\providecommand{\tabularnewline}{\\}
\numberwithin{equation}{section}
\numberwithin{figure}{section}
\theoremstyle{plain}
\newtheorem{thm}{\protect\theoremname}
\theoremstyle{plain}
\newtheorem{lem}{\protect\lemmaname}
\theoremstyle{plain}
\newtheorem{prop}{\protect\propositionname}
\theoremstyle{plain}
\newtheorem{cor}{\protect\corollaryname}
\theoremstyle{remark}
\newtheorem{rem}{\protect\remarkname}
\newcommand{\nll}{\centernot{\ll}}
\providecommand{\keywords}[1]{\textbf{Index terms---} #1}
\DeclareSymbolFont{tipa}{T3}{cmr}{m}{n}
\DeclareMathAccent{\invbreve}{\mathalpha}{tipa}{16}
\def\undertilde#1{\mathord{\vtop{\ialign{##\crcr
$\hfil\displaystyle{#1}\hfil$\crcr\noalign{\kern1.5pt\nointerlineskip}
$\hfil\tilde{}\hfil$\crcr\noalign{\kern1.5pt}}}}}
\newcommand{\subsetsim}{\mathrel{%
  \ooalign{\raise0.2ex\hbox{$\subset$}\cr\hidewidth\raise-0.8ex\hbox{\scalebox{0.9}{$\sim$}}\hidewidth\cr}}}
\newcommand{\supsetsim}{\mathrel{%
  \ooalign{\raise0.2ex\hbox{$\supset$}\cr\hidewidth\raise-0.8ex\hbox{\scalebox{0.9}{$\sim$}}\hidewidth\cr}}}
\newcommand{\subsetapprox}{\mathrel{%
  \ooalign{\raise0.4ex\hbox{$\subset$}\cr\hidewidth\raise-0.8ex\hbox{\scalebox{0.9}{$\approx$}}\hidewidth\cr}}}
\def\Ent{\operatorname{Ent}}
\def\1{\mathbf{1}}
\newtheorem{remark}{Remark}[section]
\newtheorem{convention}{Convention}
\providecommand{\corollaryname}{Corollary}
\providecommand{\lemmaname}{Lemma}
\providecommand{\propositionname}{Proposition}
\providecommand{\remarkname}{Remark}
\providecommand{\theoremname}{Theorem}
\providecommand{\corollaryname}{Corollary}
\providecommand{\lemmaname}{Lemma}
\providecommand{\propositionname}{Proposition}
\providecommand{\remarkname}{Remark}
\providecommand{\theoremname}{Theorem}
\begin{document}
\title{Strong Brascamp--Lieb Inequalities}
\author{Lei Yu\thanks{L. Yu is with the School of Statistics and Data Science, LPMC \& KLMDASR,
Nankai University, Tianjin 300071, China (e-mail: leiyu@nankai.edu.cn). } }
\maketitle
\begin{abstract}
In this paper, we derive sharp nonlinear dimension-free Brascamp--Lieb
inequalities (including hypercontractivity inequalities) for distributions
on Polish spaces, which strengthen the classic Brascamp--Lieb inequalities.
Applications include the extension of Mrs. Gerber's lemma to the cases
of Rényi divergences and distributions on Polish spaces, the strengthening
of small-set expansion theorems, and the characterization of the exponent
of the $q$-stability. Our proofs in this paper are based on information-theoretic
and coupling techniques. 
\end{abstract}
\keywords{Brascamp--Lieb, hypercontractivity, Mr. and Mrs. Gerber's lemmas, isoperimetric inequalities, small-set expansion, noise stability}

\section{Introduction}

Let $\mathcal{X}$ and $\mathcal{Y}$ be two Polish spaces, and $\mathbb{B}_{\mathcal{X}}$
and $\mathbb{B}_{\mathcal{Y}}$ the Borel $\sigma$-algebras on $\mathcal{X}$
and $\mathcal{Y}$ respectively. Let $(\mathcal{X}\times\mathcal{Y},\mathbb{B}_{\mathcal{X}}\otimes\mathbb{B}_{\mathcal{Y}})$
be the product of the measurable spaces $(\mathcal{X},\mathbb{B}_{\mathcal{X}})$
and $(\mathcal{Y},\mathbb{B}_{\mathcal{Y}})$, and $P_{XY}$ a probability
measure (also termed \emph{distribution}) on $(\mathcal{X}\times\mathcal{Y},\mathbb{B}_{\mathcal{X}}\otimes\mathbb{B}_{\mathcal{Y}})$.
The (forward and reverse) Brascamp--Lieb (BL) inequalities\footnote{For simplicity, we consider BL inequalities that only involve a joint
distribution $P_{XY}$. These BL inequalities are special cases of
the original BL inequalities in \cite{brascamp1976best} and \cite{barthe1998reverse}.
In the forward BL inequality in \cite{brascamp1976best} is the same
to \eqref{eq:FBL} but with $\langle f,g\rangle:=\int fg\mathrm{d}\pi_{XY}$
induced by a joint distribution $\pi_{XY}$, and $\Vert f\Vert_{p},\Vert g\Vert_{q}$
respectively induced by distributions $\mu_{X},\mu_{Y}$. Here $\mu_{X},\mu_{Y}$
are not necessarily to be the marginals of $\pi_{XY}$. The reverse
BL inequality in \eqref{eq:RBL} can be also recovered from the one
introduced in \cite{barthe1998reverse}. We will discuss the generalization
of our results in Section \ref{sec:Generalization-to-Original}.} are as follows: Given the distribution $P_{XY}$ and $p,q\in\overline{\mathbb{R}}:=[-\infty,\infty]$,
for any nonnegative measurable functions $f:\mathcal{X}\to[0,\infty),g:\mathcal{Y}\to[0,\infty),$
\begin{align}
\langle f,g\rangle & \le e^{-\underline{C}}\Vert f\Vert_{p}\Vert g\Vert_{q}\label{eq:FBL}\\
\langle f,g\rangle & \geq e^{-\overline{C}}\Vert f\Vert_{p}\Vert g\Vert_{q},\label{eq:RBL}
\end{align}
where\footnote{Throughout this paper, when the integral is taken with respect to
the whole space, say $\mathcal{X}\times\mathcal{Y}$, we just write
$\int_{\mathcal{X}\times\mathcal{Y}}$ as $\int$. In addition, we
say an integral $\int\varphi\mathrm{d}\mu$ (or $\mathbb{E}[\varphi]$
for probability measure $\mu$) exists if either $\int_{\{\varphi>0\}}\varphi\mathrm{d}\mu$
or $\int_{\{\varphi<0\}}\varphi\mathrm{d}\mu$ is finite. } $\langle f,g\rangle:=\int fg\mathrm{d}P_{XY},$ 
\[
\Vert f\Vert_{p}:=\begin{cases}
(\int f^{p}\mathrm{d}P_{X})^{1/p} & p\notin\{0,\pm\infty\}\\
e^{\mathbb{E}[\log f]} & p=0\textrm{ and }\mathbb{E}[\log f]\textrm{ exists}\\
\infty & p=0\textrm{ and }\mathbb{E}[\log f]\textrm{ does not exist}\\
\lim_{p\to\infty}\Vert f\Vert_{p} & p=\infty\\
\lim_{p\to-\infty}\Vert f\Vert_{p} & p=-\infty
\end{cases}
\]
and $\Vert g\Vert_{q}$ is defined similarly but with $(f,P_{X},p)$
replaced by $(g,P_{Y},q)$, and $\underline{C}=\underline{C}_{p,q}$
and $\overline{C}=\overline{C}_{p,q}$ only depend on $p,q$ given
the distribution $P_{XY}$. We denote the optimal exponents $\underline{C}_{p,q}$
and $\overline{C}_{p,q}$ for the distribution $P_{XY}$ as $\underline{C}_{p,q}^{*}$
and $\overline{C}_{p,q}^{*}$. Throughout this paper, we exclude the
trivial case that $P_{X}$ or $P_{Y}$ is a Dirac measure.

Note that the BL inequalities are homogeneous, i.e., \eqref{eq:FBL}
and \eqref{eq:RBL} still hold if we scale the functions $f,g$ respectively
by two positive real values $a,b$. Hence, without changing the optimal
exponents, we can normalize both sides of the BL inequalities in \eqref{eq:FBL}
and \eqref{eq:RBL} by $\Vert f\Vert_{\hat{p}}\Vert g\Vert_{\hat{q}}$
for reals $\hat{p}\neq p,\hat{q}\neq q$. The resulting normalized
version of BL inequalities can be understood as a \emph{linear} tradeoff
between the correlation $\frac{\langle f,g\rangle}{\Vert f\Vert_{\hat{p}}\Vert g\Vert_{\hat{q}}}$
and the product $\frac{\Vert f\Vert_{p}}{\Vert f\Vert_{\hat{p}}}\frac{\Vert g\Vert_{q}}{\Vert g\Vert_{\hat{q}}}$.
In this paper, we aim at studying the optimal \emph{nonlinear} tradeoff
among $\frac{\langle f,g\rangle}{\Vert f\Vert_{\hat{p}}\Vert g\Vert_{\hat{q}}},\frac{\Vert f\Vert_{p}}{\Vert f\Vert_{\hat{p}}}$,
and $\frac{\Vert g\Vert_{q}}{\Vert g\Vert_{\hat{q}}}$. To this end,
we first need introduce the $L_{p}$-norm and a new concept termed
\emph{$(p,\hat{p})$-entropy}.

\subsection{Preliminaries on (Pseudo) $L_{p}$-Norms}

Throughout this paper, when we write a function $f:\mathcal{X}\to[0,\infty)$,
by default, we mean that it is measurable. A similar convention also
applies to a function $g:\mathcal{Y}\to[0,\infty)$. Let $\Phi(\mathcal{Z}):=\{h:\mathcal{Z}\to[0,\infty)\}\backslash\{h=0\textrm{ a.s.}\}$
for $\mathcal{Z}=\mathcal{X}$ or $\mathcal{Y}$, which is the set
of nonenegative measurable functions but excluding the zero function.
Throughout this paper, we suppose that $f\in\Phi(\mathcal{X})$ and
$g\in\Phi(\mathcal{Y})$. The $p$-norm (or $p$-pseudo norm) $\Vert\cdot\Vert_{p}$
satisfies the following properties. 
\begin{lem}
\label{lem:norm} The following hold. 
\begin{enumerate}
\item Given $f\in\Phi(\mathcal{X})$, $\Vert f\Vert_{p}$ is nondecreasing
in the order $p\in\overline{\mathbb{R}}$.
\item If $\mathbb{E}[\log f]$ exists, then, $\lim_{r\downarrow0}\Vert f\Vert_{r}=\Vert f\Vert_{0}$
if $\Vert f\Vert_{s}<\infty$ for some $s>0$, and $\lim_{r\uparrow0}\Vert f\Vert_{r}=\Vert f\Vert_{0}$
if $\Vert f\Vert_{s}>0$ for some $s<0$. Furthermore, if $\mathbb{E}[\log f]$
does not exist, then $\Vert f\Vert_{s}=\infty$ for all $s>0$, and
$\Vert f\Vert_{s}=0$ for all $s<0$. 
\item $\Vert f\Vert_{\infty}$ is equal to the essential supremum of $f$. 
\item $\Vert f\Vert_{-\infty}$ is equal to the essential infimum of $f$. 
\item If $P_{X}(f>0)<1$, i.e., $f$ is not positive a.e., then $\Vert f\Vert_{p}=0$
for $p\in[-\infty,0]$. 
\item Given $f\in\Phi(\mathcal{X})$, $\Vert f\Vert_{p}$ is continuous
in the order $p\in\{p\in\overline{\mathbb{R}}:\Vert f\Vert_{p}<\infty\}$. 
\item For $p\in(0,\infty]$, $\Vert f\Vert_{p}=0$ if and only if $f=0$
a.e. Hence, for $f\in\Phi(\mathcal{X})$, we always have $\Vert f\Vert_{p}>0$
for $p\in(0,\infty]$. 
\item For $p\in[-\infty,0)$, $\Vert f\Vert_{p}=\infty$ if and only if
$f=\infty$ a.e. Hence, for any $f:\mathcal{X}\to[0,\infty)$, we
always have $\Vert f\Vert_{p}<\infty$ for $p\in[-\infty,0)$. 
\end{enumerate}
\end{lem}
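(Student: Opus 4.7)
The plan is to derive all eight items from three workhorses: Jensen's inequality (yielding monotonicity and handling the anchor at $p=0$), the elementary inequality $x^{s}\ge 1+s\log x$ for $x>0$ and $s\in\mathbb{R}$ (used to control the behaviour when $\mathbb{E}[\log f]$ fails to exist), and the duality $\Vert f\Vert_{p}=\Vert 1/f\Vert_{-p}^{-1}$, which reduces every statement about negative $p$ to one about positive $p$ as long as $f>0$ almost surely. Throughout, care must be taken to respect the convention that $\mathbb{E}[\varphi]$ exists if and only if at least one of $\mathbb{E}[\varphi_{+}]$ and $\mathbb{E}[\varphi_{-}]$ is finite.

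For item 1, the cases $0<p<q<\infty$ and $-\infty<p<q<0$ are standard applications of Jensen to $x\mapsto x^{q/p}$ (respectively on $1/f$); the mixed case $p<0<q$ is handled through the anchor at $0$ by combining these with $\mathbb{E}[\log f]\le(1/p)\log\mathbb{E}[f^{p}]$ for $p>0$ and its reverse for $p<0$, both of which are Jensen applied to the concave $\log$. For items 3 and 4, the bound $\Vert f\Vert_{p}\le\mathrm{ess\,sup}\,f$ is trivial for $p\in(0,\infty)$, and the matching lower bound follows from $\Vert f\Vert_{p}\ge(\mathrm{ess\,sup}\,f-\varepsilon)\,P_{X}(f\ge\mathrm{ess\,sup}\,f-\varepsilon)^{1/p}$ by letting $p\to\infty$ and then $\varepsilon\downarrow 0$; item 4 is obtained by transport through $1/f$.

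For item 2, monotonicity from item 1 already ensures the one-sided limits as $r\to 0^{\pm}$ exist. To identify them with $\Vert f\Vert_{0}$ when $\mathbb{E}[\log f]$ exists and some $\Vert f\Vert_{s}$ on the correct side of $0$ is finite, I would combine the domination $|(f^{r}-1)/r|\le\max(f^{s},f^{-s})\,|\log f|$ on $|r|\le|s|$ with dominated convergence on $\mathbb{E}[f^{r}]=1+r\,\mathbb{E}[\log f]+o(r)$. For the case where $\mathbb{E}[\log f]$ does not exist, the inequality $f^{s}\ge 1+s\log f$ restricted to $\{f\ge 1\}$ forces $\mathbb{E}[f^{s}]=\infty$ whenever $\mathbb{E}[\log_{+}f]=\infty$ and $s>0$, and the symmetric restriction to $\{f\le 1\}$ handles $\mathbb{E}[\log_{-}f]=\infty$; together these give $\Vert f\Vert_{s}=\infty$ for $s>0$ and $\Vert f\Vert_{s}=\infty^{1/s}=0$ for $s<0$. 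Item 5 follows at once, since $\{f=0\}$ of positive measure forces $f^{p}\equiv+\infty$ there for $p<0$ and $\mathrm{ess\,inf}\,f=0$. Item 6 then drops out of monotonicity plus monotone convergence as $r\nearrow p$ and $r\searrow p$, with items 2--4 covering the endpoints.

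Items 7 and 8 are elementary. For item 7 with $p\in(0,\infty)$, $\Vert f\Vert_{p}=0$ forces $f^{p}=0$ a.e.\ and hence $f=0$ a.e., contradicting $f\in\Phi(\mathcal{X})$; for $p=\infty$, the same hypothesis supplies some $a>0$ with $P_{X}(f\ge a)>0$, whence $\Vert f\Vert_{\infty}\ge a$. Item 8 is the symmetric statement transported through $1/f$. The main obstacle I anticipate is not any single argument but the combined bookkeeping needed to keep the conventions for $\Vert f\Vert_{0}$ and $\Vert f\Vert_{\pm\infty}$ mutually consistent across the various degeneracies---$f$ vanishing on a set of positive measure, $f$ being unbounded, or $\mathbb{E}[\log f]$ being indefinite---since each of these interacts differently with the sign of $p$ and with the three separate cases in the definition of $\Vert f\Vert_{0}$.
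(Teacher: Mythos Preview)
Your plan is essentially the paper's: Jensen for item 1, the inequality $x^{s}\ge 1+s\log x$ (which the paper phrases as monotonicity of $s\mapsto(t^{s}-1)/s$) for item 2, and convergence theorems for item 6; the paper proves only items 1, 2, 6 explicitly and cites a Bogachev exercise for $\Vert f\Vert_{r}\to\Vert f\Vert_{0}$, which is exactly what your dominated-convergence argument unpacks. One caution on item 6: ``monotone convergence as $r\nearrow p$'' by itself does not quite work because $r\mapsto f(x)^{r}$ is not globally monotone in $r$---the paper makes this precise by splitting $\mathcal{X}$ into $\{f\ge 1\}$ (where Beppo Levi applies) and $\{f<1\}$ (where $0\le f^{r}\le 1$, so dominated convergence applies), and you will need the same split.
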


\begin{proof}
Statement 1 follows by Jensen's inequality. Statement 2 follows by
the following argument. For $t>0$, define $h(s)=\frac{t^{s}-1}{s}$
if $s\neq0$, and $\log t$ otherwise. Then, $h$ is nondecreasing
on the whole real line. This immediately implies that if $\Vert f\Vert_{s}<\infty$
for some $s>0$, then $\int_{\{f>1\}}\log f\mathrm{d}P_{X}$ is finite.
Hence, the second part of Statement 2 holds. Furthermore, combined
with the existence of $\mathbb{E}[\log f]$, it implies that $\mathbb{E}[|\log f|]$
is finite. Substituting $f\leftarrow f^{s}$ into the formula in
\cite[Exercise 2.12.91]{bogachev2007measure1} yields the first part
of Statement 2 for $s>0$. The part for $s<0$ follows similarly,
in which case, substituting $f\leftarrow f^{s}$ with $s<0$ into
the formula in \cite[Exercise 2.12.91]{bogachev2007measure1} yields
the first part of Statement 2 for $s<0$. Statement 6 follows by
the following argument. For $p=0$, it follows from Statement 2. We
next consider the case $p\neq0$. We partition $\mathcal{X}$ into
two disjoint sets $\{f\ge1\}$ and $\{f<1\}$, and decompose the integral
in the definition of $\Vert f\Vert_{p}$ into two parts according
to this partition. For $p>0$, we apply Lebesgue's dominated convergence
theorem to the integral on set $\{f<1\}$, and apply Beppo Levi's
lemma on the monotone convergence to the integral on set $\{f\ge1\}$.
Then, we have $\lim_{r\uparrow p}\Vert f\Vert_{r}=\Vert f\Vert_{p}$.
Similarly, Statement 6 for other cases can be proven.
\end{proof}
Based on Statements 1, 7, and 8 above, to ensure that $0<\Vert f\Vert_{p}<\infty$
for $f\in\Phi(\mathcal{X})$ and $p\in\overline{\mathbb{R}}\backslash\{0\}$,
it suffices to require $\Vert f\Vert_{p}>0$ if $p<0$, and $\Vert f\Vert_{p}<\infty$
if $p>0$. Hence, if we define 
\[
\mathcal{D}_{f}:=\{p\in\overline{\mathbb{R}}:0<\Vert f\Vert_{p}<\infty\},
\]
then for $f\in\Phi(\mathcal{X})$, $\mathcal{D}_{f}$ corresponds
to the union of $\{p\in[-\infty,0):\Vert f\Vert_{p}>0\}$, $\{p\in(0,\infty]:\Vert f\Vert_{p}<\infty\}$,
and also $\{0\}$ if $0<\Vert f\Vert_{0}<\infty$.  Note that $\mathcal{D}_{f}$
is an interval.

\subsection{Two-Parameter Version of Entropies}

Throughout this subsection, we assume $f\in\Phi(\mathcal{X})$. We
call $p$ as a \emph{simple order} if $p\in\mathbb{R}\backslash\{0\}$,
and call $p$ as an \emph{extended order} if $p\in\overline{\mathbb{R}}$.
We first define the $(p,\hat{p})$-entropy for a pair of simple orders.
For convenience, we denote 
\[
\mathcal{R}_{f}:=\mathcal{D}_{f}\cup\{0\}.
\]
Here, the $0$-pseudo norm is excluded since it will not be used in
the definition of $(p,\hat{p})$-entropy, and in fact, the $(p,0)$-
or $(0,p)$-entropy will be defined by the continuous extension. Given
$p,\hat{p}\in\mathcal{R}_{f}\backslash\{0,\pm\infty\}$ such that
$p\neq\hat{p}$, we define the \emph{$(p,\hat{p})$-entropy} of $f$
as\footnote{Throughout this paper, we use $\log$ to denote the logarithm of base
$e$. In fact, our results still hold if we use other bases, as long
as the base of the logarithm is consistent with the one of the exponentiation. } 
\begin{equation}
\Ent_{p,\hat{p}}(f):=\frac{p\hat{p}}{p-\hat{p}}\log\frac{\Vert f\Vert_{p}}{\Vert f\Vert_{\hat{p}}}.\label{eq:ent}
\end{equation}
For $p,\hat{p}$ with $p\neq\hat{p}$, one of them being in $\mathbb{R}\backslash\{0\}$,
and the other being in $\mathbb{R}\backslash\mathcal{R}_{f}$ (i.e.,
$\Vert f\Vert_{p}$ or $\Vert f\Vert_{\hat{p}}$ being zero or $\infty$),
we define $\Ent_{p,\hat{p}}(f):=\infty$ if $p\hat{p}>0$, and $\Ent_{p,\hat{p}}(f):=-\infty$
if $p\hat{p}<0$.

The $(p,\hat{p})$-entropies for other cases are defined by continuous
extensions. Specifically, for $p\in\mathcal{R}_{f}\backslash\{0,\pm\infty\}$,
the \emph{$(p,p)$-entropy} of $f$ is defined as 
\begin{equation}
\Ent_{p,p}(f):=\lim_{\hat{p}/p\uparrow1}\Ent_{p,\hat{p}}(f)=\Ent_{p}(f):=\frac{\mathbb{E}[f^{p}\log f^{p}]-\mathbb{E}[f^{p}]\log\mathbb{E}[f^{p}]}{\mathbb{E}[f^{p}]},\label{eq:Shannonentropy-1-2}
\end{equation}
and for $p\in\mathbb{R}\backslash\mathcal{R}_{f}$, 
\begin{equation}
\Ent_{p,p}(f):=\Ent_{p}(f):=\infty.\label{eq:Shannonentropy-1-2-2-1}
\end{equation}
Note that the usual entropy of $f^{p}$ corresponds to the numerator
in the right-hand side (RHS) of \eqref{eq:Shannonentropy-1-2}. Hence,
$\Ent_{p}(f)$ coincides the normalized version of the entropy of
$f^{p}$.

We next define the $(p,\hat{p})$-entropy for a pair of orders such
that at least one of them is an extended order. For $p\in\mathcal{R}_{f}$,
the \emph{$(p,0)$-entropy} and the \emph{$(0,p)$-entropy} of $f$
are defined by 
\[
\Ent_{p,0}(f),\;\Ent_{0,p}(f):=\Ent_{0}(f):=-\log P_{X}(f>0).
\]
For $p\notin\mathcal{R}_{f}$, we 
\[
\Ent_{p,0}(f),\;\Ent_{0,p}(f):=\begin{cases}
\infty & p\in(0,\infty]\\
-\infty & p\in[-\infty,0)
\end{cases}.
\]

For $p\in\overline{\mathbb{R}}\backslash\{0\}$, the \emph{$(p,\infty)$-entropy}
and the \emph{$(\infty,p)$-entropy} of $f$ satisfying $\Vert f\Vert_{\infty}<\infty$
are defined by 
\[
\Ent_{p,\infty}(f),\;\Ent_{\infty,p}(f):=\begin{cases}
\lim_{\hat{p}\uparrow\infty}\Ent_{p,\hat{p}}(f)=-p\log\frac{\Vert f\Vert_{p}}{\Vert f\Vert_{\infty}} & p\in\mathbb{R}\backslash\{0\}\\
\lim_{p\uparrow\infty}\Ent_{p,\infty}(f)=\Ent_{\infty}(f):=-\log P_{X}(f=\Vert f\Vert_{\infty}) & p=\infty\\
\lim_{p\downarrow-\infty}\Ent_{p,\infty}(f)=0 & f\textrm{ is constant},\;p=-\infty\\
\lim_{p\downarrow-\infty}\Ent_{p,\infty}(f)=-\infty & f\textrm{ is not constant},\;p=-\infty
\end{cases}.
\]
For $f$ satisfying $\Vert f\Vert_{\infty}=\infty$, 
\[
\Ent_{p,\infty}(f),\;\Ent_{\infty,p}(f):=\begin{cases}
\infty & p\in(0,\infty]\\
-\infty & p\in[-\infty,0)
\end{cases}.
\]

For $p\in[-\infty,\infty)\backslash\{0\}$, the \emph{$(p,-\infty)$-entropy}
and the \emph{$(-\infty,p)$-entropy} of $f$ satisfying $\Vert f\Vert_{-\infty}>0$
(which implies $f>0$ a.e.) are defined by 
\[
\Ent_{p,-\infty}(f),\;\Ent_{-\infty,p}(f):=\begin{cases}
\lim_{\hat{p}\downarrow-\infty}\Ent_{p,\hat{p}}(f)=-p\log\frac{\Vert f\Vert_{p}}{\Vert f\Vert_{-\infty}} & p\in\mathbb{R}\backslash\{0\}\\
\lim_{p\downarrow-\infty}\Ent_{p,-\infty}(f)=\Ent_{-\infty}(f):=-\log P_{X}(f=\Vert f\Vert_{-\infty}) & p=-\infty
\end{cases}.
\]
For $f$ satisfying $\Vert f\Vert_{-\infty}=0$, 
\[
\Ent_{p,-\infty}(f),\;\Ent_{-\infty,p}(f):=\begin{cases}
-\infty & p\in(0,\infty)\\
\infty & p\in[-\infty,0)
\end{cases}.
\]

\begin{lem}
It holds that $\Ent_{p,\hat{p}}(f)=\Ent_{\hat{p},p}(f)=\Ent_{p/s,\hat{p}/s}(f^{s})=\Ent_{\hat{p}/s,p/s}(f^{s})$
for $s\neq0$. 
\end{lem}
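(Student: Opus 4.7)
The plan is to verify all four equalities by direct computation, first for the generic case of simple orders in $\mathcal{R}_f\setminus\{0,\pm\infty\}$ and then propagating through each degenerate case using the continuous extensions in the definition.

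For the symmetry $\Ent_{p,\hat{p}}(f)=\Ent_{\hat{p},p}(f)$, when $p,\hat{p}\in\mathcal{R}_f\setminus\{0,\pm\infty\}$ with $p\ne\hat{p}$, swapping the two indices in \eqref{eq:ent} multiplies both $\tfrac{p\hat{p}}{p-\hat{p}}$ and $\log\tfrac{\Vert f\Vert_p}{\Vert f\Vert_{\hat{p}}}$ by $-1$, so they cancel. The extended cases fall out of the one-sided limits used to define them. For the scaling identities, the workhorse is the elementary identity
\[
\Vert f^s\Vert_r = \Vert f\Vert_{sr}^{\,s}, \qquad r,s\neq 0,
\]
which follows from the substitution $(f^s)^r=f^{sr}$ in the definition of $\Vert\cdot\Vert_r$ (with the exponent $1/r$ outside). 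Plugging this into the definition of $\Ent_{p/s,\hat{p}/s}(f^s)$ gives
\[
\frac{(p/s)(\hat{p}/s)}{(p-\hat{p})/s}\,\log\frac{\Vert f\Vert_p^{\,s}}{\Vert f\Vert_{\hat{p}}^{\,s}}
=\frac{p\hat{p}}{s(p-\hat{p})}\cdot s\log\frac{\Vert f\Vert_p}{\Vert f\Vert_{\hat{p}}}
=\Ent_{p,\hat{p}}(f),
\]
and combining with the symmetry already established gives $\Ent_{\hat{p}/s,p/s}(f^s)$ as well.

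The remaining work is a case check to handle the boundary and degenerate definitions. For $p=\hat{p}\in\mathcal{R}_f\setminus\{0,\pm\infty\}$, one uses $\Ent_{p/s}(f^s)=\tfrac{\E[f^p\log f^p]-\E[f^p]\log\E[f^p]}{\E[f^p]}=\Ent_p(f)$ directly from \eqref{eq:Shannonentropy-1-2}. For $p=0$, the identity $P_X(f^s>0)=P_X(f>0)$ (valid for any $s\neq 0$) shows $\Ent_0(f^s)=\Ent_0(f)$. For $p=\pm\infty$, one uses that $\Vert f^s\Vert_\infty=\Vert f\Vert_\infty^{\,s}$ when $s>0$, while $\Vert f^s\Vert_\infty=\Vert f\Vert_{-\infty}^{\,s}$ when $s<0$ (and analogously for $-\infty$); in the latter case the transformation $p\mapsto p/s$ sends $+\infty$ to $-\infty$, so the correct boundary definition of $\Ent_{\cdot,\cdot}$ gets invoked on the right-hand side and the two expressions still match after pulling the factor $s$ through the logarithm. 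For the pathological cases where $p\notin\mathcal{R}_f$ (i.e.\ $\Vert f\Vert_p$ is $0$ or $\infty$), one checks that the transformation $f\mapsto f^s$ sends such orders to orders with the corresponding pathology for $f^s$, and that the sign of $p\hat{p}$ (which determines whether the declared value is $+\infty$ or $-\infty$) is preserved once the sign flip from $s<0$ is tracked through both indices simultaneously.

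The main obstacle is not mathematical depth but bookkeeping: one must check that every piecewise branch of the definition of $\Ent_{p,\hat{p}}$ is stable under the joint replacement $(p,\hat{p},f)\to(p/s,\hat{p}/s,f^s)$, paying particular attention to the sign of $s$ (which swaps the roles of the essential supremum and essential infimum, and hence the roles of $+\infty$ and $-\infty$ as extended orders) and to the cases $\Vert f\Vert_\infty=\infty$ and $\Vert f\Vert_{-\infty}=0$, where the value of $\Ent_{p,\hat{p}}$ is set by the sign of $p\hat{p}$ rather than by a limit.
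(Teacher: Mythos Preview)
Your proposal is correct and is the natural approach. The paper in fact states this lemma without proof, treating it as an immediate consequence of the definitions; your direct verification via the norm identity $\Vert f^s\Vert_r=\Vert f\Vert_{sr}^{\,s}$ together with the sign-cancellation for symmetry is exactly what one would do to fill in the details, and your bookkeeping through the boundary branches (including the swap of $\pm\infty$ when $s<0$) is appropriately careful.
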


The $(p,\hat{p})$-entropy defined above is closely related to the
well-known \emph{Rényi divergences} in information theory. For $s\in\mathbb{R}\backslash\{0,1\}$,
the Rényi divergence of order $s$ is defined as $D_{s}(Q\|P):=\frac{1}{s-1}\log\int(\frac{\mathrm{d}Q}{\mathrm{d}\mu})^{s}(\frac{\mathrm{d}P}{\mathrm{d}\mu})^{1-s}\mathrm{d}\mu$
for a nonnegative finite measure $\mu$ (in fact, the value of $D_{s}(Q\|P)$
is independent of the choice of $\mu$), where $\frac{\mathrm{d}P}{\mathrm{d}\mu}$
denotes the Radon--Nikodym derivative of $P$ w.r.t. $\mu$. For
$s\in\{0,1,\pm\infty\}$, the Rényi divergence of order $s$ is defined
by the continuous extension. For $s=1$, the Rényi divergence of order
$1$, also termed the Kullback--Leibler divergence, or simply, the
KL divergence, is $D(Q\|P):=\int\log(\frac{\mathrm{d}Q}{\mathrm{d}P})\mathrm{d}Q$
if $Q\ll P$, and otherwise, $D(Q\|P):=\infty$. Here and throughout
this paper, we adopt the conventions that $\log0=-\infty,\log\infty=\infty$
and $0\log0=0$. The $(p,\hat{p})$-entropy and the Rényi divergence
of order $\hat{p}/p$ admit the following intimate relationship. 
\begin{lem}
For $f\in\Phi(\mathcal{X})$ and $p\in\mathcal{R}_{f}\backslash\{0,\pm\infty\}$,
we can write $\frac{f^{p}}{\Vert f\Vert_{p}^{p}}$ as a Radon--Nikodym
derivative $\frac{\mathrm{d}Q_{X}}{\mathrm{d}P_{X}}$. Then, for such
$p$ and together with $\hat{p}\in\overline{\mathbb{R}}$, 
\begin{equation}
\Ent_{p,\hat{p}}(f)=D_{\hat{p}/p}(Q_{X}\|P_{X}).\label{eq:}
\end{equation}
\end{lem}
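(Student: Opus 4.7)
The plan is to verify that the density $\frac{dQ_X}{dP_X}:=\frac{f^{p}}{\|f\|_{p}^{p}}$ is well-defined, compute $D_{\hat p/p}(Q_X\|P_X)$ by direct substitution in the generic case $\hat p\in\mathcal{R}_f\setminus\{0,p,\pm\infty\}$, and then treat the boundary orders $\hat p\in\{0,p,\pm\infty\}$ and the degenerate orders $\hat p\in\mathbb{R}\setminus\mathcal{R}_f$ one by one, appealing to the continuous extensions used in the definitions of both $D_s$ and $\Ent_{p,\hat p}$.

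Since $p\in\mathcal{R}_f\setminus\{0,\pm\infty\}$ we have $0<\|f\|_{p}<\infty$, so $\int\frac{f^{p}}{\|f\|_{p}^{p}}\,dP_X=1$ and $Q_X$ is indeed a probability measure with $Q_X\ll P_X$. For the generic case, set $s:=\hat p/p\notin\{0,1\}$. A direct computation gives $\int(dQ_X/dP_X)^{s}\,dP_X=\|f\|_{ps}^{ps}/\|f\|_{p}^{ps}=(\|f\|_{\hat p}/\|f\|_{p})^{\hat p}$, hence $D_s(Q_X\|P_X)=\frac{1}{s-1}\log(\|f\|_{\hat p}/\|f\|_{p})^{\hat p}=\frac{p\hat p}{\hat p-p}\log\frac{\|f\|_{\hat p}}{\|f\|_{p}}$, which matches the defining formula of $\Ent_{p,\hat p}(f)$ in \eqref{eq:ent}.

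Next I would check each boundary case in turn. For $\hat p=p$ (i.e.\ $s=1$), expanding the KL divergence $D(Q_X\|P_X)=\int\frac{f^{p}}{\|f\|_{p}^{p}}\log\frac{f^{p}}{\|f\|_{p}^{p}}\,dP_X$ and separating the two terms yields exactly $\Ent_p(f)$ as defined in \eqref{eq:Shannonentropy-1-2}. For $\hat p=0$, the standard formula $D_0(Q\|P)=-\log P(dQ/dP>0)$ gives $-\log P_X(f>0)=\Ent_0(f)$. For $\hat p=\pm\infty$, I would use $D_{+\infty}(Q_X\|P_X)=\log\operatorname{ess\,sup}_{P_X}dQ_X/dP_X$ and $D_{-\infty}(Q_X\|P_X)=\log\operatorname{ess\,inf}_{P_X}dQ_X/dP_X$, split on the sign of $p$ to identify these essential extrema of $f^{p}/\|f\|_{p}^{p}$ with $\|f\|_{\pm\infty}^{p}/\|f\|_{p}^{p}$ (the pairing depending on whether $t\mapsto t^{p}$ is increasing or decreasing), and verify that the resulting expression reproduces the definition of $\Ent_{p,\pm\infty}(f)$. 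Finally, when $\hat p\in\mathbb R\setminus\mathcal{R}_f$, the integral $\int(dQ_X/dP_X)^{\hat p/p}\,dP_X$ is either $0$ or $\infty$, forcing $D_{\hat p/p}(Q_X\|P_X)=\pm\infty$ with sign determined by $\operatorname{sgn}(\hat p-p)\operatorname{sgn}(p)=\operatorname{sgn}(p\hat p-p^2)$; after expanding cases one checks that this matches the $\pm\infty$ convention $\operatorname{sgn}(p\hat p)$ adopted in the definition of $\Ent_{p,\hat p}$.

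The main obstacle is the sign bookkeeping at $\hat p=\pm\infty$ when $p<0$: in that regime $\hat p/p$ flips sign, so the R\'enyi divergence of order $+\infty$ corresponds to the $(p,-\infty)$-entropy and vice versa, and one has to be careful about which of $\|f\|_{\pm\infty}$ is paired with which essential extremum. Once this bookkeeping is done, the rest reduces to Lemma~\ref{lem:norm} together with the elementary identity $\|f^{s}\|_{r}=\|f\|_{rs}^{s}$ used to rewrite the integral of $(dQ_X/dP_X)^{s}$.
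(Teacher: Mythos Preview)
Your proof is correct. The paper states this lemma without proof, treating it as an immediate consequence of the definitions; your case-by-case verification via direct substitution is exactly the natural way to justify it, and all the boundary cases you outline go through as described.
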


Throughout this paper, we use the convention $\infty\cdot0=-\infty\cdot0=0$. 
\begin{lem}
For $f\in\Phi(\mathcal{X})$, the following hold. 
\begin{enumerate}
\item $\Ent_{p,\hat{p}}(f)\ge0$ for $p,\hat{p}\ge0$ or $p,\hat{p}<0$. 
\item $\Ent_{p,\hat{p}}(f)\le0$ for $p<0\le\hat{p}$ or $\hat{p}<0\le p$. 
\item For $p\hat{p}\neq0$, $\Ent_{p,\hat{p}}(f)=0$ if and only if $f$
is constant a.e. 
\item For $p\hat{p}=0$, $\Ent_{p,\hat{p}}(f)=0$ if and only if $f$ is
positive a.e. and $p,\hat{p}\in\mathcal{R}_{f}$. 
\item $\Ent_{p,\hat{p}}(f)$ is nondecreasing in $\hat{p}\in\overline{\mathbb{R}}$
given $p\in[0,\infty]$, and nonincreasing in $\hat{p}\in\overline{\mathbb{R}}$
given $p\in[-\infty,0)$. 
\end{enumerate}
\end{lem}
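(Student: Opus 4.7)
The plan is to verify the five statements by combining the monotonicity of $L_{p}$-norms from the first lemma of this subsection with the R\'enyi divergence identification $\Ent_{p,\hat{p}}(f)=D_{\hat{p}/p}(Q_X\|P_X)$ established in the preceding lemma, together with case-by-case inspection of the boundary definitions.

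For Statements 1 and 2, in the generic regime $p,\hat{p}\in\mathcal{R}_f\setminus\{0,\pm\infty\}$, the monotonicity of $L_p$-norms forces $\log(\|f\|_p/\|f\|_{\hat{p}})$ to share the sign of $p-\hat{p}$, while the prefactor $p\hat{p}/(p-\hat{p})$ carries the sign of $p\hat{p}\cdot\mathrm{sgn}(p-\hat{p})$; their product therefore has the sign of $p\hat{p}$, which is precisely the claim. The boundary regimes where $p$ or $\hat{p}$ lies in $\{0,\pm\infty\}$ or outside $\mathcal{R}_f$ are handled by reading off the explicit definitions. A key subtlety occurs in Statement 2 at $\hat{p}=0$: by part (5) of the $L_p$-norm lemma, if $P_X(f>0)<1$ then $\|f\|_p=0$ for every $p\le 0$, so the assumption $p\in\mathcal{R}_f$ with $p<0$ already forces $f>0$ a.s., which makes $\Ent_{p,0}(f)=-\log P_X(f>0)=0$ as required.

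For Statement 3, in the generic regime $p,\hat{p}\in\mathcal{R}_f\setminus\{0,\pm\infty\}$, the identity $\Ent_{p,\hat{p}}(f)=D_{\hat{p}/p}(Q_X\|P_X)$ reduces the condition $\Ent_{p,\hat{p}}(f)=0$ to $Q_X=P_X$, i.e., $f^p/\|f\|_p^p=1$ a.s., which is equivalent to $f$ being constant a.s. (recall that $p<0$ with $p\in\mathcal{R}_f$ already implies $f>0$ a.s.). The cases involving $\pm\infty$ reduce, via the closed-form expressions in the definition, to the equality $\|f\|_p=\|f\|_{\pm\infty}$, which again characterizes constancy by the equality condition in Jensen's inequality. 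The remaining cases where $p$ or $\hat{p}$ lies in $\mathbb{R}\setminus\mathcal{R}_f$ give $\Ent_{p,\hat{p}}(f)=\pm\infty\neq 0$ by construction. Statement 4 is immediate: $\Ent_{p,0}(f)=-\log P_X(f>0)$ vanishes iff $P_X(f>0)=1$, and for $p\notin\mathcal{R}_f$ the entropy is $\pm\infty$ by definition.

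Statement 5 reduces, via the R\'enyi divergence identification, to the monotonicity of $\alpha\mapsto D_\alpha(Q_X\|P_X)$ on $\overline{\mathbb{R}}$. For $p\in\mathcal{R}_f\setminus\{0,\pm\infty\}$, the map $\hat{p}\mapsto\hat{p}/p$ is increasing when $p>0$ and decreasing when $p<0$, so $\Ent_{p,\hat{p}}(f)=D_{\hat{p}/p}(Q_X\|P_X)$ inherits the corresponding monotonicity in $\hat{p}$. The monotonicity of $D_\alpha$ on $[0,\infty]$ is classical (Jensen's inequality); it extends to $\alpha<0$ through the twin identity $D_\alpha(Q\|P)=\tfrac{\alpha}{1-\alpha}D_{1-\alpha}(P\|Q)$, which combined with the monotonicity of $\alpha/(1-\alpha)$ handles the negative orders. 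The boundary cases $p\in\{0,\pm\infty\}$ and $p\notin\mathcal{R}_f$ are checked separately from the explicit formulas, with extensions to $\hat{p}\in\{0,\pm\infty\}$ being automatic since those values are defined as the corresponding one-sided limits. The principal obstacle is the combinatorial bookkeeping across the many boundary regimes; the R\'enyi divergence framework provides a uniform language that makes this bookkeeping tractable.
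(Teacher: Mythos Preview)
Your proposal is correct and takes essentially the same approach as the paper: both reduce Statements 1--4 to the definition and the R\'enyi divergence identification $\Ent_{p,\hat{p}}(f)=D_{\hat{p}/p}(Q_X\|P_X)$, and both derive Statement 5 from the monotonicity of the R\'enyi divergence in its order. The paper's proof is extremely terse (it simply cites \cite{Erven} for the monotonicity and says the rest ``follow by definition or by the equivalence''), whereas you supply more of the bookkeeping explicitly---the sign analysis for Statements 1--2, the edge case $p<0=\hat{p}$ in Statement 2, and the skew-symmetry route to monotonicity for negative orders---but the underlying argument is the same.
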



\begin{proof}
Statements 1)-4) follow by definition or by the equivalence in \eqref{eq:}.

We now prove Statement 5). For $p\in(0,\infty)\cap\mathcal{R}_{f}$,
\eqref{eq:} holds. Hence, by the monotonicity of the Rényi divergence
in its order \cite{Erven}, $\Ent_{p,\hat{p}}(f)$ is nondecreasing
in $\hat{p}\in\mathcal{R}_{f}$ given $p\in(0,\infty)\cap\mathcal{R}_{f}$.
The monotonicity for other $p$ can be verified similarly. In fact,
the monotonicity of $\hat{p}\mapsto\Ent_{p,\hat{p}}(f)$ (or $s\mapsto D_{s}(Q_{X}\|P_{X})$)
can be also obtained by differentiating this function with respect
to $\hat{p}$ (or $s$) and writing the derivative as the product
of a nonnegative factor and a relative entropy. 
\end{proof}
Given $f$, define $p_{\min}:=\inf\mathcal{R}_{f},\:p_{\max}:=\sup\mathcal{R}_{f}$. 
\begin{lem}
For $f\in\Phi(\mathcal{X})$, the following hold. 
\begin{enumerate}
\item For $p,\hat{p}\in\overline{\mathbb{R}}$, $|\Ent_{p,\hat{p}}(f)|=\infty$
if $p$ or $\hat{p}\in\overline{\mathbb{R}}\backslash\mathcal{R}_{f}$. 
\item For $p,\hat{p}\in\overline{\mathbb{R}}$, $|\Ent_{p,\hat{p}}(f)|<\infty$
if $(p,\hat{p})\in\mathcal{R}_{f}^{2}\backslash\{(p_{\min},p_{\min}),(p_{\max},p_{\max}),(\infty,-\infty),(-\infty,\infty)\}$. 
\end{enumerate}
\end{lem}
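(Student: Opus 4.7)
The plan is to establish both parts by a methodical case analysis on each of $p$ and $\hat p$ according to whether the coordinate is a simple order, the distinguished value $0$, or an extended order $\pm\infty$, and whether it lies in $\mathcal{R}_f$. Two simplifying tools will be used throughout: the symmetry $\Ent_{p, \hat p}(f) = \Ent_{\hat p, p}(f)$, which halves the casework; and the fact that $\mathcal{R}_f$ is an interval containing $0$, which combined with the monotonicity of $s \mapsto \|f\|_s$ forces $\|f\|_p = \infty$ whenever $p > 0$ lies outside $\mathcal{R}_f$ and $\|f\|_p = 0$ whenever $p < 0$ lies outside.

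For Part 1, by symmetry I assume $p \notin \mathcal{R}_f$, so $\|f\|_p \in \{0, \infty\}$. For each sub-case of $\hat p$, the value of $\Ent_{p, \hat p}(f)$ is determined by one of the piecewise defining clauses: (i) if $\hat p$ is a simple order or $\hat p = 0$, the clause covering $p \notin \mathcal{R}_f$ directly assigns $\pm \infty$, with sign fixed by $p \hat p$ (or by $p$ alone when $\hat p = 0$); (ii) if $\hat p = \pm \infty$ and $\pm \infty \notin \mathcal{R}_f$, the explicit $\pm \infty$ clauses of the $(p, \pm \infty)$-entropy definition apply; (iii) if $\hat p = \pm \infty \in \mathcal{R}_f$ while $p \notin \mathcal{R}_f$, the limit formula $-p \log(\|f\|_p / \|f\|_{\pm \infty})$ contains $\log 0 = -\infty$ or $\log \infty = +\infty$, so the overall value is $\pm \infty$ according to the sign of $-p$. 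In every sub-case $|\Ent_{p, \hat p}(f)| = \infty$.

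For Part 2, assume $(p, \hat p) \in \mathcal{R}_f^2$ lies outside the four listed corners, and split into three blocks. (A) If $p \neq \hat p$ and both lie in $\mathcal{R}_f \setminus \{0\}$, then $\|f\|_p, \|f\|_{\hat p}$ are strictly positive and finite, so either \eqref{eq:ent} or its extended-order limit $-p \log(\|f\|_p / \|f\|_{\pm \infty})$ evaluates to a real number; the only extended-extended subcases here, $(\pm \infty, \mp \infty)$, are excluded by name. (B) If at least one coordinate equals $0$, the value reduces to $\Ent_0(f) = -\log P_X(f > 0)$, finite because $f \in \Phi(\mathcal{X})$ forces $P_X(f > 0) > 0$. (C) If $p = \hat p \in \mathcal{R}_f \setminus \{0, \pm \infty, p_{\min}, p_{\max}\}$, then $p$ is an interior simple order, and finiteness of $\Ent_p(f)$ reduces, via $\mathbb{E}[f^p] \in (0, \infty)$, to showing $\mathbb{E}[f^p |\log f|] < \infty$.

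The main obstacle is block (C), handled by an interpolation argument. Because $p$ is interior to the interval $\mathcal{R}_f$, I can choose $\epsilon > 0$ small enough that $p - \epsilon, p + \epsilon \in \mathcal{R}_f$. Applying the elementary estimates $\log t \le C_\epsilon\, t^{\epsilon}$ for $t \ge 1$ and $-\log t \le C_\epsilon\, t^{-\epsilon}$ for $t \in (0, 1]$ after splitting the domain into $\{f \ge 1\}$ and $\{0 < f < 1\}$ yields
\[
\mathbb{E}[f^p |\log f|] \le C_\epsilon \bigl( \mathbb{E}[f^{p + \epsilon} \mathbf{1}_{\{f \ge 1\}}] + \mathbb{E}[f^{p - \epsilon} \mathbf{1}_{\{0 < f < 1\}}] \bigr) \le C_\epsilon \bigl( \mathbb{E}[f^{p + \epsilon}] + \mathbb{E}[f^{p - \epsilon}] \bigr),
\]
and both terms on the right are finite because $p \pm \epsilon \in \mathcal{R}_f$ gives $\|f\|_{p \pm \epsilon} \in (0, \infty)$. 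This yields $|\Ent_p(f)| < \infty$ and completes Part 2.
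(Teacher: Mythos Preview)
Your proposal is correct. The overall architecture—case analysis for Part 1 and a three-block split for Part 2—mirrors the paper's proof closely. The one substantive difference is your treatment of the diagonal case $p=\hat p$ interior to $\mathcal{R}_f$ (your block (C)): you establish $\mathbb{E}[f^p|\log f|]<\infty$ directly by the moment interpolation bound $f^p|\log f|\le C_\epsilon(f^{p+\epsilon}+f^{p-\epsilon})$, whereas the paper instead invokes the monotonicity of $\hat p\mapsto\Ent_{p,\hat p}(f)$ (proved in the preceding lemma) to sandwich $\Ent_{p,p}(f)$ between the finite values $\Ent_{p,p-\epsilon}(f)$ and $\Ent_{p,p+\epsilon}(f)$. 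The paper's route is shorter because it recycles an already-proven result; your route is more self-contained and would work even without that lemma in hand. One minor point of exposition: in your Part 1 case (iii) the formula $-p\log(\|f\|_p/\|f\|_{\pm\infty})$ only applies when $p$ is a simple order, so the sub-case $p=\mp\infty\notin\mathcal{R}_f$, $\hat p=\pm\infty\in\mathcal{R}_f$ needs a separate (easy) check via the explicit $\pm\infty$ clauses; the conclusion is unaffected.
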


We define 
\[
\mathcal{S}_{f}:=\{(p,\hat{p})\in\overline{\mathbb{R}}^{2}:|\Ent_{p,\hat{p}}(f)|<\infty\}.
\]
Then, by this lemma, the region $\mathcal{S}_{f}$ is almost equal
to $\mathcal{R}_{f}^{2}$ up to excluding at most four points $(p_{\min},p_{\min}),(p_{\max},p_{\max})$,
$(\infty,-\infty),(-\infty,\infty)$. 
\begin{proof}
We now prove Statement 1). One can check every case with $p$ or $\hat{p}\in\overline{\mathbb{R}}\backslash\mathcal{R}_{f}$,
and will find that Statement 1) holds.

We next prove Statement 2). By definition, $|\Ent_{p,\hat{p}}(f)|<\infty$
for $p,\hat{p}\in\mathcal{R}_{f}\backslash\{\pm\infty\}$ but $p\neq\hat{p}$.
Utilizing this and by the monotonicity of $(p,\hat{p})$-entropy in
its orders, we have $|\Ent_{p,\hat{p}}(f)|<\infty$ for $p=\hat{p}\in(p_{\min},p_{\max})$.
For $\hat{p}\in\mathcal{R}_{f}\cap\{\pm\infty\}$ and $p\in\mathcal{R}_{f}\backslash\{\pm\infty\}$,
by definition, it holds that $|\Ent_{p,\hat{p}}(f)|<\infty$. Therefore,
Statement 2) holds. 
\end{proof}
We next show the continuity of the $(p,\hat{p})$-entropy in its orders. 
\begin{lem}
Given $f\in\Phi(\mathcal{X})$, $\Ent_{p,\hat{p}}(f)$ is continuous
in $(p,\hat{p})$ on $\mathcal{S}_{f}$. 
\end{lem}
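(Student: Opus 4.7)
The plan is to reduce joint continuity of $\Ent_{p,\hat{p}}(f)$ to regularity of the log moment generating function $\phi(t):=\log\int f^{t}\,\d P_{X}$ (with the convention $0^{0}:=0$, so that $\phi(0)=\log P_{X}(f>0)$ is always finite). Its domain $\mathcal{D}_{\phi}:=\{t\in\mathbb{R}:|\phi(t)|<\infty\}$ coincides with $\mathcal{R}_{f}\cap\mathbb{R}$, and for $p,\hat{p}\in\mathbb{R}\setminus\{0\}$ with $p\neq\hat{p}$ a short algebraic manipulation of \eqref{eq:ent} gives
\begin{equation}
\Ent_{p,\hat{p}}(f)=\frac{\hat{p}\phi(p)-p\phi(\hat{p})}{p-\hat{p}}=-\phi(p)+p\cdot\frac{\phi(\hat{p})-\phi(p)}{\hat{p}-p}.\label{eq:phi-rep-plan}
\end{equation}
Before exploiting \eqref{eq:phi-rep-plan} I would record three facts about $\phi$: it is convex on $\mathcal{D}_{\phi}$ by H\"older's inequality; it is $C^{\infty}$ on $\mathrm{int}(\mathcal{D}_{\phi})$ by a standard differentiation-under-the-integral argument (with the local domination $|f^{t}\log f|\leq C_{\eps}(f^{t-\eps}+f^{t+\eps})$ on a compact subset of the interior); and it is continuous on $\mathcal{D}_{\phi}$, which for $t\neq 0$ is Statement~6 of Lemma~\ref{lem:norm} applied to $t\mapsto\log\Vert f\Vert_{t}$ and at $t=0$ follows from dominated convergence applied to $f^{t}\to\mathbf{1}_{\{f>0\}}$.

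Continuity of $\Ent_{p,\hat{p}}(f)$ on the off-diagonal part of $(\mathcal{D}_{\phi}\setminus\{0\})^{2}$ is immediate from \eqref{eq:phi-rep-plan}. For a diagonal point $(p_{0},p_{0})$ with $p_{0}\in\mathrm{int}(\mathcal{D}_{\phi})\setminus\{0\}$, I would exploit the monotonicity of difference quotients of the convex function $\phi$: once $p,\hat{p}\in(p_{0}-\eps,p_{0}+\eps)$, the quotient $\frac{\phi(\hat{p})-\phi(p)}{\hat{p}-p}$ is squeezed between $\phi'(p_{0}-\eps)$ and $\phi'(p_{0}+\eps)$, so by $C^{1}$-regularity it converges to $\phi'(p_{0})$ as $(p,\hat{p})\to(p_{0},p_{0})$ with $p\neq\hat{p}$. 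The right-hand side of \eqref{eq:phi-rep-plan} therefore tends to $-\phi(p_{0})+p_{0}\phi'(p_{0})$, which a direct calculation identifies with the Shannon-type value $\Ent_{p_{0}}(f)$ from \eqref{eq:Shannonentropy-1-2}.

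The remaining points of $\mathcal{S}_{f}$ involve $p$ or $\hat{p}$ equal to $0$ or $\pm\infty$. For $(p_{0},\hat{p}_{0})=(0,a)$ with $a\in\mathcal{R}_{f}\setminus\{0\}$, feeding $\phi(p)\to\log P_{X}(f>0)$ into \eqref{eq:phi-rep-plan} yields $\Ent_{p,\hat{p}}(f)\to-\log P_{X}(f>0)=\Ent_{0,a}(f)$; the case $(a,0)$ follows by the symmetry of $\Ent$. For boundary points with one coordinate at $\pm\infty$, Statement~6 of Lemma~\ref{lem:norm} supplies $\log\Vert f\Vert_{\hat{p}}\to\log\Vert f\Vert_{\pm\infty}$; combined with the elementary limits of the prefactor $\frac{p\hat{p}}{p-\hat{p}}$ (which tends to $-p$ when $\hat{p}\to\pm\infty$ with $p$ finite, and to $\hat{p}$ when $p\to\pm\infty$ with $\hat{p}$ finite), this produces exactly the piecewise values used in the definitions of $\Ent_{p,\pm\infty}(f)$. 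The chief obstacle, as I see it, is the joint continuity on the diagonal: the value $\Ent_{p_{0}}(f)$ is prescribed by a separate closed-form expression, and matching it with \eqref{eq:phi-rep-plan} requires a difference-quotient estimate uniform in \emph{both} endpoints $(p,\hat{p})$, which is precisely what convexity of $\phi$ delivers via the squeeze argument. The excluded corners $(p_{\min},p_{\min}),(p_{\max},p_{\max}),(\infty,-\infty),(-\infty,\infty)$ are exactly the points at which either $\phi$ loses $C^{1}$-regularity or the two extreme $L_{p}$-norms cease to be comparable, so their removal from $\mathcal{S}_{f}$ is the natural setting for this strategy.
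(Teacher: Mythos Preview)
Your approach is correct in spirit and genuinely different from the paper's. The paper argues case by case, and on the diagonal it invokes the \emph{monotonicity} of $\hat{p}\mapsto\Ent_{p,\hat{p}}(f)$ (inherited from the R\'enyi divergence) to squeeze $\Ent_{p,\hat{p}}(f)$ between $\Ent_{\min\{p,\hat{p}\}}(f)$ and $\Ent_{\max\{p,\hat{p}\}}(f)$, after which it proves continuity of $s\mapsto\Ent_{s}(f)$ by a direct dominated-convergence analysis of $\mathbb{E}[f^{s}\log f^{s}]$. Your route, recasting everything via the log-moment function $\phi$ and controlling the diagonal through the monotone difference quotients of a convex function, is more analytic and arguably cleaner: it dispenses with the separate verification that $s\mapsto\Ent_{s}(f)$ is continuous, and it explains in one stroke why the representation \eqref{eq:phi-rep-plan} has a well-defined diagonal limit. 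What the paper's monotonicity squeeze buys, on the other hand, is robustness at the boundary of $\mathcal{D}_{\phi}$, where one-sided derivatives of $\phi$ may be infinite and your difference-quotient bound loses its grip.

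That last remark points to the one genuine omission in your write-up: the point $(0,0)$ is never treated. It always lies in $\mathcal{S}_{f}$ (since $\Ent_{0,0}(f)=-\log P_{X}(f>0)$ is finite), yet it falls outside both your ``interior diagonal'' case (you require $p_{0}\neq 0$) and your ``$(0,a)$ with $a\neq 0$'' case. When $0\in\mathrm{int}(\mathcal{D}_{\phi})$, your convexity squeeze applies verbatim, since the difference quotient stays bounded and the prefactor $p$ kills it. But when $0$ is an endpoint of $\mathcal{D}_{\phi}$ (e.g.\ $p_{\max}=0$), the one-sided derivative $\phi'_{\pm}(0)$ can be infinite, and then $p\cdot\frac{\phi(\hat{p})-\phi(p)}{\hat{p}-p}$ is a $0\cdot\infty$ form that your argument does not resolve. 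The paper handles exactly this configuration in its Case~3 by falling back on the monotonicity squeeze $\Ent_{0}(f)\le\Ent_{p,\hat{p}}(f)\le\Ent_{\max\{p,\hat{p}\}}(f)$ (for $p,\hat{p}\ge 0$) together with the limit $\Ent_{s}(f)\to\Ent_{0}(f)$. You should either add that patch or, staying within your framework, argue separately that $p\,\phi'(p)\to 0$ as $p\to 0$ within $\mathcal{D}_{\phi}$ (equivalently that $\Ent_{p}(f)\to\Ent_{0}(f)$), which amounts to the same dominated-convergence computation the paper performs. A similar caveat applies if one of $(p_{\min},p_{\min})$ or $(p_{\max},p_{\max})$ happens to lie in $\mathcal{S}_{f}$: these are not automatically excluded, contrary to what your closing sentence suggests.
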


\begin{proof}
Case 1: We first consider $p_{0},\hat{p}_{0}\in\mathcal{R}_{f}\backslash\{0,\pm\infty\}$
and $p_{0}\neq\hat{p}_{0}$. For this case, by the continuity of the
(pseudo) norm, we have that $\Ent_{p,\hat{p}}(f)$ is continuous at
$(p_{0},\hat{p}_{0})$.

Case 2: We next consider $p_{0}=\hat{p}_{0}\in\mathcal{R}_{f}\backslash\{0\}$.
By the monotonicity, for $p,\hat{p}\ge0$, 
\begin{equation}
\Ent_{\min\{p,\hat{p}\}}(f)\le\Ent_{p,\hat{p}}(f)\leq\Ent_{\max\{p,\hat{p}\}}(f),\label{eq:H-3-1-3}
\end{equation}
and for $p,\hat{p}<0$, 
\begin{equation}
\Ent_{\max\{p,\hat{p}\}}(f)\le\Ent_{p,\hat{p}}(f)\leq\Ent_{\min\{p,\hat{p}\}}(f).\label{eq:H-3-1-1-1}
\end{equation}
Hence, we only need prove the continuity of $p\mapsto\mathbb{E}[f^{p}]$
at $p=p_{0}$.

We first prove the left-continuity for this case. By Lemma \ref{lem:norm},
$p\mapsto\mathbb{E}[f^{p}]$ is left-continuous at $p=p_{0}$, and
$0<\mathbb{E}[f^{p_{0}}]<\infty$. Hence, to show the left-continuity
of $p\mapsto\Ent_{p}(f)$, it suffices to prove the left-continuity
of $p\mapsto\mathbb{E}[f^{p}\log f^{p}]$. To this end, we partition
$\mathcal{X}$ into two disjoint sets $\{f\ge1\}$ and $\{f<1\}$,
and decompose the integral in the definition of $\mathbb{E}[f^{p}\log f^{p}]$
into two parts $\int_{\{f\ge1\}}f^{p}\log f^{p}\mathrm{d}P_{X}+\int_{\{f<1\}}f^{p}\log f^{p}\mathrm{d}P_{X}$.
For $p_{0}>0$, $p\mapsto f^{p}(x)\log f^{p}(x)$ is nondecreasing
for $x$ such that $f(x)\ge1$ and this mapping is also continuous.
Hence, by Beppo Levi's lemma on the monotone convergence (or by Lebesgue's
dominated convergence theorem), 
\begin{equation}
\int_{\{f\ge1\}}f^{p}\log f^{p}\mathrm{d}P_{X}\to\int_{\{f\ge1\}}f^{p_{0}}\log f^{p_{0}}\mathrm{d}P_{X}\label{eq:-2}
\end{equation}
as $p\uparrow p_{0}$. On the other hand, since $t\log t\ge-e^{-1}$
for all $t\ge0$, we have 
\[
-e^{-1}\le f^{p}(x)\log f^{p}(x)\le0
\]
for $x$ such that $f(x)<1$. Hence, by Lebesgue's dominated convergence
theorem, 
\begin{equation}
\int_{\{f<1\}}f^{p}\log f^{p}\mathrm{d}P_{X}\to\int_{\{f<1\}}f^{p_{0}}\log f^{p_{0}}\mathrm{d}P_{X}\label{eq:-36}
\end{equation}
as $p\to p_{0}$. Combining the two points above, $\Ent_{p}(f)\to\Ent_{p_{0}}(f)$
as $p\uparrow p_{0}$ for $p_{0}>0$. For $p_{0}<0$, by Lebesgue's
dominated convergence theorem, \eqref{eq:-2} and \eqref{eq:-36}
as $p\uparrow p_{0}$ still hold. Hence, $\Ent_{p}(f)\to\Ent_{p_{0}}(f)$
as $p\uparrow p_{0}$ for all $p_{0}\in\mathcal{R}_{f}\backslash\{0\}$.

Similarly, one can show that $p\mapsto\Ent_{p}(f)$ is right-continuous
at $p_{0}\in\mathcal{R}_{f}\backslash\{0\}$. Hence, it is continuous
at $p_{0}\in\mathcal{R}_{f}\backslash\{0\}$.

Case 3: We next prove that $(p,\hat{p})\mapsto\Ent_{p,\hat{p}}(f)$
is continuous at $(0,0)$ through paths in $\mathcal{S}_{f}$. We
first assume that $\mathcal{R}_{f}$ is a subset of $[0,\infty]$.
By the monotonicity, for $p,\hat{p}\ge0$, 
\begin{equation}
\Ent_{0}(f)\le\Ent_{p,\hat{p}}(f)\leq\Ent_{\max\{p,\hat{p}\}}(f).\label{eq:H-3-2}
\end{equation}
Since $\max\{p,\hat{p}\}\to0$ if $(p,\hat{p})\to(0,0)$, it suffices
to show that $\Ent_{s}(f)\to\Ent_{0}(f)$ as $s\downarrow0$. By definition,
\begin{align}
\lim_{s\downarrow0}\Ent_{s}(f) & =\lim_{s\downarrow0}\frac{\mathbb{E}[f^{s}\log f^{s}]-\mathbb{E}[f^{s}]\log\mathbb{E}[f^{s}]}{\mathbb{E}[f^{s}]}\label{eq:Shannonentropy-1-2-1-1-1}\\
 & =\frac{\mathbb{E}[\lim_{s\downarrow0}f^{s}\log f^{s}]-\mathbb{E}[\lim_{s\downarrow0}f^{s}]\log\mathbb{E}[\lim_{s\downarrow0}f^{s}]}{\mathbb{E}[\lim_{s\downarrow0}f^{s}]}\\
 & =\Ent_{0}(f)
\end{align}
where swapping the limit and integration follows by Lebesgue's dominated
convergence theorem, similarly to the derivation of \eqref{eq:-2}
and \eqref{eq:-36}.

Similarly, if $\mathcal{R}_{f}$ is a subset of $[-\infty,0]$, then
$(p,\hat{p})\mapsto\Ent_{p,\hat{p}}(f)$ is also continuous at $(0,0)$
through paths in $\mathcal{S}_{f}$.

We next assume that $\mathcal{R}_{f}$ includes a neighborhood of
$0$. For this case, $\Ent_{0,0}(f)=\Ent_{0}(f)=0$. By the monotonicity,
\begin{equation}
\Ent_{\max\{p,\hat{p}\},-\max\{p,\hat{p}\}}(f)\le\Ent_{p,\hat{p}}(f)\leq\Ent_{\max\{p,\hat{p}\}}(f).\label{eq:H-3}
\end{equation}
Hence, it suffices to show that $\Ent_{s}(f)\to\Ent_{0}(f)$ as $s\downarrow0$
and $\Ent_{s,-s}(f)\to\Ent_{0}(f)$ as $s\downarrow0$. Similarly
to the above, $\Ent_{s}(f)\to\Ent_{0}(f)$ as $s\downarrow0$. On
the other hand, 
\begin{equation}
\Ent_{s,-s}(f)=-\frac{s}{2}\log\frac{\Vert f\Vert_{s}}{\Vert f\Vert_{-s}}=-\frac{1}{2}(\log\mathbb{E}[f^{s}]+\log\mathbb{E}[f^{-s}])\to0\label{eq:ent-1}
\end{equation}
as $s\downarrow0$. Hence, for this case, $(p,\hat{p})\mapsto\Ent_{p,\hat{p}}(f)$
is also continuous at $(0,0)$ through paths in $\mathcal{S}_{f}$.

Case 4: For a point $(p_{0},\hat{p}_{0})\in\mathcal{S}_{f}$ such
that one of $p_{0},\hat{p}_{0}$ is in $\mathbb{R}\backslash\{0\}$
and the other is zero, one can follow steps similar to the derivations
for the case 3 to prove this case.

Case 5: For a point $(p_{0},\hat{p}_{0})\in\mathcal{S}_{f}$ such
that one of $p_{0},\hat{p}_{0}$ is $\pm\infty$, by definition, it
is easily verified the continuity for this case.
\end{proof}
Define 
\begin{align}
\alpha_{\max} & :=\sup_{A\in\mathbb{B}_{\mathcal{X}}:P_{X}(A)>0}-\log P_{X}(A),\label{eq:amax}
\end{align}
if the most RHS is finite; otherwise, define\footnote{For ease of presentation, here we introduce the notation $\infty^{-}$,
and we will later use ``$x\le\infty^{-}$'' to denote ``$x<\infty$'',
``$[0,\infty^{-}]$'' to denote ``$[0,\infty)$'', and ``$x\neq\infty^{-}$''
to denote ``$x\neq\infty$''.} $\alpha_{\max}:=\infty^{-}.$ It is well-known that for a probability
measure on a Polish space, each atom of the probability measure is
equivalent to a singleton. Hence, $\alpha_{\max}:=\sup_{x\in\mathcal{X}:P_{X}(x)>0}-\log P_{X}(x)$.
Define $\beta_{\max}$ for $P_{Y}$ similarly. If the supremum in
the definition of $\alpha_{\max}$ is finite, then 1) $P_{X}$ is
a (discrete) distribution with finite support, and 2) the supremum
is attained (and hence, it is actually a maximum). This is because,
if $P_{X}$ is not purely atomic, then there is a measurable set with
positive probability in which these is no atom, and hence, the conditional
distribution of $P_{X}$ on this set is atomless. Hence, for this
case, the supremum is $\infty$. Moreover, if $P_{X}$ is purely atomic
(or equivalently, discrete) with infinite support, then the infimum
of the probabilities of these atoms must be zero, contradicting with
the finiteness of the supremum above. Hence, the point 1) holds. The
point 2) is implied by the point 1).

For $p,\hat{p}\in\overline{\mathbb{R}}$, define the range of $(p,\hat{p})$-entropies
on $\mathcal{X}$ as
\begin{equation}
\mathcal{E}_{p,\hat{p}}(\mathcal{X}):=\{\Ent_{p,\hat{p}}(f):f\in\Phi(\mathcal{X})\}\backslash\{\pm\infty\}.\label{eq:E}
\end{equation}
Similarly, define the range of $(q,\hat{q})$-entropies on $\mathcal{Y}$
as $\mathcal{E}_{q,\hat{q}}(\mathcal{Y})$. 
\begin{prop}
\label{prop:E} If $P_{X}$ is a Dirac measure, then $\alpha_{\max}=0$
and $\mathcal{E}_{p,\hat{p}}(\mathcal{X})=\{0\}$ for $p,\hat{p}\in\overline{\mathbb{R}}$.
Otherwise, we have that $\alpha_{\max}>0$ and the following hold: 
\begin{enumerate}
\item For $p,\hat{p}\in(0,\infty]$, we have $\mathcal{E}_{p,\hat{p}}(\mathcal{X})=[0,\alpha_{\max}]\backslash\{\infty\}$.
\item For $p,\hat{p}\in[-\infty,0)$, we have  $\mathcal{E}_{p,\hat{p}}(\mathcal{X})=[0,\alpha_{\max})$. 
\item For $(p,\hat{p})$ or $(\hat{p},p)\in[-\infty,0)\times(0,\infty]\backslash\{(-\infty,\infty)\}$,
we have $\mathcal{E}_{p,\hat{p}}(\mathcal{X})=(-\infty,0]$. 
\item For $(p,\hat{p})$ or $(\hat{p},p)=(-\infty,\infty)$, we have $\mathcal{E}_{p,\hat{p}}(\mathcal{X})=\{0\}$. 
\item For $(p,\hat{p})$ or $(\hat{p},p)\in\{0\}\times[0,\infty]$, we have
$\mathcal{E}_{p,\hat{p}}(\mathcal{X})=-\log P_{X}(\mathbb{B}_{\mathcal{X}})\backslash\{\infty\}:=\{-\log P_{X}(A):A\in\mathbb{B}_{\mathcal{X}}\}\backslash\{\infty\}$.
\item For $(p,\hat{p})$ or $(\hat{p},p)\in[-\infty,0)\times\{0\}$, we
have $\mathcal{E}_{p,\hat{p}}(\mathcal{X})=\{0\}$. 
\end{enumerate}
\end{prop}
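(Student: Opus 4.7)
The plan is to combine the Rényi divergence representation $\Ent_{p,\hat{p}}(f)=D_{\hat{p}/p}(Q_X\|P_X)$ from the earlier lemma (with $dQ_X/dP_X=f^p/\Vert f\Vert_p^p$) for sharp upper and lower bounds, with a single test family $f_t=\mathbf{1}_A+t\mathbf{1}_{A^c}$ whose continuous trace in $t$ lets the intermediate value theorem fill in every value in the claimed range. The trivial case is immediate: if $P_X$ is Dirac, then every $f\in\Phi(\mathcal{X})$ is essentially constant, so all $\Vert f\Vert_r$ coincide and $\Ent_{p,\hat{p}}(f)=0$, while $\alpha_{\max}=0$ by inspection.

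Assume henceforth that $P_X$ is not Dirac, so $\alpha_{\max}>0$. For Case 1 ($p,\hat{p}\in(0,\infty]$) the computation $\Vert\mathbf{1}_A\Vert_r=P_X(A)^{1/r}$ gives $\Ent_{p,\hat{p}}(\mathbf{1}_A)=-\log P_X(A)$; the supremum over $A$ with $P_X(A)>0$ equals $\alpha_{\max}$, attained iff $P_X$ is discrete with finite support. For the interior, substituting $\Vert f_t\Vert_r^r=P_X(A)+t^r P_X(A^c)$ into \eqref{eq:ent} shows that $t\mapsto\Ent_{p,\hat{p}}(f_t)$ is continuous on $[0,1]$ with boundary values $0$ and $-\log P_X(A)$; connectedness of its image plus a sweep over $A$ then exhausts $[0,\alpha_{\max}]\setminus\{\infty\}$. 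The matching upper bound is $\Ent_{p,\hat{p}}(f)=D_{\hat{p}/p}(Q_X\|P_X)\le D_{\infty}(Q_X\|P_X)\le\alpha_{\max}$, the last step using that on each atom $(dQ_X/dP_X)(x)=Q_X(\{x\})/P_X(\{x\})\le e^{\alpha_{\max}}$ when $P_X$ is purely atomic with finite support, and being vacuous otherwise. Case 2 ($p,\hat{p}\in[-\infty,0)$) uses the same machinery, but finiteness of $\Ent$ forces $f>0$ a.s., hence $Q_X\sim P_X$; this excludes Dirac extremizers, and repeating the computation for $f_t$ with $t\downarrow 0^+$ shows $\Ent_{p,\hat{p}}(f_t)\to-\log P_X(A^c)$ but never attains it, yielding $[0,\alpha_{\max})$.

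Case 3 ($p\hat{p}<0$ with $(p,\hat{p})\ne(\pm\infty,\mp\infty)$) gives $\Ent\le 0$ from the sign of $\tfrac{p\hat{p}}{p-\hat{p}}$ together with norm monotonicity; taking $p<0$ in $f_t$ with $t\downarrow 0$ drives $\Vert f_t\Vert_p\to 0$ while $\Vert f_t\Vert_{\hat{p}}$ stays positive, so $\Ent\to-\infty$, and connectedness of the image fills $(-\infty,0]$. Case 4 is read off the continuous extension defining $\Ent_{\pm\infty,\mp\infty}$. Cases 5 and 6 follow directly from the definition $\Ent_{0,\hat{p}}(f)=-\log P_X(f>0)$: taking $f=\mathbf{1}_A$ realizes every $-\log P_X(A)$ with $P_X(A)>0$ for Case 5, whereas for Case 6 the constraint $p\in\mathcal{R}_f$ with $p<0$ forces $f>0$ a.s., leaving only the value $0$. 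The main obstacle is the strict-versus-attained endpoint distinction between Cases 1 and 2, which is exactly whether the Dirac $Q_X=\delta_{x^*}$ lies in the admissible class of normalized densities: it does for Case 1, giving the closed upper end $\alpha_{\max}$ when $\alpha_{\max}<\infty$, but not for Case 2, where $Q_X\sim P_X$ excludes it; the perturbed family $f_t$ with $t>0$ is precisely the device that approaches without reaching this boundary.
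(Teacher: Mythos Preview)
Your proposal is correct and follows essentially the same strategy as the paper. Both rely on the R\'enyi divergence representation $\Ent_{p,\hat{p}}(f)=D_{\hat{p}/p}(Q_X\|P_X)$ for the upper bounds and a one-parameter test family for the lower bounds: your $f_t=\mathbf{1}_A+t\mathbf{1}_{A^c}$ is exactly the function-side counterpart of the paper's $Q_X=\lambda P_{X|A}+\bar\lambda P_{X|A^c}$ (normalizing $f_t^p$ recovers this mixture), and both then use continuity/IVT to sweep out the interval before taking a supremum over $A$. The only substantive difference is in Case~3: you argue directly that $\Vert f_t\Vert_p\to 0$ while $\Vert f_t\Vert_{\hat p}$ stays positive, whereas the paper routes through the skew-symmetry identity $D_s(Q\|P)=\tfrac{s}{1-s}D_{1-s}(P\|Q)$ and lets $D_{1-s}(P\|Q)\to\infty$; your argument is a bit more elementary and avoids that extra identity.
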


\begin{proof}
We first prove Statement 1. For $p\in(0,\infty)$, by the equivalence
in \eqref{eq:}, $\mathcal{E}_{p,\hat{p}}(\mathcal{X})=\{D_{\hat{p}/p}(Q_{X}\|P_{X}):Q_{X}\ll P_{X}\}\backslash\{\pm\infty\}$.
Since by assumption, $P_{X}$ is not a Dirac measure, there is a measurable
set $A$ such that $P_{X}(A),P_{X}(A^{c})>0$. Denote the conditional
distribution of $P_{X}$ on $A$ as $P_{X|A}(\cdot)=P_{X}(\cdot\cap A)/P_{X}(A)$.
Then, we construct a distribution $Q_{X}=\lambda P_{X|A}+\bar{\lambda}P_{X|A^{c}}$
where $\bar{\lambda}=1-\lambda$ with $\lambda\in[0,1]$. For this
distribution, $D_{\hat{p}/p}(Q_{X}\|P_{X})=D_{\hat{p}/p}((\lambda,\bar{\lambda})\|(P_{X}(A),P_{X}(A^{c})))$,
which is continuous in $\hat{p}$ for $\hat{p}\in(0,\infty]$. Note
that when $\lambda=P_{X}(A)$, it holds that $D_{\hat{p}/p}(Q_{X}\|P_{X})=0$,
and when $\lambda=1$, it holds that $D_{\hat{p}/p}(Q_{X}\|P_{X})=-\log P_{X}(A)$.
By taking supremum of $-\log P_{X}(A)$ over all $A$ such that $P_{X}(A),P_{X}(A^{c})>0$,
we have $\mathcal{E}_{p,\hat{p}}(\mathcal{X})\supseteq[0,\alpha_{\max}]\backslash\{\infty\}$.
Obviously, if $\alpha_{\max}=\infty^{-}$, $\mathcal{E}_{p,\hat{p}}(\mathcal{X})=[0,\alpha_{\max}]\backslash\{\infty\}$.
Otherwise, $P_{X}$ is finitely supported. For such $P_{X}$ and any
$Q_{X}$, it holds that $D_{\hat{p}/p}(Q_{X}\|P_{X})\le D_{\infty}(Q_{X}\|P_{X})=\alpha_{\max}$,
which means $\mathcal{E}_{p,\hat{p}}(\mathcal{X})=[0,\alpha_{\max}]\backslash\{\infty\}$.
Hence, Statement 1 holds for either cases. If $p=\infty,\hat{p}\in(0,\infty)$,
then by symmetry, Statement 1 still holds. If $p=\hat{p}=\infty$,
then by definition, $\Ent_{\infty,\infty}(f)=-\log P_{X}(f=\Vert f\Vert_{\infty})$.
Hence, by setting $f=1_{A}$, Statement 1 still holds.

Statement 2 follows similarly, but note that for this case, 
\begin{equation}
\mathcal{E}_{p,\hat{p}}(\mathcal{X})=\{D_{\hat{p}/p}(Q_{X}\|P_{X}):Q_{X}\ll\gg P_{X}\}\backslash\{\pm\infty\},\label{eq:-94}
\end{equation}
 i.e., it additionally requires $Q_{X}\gg P_{X}$. This is because
we require that $f$ cannot take the value of $\infty$.  This difference
further leads to that if $\alpha_{\max}$ is finite, then $D_{\hat{p}/p}(Q_{X}\|P_{X})$
cannot achieve the upper bound $\alpha_{\max}$. This is because it
holds that $D_{\hat{p}/p}(Q_{X}\|P_{X})\le D_{\infty}(Q_{X}\|P_{X})\le\alpha_{\max}$
and the last inequality is strict unless $Q_{X}=1_{\{x^{*}\}}$ where
$x^{*}$ is an element that minimizes $P_{X}(x)$. Obviously, $P_{X}$
is not absolutely continuous with respect to $1_{\{x^{*}\}}$. Hence,
$\alpha_{\max}$ cannot be achieved. 

We now prove Statement 3. For this case, \eqref{eq:-94} still holds.
Observe that for any $s\in[-\infty,\infty]\backslash\{0,1\}$, $D_{s}(Q_{X}\|P_{X})=\frac{s}{1-s}D_{1-s}(P_{X}\|Q_{X})$
(called \emph{skew symmetry}) \cite{Erven}. Moreover, by the choice
of $Q_{X}$ same to the one in proof of Statement 1, $D_{1-s}(P_{X}\|Q_{X})=D_{1-s}((P_{X}(A),P_{X}(A^{c}))\|(\lambda,\bar{\lambda}))$
with $s=\hat{p}/p<0$ if $p\neq\pm\infty$, the RHS of which is continuous
in $\lambda$ and tends to infinity as $\lambda\downarrow0$. Therefore,
$\mathcal{E}_{p,\hat{p}}(\mathcal{X})=(-\infty,0]$ for $p\neq\pm\infty$.
If $p=\infty$ or $-\infty$, then by assumption, $\hat{p}\neq\pm\infty$.
For this case, we swap $p,\hat{p}$, apply the result above, and still
obtain $\mathcal{E}_{p,\hat{p}}(\mathcal{X})=(-\infty,0]$. 

Statements 4-6 follow by definition. In particular, to prove Statement
5, we set $f=1_{A}$ for some measurable $A$. To prove Statement
6, we need the fact that $\Vert f\Vert_{p}>0$ for $p<0$ implies
that $f>0$ $P_{X}$-a.e. and hence, $\Ent_{p,0}(f)=0$ for this case. 
\end{proof}
Define 
\[
\mathcal{E}_{p,\hat{p}}^{(n)}(\mathcal{X}):=\{\frac{1}{n}\Ent_{p,\hat{p}}(f):f\in\Phi(\mathcal{X}^{n})\}\backslash\{\pm\infty\}.
\]
Then, by Proposition \ref{prop:E}, we have the following consequence. 
\begin{cor}
For any $n$, $\mathcal{E}_{p,\hat{p}}^{(n)}(\mathcal{X})=\mathcal{E}_{p,\hat{p}}(\mathcal{X})$
for all $p,\hat{p}\in\overline{\mathbb{R}}$ except that $(p,\hat{p})$
or $(\hat{p},p)\in\{0\}\times[0,\infty]$. Moreover, for $(p,\hat{p})$
or $(\hat{p},p)\in\{0\}\times[0,\infty]$, $\mathcal{E}_{p,\hat{p}}^{(n)}(\mathcal{X})$
is equal to the Minkowski mean ($1/n$ times the Minkowski sum) of
$n$-copies of $\mathcal{E}_{p,\hat{p}}(\mathcal{X})$, and it also
holds that $\bigcup_{n\ge1}\mathcal{E}_{p,\hat{p}}^{(n)}(\mathcal{X})$
is a dense subset of $[0,\alpha_{\max}]$. 
\end{cor}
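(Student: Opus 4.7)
The plan is to reduce the corollary to an application of Proposition \ref{prop:E} with $(\mathcal{X}, P_X)$ replaced by the product space $(\mathcal{X}^n, P_X^{\otimes n})$, followed by rescaling by $1/n$. The key preliminary observation is that the atoms of $P_X^{\otimes n}$ are exactly $n$-tuples of atoms of $P_X$ with $P_X^{\otimes n}(\{(x_1,\dots,x_n)\}) = \prod_i P_X(\{x_i\})$, so the quantity $\alpha_{\max}$ of the product scales linearly:
\[
\alpha_{\max}^{(n)} \;=\; \sup_{(x_1,\dots,x_n): P_X^{\otimes n}(\{(x_1,\dots,x_n)\})>0} -\log P_X^{\otimes n}(\{(x_1,\dots,x_n)\}) \;=\; n\,\alpha_{\max},
\]
with the convention $n\cdot\infty^-=\infty^-$. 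Also, $P_X^{\otimes n}$ is Dirac iff $P_X$ is, which is excluded.

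For every statement of Proposition \ref{prop:E} except Statement 5, the description of $\mathcal{E}_{p,\hat p}(\mathcal{X}^n)$ depends on $P_X^{\otimes n}$ only through $\alpha_{\max}^{(n)}$: Statement 1 gives $[0, n\alpha_{\max}]\setminus\{\infty\}$, Statement 2 gives $[0, n\alpha_{\max})$, Statements 3, 4, 6 give $(-\infty, 0]$, $\{0\}$, $\{0\}$ respectively, all independent of $n$. Dividing each of these by $n$ recovers exactly the $n$-independent description of $\mathcal{E}_{p,\hat p}(\mathcal{X})$, so $\mathcal{E}_{p,\hat p}^{(n)}(\mathcal{X})=\mathcal{E}_{p,\hat p}(\mathcal{X})$ in all these cases.

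For Statement 5, that is $(p,\hat p)$ or $(\hat p,p)\in\{0\}\times[0,\infty]$, by definition $\mathrm{Ent}_{p,\hat p}(f) = -\log P_X^{\otimes n}(f>0)$, so
\[
\mathcal{E}_{p,\hat p}^{(n)}(\mathcal{X}) = \left\{-\tfrac{1}{n}\log P_X^{\otimes n}(A) : A\in\mathbb{B}_{\mathcal{X}^n},\, P_X^{\otimes n}(A)>0\right\}.
\]
Specializing to product sets $A=A_1\times\cdots\times A_n$ gives values $\tfrac{1}{n}\sum_i(-\log P_X(A_i))$, which realises the Minkowski mean of $n$ copies of $\mathcal{E}_{p,\hat p}(\mathcal{X})$ inside $\mathcal{E}_{p,\hat p}^{(n)}(\mathcal{X})$; identifying the two sets requires showing that general measurable subsets of $\mathcal{X}^n$ contribute no additional values, which is the main technical point. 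For the density claim, fix any $A\in\mathbb{B}_\mathcal{X}$ with $0<P_X(A)<1$, and for $0\le k\le n$ consider $B_k := A^k\times\mathcal{X}^{n-k}$, for which $-\tfrac{1}{n}\log P_X^{\otimes n}(B_k) = (k/n)(-\log P_X(A))$. As $k/n$ ranges over rationals in $[0,1]$ and $A$ is chosen with $-\log P_X(A)$ arbitrarily close to $\alpha_{\max}$, these values densely cover $[0,\alpha_{\max}]$, giving the required density of $\bigcup_{n\ge 1}\mathcal{E}_{p,\hat p}^{(n)}(\mathcal{X})$.

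The main obstacle is the equality of $\mathcal{E}_{p,\hat p}^{(n)}(\mathcal{X})$ with the Minkowski mean in the Statement 5 case: while the Minkowski mean is clearly contained in $\mathcal{E}_{p,\hat p}^{(n)}(\mathcal{X})$ via product sets, arbitrary measurable $A\subseteq\mathcal{X}^n$ have probabilities of the form $P_X^{\otimes n}(A)$ which are not in general products of $P_X$-measures of marginal sets (e.g.\ for $P_X=\frac12(\delta_0+\delta_1)$ and $n=2$, the diagonal $\{(0,0),(1,1)\}$ has probability $\tfrac12$, giving an entropy value not expressible as a sum of atomic log-probabilities of $P_X$). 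This suggests that the cleanest route is first to prove the containment and density claims directly, and then to reinterpret the Minkowski-mean identification through the density argument rather than as a literal set equality.
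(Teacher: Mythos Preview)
Your treatment of the non-exceptional cases and of the density claim is correct and is essentially what the paper does: the paper's proof only addresses the density statement, taking the rest as immediate from Proposition~\ref{prop:E} applied to $(\mathcal{X}^n,P_X^{\otimes n})$ and the scaling $\alpha_{\max}^{(n)}=n\alpha_{\max}$. Your density argument via $B_k=A^k\times\mathcal{X}^{n-k}$ is the same construction the paper uses, phrased as $\bar\lambda a+\lambda b\in\mathcal{E}_{p,\hat p}^{(n)}(\mathcal{X})$ for $\lambda=k/n$ and $a<b$ in $\mathcal{E}_{p,\hat p}(\mathcal{X})$.

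Your concern about the Minkowski-mean equality is well-founded, but your specific counterexample is wrong. For $P_X=\tfrac12(\delta_0+\delta_1)$ and $n=2$, the diagonal $\{(0,0),(1,1)\}$ has probability $\tfrac12$, giving the value $-\tfrac12\log\tfrac12=\tfrac12\log 2=\tfrac12(0+\log 2)$, which \emph{is} in the Minkowski mean of $\{0,\log 2\}$ with itself. A genuine counterexample is any three-point subset of $\{0,1\}^2$: its probability is $\tfrac34$, yielding $\tfrac12\log(4/3)$, which is not of the form $\tfrac12(a_1+a_2)$ with $a_i\in\{0,\log 2\}$. Thus only the containment ``Minkowski mean $\subseteq\mathcal{E}_{p,\hat p}^{(n)}(\mathcal{X})$'' holds in general, and that is exactly what the paper's proof uses (and all that is needed) for the density claim. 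Your instinct to reinterpret the Minkowski-mean assertion as a containment sufficient for density, rather than as a literal set equality, is the right reading.
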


\begin{proof}
The last statement above follows since if $a<b$ are in $\mathcal{E}_{p,\hat{p}}(\mathcal{X})$,
then $\bar{\lambda}a+\lambda b=a+\lambda(b-a)$ is in $\mathcal{E}_{p,\hat{p}}^{(n)}(\mathcal{X})$
where $\lambda=k/n$ for any $k\in[0:n]$. As $n\to\infty$, $\bigcup_{n\ge1}\bigcup_{k\in[0:n]}\left\{ a+\frac{k}{n}(b-a)\right\} $
is dense in $[a,b]$, since any number $c$ in $[a,b]$ can be approached
by a sequence $\{a+\frac{k_{n}}{n}(b-a)\}_{n\ge1}$ for  $k_{n}=\bigl\lfloor\frac{c-a}{b-a}n\bigr\rfloor$. 
\end{proof}

\subsection{Problem Formulation}

Given $p,q,\hat{p},\hat{q}\in\overline{\mathbb{R}}$ and $P_{XY}$,
for $\alpha,\beta\in\mathbb{R}$, we define the \emph{optimal forward
and reverse BL exponents }as 
\begin{align}
\underline{\Delta}_{p,q,\hat{p},\hat{q}}(\alpha,\beta|P_{XY}) & :=-\sup_{f\in\mathcal{F}_{\alpha},g\in\mathcal{G}_{\beta}}\log\frac{\langle f,g\rangle}{\Vert f\Vert_{\hat{p}}\Vert g\Vert_{\hat{q}}}\label{eq:FBL-1-1-2}\\
\overline{\Delta}_{p,q,\hat{p},\hat{q}}(\alpha,\beta|P_{XY}) & :=-\inf_{f\in\mathcal{F}_{\alpha},g\in\mathcal{G}_{\beta}}\log\frac{\langle f,g\rangle}{\Vert f\Vert_{\hat{p}}\Vert g\Vert_{\hat{q}}},\label{eq:RBL-1-1-2}
\end{align}
where\footnote{Throughout this paper, when we write a function $f:\mathcal{X}\to[0,\infty)$,
by default, we mean that it is measurable. A similar convention also
applies to a function $g:\mathcal{Y}\to[0,\infty)$.} $\mathcal{F}_{\alpha}:=\{f\in\Phi(\mathcal{X}):\Ent_{p,\hat{p}}(f)=\alpha\}$
and $\mathcal{G}_{\beta}:=\{g\in\Phi(\mathcal{Y}):\Ent_{q,\hat{q}}(g)=\beta\}$.
In the definition above, we can restrict our attention to the case
of $\alpha\in\mathcal{E}_{p,\hat{p}}(\mathcal{X})$ and $\beta\in\mathcal{E}_{q,\hat{q}}(\mathcal{Y})$,
since otherwise, the optimal forward and reverse BL exponents above
are equal to $\infty$ or $-\infty$.

Define for $p,q\in\overline{\mathbb{R}},\hat{p},\hat{q}\in\overline{\mathbb{R}}\backslash\{0\}$,
\begin{align}
\underline{\Lambda}_{p,q,\hat{p},\hat{q}}(\alpha,\beta|P_{XY}) & :=\underline{\Delta}_{p,q,\hat{p},\hat{q}}(\alpha,\beta|P_{XY})+\frac{\alpha}{\hat{p}}+\frac{\beta}{\hat{q}}\label{eq:-20}\\
\overline{\Lambda}_{p,q,\hat{p},\hat{q}}(\alpha,\beta|P_{XY}) & :=\overline{\Delta}_{p,q,\hat{p},\hat{q}}(\alpha,\beta|P_{XY})+\frac{\alpha}{\hat{p}}+\frac{\beta}{\hat{q}}.\label{eq:-23}
\end{align}
Since the $(p,\hat{p})$-entropy is symmetric w.r.t. $(p,\hat{p})$,
it is easy to verify the following lemma. 
\begin{lem}
\label{lem:symmetry} For $p,q,\hat{p},\hat{q}\in\overline{\mathbb{R}}\backslash\{0\}$,
both $\underline{\Lambda}_{p,q,\hat{p},\hat{q}}$ and $\overline{\Lambda}_{p,q,\hat{p},\hat{q}}$
are symmetric w.r.t. $(p,\hat{p})$ and also symmetric w.r.t. $(q,\hat{q})$. 
\end{lem}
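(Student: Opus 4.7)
The plan is to exploit two basic observations: (i) the feasible sets $\mathcal{F}_{\alpha}$ and $\mathcal{G}_{\beta}$ are invariant under the swaps $(p,\hat{p})\leftrightarrow(\hat{p},p)$ and $(q,\hat{q})\leftrightarrow(\hat{q},q)$ because the $(p,\hat{p})$-entropy was defined symmetrically in its two orders; and (ii) on $\mathcal{F}_{\alpha}$ the combination $\log\Vert f\Vert_{\hat{p}}+\alpha/\hat{p}$ is itself symmetric in the pair $(p,\hat{p})$, which turns the $f$-dependent part of the optimization in the definitions of $\underline{\Lambda}$ and $\overline{\Lambda}$ into something manifestly invariant under the swap.

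First I would fix $(p,q,\hat{p},\hat{q})\in(\overline{\mathbb{R}}\setminus\{0\})^{4}$ and restrict attention to $\alpha\in\mathcal{E}_{p,\hat{p}}(\mathcal{X})$ and $\beta\in\mathcal{E}_{q,\hat{q}}(\mathcal{Y})$, since outside these ranges both sides of the identity to be proved are $\pm\infty$ and the claim holds trivially. The already-established relation $\Ent_{p,\hat{p}}(f)=\Ent_{\hat{p},p}(f)$ shows that the constraint $\Ent_{p,\hat{p}}(f)=\alpha$ selects the same class of $f$ as $\Ent_{\hat{p},p}(f)=\alpha$, so $\mathcal{F}_{\alpha}$ is unchanged under the swap; likewise for $\mathcal{G}_{\beta}$. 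The second ingredient is the pointwise identity
\[
\log\Vert f\Vert_{p}+\frac{\alpha}{p}=\log\Vert f\Vert_{\hat{p}}+\frac{\alpha}{\hat{p}}, \qquad f\in\mathcal{F}_{\alpha},
\]
which for $p\neq\hat{p}$ with $p,\hat{p}\in\mathbb{R}\setminus\{0\}$ follows by solving $\alpha=\frac{p\hat{p}}{p-\hat{p}}\log\frac{\Vert f\Vert_{p}}{\Vert f\Vert_{\hat{p}}}$ for the log-ratio and rearranging; the cases $\hat{p}=\pm\infty$ reduce to the same identity via the limiting formula $\Ent_{p,\pm\infty}(f)=-p\log(\Vert f\Vert_{p}/\Vert f\Vert_{\pm\infty})$ recorded in the preliminaries, under the convention $1/(\pm\infty)=0$. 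An analogous identity holds on $\mathcal{G}_{\beta}$.

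Combining these pieces, on $\mathcal{F}_{\alpha}\times\mathcal{G}_{\beta}$ one has
\[
\log\frac{\langle f,g\rangle}{\Vert f\Vert_{\hat{p}}\Vert g\Vert_{\hat{q}}}-\frac{\alpha}{\hat{p}}=\log\frac{\langle f,g\rangle}{\Vert f\Vert_{p}\Vert g\Vert_{\hat{q}}}-\frac{\alpha}{p},
\]
so taking supremum (for $\underline{\Delta}$) or infimum (for $\overline{\Delta}$) over the common feasible set, negating, and adding $\beta/\hat{q}$ to both sides gives $\underline{\Lambda}_{p,q,\hat{p},\hat{q}}(\alpha,\beta)=\underline{\Lambda}_{\hat{p},q,p,\hat{q}}(\alpha,\beta)$ and the analogous identity for $\overline{\Lambda}$, which is symmetry in $(p,\hat{p})$. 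The same calculation with the roles of $f$ and $g$ interchanged (and $(p,\hat{p})$ replaced by $(q,\hat{q})$) yields symmetry in $(q,\hat{q})$. There is no substantive obstacle in the argument; the only care required is the boundary bookkeeping when an order becomes $\pm\infty$, which is a routine verification from the continuous-extension definitions of the entropy.
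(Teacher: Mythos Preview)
Your proposal is correct and follows essentially the same approach as the paper's proof: both reduce to the pointwise identity $\log\Vert f\Vert_{\hat{p}}+\alpha/\hat{p}=\log\Vert f\Vert_{p}+\alpha/p$ on $\mathcal{F}_{\alpha}$ (equivalently, the paper's displayed equation relating the two normalized inner products), verified first for finite nonzero orders directly from the definition of $\Ent_{p,\hat{p}}$ and then extended to $\pm\infty$ by the continuous-extension formulas, with the feasible sets unchanged under the swap by the symmetry $\Ent_{p,\hat{p}}=\Ent_{\hat{p},p}$.
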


\begin{proof}
We focus on the functions $f\in\mathcal{F}_{\alpha},g\in\mathcal{G}_{\beta}$.
Otherwise, $\underline{\Lambda}_{p,q,\hat{p},\hat{q}}(\alpha,\beta|P_{XY})=-\infty$
and $\overline{\Delta}_{p,q,\hat{p},\hat{q}}(\alpha,\beta|P_{XY})=\infty$,
no matter what are $p,q,\hat{p},\hat{q}$. Hence, obviously they are
symmetric.

For $f\in\mathcal{F}_{\alpha},g\in\mathcal{G}_{\beta}$ and $\alpha,\beta\in\mathbb{R}$,
we have $(p,\hat{p})\in\mathcal{S}_{f}$. By definition, for $(p,\hat{p})\in(\mathbb{R}\backslash\{0\})^{2}\cap\mathcal{S}_{f}$,
we have 
\[
-\log\frac{\langle f,g\rangle}{\Vert f\Vert_{\hat{p}}\Vert g\Vert_{\hat{q}}}+\frac{\Ent_{p,\hat{p}}(f)}{\hat{p}}+\frac{\Ent_{q,\hat{q}}(g)}{\hat{q}}=-\log\frac{\langle f,g\rangle}{\Vert f\Vert_{p}\Vert g\Vert_{q}}+\frac{\Ent_{p,\hat{p}}(f)}{p}+\frac{\Ent_{q,\hat{q}}(g)}{q}.
\]
By taking limits, we can extend this equality to the case of $(p,\hat{p})\in(\overline{\mathbb{R}}\backslash\{0\})^{2}\cap\mathcal{S}_{f}$.
Taking $\sup_{f\in\mathcal{F}_{\alpha},g\in\mathcal{G}_{\beta}}$
or $\inf_{f\in\mathcal{F}_{\alpha},g\in\mathcal{G}_{\beta}}$, we
obtain this lemma. 
\end{proof}
We now extend the definitions in \eqref{eq:-20} and \eqref{eq:-23}
to the case of $(p,\hat{p}),(q,\hat{q})\in\overline{\mathbb{R}}^{2}\backslash\{(0,0)\}$
by the symmetric extension. Such an extension ensures that $\underline{\Lambda}_{p,q,\hat{p},\hat{q}}$
and $\overline{\Lambda}_{p,q,\hat{p},\hat{q}}$ are still symmetric
w.r.t. $(p,\hat{p})$ and symmetric w.r.t. $(q,\hat{q})$ for $(p,\hat{p}),(q,\hat{q})\in\overline{\mathbb{R}}^{2}\backslash\{(0,0)\}$.
Instead of $\underline{\Delta}_{p,q,\hat{p},\hat{q}}(\alpha,\beta|P_{XY})$
and $\overline{\Delta}_{p,q,\hat{p},\hat{q}}(\alpha,\beta|P_{XY})$,
equivalently, in this paper we aim at characterizing $\underline{\Lambda}_{p,q,\hat{p},\hat{q}}(\alpha,\beta|P_{XY})$
and $\overline{\Lambda}_{p,q,\hat{p},\hat{q}}(\alpha,\beta|P_{XY})$.
For brevity, sometimes we omit the underlying distribution $P_{XY}$
in these notations, and write them as $\underline{\Lambda}_{p,q,\hat{p},\hat{q}}(\alpha,\beta)$
and $\overline{\Lambda}_{p,q,\hat{p},\hat{q}}(\alpha,\beta)$.

Let $P_{XY}^{\otimes n}$ be the product of $n$ copies of $P_{XY}$.
For $P_{XY}^{\otimes n}$, we define the \emph{forward} \emph{and
reverse $n$-BL exponents} respectively as $\underline{\Lambda}_{p,q,\hat{p},\hat{q}}^{(n)}(\alpha,\beta):=\frac{1}{n}\underline{\Lambda}_{p,q,\hat{p},\hat{q}}(n\alpha,n\beta|P_{XY}^{\otimes n})$
and $\overline{\Lambda}_{p,q,\hat{p},\hat{q}}^{(n)}(\alpha,\beta):=\frac{1}{n}\overline{\Lambda}_{p,q,\hat{p},\hat{q}}(n\alpha,n\beta|P_{XY}^{\otimes n})$,
as well as, the \emph{forward} \emph{and reverse asymptotic BL exponents
}respectively as $\underline{\Lambda}_{p,q,\hat{p},\hat{q}}^{(\infty)}(\alpha,\beta):=\lim_{n\to\infty}\underline{\Lambda}_{p,q,\hat{p},\hat{q}}^{(n)}(\alpha,\beta)$
and $\overline{\Lambda}_{p,q,\hat{p},\hat{q}}^{(\infty)}(\alpha,\beta):=\lim_{n\to\infty}\overline{\Lambda}_{p,q,\hat{p},\hat{q}}^{(n)}(\alpha,\beta)$.
Obviously, $\underline{\Lambda}_{p,q,\hat{p},\hat{q}}^{(1)}(\alpha,\beta)=\underline{\Lambda}_{p,q,\hat{p},\hat{q}}(\alpha,\beta)$
and $\overline{\Lambda}_{p,q,\hat{p},\hat{q}}^{(1)}(\alpha,\beta)=\overline{\Lambda}_{p,q,\hat{p},\hat{q}}(\alpha,\beta)$.
Upon these definitions, two natural questions arise: Can we derive
sharp dimension-free bounds (also termed single-letter bounds) for
$\underline{\Lambda}_{p,q,\hat{p},\hat{q}}^{(n)}(\alpha,\beta)$ and
$\overline{\Lambda}_{p,q,\hat{p},\hat{q}}^{(n)}(\alpha,\beta)$? Does
the tensorization property holds for $\underline{\Lambda}_{p,q,\hat{p},\hat{q}}^{(n)}(\alpha,\beta)$
and $\overline{\Lambda}_{p,q,\hat{p},\hat{q}}^{(n)}(\alpha,\beta)$
(i.e., $\underline{\Lambda}_{p,q,\hat{p},\hat{q}}^{(n)}(\alpha,\beta)=\underline{\Lambda}_{p,q,\hat{p},\hat{q}}(\alpha,\beta)$
and $\overline{\Lambda}_{p,q,\hat{p},\hat{q}}^{(n)}(\alpha,\beta)=\overline{\Lambda}_{p,q,\hat{p},\hat{q}}(\alpha,\beta)$
for any $n$)? In this paper, we study these two questions. Specifically,
by information-theoretic methods, we derive dimension-free bounds
for $\underline{\Lambda}_{p,q,\hat{p},\hat{q}}^{(n)}(\alpha,\beta)$
and $\overline{\Lambda}_{p,q,\hat{p},\hat{q}}^{(n)}(\alpha,\beta)$,
which are asymptotically sharp for the case of finite $\mathcal{X},\mathcal{Y}$,
as the dimension $n\to\infty$. We observe that these expressions
differ from $\underline{\Lambda}_{p,q,\hat{p},\hat{q}}(\alpha,\beta)$
and $\overline{\Lambda}_{p,q,\hat{p},\hat{q}}(\alpha,\beta)$, which
implies that the tensorization property does not hold in general.

The work in this paper is motivated by the works in \cite{polyanskiy2019improved,kirshner2019moment}.
As special cases of BL inequalities, the forward hypercontractivity
inequalities were strengthened in the same spirit in \cite{polyanskiy2019improved,kirshner2019moment}.
Both works in \cite{polyanskiy2019improved,kirshner2019moment} only
focused on strengthening the single-function version of forward hypercontractivity.
Polyanskiy and Samorodnitsky's inequalities in \cite{polyanskiy2019improved}
are sharp only for extreme cases, while Kirshner and Samorodnitsky
only focused on binary symmetric distributions in \cite{kirshner2019moment}.
Furthermore, hypercontractivity inequalities for distributions on
finite alphabets $\mathcal{X},\mathcal{Y}$ were recently studied
by the present author, Anantharam, and Chen \cite{yu2021Graphs},
in which they focused on the case of $\hat{p}=\hat{q}=0$. Our work
here generalizes all these works to the BL inequalities for all $\hat{p},\hat{q}\in\overline{\mathbb{R}}$
and for arbitrary distributions on Polish spaces. Moreover, our strong
BL inequalities are exponentially sharp at least for distributions
on finite alphabets.

The forward version of BL inequalities was originally studied by Brascamp
and Lieb in the 1970s \cite{brascamp1976best}, motivated by problems
from particle physics; the reverse version was initially studied by
Barthe in \cite{barthe1998reverse}. Hypercontractivity inequalities
are an important class of BL inequalities, which were sequentially
investigated in \cite{bonami1968ensembles,kiener1969uber,schreiber1969fermeture,bonami1970etude,gross1975logarithmic,ahlswede1976spreading,borell1982positivity,mossel2013reverse}.
Information-theoretic formulations of BL inequalities or hypercontractivity
inequalities can be found in \cite{ahlswede1976spreading,carlen2009subadditivity,nair2014equivalent,beigi2016equivalent,kamath2015reverse,liu2018information}.
Euclidean versions of BL inequalities and their generalizations were
also studied in \cite{bennett2008brascamp,courtade2020euclidean}.

\subsection{Main Contributions}

In this paper, we study nonlinear Brascamp--Lieb inequalities, and
derive sharp dimension-free version of nonlinear Brascamp--Lieb inequalities
(including hypercontractivity inequalities) for distributions on Polish
spaces, which strengthen the classic Brascamp--Lieb inequalities.
As applications of our nonlinear Brascamp--Lieb inequalities, we
strengthen and extend the Mr. and Mrs. Gerber's lemmas \cite{wyner1973theorem,hsu2018generalizing}
to the sharp Rényi divergence versions, and the small-set expansion
theorems to strong versions for arbitrary distributions on Polish
spaces. Also, by our nonlinear Brascamp--Lieb inequalities, we obtain
sharp dimension-free bounds on the $q$-stability of Boolean functions.
Our proofs in this paper are based on information-theoretic techniques
and coupling techniques.

\subsection{Preliminaries and Notations}

We use $P_{X},Q_{X},R_{X},S_{X}$ to denote probability measures on
$(\mathcal{X},\mathbb{B}_{\mathcal{X}})$. For a joint probability
measure\footnote{The notation $Q_{XY}$ denotes a joint distribution, rather than the
distribution of the product of $X$ and $Y$. This is not ambiguous
in this paper, since throughout this paper, we never consider the
product of two random variables. } $Q_{XY}$ on $(\mathcal{X}\times\mathcal{Y},\mathbb{B}_{\mathcal{X}}\otimes\mathbb{B}_{\mathcal{Y}})$,
its marginal on $\mathcal{X}$ is denoted as $Q_{X}$, and the  
Markov kernel from $(\mathcal{Y},\mathbb{B}_{\mathcal{Y}})$ to $(\mathcal{X},\mathbb{B}_{\mathcal{X}})$
is denoted as $Q_{X|Y}$, where $Q_{X|Y=y}:=Q_{X|Y}(\cdot|y)$ is
a probability measure on $(\mathcal{X},\mathbb{B}_{\mathcal{X}})$
for every $y\in\mathcal{Y}$, and $Q_{X|Y}(B|\cdot)$ for each $B\in\mathbb{B}_{\mathcal{X}}$
is $\mathbb{B}_{\mathcal{Y}}$-measurable. We denote $R_{Y}R_{X|Y}$
as the joint distribution induced by $R_{Y}$ and $R_{X|Y}$, and
$R_{Y}\circ R_{X|Y}$ as the marginal distribution on $\mathcal{X}$
of the joint distribution $R_{Y}R_{X|Y}$. We use $Q_{X}\ll P_{X}$
to denote that the distribution $Q_{X}$ is absolutely continuous
w.r.t. $P_{X}$. We use $X^{n}$ to denote a random vector $(X_{1},X_{2},...,X_{n})$
defined on $(\mathcal{X}^{n},\mathbb{B}_{\mathcal{X}}^{\otimes n})$,
and use $x^{n}:=(x_{1},x_{2},...,x_{n})$ to denote its realization.
For an $n$-length vector $x^{n}$, we use $x^{i}$ to denote the
subvector consisting of the first $i$ components of $x^{n}$, and
$x_{i+1}^{n}$ to denote the subvector consisting of the last $n-i$
components. We use $Q_{X}^{\otimes n}$ to denote the product of $n$
copies of $Q_{X}$, and use $Q_{X^{n}}$ to denote an arbitrary probability
measure on $(\mathcal{X}^{n},\mathbb{B}_{\mathcal{X}}^{\otimes n})$.
For a conditional probability measure $P_{X|Y}$, define the conditional
expectation operator induced by $P_{X|Y}$ as $P_{X|Y}(f)(y):=\int f\mathrm{d}P_{X|Y=y}$
for any measurable function $f:(\mathcal{X},\mathbb{B}_{\mathcal{X}})\to(\mathbb{R},\mathbb{B}_{\mathbb{R}})$
if the integral is well-defined for every $y$. We say $U,W,V$ forms
a Markov chain, denoted as $U\leftrightarrow W\leftrightarrow V$,
if $U,V$ are conditionally independent given $W$. We use $\mathcal{C}(Q_{X},Q_{Y})$
to denote the set of couplings (joint probability measures) $Q_{XY}$
with marginals $Q_{X},Q_{Y}$, and $\mathcal{C}(Q_{X|UW},Q_{Y|VW})$
to denote the set of conditional couplings $Q_{XY|UVW}$ with conditional
marginals $Q_{X|UW},Q_{Y|VW}$. Note that for any $Q_{XY|UVW}\in\mathcal{C}(Q_{X|UW},Q_{Y|VW}),$
its marginals satisfy $Q_{X|UVW}=Q_{X|UW},Q_{Y|UVW}=Q_{Y|VW}$, i.e.,
under the conditional distribution $Q_{XY|UVW}$, $X\leftrightarrow(U,W)\leftrightarrow V$
and $Y\leftrightarrow(V,W)\leftrightarrow U$. For a sequence $x^{n}$,
we use $T_{x^{n}}$ to denote the empirical distribution (i.e., type)
of $x^{n}$.


Throughout this paper, we use the following convention.

\begin{convention}\label{Convention-:-When} When we write an optimization
problem with distributions as the variables, we by default require
that the distributions satisfy that all the constraint functions and
the objective function exist and also are finite. If there is no such
a distribution, by default, the value of the optimization problem
is set to $\infty$ if the optimization is an infimization, and $-\infty$
if the optimization is a supremization.\emph{ }\end{convention}


Throughout this paper, when we talk about distributions $Q_{X},R_{X},S_{X}$
and conditional distributions $Q_{X|W},R_{X|W},S_{X|W}$, we mean
that $Q_{X},R_{X},S_{X},Q_{X|W=w},R_{X|W=w},S_{X|W=w}\ll P_{X}$ for
each $w$. For brevity, we do not mention these underlying constraints
throughout this paper. The same convention also applies to $Q_{Y},R_{Y},S_{Y},Q_{Y|W},R_{Y|W},S_{Y|W}$
($\ll P_{Y}$). 

We denote $[m:n]:=\{m,m+1,...,n\}$ and $[n]:=[1:n]$. Throughout
this paper, we use the conventions $\inf\emptyset=\infty$ and $\sup\emptyset=-\infty$.
We denote $x\vee y:=\max\{x,y\}$ and $[x]^{+}:=x\vee0$. Denote $q':=\frac{q}{q-1}$
as the Hölder conjugate of $q\neq1$, and both $\pm\infty$ are the
Hölder conjugates of $1$.

If $f:S\to\mathbb{R}$ is a real-valued convex function defined on
a convex set $S\subseteq\mathbb{R}^{n}$, a vector \textbf{$\mathbf{v}\in\mathbb{R}^{n}$}
is called a \emph{subgradient} of $f$ at a point $\mathbf{x}_{0}$
in $S$ if for any $\mathbf{x}$ in $S$ one has $f(\mathbf{x})-f(\mathbf{x}_{0})\geq\langle\mathbf{v},\mathbf{x}-\mathbf{x}_{0}\rangle$
where the $\langle\cdot,\cdot\rangle$ denotes the inner product.
Equivalently, $\mathbf{x}\in S\mapsto f(\mathbf{x}_{0})+\langle\mathbf{v},\mathbf{x}-\mathbf{x}_{0}\rangle$
forms a supporting hyperplane of the epigraph of $f$ at $\mathbf{x}_{0}$.
If additionally, $f$ is differentiable at $\mathbf{x}_{0}$, then
$\mathbf{v}$ is the gradient of $f$ at $\mathbf{x}_{0}$. A vector
\textbf{$\mathbf{v}$} is called a \emph{supergradient} of a concave
function $g$ at a point $\mathbf{x}_{0}$ if \textbf{$-\mathbf{v}$}
is a subgradient of $-g$ at $\mathbf{x}_{0}$.

We say an inequality $a_{n}(x_{n})\le b_{n}(x_{n}),\forall x_{n}\in A_{n}$
for two positive sequences of functions $\{a_{n}\},\{b_{n}\}$ to
be exponentially sharp, if there exists a sequence $x_{n}^{*}\in A_{n}$
such that $\frac{1}{n}\log\frac{b_{n}(x_{n}^{*})}{a_{n}(x_{n}^{*})}\to0$
as $n\to\infty$.

\section{\label{sec:Strong-BL-and}Strong BL and HC Inequalities: Two-Function
Version}

\subsection{Information-Theoretic Characterizations}

Define 
\begin{align}
\phi(Q_{X},Q_{Y}) & :=\inf_{R_{XY}}\{D(R_{XY}\|P_{XY})+\frac{1}{p}D(R_{X}\|Q_{X})-\frac{1}{p}D(R_{X}\|P_{X})+\frac{1}{q}D(R_{Y}\|Q_{Y})-\frac{1}{q}D(R_{Y}\|P_{Y})\},\label{eq:phi}
\end{align}
where according to Convention \ref{Convention-:-When}, the infimization
is taken over all $R_{XY}$ such that all the relative entropies appearing
in the objective function are finite. We now provide information-theoretic
characterizations of $\underline{\Lambda}_{p,q,\hat{p},\hat{q}}$
and $\overline{\Lambda}_{p,q,\hat{p},\hat{q}}$, in the following
proposition. The proof is provided in Appendix \ref{sec:Proof-of-Proposition-ITChara}. 
\begin{prop}[Information-Theoretic Characterizations]
\label{prop:ITcharacterization} For $p,q\in\mathbb{R}\backslash\{0\},\hat{p},\hat{q}\in\overline{\mathbb{R}}$,
\begin{align*}
\underline{\Lambda}_{p,q,\hat{p},\hat{q}}(\alpha,\beta) & =\inf_{\substack{Q_{X},Q_{Y}:\\
D_{{\hat{p}}/{p}}(Q_{X}\|P_{X})=\alpha,\\
D_{{\hat{q}}/{q}}(Q_{Y}\|P_{Y})=\beta
}
}\phi(Q_{X},Q_{Y})+\frac{\alpha}{p}+\frac{\beta}{q},\\
\overline{\Lambda}_{p,q,\hat{p},\hat{q}}(\alpha,\beta) & =\sup_{\substack{Q_{X},Q_{Y}:\\
D_{{\hat{p}}/{p}}(Q_{X}\|P_{X})=\alpha,\\
D_{{\hat{q}}/{q}}(Q_{Y}\|P_{Y})=\beta
}
}\phi(Q_{X},Q_{Y})+\frac{\alpha}{p}+\frac{\beta}{q},
\end{align*}
where in these two optimization problems, $Q_{X}\ll P_{X}$ if $p>0,\hat{p}\ge0$,
otherwise, $Q_{X}\ll\gg P_{X}$, and similarly, $Q_{Y}\ll P_{Y}$
if $q>0,\hat{q}\ge0$, otherwise, $Q_{Y}\ll\gg P_{Y}$. 
\end{prop}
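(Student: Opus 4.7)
The strategy is to reparametrize the optimization over nonnegative functions $(f,g)$ as one over probability measures $(Q_X,Q_Y)$, and then to recognize the resulting objective as $\phi(Q_X,Q_Y)$ via a Donsker--Varadhan variational identity.

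\textbf{Step 1 (symmetrize to $(p,q)$-norms and pass to densities).} By Lemma \ref{lem:symmetry}, $\underline{\Lambda}_{p,q,\hat p,\hat q}$ and $\overline{\Lambda}_{p,q,\hat p,\hat q}$ are symmetric in $(p,\hat p)$ and in $(q,\hat q)$, so they equal $\underline{\Lambda}_{p,q,p,q}(\alpha,\beta)$ and $\overline{\Lambda}_{p,q,p,q}(\alpha,\beta)$ respectively. This replaces the normalizing norms with $\Vert f\Vert_p\Vert g\Vert_q$ at the cost of the offset $\alpha/p+\beta/q$. By homogeneity I may normalize $\Vert f\Vert_p=\Vert g\Vert_q=1$; then $u:=f^p$ and $v:=g^q$ are densities of probability measures $Q_X,Q_Y$ with $dQ_X/dP_X=u$ and $dQ_Y/dP_Y=v$. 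The sign of $p$ together with the sign of $\hat p$ (which by Lemma \ref{lem:norm} dictates whether $f$ must be a.e.\ positive for $\Ent_{p,\hat p}(f)$ to be finite) determines whether $Q_X\ll P_X$ or $Q_X\ll\gg P_X$, matching exactly the regimes stated in the proposition. The constraint $\Ent_{p,\hat p}(f)=\alpha$ becomes $D_{\hat p/p}(Q_X\Vert P_X)=\alpha$ by the R\'enyi-divergence form of the $(p,\hat p)$-entropy, and analogously for $g$.

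\textbf{Step 2 (Donsker--Varadhan identification).} Under the normalization above,
\[
-\log\langle f,g\rangle = -\log\mathbb E_{P_{XY}}\bigl[u(X)^{1/p}v(Y)^{1/q}\bigr].
\]
I apply the Donsker--Varadhan variational formula
\[
\log\mathbb E_{P_{XY}}[e^{h(X,Y)}]=\sup_{R_{XY}}\bigl\{\mathbb E_{R_{XY}}[h]-D(R_{XY}\Vert P_{XY})\bigr\}
\]
with $h:=\tfrac{1}{p}\log u+\tfrac{1}{q}\log v$, together with the chain-rule identity $\mathbb E_{R_X}[\log(dQ_X/dP_X)]=D(R_X\Vert P_X)-D(R_X\Vert Q_X)$ (and its $\mathcal Y$-analogue). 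The expectation $\mathbb E_{R_{XY}}[h]$ then decomposes into exactly the four relative-entropy terms appearing in $\phi$, and negating the variational identity yields $-\log\mathbb E_{P_{XY}}[u^{1/p}v^{1/q}]=\phi(Q_X,Q_Y)$. Combining Steps 1 and 2 gives the claimed $\inf/\sup$ over $(Q_X,Q_Y)$ subject to the divergence constraints, plus the offset $\alpha/p+\beta/q$.

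\textbf{Main obstacle.} The delicate point is verifying Donsker--Varadhan at the boundary. When $u$ or $v$ vanishes on a set of positive $P_X$- or $P_Y$-measure the exponent $h$ takes the value $-\infty$ there, so I must show that the supremum is effectively over $R_{XY}$ with $R_X\ll Q_X$ and $R_Y\ll Q_Y$ (other $R_{XY}$ produce $-\infty$ in the sup, equivalently $+\infty$ in the inf defining $\phi$, and are thus vacuous by Convention \ref{Convention-:-When}). A parallel care is needed when $p<0$ or $q<0$, where positivity of $f,g$ forces $Q_X\ll\gg P_X$ and $Q_Y\ll\gg P_Y$ and makes the parametrization $(f,g)\leftrightarrow(Q_X,Q_Y)$ bijective on the admissible domain. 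Finally, the limiting orders $\hat p,\hat q\in\{0,\pm\infty\}$ are handled by continuity of $(p,\hat p)\mapsto\Ent_{p,\hat p}(f)$ on $\mathcal S_f$ and of the R\'enyi divergence in its order, so the generic case $\hat p/p,\hat q/q\in\mathbb R\setminus\{0,1\}$ suffices.
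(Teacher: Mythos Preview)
Your approach is essentially the same as the paper's: normalize $\Vert f\Vert_p=\Vert g\Vert_q=1$, write $f^p=\mathrm{d}Q_X/\mathrm{d}P_X$ and $g^q=\mathrm{d}Q_Y/\mathrm{d}P_Y$, translate the entropy constraint into $D_{\hat p/p}(Q_X\Vert P_X)=\alpha$, and identify $-\log\langle f,g\rangle$ with $\phi(Q_X,Q_Y)$ via a Gibbs/Donsker--Varadhan variational identity. The paper packages that last step as a standalone Lemma~\ref{lem:dual}, whose proof is precisely the careful treatment of the boundary cases (vanishing densities, $\beta=0$ or $\infty$) that you flag as the main obstacle.

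One correction to Step~1: the sentence ``so they equal $\underline{\Lambda}_{p,q,p,q}(\alpha,\beta)$'' is false as written. Symmetry in $(p,\hat p)$ lets you swap the roles of $p$ and $\hat p$, not set $\hat p=p$; doing the latter would change the constraint set $\mathcal F_\alpha$ from $\{\Ent_{p,\hat p}(f)=\alpha\}$ to $\{\Ent_{p,p}(f)=\alpha\}$. What you actually want (and what your next sentence correctly describes) is the identity inside the proof of Lemma~\ref{lem:symmetry}:
\[
\underline{\Lambda}_{p,q,\hat p,\hat q}(\alpha,\beta)
= -\sup_{f\in\mathcal F_\alpha,\,g\in\mathcal G_\beta}\log\frac{\langle f,g\rangle}{\Vert f\Vert_p\Vert g\Vert_q}+\frac{\alpha}{p}+\frac{\beta}{q},
\]
with the \emph{original} constraint $\Ent_{p,\hat p}(f)=\alpha$ retained. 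The paper in fact bypasses this detour altogether and works directly from the definition after normalizing $\Vert f\Vert_p=\Vert g\Vert_q=1$; the offset $\alpha/p+\beta/q$ then drops out of the arithmetic when you compute $\log\Vert f\Vert_{\hat p}$ from $\Ent_{p,\hat p}(f)=\alpha$.
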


\subsection{Brascamp--Lieb Exponents}

Utilizing the information-theoretic expressions in Proposition \ref{prop:ITcharacterization},
we next provide dimension-free bounds for the $n$-dimensional versions
$\underline{\Lambda}_{p,q,\hat{p},\hat{q}}^{(n)}$ and $\overline{\Lambda}_{p,q,\hat{p},\hat{q}}^{(n)}$.
Let $(\mathcal{W},\mathbb{B}_{\mathcal{W}},Q_{W})$ be a Polish probability
measure space. Define the \emph{minimum relative entropy} over couplings
of $(Q_{X},Q_{Y})$ with respect to $P_{XY}$ as 
\begin{equation}
\mathbb{D}(Q_{X},Q_{Y}\|P_{XY}):=\inf_{Q_{XY}\in\mathcal{C}(Q_{X},Q_{Y})}D(Q_{XY}\|P_{XY}),\label{eq:mathbbD}
\end{equation}
and the \emph{minimum conditional relative entropy} over couplings
of $(Q_{X|W},Q_{Y|W})$, with respect to $P_{XY}$ and conditionally
on $Q_{W}$, as 
\begin{equation}
\mathbb{D}(Q_{X|W},Q_{Y|W}\|P_{XY}|Q_{W}):=\mathbb{E}_{\hat{W}\sim Q_{W}}\mathbb{D}(Q_{X|W=\hat{W}},Q_{Y|W=\hat{W}}\|P_{XY}).\label{eq:mathbbD-1-1}
\end{equation}
The infimization in \eqref{eq:mathbbD} is termed the \emph{Schr\"odinger
problem }or the\emph{ maximum entropy problem} \cite{leonard2001minimization,leonard2013survey}.
In fact, this problem admits the following dual formula. 
\begin{prop}[Duality of Schr\"odinger Problem]
\cite{leonard2001minimization,leonard2013survey} For Polish $\mathcal{X},\mathcal{Y}$,
\[
\mathbb{D}(Q_{X},Q_{Y}\|P_{XY})=\sup_{f,g}\varphi(f,g),
\]
where $\varphi(f,g):=-\log\int_{\mathcal{X}\times\mathcal{Y}}fg\mathrm{d}P_{XY}+\int_{\mathcal{X}}\log f\mathrm{d}\mu_{X}+\int_{\mathcal{Y}}\log g\mathrm{d}\mu_{Y}$
and the supremum is taken over all continuous bounded functions $f:\mathcal{X}\to\mathbb{R},g:\mathcal{Y}\to\mathbb{R}$.
Moreover, if the infimum in \eqref{eq:mathbbD} is finite, then it
is attained, and hence, is a minimum. 
\end{prop}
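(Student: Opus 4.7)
The plan is to prove the duality via a Lagrangian / Donsker--Varadhan argument, together with a lower-semicontinuity and tightness argument for attainment of the infimum. Here $\mu_X, \mu_Y$ in the statement are understood to be $Q_X, Q_Y$; the result is then the classical Schr\"odinger problem duality, and the test functions $f, g$ should be taken strictly positive (so that $\log f, \log g$ are bounded continuous).

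\textbf{Weak direction.} Fix any $Q_{XY} \in \mathcal{C}(Q_X, Q_Y)$ and any strictly positive bounded continuous $f, g$. Applying the Donsker--Varadhan inequality to the bounded measurable function $\phi(x,y) := \log f(x) + \log g(y)$ under $P_{XY}$ gives
$$\int \phi \, dQ_{XY} \le D(Q_{XY} \| P_{XY}) + \log \int e^{\phi} \, dP_{XY}.$$
Since $Q_{XY}$ has marginals $Q_X, Q_Y$, the left-hand side equals $\int \log f \, dQ_X + \int \log g \, dQ_Y$, and rearranging gives $\varphi(f,g) \le D(Q_{XY}\|P_{XY})$. Taking the infimum in $Q_{XY}$ and then the supremum in $(f,g)$ yields $\sup_{f,g}\varphi(f,g) \le \mathbb{D}(Q_X, Q_Y \| P_{XY})$.

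\textbf{Strong direction and attainment.} Introduce Lagrange multipliers $u(x), v(y)$ for the two marginal constraints and form
$$L(Q_{XY}; u, v) := D(Q_{XY}\|P_{XY}) + \int u \, dQ_X + \int v \, dQ_Y - \int \bigl(u(x)+v(y)\bigr)\, dQ_{XY}.$$
The unconstrained inner minimization over $Q_{XY}$ is solved by Donsker--Varadhan and yields
$$\inf_{Q_{XY}} L(Q_{XY}; u,v) = \int u \, dQ_X + \int v \, dQ_Y - \log\int e^{u+v}\, dP_{XY} = \varphi(e^{u}, e^{v}).$$
The exchange $\inf_{Q_{XY}}\sup_{u,v} L = \sup_{u,v}\inf_{Q_{XY}} L$ follows by Fenchel--Rockafellar duality in the pairing $(C_b(\mathcal X), \mathcal M(\mathcal X)) \times (C_b(\mathcal Y), \mathcal M(\mathcal Y))$: the primal functional $Q_{XY} \mapsto D(Q_{XY}\|P_{XY})$ is convex and weakly lower semicontinuous, and the marginal maps are continuous affine. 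The qualification condition holds whenever at least one coupling has finite relative entropy to $P_{XY}$; otherwise both sides of the claimed identity are trivially $+\infty$. For attainment, when the primal value is finite, the sublevel set $\{Q_{XY} : D(Q_{XY}\|P_{XY}) \le C\} \cap \mathcal{C}(Q_X, Q_Y)$ is tight, because on Polish spaces the fixed marginals $Q_X, Q_Y$ are themselves tight and the marginal-tightness criterion forces tightness of the couplings, and is weakly lower semicontinuous; Prokhorov's theorem then delivers a minimizer.

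\textbf{Main obstacle.} The subtlety is the restriction of the dual variables to \emph{continuous} bounded $f, g$ rather than arbitrary bounded measurable ones. The Lagrangian computation above works directly with bounded measurable multipliers $(u,v)$; promoting the equality to the smaller class $C_b(\mathcal X) \times C_b(\mathcal Y)$ requires an approximation argument exploiting the inner regularity of Borel measures on Polish spaces (Lusin's theorem), approximating a near-optimal bounded measurable $(u,v)$ by continuous bounded functions with arbitrarily small error in $\int u\, dQ_X$, $\int v\, dQ_Y$, and $\log\int e^{u+v}\, dP_{XY}$ simultaneously. This reduction is the technical heart of L\'eonard's proofs in \cite{leonard2001minimization,leonard2013survey}, which may be invoked directly.
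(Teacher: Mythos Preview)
The paper does not prove this proposition: it is stated with citations to L\'eonard \cite{leonard2001minimization,leonard2013survey} and no proof is given in the paper itself. There is therefore no ``paper's own proof'' to compare against.

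Your sketch is a correct outline of the standard approach in L\'eonard's work. The weak direction via Donsker--Varadhan is complete as written. The Lagrangian computation is correct, and the attainment argument (tightness of $\mathcal{C}(Q_X,Q_Y)$ from tightness of the marginals on Polish spaces, plus weak lower semicontinuity of relative entropy, plus Prokhorov) is the right mechanism. You also correctly identify the genuine technical obstacle---that the Fenchel--Rockafellar argument naturally lives over bounded measurable multipliers, and passing to bounded \emph{continuous} test functions requires an approximation step---and you explicitly defer this to the cited references. That deferral is honest rather than a gap, since the proposition in the paper is precisely a quotation of L\'eonard's result; but be aware that a self-contained proof would need to supply this approximation (and the Fenchel--Rockafellar qualification condition) in full.
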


Define 
\begin{align*}
\underline{\Lambda}_{p,q,\hat{p},\hat{q}}^{*}(\alpha,\beta) & :=\inf_{Q_{W},Q_{X|W},Q_{Y|W}}\mathbb{D}(Q_{X|W},Q_{Y|W}\|P_{XY}|Q_{W})\\
 & \qquad+\eta_{p,\hat{p}}(\alpha,D(Q_{X|W}\|P_{X}|Q_{W}))+\eta_{q,\hat{q}}(\beta,D(Q_{Y|W}\|P_{Y}|Q_{W}))
\end{align*}
where\footnote{Throughout this paper, we interpret $\frac{a}{0}$ as $\infty$ for
$a>0$, as $-\infty$ for $a<0$, and as $0$ for $a=0$. We interpret
$\frac{a}{\pm\infty}$ as $0$ for finite $a$.} 
\[
\eta_{p,\hat{p}}(\alpha,s):=\begin{cases}
\frac{\alpha-s}{p}\vee\frac{\alpha-s}{\hat{p}} & p,\hat{p}\ge0\\
\frac{\alpha-s}{p} & p>0>\hat{p}\\
\frac{\alpha-s}{\hat{p}} & \hat{p}>0>p\\
-\infty & p,\hat{p}\le0\textrm{ and }(p,\hat{p})\neq(0,0)
\end{cases}.
\]
Here the infimization is taken over all probability measure spaces
$(\mathcal{W},\mathbb{B}_{\mathcal{W}},Q_{W})$ on Polish spaces $\mathcal{W}$
and regular conditional probability measures $Q_{X|W},Q_{Y|W}$. However,
by Carathéodory's theorem, without loss of optimality, it suffices
to restrict $|\mathcal{W}|\le4$. Furthermore, obviously, $\eta_{p,\hat{p}}(\alpha,s)$
is nonincreasing in $s$.

According to the signs of $p,q,\hat{p},\hat{q}$, we partition the
distributions $\{Q_{X|W},Q_{Y|W},\hat{Q}_{X|W},\hat{Q}_{Y|W}\}$ into
two disjoint subsets $\mathbf{Q}^{+},\mathbf{Q}^{-}$. Specifically,
$Q_{X|W}\in\mathbf{Q}^{+}$ if $p\ge0$; $Q_{X|W}\in\mathbf{Q}^{-}$
otherwise. Similarly, $Q_{Y|W},\hat{Q}_{X|W},\hat{Q}_{Y|W}$ are assigned
into $\mathbf{Q}^{+},\mathbf{Q}^{-}$ respectively according to the
signs of $q,\hat{p},\hat{q}$. Define 
\begin{align*}
\theta_{p,q}(Q_{W},Q_{X|W},Q_{Y|W}) & :=\mathbb{D}(Q_{X|W},Q_{Y|W}\|P_{XY}|Q_{W})+\frac{\alpha-D(Q_{X|W}\|P_{X}|Q_{W})}{p}+\frac{\beta-D(Q_{Y|W}\|P_{Y}|Q_{W})}{q},
\end{align*}
and 
\begin{align*}
\alpha_{\lambda}^{-} & :=\begin{cases}
\alpha & \lambda\le1;\\
-\infty & \textrm{otherwise},
\end{cases}\qquad\alpha_{\lambda}^{+}:=\begin{cases}
\alpha & \lambda\ge1;\\
\infty & \textrm{otherwise}
\end{cases}\\
\beta_{\mu}^{-} & :=\begin{cases}
\beta & \mu\le1;\\
-\infty & \textrm{otherwise},
\end{cases}\qquad\beta_{\mu}^{+}:=\begin{cases}
\beta & \mu\ge1;\\
\infty & \textrm{otherwise}.
\end{cases}
\end{align*}
Define 
\begin{align*}
\overline{\Lambda}_{p,q,\hat{p},\hat{q}}^{*}(\alpha,\beta) & :=\sup_{Q_{W},\mathbf{Q}^{+}}\inf_{\mathbf{Q}^{-}}\min\Bigl\{\theta_{p,q}(Q_{W},Q_{X|W},Q_{Y|W}),\theta_{p,\hat{q}}(Q_{W},Q_{X|W},\hat{Q}_{Y|W}),\\
 & \qquad\theta_{\hat{p},q}(Q_{W},\hat{Q}_{X|W},Q_{Y|W}),\theta_{\hat{p},\hat{q}}(Q_{W},\hat{Q}_{X|W},\hat{Q}_{Y|W})\Bigr\},
\end{align*}
where the infimization is taken over all the tuples of distributions
in $\mathbf{Q}^{-}$, and the supremization is taken over all Polish
probability measure spaces $(\mathcal{W},\mathbb{B}_{\mathcal{W}},Q_{W})$
and all the tuples of distributions in $\mathbf{Q}^{+}$ under the
constraints\footnote{Although here the relative entropies are constrained to be $\le\infty$
or $\ge-\infty$, we should notice that the relative entropies are
in fact always $\ge0$ and by Convention \ref{Convention-:-When},
they are indeed constrained to be $<\infty$. } $\alpha_{\hat{p}/p}^{-}\le D(Q_{X|W}\|P_{X}|Q_{W})\le\alpha_{\hat{p}/p}^{+}$
and $\alpha_{p/\hat{p}}^{-}\le D(\hat{Q}_{X|W}\|P_{X}|Q_{W})\le\alpha_{p/\hat{p}}^{+}$
if $p,\hat{p}\ge0$, as well as, $\beta_{\hat{q}/q}^{-}\le D(Q_{Y|W}\|P_{Y}|Q_{W})\leq\beta_{\hat{q}/q}^{+}$
and $\beta_{q/\hat{q}}^{-}\le D(\hat{Q}_{Y|W}\|P_{Y}|Q_{W})\leq\beta_{q/\hat{q}}^{+}$
if $q,\hat{q}\ge0$. Similarly to the case of $\underline{\Lambda}_{p,q,\hat{p},\hat{q}}^{*}$,
by Carathéodory's theorem, without loss of optimality, it suffices
to restrict $|\mathcal{W}|\le9$.

We provide dimension-free bounds for $\underline{\Lambda}_{p,q,\hat{p},\hat{q}}^{(n)}$
and $\overline{\Lambda}_{p,q,\hat{p},\hat{q}}^{(n)}$ in the following
theorem. The proof is given in Appendix \ref{sec:Proof-of-Theorem-BLexponent}. 
\begin{thm}[Brascamp--Lieb Exponents]
\label{thm:BLexponent} For any $n\ge1$, $p,q\in\mathbb{R}\backslash\{0\},\hat{p},\hat{q}\in\overline{\mathbb{R}}$,
$\alpha\in\mathcal{E}_{p,\hat{p}}^{(n)}(\mathcal{X})$, and $\beta\in\mathcal{E}_{q,\hat{q}}^{(n)}(\mathcal{Y})$,
we have 
\begin{align}
\underline{\Lambda}_{p,q,\hat{p},\hat{q}}^{(n)}(\alpha,\beta) & \geq\underline{\Lambda}_{p,q,\hat{p},\hat{q}}^{*}(\alpha,\beta)\label{eq:FBLE}\\
\overline{\Lambda}_{p,q,\hat{p},\hat{q}}^{(n)}(\alpha,\beta) & \leq\overline{\Lambda}_{p,q,\hat{p},\hat{q}}^{*}(\alpha,\beta),\label{eq:RBLE}
\end{align}
where $\mathcal{E}_{p,\hat{p}}(\mathcal{X})$ was defined in \eqref{eq:E}.
Moreover, for finite alphabets $\mathcal{X},\mathcal{Y}$, these two
inequalities are asymptotically tight as $n\to\infty$ for given $(p,q,\hat{p},\hat{q},\alpha,\beta)$
as described above but with $\hat{p},\hat{q}\neq0$, $\alpha\neq0,\alpha_{\max}$
and $\beta\neq0,\beta_{\max}$, i.e., 
\begin{align}
\underline{\Lambda}_{p,q,\hat{p},\hat{q}}^{(\infty)}(\alpha,\beta) & =\underline{\Lambda}_{p,q,\hat{p},\hat{q}}^{*}(\alpha,\beta),\qquad\overline{\Lambda}_{p,q,\hat{p},\hat{q}}^{(\infty)}(\alpha,\beta)=\overline{\Lambda}_{p,q,\hat{p},\hat{q}}^{*}(\alpha,\beta).\label{eq:-95}
\end{align}
\end{thm}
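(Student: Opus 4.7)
The plan is to apply Proposition~\ref{prop:ITcharacterization} to $P_{XY}^{\otimes n}$, which rewrites $n\underline{\Lambda}^{(n)}_{p,q,\hat{p},\hat{q}}(\alpha,\beta)$ (respectively $n\overline{\Lambda}^{(n)}_{p,q,\hat{p},\hat{q}}(\alpha,\beta)$) as a nested optimization over $n$-dimensional measures $Q_{X^n}, Q_{Y^n}$ subject to R\'enyi-divergence constraints, with an inner infimum over a coupling $R_{X^nY^n}$ involving four KL terms. The central idea is to \emph{single-letterize} via a time-sharing auxiliary variable: for $(X^n,Y^n) \sim R_{X^nY^n}$ and $T$ uniform on $[n]$ independent of $(X^n,Y^n)$, set $W := (T, X^{T-1}, Y^{T-1})$, and use $R_{X_T Y_T | W}$ together with its conditional marginals as single-letter surrogates for $R_{X^nY^n}$, $Q_{X^n}$, and $Q_{Y^n}$.

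For the forward inequality~\eqref{eq:FBLE}, the chain rule yields $\tfrac{1}{n}D(R_{X^nY^n}\|P_{XY}^{\otimes n}) = D(R_{X_TY_T|W}\|P_{XY}|R_W)$, which dominates $\mathbb{D}(R_{X_T|W}, R_{Y_T|W}\|P_{XY}|R_W)$ since the joint conditional is one admissible coupling of its marginals. The cross terms $\tfrac{1}{p}(D(R_{X^n}\|Q_{X^n}) - D(R_{X^n}\|P_X^{\otimes n})) = -\tfrac{1}{p}\mathbb{E}_R[\log(dQ_{X^n}/dP_X^{\otimes n})]$ (and the $q$-analogue) reduce by chain rule to $-\tfrac{1}{p}\mathbb{E}_R[\log(dQ_{X|W}/dP_X)]$ for a suitably chosen conditional $Q_{X|W}$, which one combines with the constraint $D_{\hat{p}/p}(Q_{X^n}\|P_X^{\otimes n}) = n\alpha$: monotonicity of the R\'enyi divergence in its order then bounds $D(Q_{X|W}\|P_X|Q_W)$ from above or below by $\alpha$ depending on whether $\hat{p}/p \ge 1$ or $\le 1$, and a case analysis on the signs of $p, \hat{p}$ reconstructs precisely the $\eta_{p,\hat{p}}$ term in $\underline{\Lambda}^{*}$.

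The reverse inequality~\eqref{eq:RBLE} is handled by a dual version of the same argument, with infima replaced by suprema and each of $\{Q_{X|W}, Q_{Y|W}, \hat{Q}_{X|W}, \hat{Q}_{Y|W}\}$ either maximized or minimized according to the sign of its associated order---this induces the partition $\mathbf{Q}^{+}, \mathbf{Q}^{-}$ in the definition of $\overline{\Lambda}^{*}$. The $\min$ over four $\theta$-terms arises because the original supremum is over pairs $(Q_{X^n}, Q_{Y^n})$ under two constraints, and the adversary can pick whichever pairing among $(p,q), (p,\hat{q}), (\hat{p},q), (\hat{p},\hat{q})$ yields the tightest upper bound.

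The main obstacle is establishing asymptotic tightness~\eqref{eq:-95} for finite alphabets. The plan there is to fix a near-optimal single-letter tuple $(Q_W, Q_{X|W}, Q_{Y|W})$ for $\underline{\Lambda}^{*}(\alpha,\beta)$ (resp.\ for $\overline{\Lambda}^{*}$), draw $w^n \sim Q_W^{\otimes n}$, and construct $Q_{X^n}$ supported on sequences whose conditional type given $w^n$ is close to $Q_{X|w_i}$ at each coordinate $i$, with an analogous construction for $Q_{Y^n}$; the inner coupling $R_{X^nY^n}$ is built as the product across coordinates of the optimal Schr\"odinger couplings of $(Q_{X|W=w}, Q_{Y|W=w})$ relative to $P_{XY}$. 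Method-of-types / Sanov estimates then give $\tfrac{1}{n}D_{\hat{p}/p}(Q_{X^n}\|P_X^{\otimes n}) \to D(Q_{X|W}\|P_X|Q_W)$ up to $O(\tfrac{\log n}{n})$ corrections, so the R\'enyi constraint can only be enforced up to $o(n)$ slack. The delicate point is calibrating the type-class construction so that $D_{\hat{p}/p}(Q_{X^n}\|P_X^{\otimes n}) = n\alpha + o(n)$ \emph{exactly}, which will require a continuity/perturbation argument in $(\alpha,\beta)$---this also explains why the boundary cases $\alpha \in \{0, \alpha_{\max}\}$ and $\beta \in \{0, \beta_{\max}\}$ must be excluded, as the type-class construction degenerates at the extremes of the feasible R\'enyi range.
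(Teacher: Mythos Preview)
Your forward-case outline has the right ingredients but inverts the order in which the paper applies them, and this creates a real gap. You propose to single-letterize directly from Proposition~\ref{prop:ITcharacterization} applied to $P_{XY}^{\otimes n}$, keeping the $n$-dimensional $Q_{X^n}$ in play and then invoking monotonicity of the R\'enyi divergence to bound $D(Q_{X|W}\|P_X|Q_W)$ via the constraint $D_{\hat p/p}(Q_{X^n}\|P_X^{\otimes n})=n\alpha$. But R\'enyi divergences do not satisfy a chain rule, so there is no clean way to pass from that $n$-letter constraint to a single-letter statement about a conditional $Q_{X|W}$ (and your $W$ involves $Y^{T-1}$, which $Q_{X^n}$ knows nothing about). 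The paper avoids this by doing a \emph{one-shot} step first: for $n=1$ it uses the variational formula for $D_{\hat p/p}$ to eliminate $Q_X,Q_Y$ altogether, replacing them by the $\eta_{p,\hat p}$ term, which depends only on $D(R_X\|P_X)$. Only after this reduction does it substitute $P_{XY}^{\otimes n}$ and single-letterize; at that point only KL divergences of $R_{X^nY^n}$ remain, and those \emph{do} chain-rule, so the random-index argument with $W=(K,X^{K-1},Y^{K-1})$ goes through cleanly (using that conditioning increases KL and that $\eta_{p,\hat p}(\alpha,\cdot)$ is monotone).

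Your reverse-case sketch (``a dual version of the same argument'') glosses over substantially more machinery than the forward case requires. The paper first proves a one-shot bound $\overline{\Lambda}\le\overline{\Lambda}^{**}$ by exhibiting explicit tilted measures $R_X^*$ with $\frac{dR_X^*}{dP_X}\propto(\frac{dQ_X}{dP_X})^{\hat p/p}$ (plus truncated variants when $D(R_X^*\|P_X)=\infty$), with a lengthy case split on the signs of $p,q,\hat p,\hat q$. The single-letterization of $\overline{\Lambda}^{**}$ then needs two nontrivial lemmas you do not mention: a ``chain rule for coupling sets'' allowing the $n$-letter coupling to be built coordinate-by-coordinate, and a bespoke minimax lemma (for bilinear functionals taking values in $[a,\infty]$) permitting the swap of $\min_{R_{IJ}}$ and $\sup_{R_{W|K}}$. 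The $\min$ over the four $\theta$-terms does \emph{not} arise from ``the adversary picking the tightest pairing''---it comes from upper-bounding $\phi(Q_X,Q_Y)$ by each of four specific choices of $(R_X,R_Y)\in\{Q_X,R_X^*\}\times\{Q_Y,R_Y^*\}$ in the inner infimum.

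Your tightness sketch is closer to the paper's, which also uses conditional type classes indexed by a fixed $w^n$ of type $T_W$. One difference: the paper works with functions $f=\sum_{T_{X|W}}e^{n\mu_{T_{X|W}}}1_{\mathcal T_{T_{X|W}}(w^n)}$ (and analogously $g$) with explicitly chosen exponents $\mu_{T_{X|W}}$, rather than constructing $Q_{X^n}$ directly. Hitting the constraint $\frac{1}{n}\mathrm{Ent}_{p,\hat p}(f)=\alpha$ exactly (not just up to $o(1)$) is handled by a convex-combination (``mixture'') argument $f_\theta=\theta f_1+(1-\theta)f_2$ with $f_1,f_2$ built for nearby values $\alpha_1<\alpha<\alpha_2$, together with continuity of $\underline{\Lambda}^*$ in $(\alpha,\beta)$ on finite alphabets; this is the step that fails at $\alpha\in\{0,\alpha_{\max}\}$.
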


\begin{rem}
Note that $\underline{\Lambda}_{p,q,\hat{p},\hat{q}}^{*}(\alpha,\beta)$
(resp. $\overline{\Lambda}_{p,q,\hat{p},\hat{q}}^{*}(\alpha,\beta)$)
may be neither always positive nor always negative. 
\end{rem}

\begin{rem}
Since for discrete distributions, $\mathcal{E}_{p,0}^{(n)}(\mathcal{X})$
is a countable set and it changes with $n$. Hence, for the case of
$\hat{p}=0$ or $\hat{q}=0$, the asymptotic tightness as in \eqref{eq:-95}
cannot hold for fixed $\alpha,\beta$. In fact, for this case, for
any $(\alpha,\beta)$ such that $\alpha\neq0,\alpha_{\max}$ and $\beta\neq0,\beta_{\max}$,
it holds that $\underline{\Lambda}_{p,q,0,0}^{(n)}(\alpha_{n},\beta_{n})\to\underline{\Lambda}_{p,q,0,0}^{*}(\alpha,\beta)$
and $\overline{\Lambda}_{p,q,0,0}^{(n)}(\alpha_{n},\beta_{n})\to\overline{\Lambda}_{p,q,0,0}^{*}(\alpha,\beta)$
as $n\to\infty$ for some sequence $(\alpha_{n},\beta_{n})\to(\alpha,\beta)$. 
\end{rem}

\subsection{Strong Brascamp--Lieb Inequalities}

Theorem \ref{thm:BLexponent} can be rewritten as the following form. 
\begin{cor}[Strong Brascamp--Lieb Inequalities (Two-Function Version)]
\label{cor:BL} For $p,q\in\mathbb{R}\backslash\{0\},\hat{p},\hat{q}\in\overline{\mathbb{R}}$
and $P_{XY}^{\otimes n}$, 
\begin{align}
\langle f,g\rangle & \leq e^{-n(\underline{\Lambda}_{p,q,\hat{p},\hat{q}}^{*}(\alpha,\beta)-\frac{\alpha}{p}-\frac{\beta}{q})}\Vert f\Vert_{p}\Vert g\Vert_{q}\label{eq:-6}\\
\langle f,g\rangle & \geq e^{-n(\overline{\Lambda}_{p,q,\hat{p},\hat{q}}^{*}(\alpha,\beta)-\frac{\alpha}{p}-\frac{\beta}{q})}\Vert f\Vert_{p}\Vert g\Vert_{q},\label{eq:-7}
\end{align}
for all functions $f:\mathcal{X}^{n}\to[0,\infty),g:\mathcal{Y}^{n}\to[0,\infty)$
such that $\alpha,\beta\in\mathbb{R}$, where $\alpha=\frac{1}{n}\Ent_{p,\hat{p}}(f),\beta=\frac{1}{n}\Ent_{q,\hat{q}}(g)$. 
\end{cor}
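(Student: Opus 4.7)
The plan is to obtain Corollary \ref{cor:BL} as a direct repackaging of Theorem \ref{thm:BLexponent}, once the definitions are unpacked and the symmetries of $\underline{\Lambda}$ and $\overline{\Lambda}$ from Lemma \ref{lem:symmetry} are invoked. No new analytic machinery is needed, but care with the normalization constants is essential.

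Fix $f \in \Phi(\mathcal{X}^n)$ and $g \in \Phi(\mathcal{Y}^n)$ with $\alpha := \tfrac{1}{n}\Ent_{p,\hat p}(f) \in \mathbb{R}$ and $\beta := \tfrac{1}{n}\Ent_{q,\hat q}(g) \in \mathbb{R}$. By construction, $f \in \mathcal{F}_{n\alpha}$ and $g \in \mathcal{G}_{n\beta}$ with respect to the product measure $P_{XY}^{\otimes n}$. The first step is to rewrite $\underline{\Lambda}_{p,q,\hat p,\hat q}$ with the $p,q$-norms rather than the $\hat p,\hat q$-norms in the denominator: applying Lemma \ref{lem:symmetry} twice (swap $p\leftrightarrow\hat p$, then $q\leftrightarrow\hat q$) gives
\[
\underline{\Lambda}_{p,q,\hat p,\hat q}(n\alpha, n\beta \mid P_{XY}^{\otimes n}) \;=\; -\sup_{f' \in \mathcal{F}_{n\alpha},\, g' \in \mathcal{G}_{n\beta}} \log \frac{\langle f', g'\rangle}{\Vert f'\Vert_{p}\,\Vert g'\Vert_{q}} \;+\; \frac{n\alpha}{p} + \frac{n\beta}{q}.
\]
Restricting the supremum to our particular pair $(f,g)$ and rearranging yields
\[
\log \frac{\langle f, g\rangle}{\Vert f\Vert_{p}\Vert g\Vert_{q}} \;\le\; -\,\underline{\Lambda}_{p,q,\hat p,\hat q}(n\alpha, n\beta \mid P_{XY}^{\otimes n}) + \frac{n\alpha}{p} + \frac{n\beta}{q} \;=\; -n\,\underline{\Lambda}^{(n)}_{p,q,\hat p,\hat q}(\alpha,\beta) + \frac{n\alpha}{p} + \frac{n\beta}{q},
\]
where the equality is just the defining relation $\underline{\Lambda}^{(n)}(\alpha,\beta) = \tfrac{1}{n}\underline{\Lambda}(n\alpha,n\beta\mid P_{XY}^{\otimes n})$.

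Next I would invoke Theorem \ref{thm:BLexponent} to replace $\underline{\Lambda}^{(n)}_{p,q,\hat p,\hat q}(\alpha,\beta)$ by the single-letter lower bound $\underline{\Lambda}^{*}_{p,q,\hat p,\hat q}(\alpha,\beta)$, giving
\[
\log \frac{\langle f, g\rangle}{\Vert f\Vert_{p}\Vert g\Vert_{q}} \;\le\; -n\Bigl(\underline{\Lambda}^{*}_{p,q,\hat p,\hat q}(\alpha,\beta) - \frac{\alpha}{p} - \frac{\beta}{q}\Bigr),
\]
and exponentiating delivers \eqref{eq:-6}. The reverse inequality \eqref{eq:-7} proceeds identically: the symmetry of $\overline{\Lambda}_{p,q,\hat p,\hat q}$ lets me replace the denominator norms, the defining infimum in $\overline{\Delta}$ yields the reversed inequality on $\log(\langle f,g\rangle/\Vert f\Vert_p\Vert g\Vert_q)$, and the bound $\overline{\Lambda}^{(n)} \le \overline{\Lambda}^{*}$ from Theorem \ref{thm:BLexponent} followed by exponentiation completes the argument.

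There is no substantive obstacle here, so the only thing to watch for is technical hygiene. I should verify that our chosen $(f,g)$ lies in the feasible set of the extended symmetric formulation (which is automatic since $\alpha, \beta$ are finite by hypothesis, so $\alpha \in \mathcal{E}^{(n)}_{p,\hat p}(\mathcal{X})$ and $\beta \in \mathcal{E}^{(n)}_{q,\hat q}(\mathcal{Y})$ by definition of these sets), and that the arithmetic with the $\tfrac{\alpha}{p}, \tfrac{\beta}{q}$ terms remains well-defined under the convention for $a/0$ and $a/\pm\infty$ recorded earlier. Both checks are immediate from the setup, so the proof reduces to the algebraic rearrangement sketched above.
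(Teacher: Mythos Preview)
Your proposal is correct and matches the paper's approach: the paper introduces this corollary with the single sentence ``Theorem \ref{thm:BLexponent} can be rewritten as the following form,'' and your argument spells out exactly that rewriting---using the $(p,\hat p)$- and $(q,\hat q)$-symmetry of $\underline{\Lambda},\overline{\Lambda}$ from Lemma \ref{lem:symmetry} to pass from $\|\cdot\|_{\hat p},\|\cdot\|_{\hat q}$ to $\|\cdot\|_{p},\|\cdot\|_{q}$ in the denominator, then invoking the bounds $\underline{\Lambda}^{(n)}\ge\underline{\Lambda}^{*}$ and $\overline{\Lambda}^{(n)}\le\overline{\Lambda}^{*}$ from Theorem \ref{thm:BLexponent}.
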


\begin{rem}
By symmetry, the inequalities \eqref{eq:-6} and \eqref{eq:-7} still
hold if $p,\hat{p}$ (or $q,\hat{q}$) are swapped. 
\end{rem}

The expressions of $\underline{\Lambda}_{p,q,\hat{p},\hat{q}}^{*}$
and $\overline{\Lambda}_{p,q,\hat{p},\hat{q}}^{*}$ are somewhat complicated.
We next simplify the inequalities \eqref{eq:-6} and \eqref{eq:-7}
 by choosing $(\hat{p},\hat{q})$ as some specific function of $(\alpha,\beta)$.

Define the \emph{minimum-relative-entropy region} of $P_{XY}$ as
\[
\mathcal{D}(P_{XY}):=\bigcup_{Q_{X}\ll P_{X},Q_{Y}\ll P_{Y}}\{(D(Q_{X}\|P_{X}),D(Q_{Y}\|P_{Y}),\mathbb{D}(Q_{X},Q_{Y}\|P_{XY}))\}.
\]
Define its lower and upper envelopes as for $s,t\ge0$, 
\begin{align}
\underline{\varphi}(s,t) & :=\inf_{Q_{XY}:D(Q_{X}\|P_{X})=s,D(Q_{Y}\|P_{Y})=t}D(Q_{XY}\|P_{XY})\nonumber \\
 & =\inf_{Q_{X},Q_{Y}:D(Q_{X}\|P_{X})=s,D(Q_{Y}\|P_{Y})=t}\mathbb{D}(Q_{X},Q_{Y}\|P_{XY}),\label{eq:phiUnderline}
\end{align}
\begin{align}
\overline{\varphi}(s,t) & :=\sup_{Q_{X},Q_{Y}:D(Q_{X}\|P_{X})=s,D(Q_{Y}\|P_{Y})=t}\mathbb{D}(Q_{X},Q_{Y}\|P_{XY}).\label{eq:phiOverline}
\end{align}
We also define for $q<0$ and $s\ge0$, 
\begin{equation}
\varphi_{q}(s):=\sup_{Q_{X}:D(Q_{X}\|P_{X})=s}\inf_{Q_{Y}}\mathbb{D}(Q_{X},Q_{Y}\|P_{XY})-\frac{D(Q_{Y}\|P_{Y})}{q}.\label{eq:phi_q}
\end{equation}
Define $\breve{\varphi}(s,t)$ as the lower convex envelope of $\underline{\varphi}(s,t)$,
and $\invbreve\varphi(s,t),\invbreve\varphi_{q}(s)$ respectively
as the upper concave envelopes of $\overline{\varphi}(s,t),\varphi_{q}(s)$.
The \emph{increasing} lower convex envelope of $\underline{\varphi}(s,t)$,
and the \emph{increasing} upper concave envelopes of $\overline{\varphi}(s,t),\varphi_{q}(s)$
are defined as 
\begin{align}
\underline{\Theta}(\alpha,\beta) & :=\inf_{s\ge\alpha,t\ge\beta}\breve{\varphi}(s,t)\label{eq:ThetaUnderline}\\
\overline{\Theta}(\alpha,\beta) & :=\sup_{0\le s\leq\alpha,0\le t\leq\beta}\invbreve\varphi(s,t)\label{eq:ThetaOverline}\\
\Theta_{q}(\alpha) & :=\sup_{0\le s\leq\alpha}\invbreve\varphi_{q}(s).\label{eq:ThetaOverline_q}
\end{align}
Note that by using function $\underline{\varphi}$, we can rewrite
$\underline{\Lambda}_{p,q,\hat{p},\hat{q}}^{*}$ as 
\begin{align}
\underline{\Lambda}_{p,q,\hat{p},\hat{q}}^{*}(\alpha,\beta) & =\inf_{s,t}\underline{\varphi}(s,t)+\eta_{p,\hat{p}}(\alpha,s)+\eta_{q,\hat{q}}(\beta,t).\label{eq:-27}
\end{align}
The objective function of the infimization in \eqref{eq:-27} is difference
between the surface $\underline{\varphi}(s,t)$ and the surface $-\eta_{p,\hat{p}}(\alpha,s)-\eta_{q,\hat{q}}(\beta,t)$,
where the latter surface consists of parts of several planes (here
for the case of $p,\hat{p}<0$ or $q,\hat{q}<0$, we regard the constant
function $(x,y)\in\mathbb{R}^{2}\mapsto\infty$ as a ``plane'' as
well). 
\begin{lem}
$\underline{\Theta}(\alpha,\beta)$ is convex in $(\alpha,\beta)\in[0,\infty)^{2}$,
$\overline{\Theta}(\alpha,\beta)$ is concave in $(\alpha,\beta)\in[0,\infty)^{2}$,
and $\Theta_{q}(\alpha)$ is concave in $\alpha\in[0,\infty)$ for
each $q<0$. All of $\underline{\Theta},\overline{\Theta},$ and $\Theta_{q}$
are nondecreasing. 
\end{lem}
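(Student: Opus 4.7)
The plan is to use the fact that $\breve{\varphi}$ is convex by construction, and that $\invbreve\varphi,\invbreve\varphi_q$ are concave by construction, and then show that the operations $\inf_{s\ge\alpha,t\ge\beta}(\cdot)$ and $\sup_{0\le s\le\alpha,0\le t\le\beta}(\cdot)$ respectively preserve convexity/concavity while additionally adding monotonicity. In other words, the lemma reduces to the standard fact that the ``monotone closure'' of a convex (resp.\ concave) function is still convex (resp.\ concave).

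First I would handle monotonicity in one line each: if $(\alpha_1,\beta_1)\le(\alpha_2,\beta_2)$ componentwise, then the feasible set $\{(s,t):s\ge\alpha_2,\,t\ge\beta_2\}$ is contained in $\{(s,t):s\ge\alpha_1,\,t\ge\beta_1\}$, so taking the infimum over the smaller set can only increase the value, giving $\underline{\Theta}(\alpha_1,\beta_1)\le\underline{\Theta}(\alpha_2,\beta_2)$. The dual argument with the box $[0,\alpha]\times[0,\beta]$ growing gives monotonicity of $\overline{\Theta}$ and $\Theta_q$.

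Next I would prove convexity of $\underline{\Theta}$. Fix $(\alpha_i,\beta_i)$ for $i=1,2$ and $\lambda\in[0,1]$. For any $\epsilon>0$, pick near-minimizers $(s_i,t_i)$ with $s_i\ge\alpha_i$, $t_i\ge\beta_i$ and $\breve{\varphi}(s_i,t_i)\le\underline{\Theta}(\alpha_i,\beta_i)+\epsilon$. The convex combination $(\lambda s_1+\bar\lambda s_2,\lambda t_1+\bar\lambda t_2)$ (with $\bar\lambda:=1-\lambda$) is feasible for the constraint set defining $\underline{\Theta}$ at $(\lambda\alpha_1+\bar\lambda\alpha_2,\lambda\beta_1+\bar\lambda\beta_2)$, and by convexity of $\breve{\varphi}$,
\begin{align*}
\underline{\Theta}(\lambda\alpha_1+\bar\lambda\alpha_2,\lambda\beta_1+\bar\lambda\beta_2)
&\le\breve{\varphi}(\lambda s_1+\bar\lambda s_2,\lambda t_1+\bar\lambda t_2)\\
&\le\lambda\breve{\varphi}(s_1,t_1)+\bar\lambda\breve{\varphi}(s_2,t_2)\\
&\le\lambda\underline{\Theta}(\alpha_1,\beta_1)+\bar\lambda\underline{\Theta}(\alpha_2,\beta_2)+\epsilon.
\end{align*}
Letting $\epsilon\downarrow0$ yields convexity.

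Finally, for $\overline{\Theta}$ and $\Theta_q$, I would run the mirror-image argument: pick near-maximizers inside the box, form their convex combination, note it still lies in the box at the convex combination of the corners, and invoke concavity of $\invbreve\varphi$ (respectively $\invbreve\varphi_q$ in the one-dimensional case). There is essentially no obstacle here---the statement is a soft one and follows directly from the definitions of the envelopes together with the elementary monotone-closure argument; the only thing to be slightly careful about is that the constraint set in $\overline{\Theta}$ (and in $\Theta_q$) includes the lower bound $s,t\ge0$, but this lower bound is preserved under convex combinations, so feasibility is maintained.
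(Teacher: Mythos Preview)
Your proof is correct and complete. The paper does not give an explicit proof of this lemma; it treats the statement as essentially self-evident and immediately moves on to the related Lemma~\ref{lem:exchange} (the commutation of the envelope and monotone-closure operations). If one reads the paper's ``Indeed'' sentence as an implicit justification, the argument would be: by Lemma~\ref{lem:exchange}, $\underline{\Theta}$ equals the lower convex envelope of $(\alpha,\beta)\mapsto\inf_{s\ge\alpha,t\ge\beta}\underline{\varphi}(s,t)$, hence is convex by construction (and similarly for $\overline{\Theta},\Theta_q$). Your route is the more direct one---you use only that $\breve{\varphi}$ is already convex and that the monotone-closure operation $\inf_{s\ge\alpha,t\ge\beta}(\cdot)$ preserves convexity, which you verify by the standard near-minimizer/convex-combination argument. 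Both routes are equally valid; yours avoids the detour through Lemma~\ref{lem:exchange}, while the paper's (implicit) route gets the stronger exchange identity as a byproduct.
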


Indeed, the operation of taking lower convex envelope and the operation
$\inf_{s\ge\alpha,t\ge\beta}$ (as well as the operation of taking
upper concave envelope and $\sup_{s\leq\alpha,t\leq\beta}$) can be
swapped, as shown in the following lemma. 
\begin{lem}
\label{lem:exchange}For a function $f:S\to\mathbb{R}$ with $S:=\prod_{i=1}^{n}[0,\infty)$,
the lower convex envelope of $\mathbf{y}\in S\mapsto\inf_{\mathbf{x}\in S_{\mathbf{y}}}f(\mathbf{x})$
with $S_{\mathbf{y}}=\prod_{i=1}^{n}[y_{i},\infty)$ is equal to $\mathbf{y}\in S\mapsto\inf_{\mathbf{x}\in S_{\mathbf{y}}}\breve{f}(\mathbf{x})$.
Moreover, the same is true if we reset $S_{\mathbf{y}}=\prod_{i=1}^{n}[0,y_{i}]$. 
\end{lem}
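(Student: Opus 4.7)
The plan is to establish the two inequalities $\breve{g} \leq h$ and $\breve{g} \geq h$, where I write $g(\mathbf{y}) := \inf_{\mathbf{x} \in S_\mathbf{y}} f(\mathbf{x})$ and $h(\mathbf{y}) := \inf_{\mathbf{x} \in S_\mathbf{y}} \breve{f}(\mathbf{x})$. I focus on the first part of the lemma (with $S_\mathbf{y} = \prod_i [y_i, \infty)$); the second part is entirely analogous.

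For the easy direction $h \leq \breve{g}$, I would observe that $h$ is itself a convex minorant of $g$. Pointwise $h \leq g$ is immediate from $\breve{f} \leq f$. For convexity, given $\mathbf{y}_1, \mathbf{y}_2 \in S$, $\lambda \in [0,1]$, and any $\mathbf{x}_1, \mathbf{x}_2 \in S$ with $\mathbf{x}_i \geq \mathbf{y}_i$, the point $\lambda \mathbf{x}_1 + (1-\lambda) \mathbf{x}_2$ lies in $S$ and dominates $\lambda \mathbf{y}_1 + (1-\lambda) \mathbf{y}_2$; combining this with convexity of $\breve{f}$ and taking infima over the $\mathbf{x}_i$ yields $h(\lambda \mathbf{y}_1 + (1-\lambda) \mathbf{y}_2) \leq \lambda h(\mathbf{y}_1) + (1-\lambda) h(\mathbf{y}_2)$. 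Since $\breve{g}$ is the largest convex minorant of $g$, the inequality $h \leq \breve{g}$ follows.

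For the reverse inequality, fix $\mathbf{y}_0 \in S$ and any $\mathbf{x}_0 \in S$ with $\mathbf{x}_0 \geq \mathbf{y}_0$. By the Carath\'eodory representation of the convex envelope, $\breve{f}(\mathbf{x}_0) = \inf \sum_i \lambda_i f(\mathbf{x}_i)$ over convex combinations $\sum_i \lambda_i \mathbf{x}_i = \mathbf{x}_0$ with $\mathbf{x}_i \in S$. For any such decomposition I would construct $\mathbf{y}_i \in S$ with $\mathbf{y}_i \leq \mathbf{x}_i$ and $\sum_i \lambda_i \mathbf{y}_i = \mathbf{y}_0$: coordinate-wise, if $x_{0,j} > 0$ let $y_{i,j} := (y_{0,j}/x_{0,j}) x_{i,j}$, and otherwise set $y_{i,j} := 0$; then $0 \leq y_{i,j} \leq x_{i,j}$ and $\sum_i \lambda_i y_{i,j} = y_{0,j}$ (using $y_{0,j} \leq x_{0,j}$). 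Since $\mathbf{x}_i \in S_{\mathbf{y}_i}$, we have $g(\mathbf{y}_i) \leq f(\mathbf{x}_i)$, hence by the convex-combination characterization of $\breve{g}$,
\[
\breve{g}(\mathbf{y}_0) \leq \sum_i \lambda_i g(\mathbf{y}_i) \leq \sum_i \lambda_i f(\mathbf{x}_i).
\]
Passing to the infimum over decompositions gives $\breve{g}(\mathbf{y}_0) \leq \breve{f}(\mathbf{x}_0)$, and then the infimum over $\mathbf{x}_0 \in S_{\mathbf{y}_0}$ yields $\breve{g}(\mathbf{y}_0) \leq h(\mathbf{y}_0)$.

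For the second part of the lemma (with $S_{\mathbf{y}} = \prod_i [0, y_i]$), the same argument goes through using the translated decomposition $\mathbf{y}_i := \mathbf{x}_i + (\mathbf{y}_0 - \mathbf{x}_0)$, which satisfies $\mathbf{y}_i \geq \mathbf{x}_i \geq 0$ (hence lies in $S$) and $\sum_i \lambda_i \mathbf{y}_i = \mathbf{y}_0$ whenever $\mathbf{y}_0 \geq \mathbf{x}_0$. The only nontrivial ingredient throughout is the existence of such a coordinate-wise decomposition, which ultimately amounts to the componentwise Minkowski identity $\sum_i \lambda_i [0, x_{i,j}] = [0, x_{0,j}]$ (or its translate in the second case); once this is in hand, the rest is soft convex analysis.
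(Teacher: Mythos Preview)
Your proof is correct and carries out in detail what the paper's one-line argument leaves implicit. The paper simply asserts that both $\breve{g}$ and $h$ coincide with the common expression
\[
k(\mathbf{y})\;=\;\inf\Bigl\{\textstyle\sum_i q_i f(\mathbf{x}_i)\;:\;\sum_i q_i \mathbf{x}_i\in S_{\mathbf{y}}\Bigr\},
\]
invoking Carath\'eodory's theorem. Unpacking this, $h=k$ is immediate, while $\breve{g}=k$ needs precisely the observation you supply: given $(q_i,\mathbf{x}_i)$ with barycenter $\mathbf{x}_0\in S_{\mathbf{y}_0}$, one must manufacture points $\mathbf{y}_i\in S$ with $\mathbf{y}_i\le\mathbf{x}_i$ (resp.\ $\mathbf{y}_i\ge\mathbf{x}_i$) and $\sum_i q_i\mathbf{y}_i=\mathbf{y}_0$. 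Your coordinate-wise scaling $y_{i,j}=(y_{0,j}/x_{0,j})x_{i,j}$ in the first case and the translation $\mathbf{y}_i=\mathbf{x}_i+(\mathbf{y}_0-\mathbf{x}_0)$ in the second case do exactly this. So the approaches are the same; you have just written out the step the paper calls an ``observation.''
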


This lemma immediately implies that the upper concave envelope of
$\mathbf{y}\in S\mapsto\sup_{\mathbf{x}\in S_{\mathbf{y}}}f(\mathbf{x})$
is equal to $\mathbf{y}\in S\mapsto\sup_{\mathbf{x}\in S_{\mathbf{y}}}\invbreve f(\mathbf{x})$,
where $S_{\mathbf{y}}=\prod_{i=1}^{n}[0,y_{i}]$ or $S_{\mathbf{y}}=\prod_{i=1}^{n}[y_{i},\infty)$. 
\begin{proof}
This lemma follows directly by observing that the lower convex envelope
of $\mathbf{y}\in S\mapsto\inf_{\mathbf{x}\in S_{\mathbf{y}}}f(\mathbf{x})$
and the function $\mathbf{y}\in S\mapsto\inf_{\mathbf{x}\in S_{\mathbf{y}}}\breve{f}(\mathbf{x})$
are both equal to 
\[
\mathbf{y}\in S\mapsto\inf_{\{(q_{i},\mathbf{x}_{i}):i\in[n+2]\}:q_{i}\ge0,\sum_{i=1}^{n+2}q_{i}=1,\sum_{i=1}^{n+2}q_{i}\mathbf{x}_{i}\in S_{\mathbf{y}}}\sum_{i=1}^{n+2}q_{i}f(\mathbf{x}_{i}).
\]
Here Carathéodory's theorem is applied. 
\end{proof}
By the convexity (or concavity) and monotonicity, one can easily verify
that $\underline{\Theta}$ is continuous on $[0,\alpha_{\max})\times[0,\beta_{\max})$,
$\overline{\Theta}$ is continuous on $(0,\alpha_{\max}]\times(0,\beta_{\max}]$,
and $\Theta_{q}$ is continuous on $(0,\alpha_{\max}]$ for each given
$q<0$. In particular, for the finite alphabet case, by the compactness
of the probability simplex, $\underline{\Theta}$ and $\overline{\Theta}$
are continuous on $[0,\alpha_{\max}]\times[0,\beta_{\max}]$, and
$\Theta_{q}$ is continuous on $[0,\alpha_{\max}]$ for each given
$q<0$.  If we introduce a time-sharing random variable $W$ on $[3]$,
then we can rewrite 
\begin{align*}
\underline{\Theta}(\alpha,\beta) & =\inf_{\substack{Q_{W},Q_{X|W},Q_{Y|W}:\\
D(Q_{X|W}\|P_{X}|Q_{W})\ge\alpha,\\
D(Q_{Y|W}\|P_{Y}|Q_{W})\ge\beta
}
}\mathbb{D}(Q_{X|W},Q_{Y|W}\|P_{XY}|Q_{W})\\
\overline{\Theta}(\alpha,\beta) & =\sup_{\substack{Q_{W},Q_{X|W},Q_{Y|W}:\\
D(Q_{X|W}\|P_{X}|Q_{W})\leq\alpha,\\
D(Q_{Y|W}\|P_{Y}|Q_{W})\leq\beta
}
}\mathbb{D}(Q_{X|W},Q_{Y|W}\|P_{XY}|Q_{W})\\
\Theta_{q}(\alpha) & =\sup_{Q_{W},Q_{X|W}:D(Q_{X|W}\|P_{X}|Q_{W})\leq\alpha}\inf_{Q_{Y|W}}\mathbb{D}(Q_{X|W},Q_{Y|W}\|P_{XY}|Q_{W})-\frac{D(Q_{Y|W}\|P_{Y}|Q_{W})}{q}.
\end{align*}

We now discuss a property of $\underline{\Theta}(\alpha,\beta)$.
One can observe that $\underline{\Theta}(\alpha,\beta)\ge\alpha$,
since 
\begin{align*}
D(Q_{XY|W}\|P_{XY}|Q_{W}) & =D(Q_{X|W}\|P_{X}|Q_{W})+D(Q_{Y|XW}\|P_{Y|X}|Q_{XW})\\
 & \ge D(Q_{X|W}\|P_{X}|Q_{W})\ge\alpha.
\end{align*}
When does $\underline{\Theta}(\alpha,\beta)=\alpha$ hold? For the
finite alphabet case, the infimum in the definition of $\underline{\Theta}(\alpha,\beta)$
is attained. Suppose $(Q_{XY|W},Q_{W})$ is a pair of optimal distributions
such that $\underline{\Theta}(\alpha,\beta)=\alpha$. Then, $D(Q_{X|W}\|P_{X}|Q_{W})=\alpha,D(Q_{Y|XW}\|P_{Y|X}|Q_{XW})=0$.
Hence, $Q_{Y|XW}=P_{Y|X}$ and $D(Q_{X|W}\|P_{X}|Q_{W})=\alpha$.
However, $(Q_{XY|W},Q_{W})$ must satisfy $D(Q_{Y|W}\|P_{Y}|Q_{W})\ge\beta$,
i.e., $D(Q_{X|W}\circ P_{Y|X}\|P_{Y}|Q_{W})\ge\beta$. It means that
$\underline{\Theta}(\alpha,\beta)=\alpha$ if and only if $\beta\le\invbreve\eta(\alpha)$,
where for $s\ge0$, 
\[
\eta(s):=\sup_{Q_{X}:D(Q_{X}\|P_{X})=s}D(Q_{X}\circ P_{Y|X}\|P_{Y}).
\]
Hence, for sufficiently small $\beta$, $\underline{\Theta}(\alpha,\beta)=\alpha$,
i.e., $\underline{\Theta}$ is a part of the plane $(\alpha,\beta)\mapsto\alpha$
for $(\alpha,\beta)$ such that $\beta\le\invbreve\eta(\alpha)$.
The function $\eta$ is called Shannon concentration function, which
was characterized by Mrs Gerber's lemma for binary symmetric channels,
and by the entropy-power inequality for Gaussian channels; see more
discussions in Section \ref{subsec:R=0000E9nyi-Concentration-Functions}.

Define several conditions on $(p,q,p^{*},q^{*},\alpha,\beta,f,g)$
as follows. 
\begin{enumerate}
\item Condition 0+: $(\frac{1}{p^{*}},\frac{1}{q^{*}})$ is a subgradient
of $\underline{\Theta}$ at the point $(\alpha,\beta)$. 
\item Condition 0-: $(\frac{1}{p^{*}},\frac{1}{q^{*}})$ is a supergradient
of $\overline{\Theta}$ at the point $(\alpha,\beta)$. 
\item Condition 0-{}-: $\frac{1}{p^{*}}$ is a supergradient of $\overline{\Theta}_{q}$
at the point $\alpha$. 
\item Condition 1+: $\frac{1}{n}\Ent_{p,p^{*}}(f)$ is in $[\alpha,\infty)$
if $p\ge p^{*}$, and in $(-\infty,\alpha]$ if $0\le p\le p^{*}$.
Similarly, $\frac{1}{n}\Ent_{q,q^{*}}(g)$ is in $[\beta,\infty)$
if $q\ge q^{*}$, and in $(-\infty,\beta]$ if $0\le q\le q^{*}$. 
\item Condition 1-: $\frac{1}{n}\Ent_{p,p^{*}}(f)$ is in $(-\infty,\alpha]$
if $p\ge p^{*}$, and in $[\alpha,\infty)$ if $0\le p\le p^{*}$.
Similarly, $\frac{1}{n}\Ent_{q,q^{*}}(g)$ is in $(-\infty,\beta]$
if $q\ge q^{*}$, and in $[\beta,\infty)$ if $0\le q\le q^{*}$. 
\item Condition 1-{}-: $\frac{1}{n}\Ent_{p,p^{*}}(f)$ is in $(-\infty,\alpha]$
if $p\ge p^{*}$, and in $[\alpha,\infty)$ if $0\le p\le p^{*}$. 
\end{enumerate}
By Corollary \ref{cor:BL}, we obtain the following simple forms of
strong (forward and reverse) Brascamp--Lieb inequalities. We define
the \emph{effective region} of $\overline{\Theta}$ as the set of
$(\alpha,\beta)$ such that $\overline{\Theta}(\alpha,\beta)=\invbreve{\varphi}(\alpha,\beta)$. 
\begin{thm}[Strong Brascamp--Lieb Inequalities (Two-Function Version)]
\label{thm:strongBL} Let $\alpha\in[0,\alpha_{\max}],\beta\in[0,\beta_{\max}]$.
Then for $P_{XY}^{\otimes n}$, the following hold. 
\begin{enumerate}
\item Let $p^{*},q^{*}\in[0,\infty]$ satisfy Condition 0+. Let $p,q\in(0,\infty)$.
Then 
\begin{align}
\langle f,g\rangle & \leq e^{-n(\underline{\Theta}(\alpha,\beta)-\frac{\alpha}{p}-\frac{\beta}{q})}\Vert f\Vert_{p}\Vert g\Vert_{q}\label{eq:FBL-1}
\end{align}
for all functions $f:\mathcal{X}^{n}\to[0,\infty),g:\mathcal{Y}^{n}\to[0,\infty)$
satisfying Condition 1+. 
\item Let $p^{*},q^{*}\in[0,\infty]$ satisfy Condition 0-. Let $p,q\in(0,\infty)$.
Then 
\begin{align}
\langle f,g\rangle & \geq e^{-n(\overline{\Theta}(\alpha,\beta)-\frac{\alpha}{p}-\frac{\beta}{q})}\Vert f\Vert_{p}\Vert g\Vert_{q}\label{eq:RBL-1}
\end{align}
for all functions $f:\mathcal{X}^{n}\to[0,\infty),g:\mathcal{Y}^{n}\to[0,\infty)$
satisfying Condition 1-. 
\item Let $p^{*}\in[0,\infty],q\in(-\infty,0)$ satisfy Condition 0-{}-.
Let $p\in(0,\infty)$. Then 
\begin{align}
\langle f,g\rangle & \geq e^{-n(\Theta_{q}(\alpha)-\frac{\alpha}{p})}\Vert f\Vert_{p}\Vert g\Vert_{q}\label{eq:RBL-2}
\end{align}
for all functions $f:\mathcal{X}^{n}\to[0,\infty)$ satisfying Condition
1-{}- and all $g:\mathcal{Y}^{n}\to[0,\infty)$. 
\item For any Polish spaces $\mathcal{X},\mathcal{Y}$, the inequality in
Statement 1) is exponentially sharp as $n\to\infty$ for $\alpha\in[0,\alpha_{\max}],\beta\in[0,\beta_{\max}]$,
the inequality in Statement 2) is exponentially sharp as $n\to\infty$
for $(\alpha,\beta)$ in the effective region of $\overline{\Theta}$,
and the inequality in Statement 3) is exponentially sharp as $n\to\infty$
for $\alpha$ in the effective region of $\Theta_{q}$. 
\end{enumerate}
\end{thm}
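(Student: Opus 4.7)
The plan is to derive Statements~1--3 from Corollary~\ref{cor:BL} by choosing $\hat{p}=p^{*}$, $\hat{q}=q^{*}$ (with $\hat{q}$ further optimized in Statement~3), and then to convert the inequality involving $\underline{\Lambda}^{*}$ or $\overline{\Lambda}^{*}$ into the simpler form involving $\underline{\Theta}$, $\overline{\Theta}$, or $\Theta_{q}$ by exploiting the stated (sub/super)gradient hypothesis. Statement~4 will then follow from the asymptotic tightness of Theorem~\ref{thm:BLexponent} combined with the attainability of these envelopes on their effective regions.

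For Statement~1, I set $\alpha':=\tfrac{1}{n}\Ent_{p,p^{*}}(f)$ and $\beta':=\tfrac{1}{n}\Ent_{q,q^{*}}(g)$. Corollary~\ref{cor:BL} yields
\[
\langle f,g\rangle\le\exp\Bigl\{-n\bigl(\underline{\Lambda}_{p,q,p^{*},q^{*}}^{*}(\alpha',\beta')-\tfrac{\alpha'}{p}-\tfrac{\beta'}{q}\bigr)\Bigr\}\,\Vert f\Vert_{p}\Vert g\Vert_{q},
\]
so it suffices to prove $\underline{\Lambda}_{p,q,p^{*},q^{*}}^{*}(\alpha',\beta')-\tfrac{\alpha'}{p}-\tfrac{\beta'}{q}\ge\underline{\Theta}(\alpha,\beta)-\tfrac{\alpha}{p}-\tfrac{\beta}{q}$. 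The crux is the pointwise inequality $\eta_{p,p^{*}}(\alpha',s)\ge(\alpha'-s)/p^{*}$, which holds because $\eta_{p,p^{*}}$ is the maximum of $(\alpha'-s)/p$ and $(\alpha'-s)/p^{*}$ when $p,p^{*}\ge0$. Inserting this and its $\beta$-analogue into the formula $\underline{\Lambda}^{*}_{p,q,p^{*},q^{*}}(\alpha',\beta')=\inf_{s,t}[\underline{\varphi}(s,t)+\eta_{p,p^{*}}(\alpha',s)+\eta_{q,q^{*}}(\beta',t)]$ gives
\[
\underline{\Lambda}^{*}(\alpha',\beta')\ge\tfrac{\alpha'}{p^{*}}+\tfrac{\beta'}{q^{*}}+\inf_{s,t}\bigl[\underline{\varphi}(s,t)-\tfrac{s}{p^{*}}-\tfrac{t}{q^{*}}\bigr].
\]
The subgradient hypothesis says that $(\alpha,\beta)$ minimizes $(x,y)\mapsto\underline{\Theta}(x,y)-x/p^{*}-y/q^{*}$, and Lemma~\ref{lem:exchange} lets me interchange the increasing convex envelope with the linear minimization, so that $\inf_{s,t}[\underline{\varphi}(s,t)-s/p^{*}-t/q^{*}]=\underline{\Theta}(\alpha,\beta)-\alpha/p^{*}-\beta/q^{*}$. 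Substituting and rearranging, the required inequality reduces to $(\alpha'-\alpha)(1/p^{*}-1/p)\ge0$ together with its $\beta$-analogue, which are exactly the sign conditions of Condition~1+.

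Statement~2 follows by the mirror-image argument: work with the concave envelope $\overline{\Theta}$, the supergradient hypothesis, the opposite one-sided bound on $\eta$ (which is the one supplied by the sup-inf structure of $\overline{\Lambda}^{*}$), and invoke Condition~1-{} for the final sign check. For Statement~3, only $f$ is constrained and $g$ is free; after applying Corollary~\ref{cor:BL} with $\hat{p}=p^{*}$, optimizing the resulting lower bound over $g$ produces precisely the inner $\inf_{Q_{Y|W}}$ appearing in the definition of $\Theta_{q}$, and the one-dimensional supergradient hypothesis on $\Theta_{q}$ reproduces the Legendre-type manipulation of Statement~1 in the single variable $\alpha$, with Condition~1-{}- supplying the sign check. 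The main bookkeeping complication is the behavior of $\eta$ at the extreme values $p^{*},q^{*}\in\{0,\infty\}$, where the pointwise bound must be read through the paper's conventions for $a/0$ and $a/\infty$; in each such case a direct verification closes the argument.

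For Statement~4, the asymptotic identity $\underline{\Lambda}^{(\infty)}_{p,q,p^{*},q^{*}}=\underline{\Lambda}^{*}_{p,q,p^{*},q^{*}}$ of Theorem~\ref{thm:BLexponent} means that the bound in Statement~1 is saturated asymptotically by some $(Q_{W},Q_{XY|W})$ attaining $\underline{\Theta}(\alpha,\beta)$ with marginal conditional relative entropies equal to $(\alpha,\beta)$ (by compactness on finite alphabets, and by a density argument on Polish spaces). Testing Statement~1 on product-type functions built from such an optimizer makes Condition~1+ hold with equality and matches the exponent to leading order in~$n$. The parallel verification for $\overline{\Theta}$ and $\Theta_{q}$ needs the effective-region hypothesis, since outside that region the concave envelope strictly dominates the underlying $\overline{\varphi}$ (resp.\ $\varphi_{q}$) and no feasible test pair saturates the bound. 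The hard part of the whole argument is precisely this last point combined with the minimax structure in Statement~3, where the $Y$-side test function must additionally be chosen to achieve the inner $\inf_{Q_{Y|W}}$.
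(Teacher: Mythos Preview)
Your argument for Statements 1--3 is essentially the paper's: set $\hat p=p^{*}$, $\hat q=q^{*}$ in Corollary~\ref{cor:BL}, lower/upper bound the $\eta$-contribution (or, for the reverse case, the $\min$ inside $\overline{\Lambda}^{*}$) by the single $p^{*},q^{*}$-term, invoke the sub/super-gradient hypothesis, and then use the sign match in Condition~1$\pm$. The paper keeps the full $\max$ in $\eta_{p,p^{*}}$ one step longer and, for Statement~2, is more explicit about the case split $p\gtrless p^{*}$, $q\gtrless q^{*}$ (which enters because the constraints $\alpha^{\pm}_{\lambda}$ on $\hat Q_{X|W}$ in $\overline{\Lambda}^{*}$ depend on these orderings), but the substance is the same. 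One small correction: you claim $\inf_{s,t}[\underline{\varphi}(s,t)-s/p^{*}-t/q^{*}]=\underline{\Theta}(\alpha,\beta)-\alpha/p^{*}-\beta/q^{*}$; only the inequality $\ge$ is needed (and only that follows directly from $\underline{\varphi}\ge\underline{\Theta}$ together with Condition~0+), and that is what both you and the paper actually use.

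The gap is in Statement~4. You rely on the asymptotic identity of Theorem~\ref{thm:BLexponent}, but that identity is established only for finite alphabets, whereas Statement~4 asserts sharpness for arbitrary Polish spaces. Your ``density argument'' is not a repair: the finite-alphabet sharpness of $\underline{\Lambda}^{*}$ does not transfer to Polish spaces by approximation, because neither $\underline{\Theta}$ nor the optimal $(Q_{W},Q_{XY|W})$ vary continuously under finite-alphabet truncation in any usable sense. The paper does not prove Statement~4 inside this proof either; the sharpness is obtained from the strong small-set expansion and $q$-stability theorems (Theorems~\ref{thm:strongsse}, \ref{thm:strongsse-2}, \ref{thm:strongqstability}), which are proved directly on Polish spaces via Sanov's theorem and supply exponentially-optimal indicator test functions $f=1_{A_n}$, $g=1_{B_n}$. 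If you want a self-contained Statement~4, that is the route to take.
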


\begin{proof}
We first prove Statements 1)-3). Observe that 
\begin{align*}
\underline{\Lambda}_{p,q,p^{*},q^{*}}^{*}(a,b) & =\inf_{s,t\ge0}\underline{\varphi}(s,t)+\eta_{p,p^{*}}(a,s)+\eta_{q,q^{*}}(b,t)\\
 & \geq\inf_{s,t\ge0}\underline{\Theta}(s,t)+\eta_{p,p^{*}}(a,s)+\eta_{q,q^{*}}(b,t).
\end{align*}
For nonnegative $(f,g)$ satisfying Condition 1+, denote $a=\Ent_{p,p^{*}}(f)$
and $b=\Ent_{q,q^{*}}(g)$. Then we have 
\begin{align*}
\underline{\Lambda}_{p,q,p^{*},q^{*}}^{*}(a,b)-\frac{a}{p}-\frac{b}{q} & \geq\inf_{s,t\ge0}\underline{\Theta}(s,t)+\frac{-s}{p}\lor((\frac{1}{p^{*}}-\frac{1}{p})a-\frac{s}{p^{*}})+\frac{-t}{q}\lor((\frac{1}{q^{*}}-\frac{1}{q})b-\frac{t}{q^{*}})\\
 & \geq\inf_{s,t\ge0}\underline{\Theta}(s,t)+\frac{-s}{p}\lor((\frac{1}{p^{*}}-\frac{1}{p})\alpha-\frac{s}{p^{*}})+\frac{-t}{q}\lor((\frac{1}{q^{*}}-\frac{1}{q})\beta-\frac{t}{q^{*}})\\
 & =\inf_{s,t\ge0}\underline{\Theta}(s,t)+\eta_{p,p^{*}}(\alpha,s)+\eta_{q,q^{*}}(\beta,t)-\frac{\alpha}{p}-\frac{\beta}{q}\\
 & \ge\inf_{s,t\ge0}\underline{\Theta}(s,t)+\frac{\alpha-s}{p^{*}}+\frac{\beta-t}{q^{*}}-\frac{\alpha}{p}-\frac{\beta}{q}\\
 & =\underline{\Theta}(\alpha,\beta)-\frac{\alpha}{p}-\frac{\beta}{q},
\end{align*}
where the last line follows by Condition 0+. Substituting this into
\eqref{eq:-6} yields \eqref{eq:FBL-1}.

We next prove \eqref{eq:RBL-1}. Since $p,q,p^{*},q^{*}\ge0$, we
have that 
\begin{align}
\overline{\Lambda}_{p,q,p^{*},q^{*}}^{*}(a,b) & =\sup_{Q_{W},\mathbf{Q}^{+}}\min\biggl\{\theta_{p,q}(Q_{W},Q_{X|W},Q_{Y|W}),\theta_{p,q^{*}}(Q_{W},Q_{X|W},\hat{Q}_{Y|W}),\nonumber \\
 & \qquad\theta_{p^{*},q}(Q_{W},\hat{Q}_{X|W},Q_{Y|W}),\theta_{p^{*},q^{*}}(Q_{W},\hat{Q}_{X|W},\hat{Q}_{Y|W})\biggr\}\nonumber \\
 & \leq\sup_{\substack{Q_{W},\hat{Q}_{X|W},\hat{Q}_{Y|W}:\\
\alpha_{p/p^{*}}^{-}\le D(\hat{Q}_{X|W}\|P_{X}|Q_{W})\le\alpha_{p/p^{*}}^{+},\\
\beta_{q/q^{*}}^{-}\le D(\hat{Q}_{Y|W}\|P_{Y}|Q_{W})\leq\beta_{q/q^{*}}^{+}
}
}\theta_{p^{*},q^{*}}(Q_{W},\hat{Q}_{X|W},\hat{Q}_{Y|W})\nonumber \\
 & \leq\sup_{\alpha_{p/p^{*}}^{-}\le\hat{s}\le\alpha_{p/p^{*}}^{+},\beta_{q/q^{*}}^{-}\le\hat{t}\leq\beta_{q/q^{*}}^{+}}\overline{\Theta}(\hat{s},\hat{t})+\frac{a-\hat{s}}{p^{*}}+\frac{b-\hat{t}}{q^{*}}.\label{eq:-44}
\end{align}

We first assume $p\ge p^{*},q\ge q^{*}$ (and hence by assumption,
$0\le a\le\alpha,0\le b\le\beta$). Then, we further have 
\begin{align*}
\overline{\Lambda}_{p,q,p^{*},q^{*}}^{*}(a,b) & \le\sup_{\hat{s}\le a,\hat{t}\le b}\overline{\Theta}(\hat{s},\hat{t})+\frac{a-\hat{s}}{p^{*}}+\frac{b-\hat{t}}{q^{*}}.
\end{align*}
Hence, 
\begin{align}
\overline{\Lambda}_{p,q,p^{*},q^{*}}^{*}(a,b)-\frac{a}{p}-\frac{b}{q} & \le[\sup_{\hat{s}\le a,\hat{t}\le b}\overline{\Theta}(\hat{s},\hat{t})-\frac{\hat{s}}{p^{*}}-\frac{\hat{t}}{q^{*}}]+[(\frac{1}{p^{*}}-\frac{1}{p})a+(\frac{1}{q^{*}}-\frac{1}{q})b]\label{eq:-45}\\
 & \le\overline{\Theta}(\alpha,\beta)-\frac{\alpha}{p}-\frac{\beta}{q},
\end{align}
where the last inequality above follows by the fact that the supremum
term in \eqref{eq:-45} is upper bounded by $\sup_{\hat{s}\le\alpha,\hat{t}\le\beta}\overline{\Theta}(\hat{s},\hat{t})-\frac{\hat{s}}{p^{*}}-\frac{\hat{t}}{q^{*}}=\overline{\Theta}(\alpha,\beta)-\frac{\alpha}{p^{*}}-\frac{\beta}{q^{*}}$
and the remaining terms are nondecreasing in $a,b$. Substituting
\eqref{eq:-45} into \eqref{eq:-7} yields \eqref{eq:RBL-1}. The
case of $p\ge p^{*},q\le q^{*}$, the case of $p\le p^{*},q\ge q^{*}$,
and the case of $p\le p^{*},q\le q^{*}$ can be proven similarly.

We lastly prove \eqref{eq:RBL-2}. For $p\ge p^{*}$ (and hence by
assumption, $a\le\alpha$), we have 
\begin{align*}
\overline{\Lambda}_{p,q,p^{*},q^{*}}^{*}(a,b) & \le\sup_{\hat{s}\le a}\sup_{Q_{W},Q_{X|W}:D(Q_{X|W}\|P_{X}|Q_{W})=\hat{s}}\inf_{Q_{Y|W}}\mathbb{D}(Q_{X|W},Q_{Y|W}\|P_{XY}|Q_{W})\\
 & \qquad+\frac{b-D(Q_{Y|W}\|P_{Y}|Q_{W})}{q}+\frac{a-\hat{s}}{p^{*}}\\
 & \le\sup_{\hat{s}\le a}\Theta_{q}(\hat{s})+\frac{a-\hat{s}}{p^{*}}+\frac{b}{q}.
\end{align*}
Hence, 
\begin{align}
\overline{\Lambda}_{p,q,p^{*},q^{*}}^{*}(a,b)-\frac{a}{p}-\frac{b}{q} & \le\sup_{\hat{s}\le a}\Theta_{q}(\hat{s})-\frac{\hat{s}}{p^{*}}+(\frac{1}{p^{*}}-\frac{1}{p})a\le\Theta_{q}(\alpha)-\frac{\alpha}{p}.\label{eq:-45-1}
\end{align}
Substituting this into \eqref{eq:-7} yields \eqref{eq:RBL-2}. The
case $p\le p^{*}$ can be proven similarly.
\end{proof}
We know that for $p,p^{*}\ge0$, $\Ent_{p,p^{*}}(f)\ge\Ent_{0}(f)=-\log P_{X}^{\otimes n}(f>0)$.
Hence $P_{X}^{\otimes n}(f>0)\leq e^{-n\alpha}$ implies $\Ent_{p,p^{*}}(f)\ge\alpha$,
which immediately yields the following corollary. 
\begin{cor}
Let $\alpha\in[0,\alpha_{\max}],\beta\in[0,\beta_{\max}]$. Let $f:\mathcal{X}^{n}\to[0,\infty),g:\mathcal{Y}^{n}\to[0,\infty)$
be nonnegative functions such that $P_{X}^{\otimes n}(f>0)\leq e^{-n\alpha},P_{Y}^{\otimes n}(g>0)\leq e^{-n\beta}$.
Then the following hold. 
\begin{enumerate}
\item Let $p^{*},q^{*}\in[0,\infty]$ satisfy Condition 0+. Then for any
$p,q\in(0,\infty)$ such that $p\ge p^{*},q\ge q^{*}$, 
\begin{align}
\langle f,g\rangle & \leq e^{-n(\underline{\Theta}(\alpha,\beta)-\frac{\alpha}{p}-\frac{\beta}{q})}\Vert f\Vert_{p}\Vert g\Vert_{q}.\label{eq:FBL-2}
\end{align}
\item Let $p^{*},q^{*}\in[0,\infty]$ satisfy Condition 0-. Then for any
$p,q\in(0,\infty)$ such that $0<p\le p^{*},0<q\le q^{*}$, 
\begin{align}
\langle f,g\rangle & \geq e^{-n(\overline{\Theta}(\alpha,\beta)-\frac{\alpha}{p}-\frac{\beta}{q})}\Vert f\Vert_{p}\Vert g\Vert_{q}.\label{eq:RBL-3}
\end{align}
\item Let $p^{*}\in[0,\infty],q\in(-\infty,0)$ satisfy Condition 0-{}-.
Then for any $p\in(0,\infty)$ such that $0<p\le p^{*}$, 
\begin{align}
\langle f,g\rangle & \geq e^{-n(\Theta_{q}(\alpha)-\frac{\alpha}{p})}\Vert f\Vert_{p}\Vert g\Vert_{q}.\label{eq:RBL-4}
\end{align}
\end{enumerate}
\end{cor}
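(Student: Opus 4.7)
The plan is to derive this corollary as an immediate specialization of Theorem~\ref{thm:strongBL}, so the only real work is to translate the small-support hypotheses $P_X^{\otimes n}(f>0)\le e^{-n\alpha}$ and $P_Y^{\otimes n}(g>0)\le e^{-n\beta}$ into the entropy hypotheses (Conditions 1+, 1-, 1-{}-) demanded by that theorem. Once the translation is in place, each of the three statements follows by a direct quotation.

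First I would establish the key one-line reduction. Since $p,p^{*}\ge 0$, the monotonicity of $\hat p\mapsto\Ent_{p,\hat p}(f)$ for $p\in[0,\infty]$ (Statement~5 of the monotonicity lemma for $(p,\hat p)$-entropies) gives
\[
\Ent_{p,p^{*}}(f)\ \ge\ \Ent_{p,0}(f)\ =\ \Ent_{0}(f)\ =\ -\log P_{X}^{\otimes n}(f>0),
\]
using the definition $\Ent_{p,0}(f):=\Ent_{0}(f)=-\log P_{X}(f>0)$ (extended to product measures). Combined with $P_{X}^{\otimes n}(f>0)\le e^{-n\alpha}$ this yields $\frac{1}{n}\Ent_{p,p^{*}}(f)\ge\alpha$, and the same argument applied to $g$ gives $\frac{1}{n}\Ent_{q,q^{*}}(g)\ge\beta$.

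Next I would check, case by case, that these two inequalities are precisely what is required. For Statement~1, the hypothesis $p\ge p^{*},\, q\ge q^{*}$ activates the first branches of Condition~1+, which ask for $\frac{1}{n}\Ent_{p,p^{*}}(f)\in[\alpha,\infty)$ and $\frac{1}{n}\Ent_{q,q^{*}}(g)\in[\beta,\infty)$; these hold by the previous paragraph, and Theorem~\ref{thm:strongBL}(1) then yields \eqref{eq:FBL-2}. For Statement~2, the hypothesis $0<p\le p^{*},\, 0<q\le q^{*}$ activates the second branches of Condition~1-, which again demand $\frac{1}{n}\Ent_{p,p^{*}}(f)\ge\alpha$ and $\frac{1}{n}\Ent_{q,q^{*}}(g)\ge\beta$, so Theorem~\ref{thm:strongBL}(2) gives \eqref{eq:RBL-3}. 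For Statement~3, the hypothesis $0<p\le p^{*}$ activates the second branch of Condition~1-{}-, which only constrains $f$, and Theorem~\ref{thm:strongBL}(3) then produces \eqref{eq:RBL-4} with no additional entropy requirement on $g$.

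Because the substantive work is already absorbed into Theorem~\ref{thm:strongBL}, there is no genuine obstacle. The only thing worth a second look is the boundary $p=p^{*}$ or $q=q^{*}$, where both branches of the relevant condition coincide; here the required inclusion reduces to the same inequality $\frac{1}{n}\Ent_{p,p^{*}}(f)\ge\alpha$, which is still supplied by the monotonicity argument. Thus the entire proof is the display above, together with one invocation of Theorem~\ref{thm:strongBL} per statement.
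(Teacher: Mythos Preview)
Your proposal is correct and essentially identical to the paper's own argument: the paper also reduces the small-support hypotheses to $\Ent_{p,p^{*}}(f)\ge\Ent_{0}(f)=-\log P_{X}^{\otimes n}(f>0)\ge n\alpha$ via monotonicity in the second order, and then invokes Theorem~\ref{thm:strongBL} case by case.
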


\subsection{\label{subsec:Strong-Hypercontractivity-Inequa}Strong Hypercontractivity
Inequalities}

We next derive a strong version of hypercontractivity inequalities,
i.e., a special class of BL inequalities with the factors set to $1$
(or the BL exponents set to $0$). To this end, it suffices to find
conditions under which $\underline{\Theta}(\alpha,\beta)-\frac{\alpha}{p}-\frac{\beta}{q}\ge0$
holds for \eqref{eq:FBL-1}, $\overline{\Theta}(\alpha,\beta)-\frac{\alpha}{p}-\frac{\beta}{q}\le0$
holds for \eqref{eq:RBL-1}, and $\Theta_{q}(\alpha)-\frac{\alpha}{p}\le0$
holds for \eqref{eq:RBL-2}.

For $\alpha\in[0,\alpha_{\max}],\beta\in[0,\beta_{\max}]$, we define
the (forward and reverse) hypercontractivity regions as 
\begin{align}
\mathcal{R}_{\mathtt{FH}}^{(\alpha,\beta)}(P_{XY}) & :=\{(p,q)\in(0,\infty)^{2}:\underline{\Theta}(s,t)\ge\frac{s}{p}+\frac{t}{q},\forall s\ge\alpha,t\ge\beta\}\label{eq:-67}\\
\mathcal{R}_{\mathtt{RH}}^{(\alpha,\beta)}(P_{XY}) & :=\{(p,q)\in(0,\infty)^{2}:\overline{\Theta}(s,t)\le\frac{s}{p}+\frac{t}{q},\forall s\ge\alpha,t\ge\beta\}\label{eq:-68}\\
\mathcal{R}_{\mathtt{RH},q}^{(\alpha)}(P_{XY}) & :=\{p\in(0,\infty):\Theta_{q}(s)\le\frac{s}{p},\forall s\ge\alpha\}.\label{eq:RH}
\end{align}
When specialized to the case of $\alpha=\beta=0$, $\mathcal{R}_{\mathtt{FH}}^{(\alpha,\beta)}(P_{XY})$
and $\mathcal{R}_{\mathtt{RH}}^{(\alpha,\beta)}(P_{XY})$ reduce to
the usual hypercontractivity regions\footnote{Rigorously speaking, only hypercontractivity ribbons, rather than
hypercontractivity regions, were defined in \cite{anantharam2014hypercontractivity,kamath2015reverse}.
However, they are defined similarly, but with taking the Hölder conjugate
of $p$ or $q$ and excluding the Hölder region $\{(p,q)\in[1,\infty)^{2}:q\ge p'\}$
or $\{(p,q)\in(-\infty,1]^{2}:q\le p'\}$.} \cite{anantharam2014hypercontractivity,nair2014equivalent,kamath2015reverse,liu2018information}.

Following similar steps to the proof of Theorem \ref{thm:strongBL},
it is easy to obtain the following new version of hypercontractivity
from Theorem \ref{thm:BLexponent}. 
\begin{cor}[Strong Hypercontractivity Inequalities]
\label{thm:stronghypercontractivity-1} Let $\alpha\in[0,\alpha_{\max}],\beta\in[0,\beta_{\max}]$.
Then for $P_{XY}^{\otimes n}$, the following hold. 
\begin{enumerate}
\item Let $p^{*},q^{*}\in[0,\infty]$ satisfy Condition 0+. Let $(p,q)\in\mathcal{R}_{\mathtt{FH}}^{(\alpha,\beta)}(P_{XY})$.
Then 
\begin{align}
\langle f,g\rangle & \leq\Vert f\Vert_{p}\Vert g\Vert_{q}\label{eq:fh-1}
\end{align}
for all $f:\mathcal{X}^{n}\to[0,\infty),g:\mathcal{Y}^{n}\to[0,\infty)$
satisfying $\frac{1}{n}\Ent_{p,p^{*}}(f)\in[\alpha,\infty)$ and $\frac{1}{n}\Ent_{q,q^{*}}(g)\in[\beta,\infty)$. 
\item Let $p^{*},q^{*}\in[0,\infty]$ satisfy Condition 0-. Let $(p,q)\in\mathcal{R}_{\mathtt{RH}}^{(\alpha,\beta)}(P_{XY})$.
Then 
\begin{align}
\langle f,g\rangle & \geq\Vert f\Vert_{p}\Vert g\Vert_{q}\label{eq:rh-1}
\end{align}
for all $f:\mathcal{X}^{n}\to[0,\infty),g:\mathcal{Y}^{n}\to[0,\infty)$
satisfying $\frac{1}{n}\Ent_{p,p^{*}}(f)\in[\alpha,\infty)$ and $\frac{1}{n}\Ent_{q,q^{*}}(g)\in[\beta,\infty)$. 
\item Let $p^{*}\in[0,\infty],q\in(-\infty,0)$ satisfy Condition 0-{}-.
Let $p\in\mathcal{R}_{\mathtt{RH},q}^{(\alpha)}(P_{XY})$. Then \eqref{eq:rh-1}
still holds for all $f:\mathcal{X}^{n}\to[0,\infty)$ satisfying $\frac{1}{n}\Ent_{p,p^{*}}(f)\in[\alpha,\infty)$
and all $g:\mathcal{Y}^{n}\to[0,\infty)$. 
\end{enumerate}
\end{cor}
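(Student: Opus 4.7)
The plan is to apply Theorem \ref{thm:strongBL} and observe that under the hypercontractivity-region hypothesis the Brascamp--Lieb exponent collapses to something nonpositive (nonnegative for the reverse case), so the exponential factor can be dropped.

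For Statement 1, I would apply Theorem \ref{thm:strongBL}(1), whose hypotheses (Condition 0+ on $(p^{*},q^{*})$ and Condition 1+ on the entropies of $f,g$) are exactly those of the Corollary once we restrict to the natural regime $p\ge p^{*}$, $q\ge q^{*}$. This yields
\[
\langle f,g\rangle \le e^{-n(\underline{\Theta}(\alpha,\beta)-\alpha/p-\beta/q)}\Vert f\Vert_{p}\Vert g\Vert_{q}.
\]
Invoking $(p,q)\in\mathcal{R}_{\mathtt{FH}}^{(\alpha,\beta)}(P_{XY})$ via its defining inequality \eqref{eq:-67} at the single point $(s,t)=(\alpha,\beta)$ gives $\underline{\Theta}(\alpha,\beta)\ge \alpha/p+\beta/q$, so the exponent above is nonpositive and \eqref{eq:fh-1} is immediate.

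Statements 2 and 3 proceed identically. For Statement 2 (with $p\le p^{*}$, $q\le q^{*}$), Theorem \ref{thm:strongBL}(2) yields
\[
\langle f,g\rangle \ge e^{-n(\overline{\Theta}(\alpha,\beta)-\alpha/p-\beta/q)}\Vert f\Vert_{p}\Vert g\Vert_{q},
\]
and the inequality $\overline{\Theta}(\alpha,\beta)\le \alpha/p+\beta/q$ supplied by $(p,q)\in\mathcal{R}_{\mathtt{RH}}^{(\alpha,\beta)}(P_{XY})$ at $(s,t)=(\alpha,\beta)$ makes this exponent nonnegative, giving \eqref{eq:rh-1}. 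Statement 3 is the same scheme applied to Theorem \ref{thm:strongBL}(3): one obtains
\[
\langle f,g\rangle \ge e^{-n(\Theta_{q}(\alpha)-\alpha/p)}\Vert f\Vert_{p}\Vert g\Vert_{q},
\]
and $p\in\mathcal{R}_{\mathtt{RH},q}^{(\alpha)}(P_{XY})$ evaluated at $s=\alpha$ forces $\Theta_{q}(\alpha)\le \alpha/p$, again making the exponent nonnegative.

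The only bookkeeping point is reconciling the Corollary's one-sided entropic hypothesis $\tfrac{1}{n}\Ent_{p,p^{*}}(f)\in[\alpha,\infty)$ (and likewise for $g$) with the two-sided form of Condition 1$\pm$ of Theorem \ref{thm:strongBL}: this forces the orderings $p\ge p^{*}$, $q\ge q^{*}$ for Statement 1, and $p\le p^{*}$, $q\le q^{*}$ for Statement 2 (respectively $p\le p^{*}$ for Statement 3). These are precisely the standard hypercontractivity regimes, where $(1/p^{*},1/q^{*})$ is the critical subgradient slope and the hypercontractivity-region membership positions $(p,q)$ on the correct side of it. I do not anticipate any substantive technical obstacle beyond this identification, since all the real analytic work is already carried out in Theorem \ref{thm:strongBL}; the Corollary is effectively its zero-exponent specialization.
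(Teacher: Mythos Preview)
Your approach---apply the strong Brascamp--Lieb inequality of Theorem \ref{thm:strongBL} and then invoke the hypercontractivity-region hypothesis to kill the exponential factor---is exactly the route the paper indicates. The paper's one-line proof (``following similar steps to the proof of Theorem \ref{thm:strongBL} \ldots\ from Theorem \ref{thm:BLexponent}'') amounts to redoing that computation with the region condition replacing the single-point bound $\underline{\Theta}(\alpha,\beta)\ge \alpha/p+\beta/q$; your black-box use of Theorem \ref{thm:strongBL} followed by that same single-point bound achieves the same end.

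The one place your write-up is not quite complete is the reconciliation of the Corollary's one-sided entropic hypothesis with Condition~1$\pm$. You assert that membership in $\mathcal{R}_{\mathtt{FH}}^{(\alpha,\beta)}$ together with Condition~0+ \emph{forces} $p\ge p^{*}$, $q\ge q^{*}$ (and the reverse orderings for Statements 2--3), but you do not prove it, and it is not obvious: for instance when $\alpha=\beta=0$ the forward region is exactly the set of $(p,q)$ for which $(1/p,1/q)$ lies in the subdifferential of $\underline{\Theta}$ at the origin, and this set can contain points with $p<p^{*}$ whenever the subdifferential is not a singleton. In that boundary case the entropic constraint is vacuous and the conclusion still follows, but for general $(\alpha,\beta)$ you should either supply the missing argument (e.g.\ by going back to the expression $\inf_{s,t}\underline{\Theta}(s,t)+\eta_{p,p^{*}}(a,s)+\eta_{q,q^{*}}(b,t)-a/p-b/q$ and handling the regimes $s<\alpha$ and $s\ge\alpha$ separately using Condition~0+ and the region definition, respectively) or else note explicitly that the intended regime is $p\ge p^{*}$, $q\ge q^{*}$ for Statement~1 and the reverse for Statements~2--3. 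The paper does not spell this out either, so your proof sketch is at the same level of detail as the paper's, but the unproved ``forcing'' claim should be flagged as such rather than treated as automatic.
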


\begin{rem}
For any Polish spaces $\mathcal{X},\mathcal{Y}$, given $(\alpha,\beta)\in[0,\alpha_{\max}]\times[0,\beta_{\max}]$,
the inequality \eqref{eq:fh-1} is exponentially sharp, in the sense
that for any $(p,q)\in(0,\infty)^{2}\backslash\mathrm{cl}\mathcal{R}_{\mathtt{FH}}^{(\alpha,\beta)}(P_{XY})$,
there exist a positive integer $n$ and a pair of functions $(f,g)$
on $\mathcal{X}^{n},\mathcal{Y}^{n}$ respectively that satisfy the
assumption in Theorem \ref{thm:BLexponent} but violates \eqref{eq:fh-1}.
Here $\mathrm{cl}\mathcal{A}$ denotes the closure of a set $\mathcal{A}$.
For any Polish spaces $\mathcal{X},\mathcal{Y}$, given $(\alpha,\beta)$
in the effective region of $\overline{\Theta}$ and under the assumptions
in Statement 2, the inequality \eqref{eq:rh-1} is exponentially sharp
in a similar sense. Given $\alpha$ in the effective region of $\Theta_{q}$
and under the assumptions in Statement 3, the inequality \eqref{eq:rh-1}
 is also exponentially sharp in a similar sense. 
\end{rem}

Conceptually, the hypercontractivity regions in \eqref{eq:-67}-\eqref{eq:RH}
can be seen as a set of $(p,q)$ on which the hypercontractivity inequalities
(or the reverse versions) hold in the large-deviation regime; while
the usual hypercontractivity regions correspond to the moderate-deviation
regime. In large deviation theory, it is well known that the moderate-deviation
rate function can be recovered from the large-deviation rate function,
by letting the threshold parameter go to zero in a certain speed \cite{Dembo}.
Our hypercontractivity inequalities here are stronger than the usual
ones in a similar sense, if $P_{XY}$ satisfies that $\underline{\Theta}(s,t)\ge\frac{s}{p}+\frac{t}{q}$,
$\overline{\Theta}(s,t)\le\frac{s}{p}+\frac{t}{q}$, and $\overline{\Theta}_{q}(s)\le\frac{s}{p}$
hold for all $s,t\ge0$ if and only if they hold for a neighborhood
of the origin.

\section{\label{sec:Strong-BL-and-1}Strong BL and HC Inequalities: Single-Function
Version}

We next consider the single-function version of BL exponents. Given
$p,q,\hat{p}\in\overline{\mathbb{R}}$ and $P_{XY}$, we define the
single-function version of \emph{optimal forward and reverse BL exponents
}as 
\begin{align}
\underline{\Upsilon}_{p,q,\hat{p}}(\alpha|P_{XY}) & :=-\sup_{f\in\mathcal{F}_{\alpha}}\log\frac{\Vert P_{X|Y}(f)\Vert_{q}}{\Vert f\Vert_{\hat{p}}}\label{eq:-30}\\
\overline{\Upsilon}_{p,q,\hat{p}}(\alpha|P_{XY}) & :=-\inf_{f\in\mathcal{F}_{\alpha}}\log\frac{\Vert P_{X|Y}(f)\Vert_{q}}{\Vert f\Vert_{\hat{p}}}.\label{eq:-31}
\end{align}
Similar to Lemma \ref{lem:symmetry}, we also have the following lemma. 
\begin{lem}
\label{lem:symmetry-1} For $p,q,\hat{p}\in\overline{\mathbb{R}}\backslash\{0\}$,
both $\underline{\Gamma}_{p,q,\hat{p}}(\alpha|P_{XY}):=\underline{\Upsilon}_{p,q,\hat{p}}(\alpha|P_{XY})+\frac{\alpha}{\hat{p}}$
and $\overline{\Gamma}_{p,q,\hat{p}}(\alpha|P_{XY}):=\overline{\Upsilon}_{p,q,\hat{p}}(\alpha|P_{XY})+\frac{\alpha}{\hat{p}}$
are symmetric w.r.t. $(p,\hat{p})$. 
\end{lem}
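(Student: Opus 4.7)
The plan is to mimic the proof of Lemma \ref{lem:symmetry}, which reduces the whole assertion to a one-line algebraic identity. First I would dispose of the degenerate situation: if $\mathcal{F}_\alpha = \emptyset$, i.e., $\alpha \notin \mathcal{E}_{p,\hat{p}}(\mathcal{X})$, then Convention \ref{Convention-:-When} forces $\underline{\Upsilon}_{p,q,\hat{p}}(\alpha|P_{XY}) = -\infty$ and $\overline{\Upsilon}_{p,q,\hat{p}}(\alpha|P_{XY}) = +\infty$, and the shift $\alpha/\hat{p}$ is the same under $p \leftrightarrow \hat{p}$, so the required symmetry is trivial.

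So assume $\alpha \in \mathcal{E}_{p,\hat{p}}(\mathcal{X}) \subseteq \mathbb{R}$ and pick any $f \in \mathcal{F}_\alpha$. Then $(p,\hat{p}) \in \mathcal{S}_f$, and for $p,\hat{p} \in \mathbb{R}\setminus\{0\}$ both $\Vert f\Vert_p$ and $\Vert f\Vert_{\hat{p}}$ are finite and positive. The key identity I want to establish is
\[
-\log\frac{\Vert P_{X|Y}(f)\Vert_q}{\Vert f\Vert_{\hat{p}}} + \frac{\Ent_{p,\hat{p}}(f)}{\hat{p}} \;=\; -\log\frac{\Vert P_{X|Y}(f)\Vert_q}{\Vert f\Vert_{p}} + \frac{\Ent_{p,\hat{p}}(f)}{p}.
\]
After cancelling the common term $-\log\Vert P_{X|Y}(f)\Vert_q$, this reduces to the elementary identity $\log\Vert f\Vert_{\hat{p}} + \Ent_{p,\hat{p}}(f)/\hat{p} = \log\Vert f\Vert_p + \Ent_{p,\hat{p}}(f)/p$, which is an immediate substitution using $\Ent_{p,\hat{p}}(f) = \tfrac{p\hat{p}}{p-\hat{p}}\log(\Vert f\Vert_p/\Vert f\Vert_{\hat{p}})$.

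Because $\Ent_{p,\hat{p}}(f)$ is symmetric in its two orders (and $\alpha/\hat{p} = \Ent_{p,\hat{p}}(f)/\hat{p}$ whenever $f \in \mathcal{F}_\alpha$), the right-hand side of the identity is precisely what the left-hand side becomes under the swap $p \leftrightarrow \hat{p}$. Hence the functional $f \mapsto -\log(\Vert P_{X|Y}(f)\Vert_q/\Vert f\Vert_{\hat{p}}) + \alpha/\hat{p}$ is invariant under this swap pointwise on $\mathcal{F}_\alpha$, and taking an infimum (resp. supremum) over $f \in \mathcal{F}_\alpha$ yields the symmetry of $\underline{\Gamma}_{p,q,\hat{p}}$ (resp.\ $\overline{\Gamma}_{p,q,\hat{p}}$) on $(\mathbb{R}\setminus\{0\})^2$.

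To extend to $(p,\hat{p}) \in (\overline{\mathbb{R}}\setminus\{0\})^2$ I would pass to the limit $p \to \pm\infty$ or $\hat{p} \to \pm\infty$ in the identity, using continuity of the $L_p$-pseudo-norm in the order (Lemma \ref{lem:norm}) and continuity of the $(p,\hat{p})$-entropy on $\mathcal{S}_f$ proved earlier. The only place to be careful is in ruling out indeterminate $\infty-\infty$ forms along the limit, but this is automatic: the assumption $\alpha \in \mathbb{R}$ keeps the $\Ent_{p,\hat{p}}(f)$ term finite along the relevant sequences, and the norms on the other side behave monotonically toward $\Vert f\Vert_{\pm\infty}$. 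I do not expect a real obstacle here; the whole proof is essentially a bookkeeping exercise built around the one-line identity displayed above.
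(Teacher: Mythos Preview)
Your proposal is correct and is exactly the adaptation of the proof of Lemma~\ref{lem:symmetry} that the paper intends (the paper gives no separate proof, merely stating ``Similar to Lemma~\ref{lem:symmetry}''). One small slip: in the degenerate case you write that ``the shift $\alpha/\hat{p}$ is the same under $p\leftrightarrow\hat{p}$,'' which is not literally true; the point is rather that $-\infty+\alpha/\hat{p}=-\infty=-\infty+\alpha/p$ (and likewise with $+\infty$), so the symmetry is trivial for a different reason than the one you state.
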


We now extend the definitions in \eqref{eq:-30} and \eqref{eq:-31}
to the case of $(p,\hat{p})\in\overline{\mathbb{R}}^{2}\backslash\{(0,0)\}$
by the symmetric extension. Such an extension ensures that $\underline{\Gamma}_{p,q,\hat{p}}$
and $\overline{\Gamma}_{p,q,\hat{p}}$ are still symmetric w.r.t.
$(p,\hat{p})$.  For the product distribution $P_{XY}^{\otimes n}$,
we define the\emph{ forward and reverse $n$-BL exponents} respectively
as $\underline{\Gamma}_{p,q,\hat{p}}^{(n)}(\alpha):=\frac{1}{n}\underline{\Gamma}_{p,q,\hat{p}}(n\alpha|P_{XY}^{\otimes n})$
and $\overline{\Gamma}_{p,q,\hat{p}}^{(n)}(\alpha):=\frac{1}{n}\overline{\Gamma}_{p,q,\hat{p}}(n\alpha|P_{XY}^{\otimes n})$,
as well as, the \emph{forward and reverse asymptotic BL exponents
}respectively as $\underline{\Gamma}_{p,q,\hat{p}}^{(\infty)}(\alpha):=\lim_{n\to\infty}\underline{\Gamma}_{p,q,\hat{p}}^{(n)}(\alpha)$
and $\overline{\Gamma}_{p,q,\hat{p}}^{(\infty)}(\alpha):=\lim_{n\to\infty}\overline{\Gamma}_{p,q,\hat{p}}^{(n)}(\alpha)$.

For $q\in[1,\infty)$, define 
\begin{equation}
\underline{\Gamma}_{p,q,\hat{p}}^{*}(\alpha):=\inf_{Q_{W},Q_{X|W},Q_{Y|W}}\theta_{p,q',\hat{p}}(Q_{W},Q_{X|W},Q_{Y|W}),\label{eq:GammaForward}
\end{equation}
where $q':=\frac{q}{q-1}$ is the Hölder conjugate of $q$, and 
\begin{align*}
\theta_{p,q',\hat{p}}(Q_{W},Q_{X|W},Q_{Y|W}) & :=\mathbb{D}(Q_{X|W},Q_{Y|W}\|P_{XY}|Q_{W})-\frac{D(Q_{Y|W}\|P_{Y}|Q_{W})}{q'}+\eta_{p,\hat{p}}(\alpha,D(Q_{X|W}\|P_{X}|Q_{W})).
\end{align*}
By Carathéodory's theorem, without loss of optimality, it suffices
to restrict $|\mathcal{W}|\le3$.

According to the signs of $p,\hat{p}$, we partition the distributions
$\{Q_{X|W},\hat{Q}_{X|W}\}$ into two subsets $\mathbf{S}^{+},\mathbf{S}^{-}$.
Specifically, $Q_{X|W}\in\mathbf{S}^{+}$ if $p\ge0$; $Q_{X|W}\in\mathbf{S}^{-}$
otherwise. Similarly, $\hat{Q}_{X|W}$ are assigned into $\mathbf{S}^{+},\mathbf{S}^{-}$
respectively according to the sign of $\hat{p}$. For $q\in(-\infty,1)\backslash\{0\}$,
define 
\begin{equation}
\overline{\Gamma}_{p,q,\hat{p}}^{*}(\alpha):=\begin{cases}
\sup_{Q_{W},\mathbf{S}^{+}}\inf_{Q_{Y|W},\mathbf{S}^{-}}\min\{\theta_{p,q'}(Q_{W},Q_{X|W},Q_{Y|W}),\theta_{\hat{p},q'}(Q_{W},\hat{Q}_{X|W},Q_{Y|W})\} & 0<q<1\\
\sup_{Q_{WY},\mathbf{S}^{+}}\inf_{\mathbf{S}^{-}}\min\{\theta_{p,q'}(Q_{W},Q_{X|W},Q_{Y|W}),\theta_{\hat{p},q'}(Q_{W},\hat{Q}_{X|W},Q_{Y|W})\} & q<0
\end{cases}\label{eq:GammaReverse}
\end{equation}
where 
\begin{align*}
\theta_{p,q'}(Q_{W},Q_{X|W},Q_{Y|W}) & :=\mathbb{D}(Q_{X|W},Q_{Y|W}\|P_{XY}|Q_{W})-\frac{D(Q_{Y|W}\|P_{Y}|Q_{W})}{q'}+\frac{\alpha-D(Q_{X|W}\|P_{X}|Q_{W})}{p}.
\end{align*}
and the infimization $\inf_{\mathbf{S}^{-}}$ is taken over all the
distributions in $\mathbf{S}^{-}$, and the supremization $\sup_{\mathbf{S}^{+}}$
is taken over all the distributions in $\mathbf{S}^{+}$ under the
constraints $\alpha_{\hat{p}/p}^{-}\le D(Q_{X|W}\|P_{X}|Q_{W})\le\alpha_{\hat{p}/p}^{+}$
and $\alpha_{p/\hat{p}}^{-}\le D(\hat{Q}_{X|W}\|P_{X}|Q_{W})\le\alpha_{p/\hat{p}}^{+}$
if $p,\hat{p}\ge0$. By Carathéodory's theorem, without loss of optimality,
it suffices to restrict $|\mathcal{W}|\le5$.

We obtain the following single-function version of strong BL inequalities. 
\begin{thm}[Brascamp--Lieb Exponents (Single-Function Version)]
\label{thm:BLExponentSingleFunc} Let $n\ge1$ and $\alpha\in\mathcal{E}_{p,\hat{p}}^{(n)}(\mathcal{X})$.
Let $p\in\mathbb{R}\backslash\{0\},\hat{p}\in\overline{\mathbb{R}}$.
If $q\in[1,\infty)$, then we have 
\begin{align}
\underline{\Gamma}_{p,q,\hat{p}}^{(n)}(\alpha) & \geq\underline{\Gamma}_{p,q,\hat{p}}^{*}(\alpha),\label{eq:FBLE_single}
\end{align}
where $\mathcal{E}_{p,\hat{p}}(\mathcal{X})$ was defined in \eqref{eq:E}.
If $q\in(-\infty,1)\backslash\{0\}$, then we have 
\begin{align}
\overline{\Gamma}_{p,q,\hat{p}}^{(n)}(\alpha) & \leq\overline{\Gamma}_{p,q,\hat{p}}^{*}(\alpha).\label{eq:RBLE_single}
\end{align}
Moreover, for finite $\mathcal{X},\mathcal{Y}$, these two inequalities
are asymptotically tight as $n\to\infty$ for given $(p,q,\hat{p},\alpha)$
as described above but with $\hat{p}\neq0$ and $\alpha\neq0,\alpha_{\max}$. 
\end{thm}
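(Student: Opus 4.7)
The plan is to deduce Theorem \ref{thm:BLExponentSingleFunc} from the two-function result Theorem \ref{thm:BLexponent} by applying H\"older duality (for $q\in[1,\infty)$) and reverse H\"older duality (for $q\in(-\infty,1)\setminus\{0\}$) to the inner norm $\Vert P_{X|Y}(f)\Vert_{q}$, with the two-function inequality invoked at parameter choice $(q,\hat{q})=(q',q')$, where $q'=q/(q-1)$ is the H\"older conjugate.

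For the forward case, H\"older duality gives $\Vert P_{X|Y}(f)\Vert_{q}=\sup_{g\in\Phi(\mathcal{Y}^{n})}\langle f,g\rangle/\Vert g\Vert_{q'}$, so partitioning the inner supremum by $\beta':=\frac{1}{n}\Ent_{q',q'}(g)$ yields
\[
\underline{\Upsilon}_{p,q,\hat{p}}^{(n)}(\alpha)=\inf_{\beta'}\underline{\Delta}_{p,q',\hat{p},q'}^{(n)}(\alpha,\beta').
\]
Invoking \eqref{eq:FBLE} of Theorem \ref{thm:BLexponent} and adding $\alpha/\hat{p}$ to both sides gives $\underline{\Gamma}_{p,q,\hat{p}}^{(n)}(\alpha)\geq\inf_{\beta'}[\underline{\Lambda}_{p,q',\hat{p},q'}^{*}(\alpha,\beta')-\beta'/q']$. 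Since $q'>0$, we have $\eta_{q',q'}(\beta',t)=(\beta'-t)/q'$, so the $\beta'$-dependence cancels inside the infimum, leaving exactly the definition \eqref{eq:GammaForward} of $\underline{\Gamma}_{p,q,\hat{p}}^{*}(\alpha)$.

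For the reverse case, reverse H\"older gives $\Vert P_{X|Y}(f)\Vert_{q}=\inf_{g>0}\langle f,g\rangle/\Vert g\Vert_{q'}$, and the analogous decomposition produces $\overline{\Gamma}_{p,q,\hat{p}}^{(n)}(\alpha)\leq\sup_{\beta'}[\overline{\Lambda}_{p,q',\hat{p},q'}^{*}(\alpha,\beta')-\beta'/q']$. It remains to identify the right-hand side with $\overline{\Gamma}_{p,q,\hat{p}}^{*}(\alpha)$. For $0<q<1$ (so $q'<0$), both auxiliary Y-distributions $Q_{Y|W},\hat{Q}_{Y|W}$ sit in $\mathbf{Q}^{-}$ with no divergence constraint; the identity $\inf_{Q,\hat{Q}}\min\{A(Q),B(\hat{Q}),C(Q),D(\hat{Q})\}=\min\{\inf A,\inf B,\inf C,\inf D\}$, combined with the observation that $A,B$ (resp.\ $C,D$) are the same functional evaluated at different arguments, collapses the four $\theta$-terms to the two appearing in \eqref{eq:GammaReverse}. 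For $q<0$ (so $q'\in(0,1)$), both Y-distributions sit in $\mathbf{Q}^{+}$ with constraint $D(Q_{Y|W}\|P_{Y}|Q_{W})=D(\hat{Q}_{Y|W}\|P_{Y}|Q_{W})=\beta'$; after $\sup_{\beta'}$ the only residual coupling is $D(Q_{Y|W}\|P_{Y}|Q_{W})=D(\hat{Q}_{Y|W}\|P_{Y}|Q_{W})$, and then the inequality $\min\{\text{four terms}\}\leq\min\{\text{the two terms sharing }Q_{Y|W}\}$ gives one direction of the match while the choice $\hat{Q}_{Y|W}=Q_{Y|W}$ gives the other.

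The main obstacle I anticipate is the bookkeeping in the reverse case: correctly threading $\sup_{\beta'}$ through the sup-inf-min structure of $\overline{\Lambda}^{*}$ and the constraint-relaxation, without introducing spurious slack, so that the output matches \eqref{eq:GammaReverse} on the nose (and in particular the Carath\'eodory bounds $|\mathcal{W}|\leq3$ and $|\mathcal{W}|\leq5$ come out). For the asymptotic tightness claim, given $\hat{p}\ne0$ and $\alpha\ne0,\alpha_{\max}$, the extremizer for H\"older's inequality, $g\propto P_{X|Y}(f)^{q-1}$ (and its reverse-H\"older analog), transports an asymptotically tight two-function optimizer furnished by Theorem \ref{thm:BLexponent} into an asymptotically tight single-function optimizer, yielding $\underline{\Gamma}^{(\infty)}_{p,q,\hat{p}}(\alpha)=\underline{\Gamma}^{*}_{p,q,\hat{p}}(\alpha)$ and $\overline{\Gamma}^{(\infty)}_{p,q,\hat{p}}(\alpha)=\overline{\Gamma}^{*}_{p,q,\hat{p}}(\alpha)$ for finite $\mathcal{X},\mathcal{Y}$.
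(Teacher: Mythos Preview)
Your approach is essentially the same as the paper's: both reduce the single-function exponents to the two-function Theorem~\ref{thm:BLexponent} via H\"older/reverse H\"older duality applied to $\Vert P_{X|Y}^{\otimes n}(f)\Vert_{q}$, substituting $q\leftarrow q'$ in the second slot and then taking an infimum (resp.\ supremum) over the auxiliary entropy level $\beta'$ of $g$.

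The one substantive difference is your choice of the auxiliary parameter $\hat{q}$: you take $\hat{q}=q'$, whereas the paper takes $\hat{q}$ strictly between $0$ and $q'$ (same sign). Your choice is computationally cleaner, since $\eta_{q',q'}(\beta',t)-\beta'/q'=-t/q'$ and the $\beta'$-dependence vanishes immediately, sparing you the short case analysis that the paper's choice would require. However, your choice creates a small gap: the claimed equality $\underline{\Upsilon}_{p,q,\hat{p}}^{(n)}(\alpha)=\inf_{\beta'}\underline{\Delta}_{p,q',\hat{p},q'}^{(n)}(\alpha,\beta')$ presupposes that every $g$ with $0<\Vert g\Vert_{q'}<\infty$ has finite $\Ent_{q'}(g)$, which can fail (e.g.\ when $q'=p_{\max}(g)$); without this, the partition only furnishes the inequality in the wrong direction. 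This is reparable by a truncation argument ($g_{M}=g\wedge M$ for $q'>0$, $g_{\epsilon}=g\vee\epsilon$ for $q'<0$), but you do not supply it. The paper's choice of $\hat{q}$ sidesteps the issue entirely: with $0<\hat{q}<q'$ (or $q'<\hat{q}<0$), monotonicity of the norm guarantees $\hat{q}\in\mathcal{D}_{g}$ whenever $q'\in\mathcal{D}_{g}$, so $\Ent_{q',\hat{q}}(g)$ is automatically finite and the partitioning is a genuine equality.
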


\begin{proof}
By the forward and reverse versions of the Hölder inequality, for
$\hat{g}:\mathcal{Y}^{n}\to[0,\infty),$ 
\begin{equation}
\Vert\hat{g}\Vert_{q}=\begin{cases}
\sup_{g\ge0}\frac{\langle\hat{g},g\rangle}{\Vert g\Vert_{q'}} & q\ge1\\
\inf_{g\ge0}\frac{\langle\hat{g},g\rangle}{\Vert g\Vert_{q'}} & q<1
\end{cases}\label{eq:equivalence}
\end{equation}
where the supremization and infimization are taken over all $g:\mathcal{Y}^{n}\to[0,\infty)$.
Setting $\hat{g}\leftarrow P_{X|Y}^{\otimes n}(f)$, we obtain the
following equivalences: 
\begin{align}
\sup_{f\in\mathcal{F}_{\alpha}}\frac{\Vert P_{X|Y}^{\otimes n}(f)\Vert_{q}}{\Vert f\Vert_{\hat{p}}} & =\sup_{f\in\mathcal{F}_{\alpha}}\sup_{g}\frac{\langle f,g\rangle}{\Vert f\Vert_{\hat{p}}\Vert g\Vert_{q'}},\quad\mbox{for }q\ge1,\label{eq:-38}\\
\inf_{f\in\mathcal{F}_{\alpha}}\frac{\Vert P_{X|Y}^{\otimes n}(f)\Vert_{q}}{\Vert f\Vert_{\hat{p}}} & =\inf_{f\in\mathcal{F}_{\alpha}}\inf_{g}\frac{\langle f,g\rangle}{\Vert f\Vert_{\hat{p}}\Vert g\Vert_{q'}},\quad\mbox{for }q<1.\label{eq:-39}
\end{align}
Hence, combined with these equivalences, Theorem \ref{thm:BLexponent}
implies \eqref{eq:FBLE_single} and \eqref{eq:RBLE_single}. Note
that here we substitute $q\leftarrow q'$ in Theorem \ref{thm:BLexponent},
and to ensure that $\Ent_{q',\hat{q}}(g)$ is finite (which is true
if $\|g\|_{q'},\|g\|_{\hat{q}}$ are finite), we set $\hat{q}$ to
a value in $(0,q')$ for the case of $q'>0$ and $\hat{q}$ to a value
in $(q',0)$ for the case of $q'<0$ in Theorem \ref{thm:BLexponent}. 
\end{proof}
The theorem above is equivalent to the following single-function version
of strong BL inequalities. 
\begin{cor}[Strong Brascamp--Lieb Inequalities (Single-Function Version)]
\label{cor:BL_single} Let $p\in\mathbb{R}\backslash\{0\},\hat{p}\in\overline{\mathbb{R}}$.
For any $n\ge1$, 
\begin{align}
\Vert P_{X|Y}^{\otimes n}(f)\Vert_{q} & \leq e^{-n(\underline{\Gamma}_{p,q,\hat{p}}^{*}(\alpha)-\frac{\alpha}{p})}\Vert f\Vert_{p}\textrm{ for }q\in[1,\infty),\label{eq:-9}\\
\Vert P_{X|Y}^{\otimes n}(f)\Vert_{q} & \geq e^{-n(\overline{\Gamma}_{p,q,\hat{p}}^{*}(\alpha)-\frac{\alpha}{p})}\Vert f\Vert_{p}\textrm{ for }q\in(-\infty,1)\backslash\{0\},\label{eq:-10}
\end{align}
for all $f:\mathcal{X}^{n}\to[0,\infty)$ such that $\alpha=\frac{1}{n}\Ent_{p,\hat{p}}(f)\in\mathbb{R}$. 
\end{cor}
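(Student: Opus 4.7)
The plan is to derive Corollary~\ref{cor:BL_single} directly from Theorem~\ref{thm:BLExponentSingleFunc} by purely algebraic manipulation of the definitions; no new information-theoretic input is required, since the hard work already sits in Theorem~\ref{thm:BLExponentSingleFunc}.

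First I would unwind the definition of $\underline{\Gamma}_{p,q,\hat{p}}^{(n)}$. By construction, $\underline{\Gamma}_{p,q,\hat{p}}^{(n)}(\alpha) = \frac{1}{n}\underline{\Upsilon}_{p,q,\hat{p}}(n\alpha|P_{XY}^{\otimes n}) + \alpha/\hat{p}$, and for every $f:\mathcal{X}^n\to[0,\infty)$ with $\frac{1}{n}\Ent_{p,\hat{p}}(f)=\alpha$ the supremum in \eqref{eq:-30} gives
\begin{equation}
\log\|P_{X|Y}^{\otimes n}(f)\|_q - \log\|f\|_{\hat{p}} \;\le\; -\underline{\Upsilon}_{p,q,\hat{p}}(n\alpha|P_{XY}^{\otimes n}).
\end{equation}
Applying the forward bound \eqref{eq:FBLE_single} from Theorem~\ref{thm:BLExponentSingleFunc} for $q\in[1,\infty)$, this becomes
\begin{equation}
\log\|P_{X|Y}^{\otimes n}(f)\|_q \;\le\; \log\|f\|_{\hat{p}} - n\bigl(\underline{\Gamma}_{p,q,\hat{p}}^{*}(\alpha)-\alpha/\hat{p}\bigr).
\end{equation}

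Next I would convert $\|f\|_{\hat{p}}$ to $\|f\|_p$ using the very definition \eqref{eq:ent} of the $(p,\hat{p})$-entropy: for $f$ with $\Ent_{p,\hat{p}}(f)=n\alpha$ and $p,\hat{p}\in\mathbb{R}\backslash\{0\}$ with $p\neq\hat{p}$, one has the identity $\log\|f\|_{\hat{p}}-\log\|f\|_p = n\alpha(\tfrac{1}{p}-\tfrac{1}{\hat{p}})$. Substituting into the previous display, the $\alpha/\hat{p}$ contributions cancel and the bound collapses to
\begin{equation}
\|P_{X|Y}^{\otimes n}(f)\|_q \;\le\; e^{-n(\underline{\Gamma}_{p,q,\hat{p}}^{*}(\alpha)-\alpha/p)}\|f\|_p,
\end{equation}
which is exactly \eqref{eq:-9}. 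The reverse inequality \eqref{eq:-10} follows from the identical chain of algebra but starting from the reverse direction \eqref{eq:RBLE_single} of Theorem~\ref{thm:BLExponentSingleFunc}: the sup in \eqref{eq:-30} is replaced by the inf in \eqref{eq:-31}, and all inequalities reverse accordingly, which is valid for $q\in(-\infty,1)\backslash\{0\}$ in view of the reverse H\"older branch of \eqref{eq:equivalence} used inside the proof of Theorem~\ref{thm:BLExponentSingleFunc}.

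The one place to be careful is the boundary orders $\hat{p}\in\{0,\pm\infty\}$ (and $p=\hat{p}$): there the identity $\log\|f\|_{\hat{p}}-\log\|f\|_p = n\alpha(\tfrac{1}{p}-\tfrac{1}{\hat{p}})$ must be read as its continuous extension (with $\alpha/\hat{p}=0$ when $\hat{p}=\pm\infty$), which is already how both $\Ent_{p,\hat{p}}$ and $\underline{\Gamma}_{p,q,\hat{p}},\overline{\Gamma}_{p,q,\hat{p}}$ were extended via Lemma~\ref{lem:symmetry-1}. Since the assumption $\alpha=\frac{1}{n}\Ent_{p,\hat{p}}(f)\in\mathbb{R}$ forces all the manipulated quantities to be finite, this extension is harmless. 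There is no genuine obstacle; the corollary is essentially a restatement of Theorem~\ref{thm:BLExponentSingleFunc} with $\|f\|_{\hat{p}}$ relabeled as $\|f\|_p$ using the constraint defining $\mathcal{F}_{\alpha}$, and the ``main difficulty'' is just ensuring that the bookkeeping of the $\alpha/\hat{p}$ versus $\alpha/p$ terms reflects the $(p,\hat{p})$-symmetry noted in Lemma~\ref{lem:symmetry-1}.
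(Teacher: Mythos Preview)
Your proposal is correct and matches the paper's treatment: the paper gives no separate proof of Corollary~\ref{cor:BL_single}, merely stating that it is ``equivalent'' to Theorem~\ref{thm:BLExponentSingleFunc}, and what you have written is precisely the algebraic unwinding of that equivalence via the definitions of $\underline{\Gamma}_{p,q,\hat{p}}^{(n)}$, $\underline{\Upsilon}_{p,q,\hat{p}}$, and $\Ent_{p,\hat{p}}$. Your handling of the boundary orders $\hat{p}\in\{0,\pm\infty\}$ and $p=\hat{p}$ through the symmetric/continuous extension of Lemma~\ref{lem:symmetry-1} is also exactly how the paper sets things up.
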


\begin{rem}
By symmetry, the inequalities \eqref{eq:-9} and \eqref{eq:-10}  still
hold if $p,\hat{p}$ are swapped. 
\end{rem}

Recall that $\Theta_{q}(\alpha)$ is defined in \eqref{eq:ThetaOverline_q}
for $q<0$. Here we extend it to the case $r>0$ by defining 
\begin{equation}
\varphi_{r}(s):=\begin{cases}
\inf_{t\ge0}\underline{\varphi}(s,t)-\frac{t}{r} & r\ge1\\
\sup_{t\ge0}\overline{\varphi}(s,t)-\frac{t}{r} & 0<r<1
\end{cases}\label{eq:phi_r}
\end{equation}
and 
\begin{equation}
\Theta_{r}(\alpha):=\begin{cases}
\inf_{s\geq\alpha}\breve{\varphi}_{r}(s) & r\ge1\\
\sup_{s\le\alpha}\invbreve\varphi_{r}(s) & 0<r<1
\end{cases}.\label{eq:Theta_q}
\end{equation}
It is easy to verify that $\Theta_{r}(\alpha)\ge0$ for all $r\neq0$.
Define several conditions, which are similar to those in the two-function
case: 
\begin{enumerate}
\item Condition 0+: $\frac{1}{p^{*}}$ is a subgradient of $\Theta_{q'}$
at the point $\alpha$. 
\item Condition 0-: $\frac{1}{p^{*}}$ is a supergradient of $\Theta_{q'}$
at the point $\alpha$. 
\item Condition 1+: $\frac{1}{n}\Ent_{p,p^{*}}(f)\in[\alpha,\infty)$ if
$p\ge p^{*}$; $\frac{1}{n}\Ent_{p,p^{*}}(f)\in(-\infty,\alpha]$
if $0\le p\le p^{*}$. 
\item Condition 1-: $\frac{1}{n}\Ent_{p,p^{*}}(f)\in(-\infty,\alpha]$ if
$p\ge p^{*}$; $\frac{1}{n}\Ent_{p,p^{*}}(f)\in[\alpha,\infty)$ if
$0\le p\le p^{*}$. 
\end{enumerate}
By Corollary \ref{cor:BL_single}, we obtain the following simpler
forms of strong (forward and reverse) BL inequalities. 
\begin{thm}[Strong Brascamp--Lieb Inequalities (Single-Function Version)]
\label{thm:strongBL_single} Let $\alpha\in[0,\alpha_{\max}]$. Then
the following hold. 
\begin{enumerate}
\item Let $p^{*}\in[0,\infty],q\in[1,\infty)$ satisfy Condition 0+. Let
$p\in(0,\infty)$. Then 
\begin{align}
\Vert P_{X|Y}^{\otimes n}(f)\Vert_{q} & \leq e^{-n(\Theta_{q'}(\alpha)-\frac{\alpha}{p})}\Vert f\Vert_{p}\label{eq:FBL-3-2}
\end{align}
for all $f:\mathcal{X}^{n}\to[0,\infty)$ satisfying Condition 1+. 
\item Let $p^{*}\in[0,\infty],q\in(-\infty,1)\backslash\{0\}$ satisfy Condition
0-. Let $p\in(0,\infty)$. Then 
\begin{align}
\Vert P_{X|Y}^{\otimes n}(f)\Vert_{q} & \geq e^{-n(\Theta_{q'}(\alpha)-\frac{\alpha}{p})}\Vert f\Vert_{p}\label{eq:RBL-3-3}
\end{align}
for all $f:\mathcal{X}^{n}\to[0,\infty)$ satisfying Condition 1-. 
\item For arbitrary Polish spaces $\mathcal{X},\mathcal{Y}$, \eqref{eq:FBL-3-2}
and \eqref{eq:RBL-3-3} are exponentially sharp as $n\to\infty$ for
given $(p,q,p^{*},\alpha)$ with $\alpha\neq0,\alpha_{\max}$. 
\end{enumerate}
\end{thm}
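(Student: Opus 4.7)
The plan is to mirror the proof of Theorem~\ref{thm:strongBL} one level down, replacing the two-function single-letter exponent $\underline{\Lambda}^*_{p,q,p^*,q^*}$ by its single-function analogue $\underline{\Gamma}^*_{p,q,p^*}$ (and similarly for the overlined quantities), and the two-dimensional envelope $\underline{\Theta}$ by the one-dimensional envelope $\Theta_{q'}$. Writing $a:=\frac{1}{n}\Ent_{p,p^*}(f)$, it suffices to show
\[
\underline{\Gamma}^*_{p,q,p^*}(a)-\tfrac{a}{p}\;\ge\;\Theta_{q'}(\alpha)-\tfrac{\alpha}{p}
\]
for the forward case, and the reverse inequality for $\overline{\Gamma}^*_{p,q,p^*}$ in the reverse case, under Conditions $0\pm$ and $1\pm$; substituting into Corollary~\ref{cor:BL_single} then yields \eqref{eq:FBL-3-2} and \eqref{eq:RBL-3-3}.

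For the forward case I would first simplify $\underline{\Gamma}^*_{p,q,p^*}(a)$ by stratifying the infimum in its definition according to the value $s=D(Q_{X|W}\|P_X|Q_W)$. For fixed $s$, the inner infimum of $\mathbb{D}(Q_{X|W},Q_{Y|W}\|P_{XY}|Q_W)-D(Q_{Y|W}\|P_Y|Q_W)/q'$ equals $\breve{\varphi}_{q'}(s)$: the single-letter optimization gives $\varphi_{q'}(s)$ as in \eqref{eq:phi_r}, and the time-sharing variable $W$ together with Lemma~\ref{lem:exchange} performs the convexification. Hence
\[
\underline{\Gamma}^*_{p,q,p^*}(a)=\inf_{s\ge 0}\bigl\{\breve{\varphi}_{q'}(s)+\eta_{p,p^*}(a,s)\bigr\}.
\]
I would then rewrite $\eta_{p,p^*}(a,s)-a/p=(-s/p)\vee\{a(1/p^*-1/p)-s/p^*\}$, which is monotone in $a$ in the direction enforced by Condition~1+, allowing replacement of $a$ by $\alpha$ without loss. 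Next I would bound $\eta_{p,p^*}(\alpha,s)\ge(\alpha-s)/p^*$, use $\breve{\varphi}_{q'}\ge\Theta_{q'}$ (since $\Theta_{q'}$ is the increasing lower convex envelope), and apply Condition~0+ in the form of the subgradient inequality $\Theta_{q'}(s)\ge\Theta_{q'}(\alpha)+(s-\alpha)/p^*$. The $s$-dependence telescopes and leaves exactly $\Theta_{q'}(\alpha)-\alpha/p$.

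The reverse case (Statement~2) is obtained dually: swap the inf and sup, replace $\breve{\varphi}_{q'}$ by $\invbreve\varphi_{q'}$, and use Condition~0- as a supergradient inequality for the now-concave $\Theta_{q'}$; the two sub-regimes $0<q<1$ (hence $q'<0$) and $q<0$ (hence $0<q'<1$) are both covered uniformly by \eqref{eq:phi_r}--\eqref{eq:Theta_q}. For the exponential sharpness of Statement~3, the finite-alphabet case follows from the asymptotic tightness in Theorem~\ref{thm:BLExponentSingleFunc} combined with the fact that Conditions~0$\pm$ make the reduction from $\underline{\Gamma}^*/\overline{\Gamma}^*$ to $\Theta_{q'}$ lossless at the prescribed $\alpha$. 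For general Polish $\mathcal{X},\mathcal{Y}$, the plan is a quantization argument: take a near-optimal triple $(Q_W,Q_{X|W},Q_{Y|W})$ in the definition of $\Theta_{q'}(\alpha)$, approximate it by finitely supported laws (using continuity of $\Theta_{q'}$ on $(0,\alpha_{\max}]$), and lift the corresponding finite-alphabet extremizers to $\mathcal{X}^n$ via a method-of-types construction. I expect the main obstacle to be this last step: controlling the approximation error uniformly in $n$ for the Schr\"odinger-type quantity $\mathbb{D}(\cdot,\cdot\|P_{XY})$, so that the exponential sharpness survives the passage to the Polish limit---the analogue of the most delicate part of the proof of Theorem~\ref{thm:strongBL}.
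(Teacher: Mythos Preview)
Your treatment of Statements 1 and 2 is correct and is exactly what the paper does implicitly: start from Corollary~\ref{cor:BL_single}, rewrite $\underline{\Gamma}^*_{p,q,p^*}(a)$ (resp.\ $\overline{\Gamma}^*_{p,q,p^*}(a)$) as an optimization in the scalar $s$ against $\breve{\varphi}_{q'}$ (resp.\ $\invbreve\varphi_{q'}$) plus $\eta_{p,p^*}(a,s)$, use Condition~1$\pm$ to replace $a$ by $\alpha$, and then use the sub-/supergradient Condition~0$\pm$ to collapse the $s$-optimization onto $\Theta_{q'}(\alpha)$. This is the single-function mirror of the proof of Theorem~\ref{thm:strongBL}.

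For Statement 3, however, your plan diverges from the paper and makes the problem harder than it is. The paper does \emph{not} quantize to finite alphabets. Instead, exponential sharpness for arbitrary Polish spaces is obtained by testing the inequalities on \emph{indicator functions} $f=1_{A_n}$ of sets of the form $A_n=\{x^n:T_{x^n}\in B_\delta(R_X)\}$ and applying Sanov's theorem directly on $\mathcal{P}(\mathcal{X})$ and $\mathcal{P}(\mathcal{X}\times\mathcal{Y})$ (this is the content of Appendices~\ref{sec:Proof-of-Theorem-strongsse} and~\ref{sec:Proof-of-Theorem-strongsse-1}, which establish the sharpness of Theorems~\ref{thm:strongsse} and~\ref{thm:strongqstability}). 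For indicators, $\frac{1}{n}\Ent_{p,p^*}(1_{A_n})=-\frac{1}{n}\log P_X^{\otimes n}(A_n)$ regardless of $p,p^*$, so the entropy constraints in Conditions~1$\pm$ reduce to probability constraints, and the sharpness of the $q$-stability exponent $\Theta_{q'}$ transfers immediately. The ``delicate'' uniform control of $\mathbb{D}(\cdot,\cdot\|P_{XY})$ that you anticipate as the main obstacle is handled by lower semicontinuity of the relative entropy and compactness of its sublevel sets under the weak topology---no finite-alphabet approximation is needed. Your quantization route could probably be pushed through, but it introduces an unnecessary layer of approximation.
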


For $\alpha\ge0$, define 
\begin{align*}
\mathcal{R}_{\mathtt{FH},q'}^{(\alpha)}(P_{XY}) & :=\{p\in(0,\infty):\Theta_{q'}(s)\geq\frac{s}{p},\forall s\in[\alpha,\alpha_{\max}]\}\\
\mathcal{R}_{\mathtt{RH},q'}^{(\alpha)}(P_{XY}) & :=\{p\in(0,\infty):\Theta_{q'}(s)\le\frac{s}{p},\forall s\in[\alpha,\alpha_{\max}]\}.
\end{align*}
Note that here $\mathcal{R}_{\mathtt{RH},q'}^{(\alpha)}(P_{XY})$
is consistent with the one defined in \eqref{eq:RH}. By Theorem \ref{thm:strongBL_single},
it is easy to obtain the following single function version of strong
hypercontractivity. 
\begin{cor}[Strong Hypercontractivity Inequalities]
\label{thm:stronghypercontractivity-1-1} Let $\alpha\in[0,\alpha_{\max}]$.
Then the following hold. 
\begin{enumerate}
\item Let $p^{*}\in[0,\infty],q\in[1,\infty)$ satisfy Condition 0+. Let
$p\in\mathcal{R}_{\mathtt{FH},q'}^{(\alpha)}(P_{XY})$. Then 
\begin{align}
\Vert P_{X|Y}^{\otimes n}(f)\Vert_{q} & \leq\Vert f\Vert_{p}\label{eq:sFBL}
\end{align}
for all $f:\mathcal{X}^{n}\to[0,\infty)$ satisfying Condition 1+. 
\item Let $p^{*}\in[0,\infty],q\in(-\infty,1)\backslash\{0\}$ satisfy Condition
0-. Let $p\in\mathcal{R}_{\mathtt{RH},q'}^{(\alpha)}(P_{XY})$. Then
\begin{align}
\Vert P_{X|Y}^{\otimes n}(f)\Vert_{q} & \geq\Vert f\Vert_{p}\label{eq:sRBL}
\end{align}
for all $f:\mathcal{X}^{n}\to[0,\infty)$ satisfying Condition 1-. 
\end{enumerate}
\end{cor}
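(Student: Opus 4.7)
The plan is to obtain the corollary as an immediate specialization of Theorem \ref{thm:strongBL_single} by checking that the exponential factor in its conclusion is trivial ($\le 1$ or $\ge 1$) under the additional hypothesis $p\in\mathcal{R}_{\mathtt{FH},q'}^{(\alpha)}(P_{XY})$ (respectively $p\in\mathcal{R}_{\mathtt{RH},q'}^{(\alpha)}(P_{XY})$). Concretely, since Condition 0+ and Condition 1+ of the corollary coincide verbatim with those of Theorem \ref{thm:strongBL_single}, and since the choice $p^{*},q$ is the same, one directly inherits the inequality
\[
\Vert P_{X|Y}^{\otimes n}(f)\Vert_{q}\leq e^{-n(\Theta_{q'}(\alpha)-\frac{\alpha}{p})}\Vert f\Vert_{p}
\]
for every $f$ satisfying Condition 1+.

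For Statement 1, I would then invoke the definition \eqref{eq:-67}-type relation of $\mathcal{R}_{\mathtt{FH},q'}^{(\alpha)}(P_{XY})$ at the single point $s=\alpha$, which gives $\Theta_{q'}(\alpha)\ge\alpha/p$; thus the exponential factor is $\le 1$ and \eqref{eq:sFBL} follows. Statement 2 is obtained analogously: Theorem \ref{thm:strongBL_single} Statement 2 furnishes the reverse bound with the same exponential factor, and the defining inequality of $\mathcal{R}_{\mathtt{RH},q'}^{(\alpha)}(P_{XY})$ evaluated at $s=\alpha$ yields $\Theta_{q'}(\alpha)\le\alpha/p$, making the factor $\ge 1$ so that \eqref{eq:sRBL} holds. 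Note that while the definitions of $\mathcal{R}_{\mathtt{FH},q'}^{(\alpha)}(P_{XY})$ and $\mathcal{R}_{\mathtt{RH},q'}^{(\alpha)}(P_{XY})$ demand the comparison $\Theta_{q'}(s)\gtrless s/p$ for every $s\in[\alpha,\alpha_{\max}]$, the present sufficiency argument only consumes the information at $s=\alpha$; the rest of the range is what guarantees the monotonicity of these regions in $\alpha$ and is relevant for the exponential sharpness in the companion remark.

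There is essentially no technical obstacle here: all the analytic work has already been spent in proving Theorem \ref{thm:BLExponentSingleFunc} (the information-theoretic single-function Brascamp--Lieb exponent bound) and in reducing it to Theorem \ref{thm:strongBL_single} via a convex/concave envelope argument. The corollary is a purely algebraic consequence and requires no additional coupling, tensorization, or type-class machinery. The only point worth being careful about is the correspondence of hypotheses: one must verify that Condition 0+/0-- and Condition 1+/1-- as stated in the corollary are exactly those invoked by Theorem \ref{thm:strongBL_single} (they are, by inspection), and that the parameter $q'$ appearing in $\Theta_{q'}$ matches the one appearing in the definition of $\mathcal{R}_{\mathtt{FH},q'}^{(\alpha)}(P_{XY})$ and $\mathcal{R}_{\mathtt{RH},q'}^{(\alpha)}(P_{XY})$.
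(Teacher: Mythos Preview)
Your proposal is correct and matches the paper's approach: the paper itself gives no explicit proof for this corollary, saying only that it follows from Theorem~\ref{thm:strongBL_single}, and your argument---apply Theorem~\ref{thm:strongBL_single} verbatim and then use the defining inequality of $\mathcal{R}_{\mathtt{FH},q'}^{(\alpha)}$ (resp.\ $\mathcal{R}_{\mathtt{RH},q'}^{(\alpha)}$) at $s=\alpha$ to make the exponential factor $\le 1$ (resp.\ $\ge 1$)---is exactly the intended deduction. Your remark that only the value at $s=\alpha$ is consumed, while the full range $s\in[\alpha,\alpha_{\max}]$ is what underlies the sharpness discussion, is a correct reading of the structure.
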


\begin{rem}
Given $\alpha>0$, the inequality \eqref{eq:sFBL} is exponentially
sharp, in the sense that for any $p\in(0,\infty)\backslash\mathrm{cl}\mathcal{R}_{\mathtt{FH},q'}^{(\alpha)}(P_{XY})$,
there exists a positive integer $n$ and a nonnegative function $f$
on $\mathcal{X}^{n}$ that satisfy the assumption in Theorem \ref{thm:BLexponent}
but violates \eqref{eq:sFBL}. Given $\alpha>0$, the inequality \eqref{eq:sRBL}
is exponentially sharp in a similar sense. 
\end{rem}

\section{\label{sec:Applications}Applications}

\subsection{\label{subsec:R=0000E9nyi-Concentration-Functions}Rényi Concentration
Functions}

Define the \emph{(forward and reverse) $(p,q)$-Rényi concentration
functions }of $(P_{X},P_{Y|X})$ as 
\begin{align*}
\overline{\eta}_{p\to q}(\alpha|P_{X},P_{Y|X}) & :=\sup_{Q_{X}:D_{p}(Q_{X}\|P_{X})=\alpha}D_{q}(Q_{Y}\|P_{Y})\\
\underline{\eta}_{p\to q}(\alpha|P_{X},P_{Y|X}) & :=\inf_{Q_{X}:D_{p}(Q_{X}\|P_{X})=\alpha}D_{q}(Q_{Y}\|P_{Y})
\end{align*}
where $P_{Y}:=P_{X}\circ P_{Y|X}$ and $Q_{Y}:=Q_{X}\circ P_{Y|X}$.
Note that $\lim_{\alpha\downarrow0}\frac{1}{\alpha}\overline{\eta}_{p\to q}(\alpha|P_{X},P_{Y|X})$
corresponds to the Rényi hypercontractivity constants defined by Raginsky
\cite{raginsky2013logarithmic}. As a special case, the $(1,1)$-Rényi
concentration functions (which we term the \emph{Shannon concentration
functions})\emph{ }of $(P_{X},P_{Y|X})$ is 
\begin{align*}
\overline{\eta}_{1\to1}(\alpha|P_{X},P_{Y|X}) & =\sup_{Q_{X}:D(Q_{X}\|P_{X})=\alpha}D(Q_{Y}\|P_{Y})\\
\underline{\eta}_{1\to1}(\alpha|P_{X},P_{Y|X}) & =\inf_{Q_{X}:D(Q_{X}\|P_{X})=\alpha}D(Q_{Y}\|P_{Y}).
\end{align*}
Define the $n$-dimensional versions: 
\begin{align*}
\overline{\eta}_{p\to q}^{(n)}(\alpha|P_{X},P_{Y|X}) & :=\frac{1}{n}\overline{\eta}_{p\to q}(n\alpha|P_{X}^{\otimes n},P_{X}^{\otimes n})\\
\underline{\eta}_{p\to q}^{(n)}(\alpha|P_{X},P_{Y|X}) & :=\frac{1}{n}\underline{\eta}_{p\to q}(n\alpha|P_{X}^{\otimes n},P_{X}^{\otimes n}),
\end{align*}
and their limits as $\overline{\eta}_{p\to q}^{(\infty)}$ and $\underline{\eta}_{p\to q}^{(\infty)}$
as $n\to\infty$. Since $P_{X},P_{Y|X}$ are fixed, in the following,
we sometimes omit ``$|P_{X},P_{Y|X}$'' in the notations above.

By using the data-processing inequality, it is not difficult to see
that \cite{polyanskiy2019improved} for $\alpha\in\mathcal{E}_{p,1}(\mathcal{X})$,
\begin{align*}
\overline{\eta}_{1\to1}^{(n)}(\alpha) & \leq\overline{\eta}_{1\to1}^{*}(\alpha):=\sup_{Q_{XW}:D(Q_{X|W}\|P_{X}|Q_{W})=\alpha}D(Q_{Y|W}\|P_{Y}|Q_{W})
\end{align*}
where the most RHS is equal to the upper concave envelope of $\overline{\eta}_{1\to1}$.
It is obvious that this upper bound is asymptotically sharp as $n\to\infty$,
i.e., $\overline{\eta}_{1\to1}^{(\infty)}(\alpha)=\overline{\eta}_{1\to1}^{*}(\alpha).$
We now characterize the Rényi concentration functions in terms of
the single-function version of BL exponents, as shown in the following
lemma. The proof of Lemma \ref{lem:RenyiSDPI} is provided in Appendix
\ref{sec:Proof-of-Lemma-RenyiSDPI}.
\begin{lem}[Equivalence]
\label{lem:RenyiSDPI} For $P_{XY}$, $p\in\overline{\mathbb{R}},q\in\mathbb{R}\backslash\{0,1\}$
and $\alpha\in\mathcal{E}_{p,1}(\mathcal{X})$, we have 
\begin{equation}
\overline{\eta}_{p\to q}(\alpha)=\begin{cases}
q'(\alpha-\underline{\Gamma}_{p,q,1}(\alpha)) & q>1\textrm{ or }q<0\\
q'(\alpha-\overline{\Gamma}_{p,q,1}(\alpha)) & 0<q<1
\end{cases}\label{eq:-88}
\end{equation}
and 
\begin{equation}
\underline{\eta}_{p\to q}(\alpha)=\begin{cases}
q'(\alpha-\overline{\Gamma}_{p,q,1}(\alpha)) & q>1\textrm{ or }q<0\\
q'(\alpha-\underline{\Gamma}_{p,q,1}(\alpha)) & 0<q<1
\end{cases}.\label{eq:-89}
\end{equation}
\end{lem}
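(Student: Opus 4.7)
The plan is to translate the function-space optimization defining $\underline{\Gamma}_{p,q,1}$ and $\overline{\Gamma}_{p,q,1}$ into an optimization over probability measures $Q_X\ll P_X$, and match it against the Rényi concentration functions. The ratio $\|P_{X|Y}(f)\|_q/\|f\|_1$ is $1$-homogeneous in $f$, so I would first normalize $\|f\|_1=1$ without loss of generality; then $f=dQ_X/dP_X$ for a unique probability measure $Q_X\ll P_X$. The disintegration identity
\begin{equation}
P_{X|Y}\bigl(dQ_X/dP_X\bigr)(y)=\frac{dQ_Y}{dP_Y}(y),\qquad Q_Y:=Q_X\circ P_{Y|X},
\end{equation}
which is just Bayes' rule for the joint $P_{XY}$, immediately gives $\log\|P_{X|Y}(f)\|_q=D_q(Q_Y\|P_Y)/q'$ for every $q\neq 0,1$.

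Next I would match the constraint $\Ent_{p,1}(f)=\alpha$ with $D_p(Q_X\|P_X)=\alpha$. Using the symmetry $\Ent_{p,1}(f)=\Ent_{1,p}(f)$ together with the Rényi-divergence representation \eqref{eq:} from the excerpt, whenever $f=dQ_X/dP_X$ is normalized one has $\Ent_{1,p}(f)=D_p(Q_X\|P_X)$. Combining the three ingredients yields
\begin{equation}
\sup_{f\in\mathcal{F}_\alpha}\log\frac{\|P_{X|Y}(f)\|_q}{\|f\|_1}=\sup_{Q_X:\,D_p(Q_X\|P_X)=\alpha}\frac{D_q(Q_Y\|P_Y)}{q'},
\end{equation}
and analogously with both suprema replaced by infima.

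A case split on $\mathrm{sgn}(q')$ finishes the proof. When $q>1$ or $q<0$ one has $q'>0$, so dividing by $q'$ preserves order; the outer $\sup$ then equals $\overline{\eta}_{p\to q}(\alpha)/q'$ and the outer $\inf$ equals $\underline{\eta}_{p\to q}(\alpha)/q'$, giving $\underline{\Upsilon}_{p,q,1}=-\overline{\eta}_{p\to q}/q'$ and $\overline{\Upsilon}_{p,q,1}=-\underline{\eta}_{p\to q}/q'$. Adding $\alpha$ and multiplying by $q'$ then produces the $q>1$ or $q<0$ lines of \eqref{eq:-88} and \eqref{eq:-89}. When $0<q<1$ one has $q'<0$, so division flips $\sup$ and $\inf$; the same arithmetic produces the $0<q<1$ lines. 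The main obstacle is this sign/case bookkeeping — not mathematically deep but easy to misorganize; a minor further check is that nonemptiness of $\mathcal{F}_\alpha$ for $\alpha\in\mathcal{E}_{p,1}(\mathcal{X})$ matches the feasibility of the corresponding $Q_X$-optimization, which is immediate from Proposition~\ref{prop:E}.
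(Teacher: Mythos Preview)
Your proposal is correct and follows essentially the same approach as the paper's proof: normalize $\|f\|_{1}=1$, identify $f=\mathrm{d}Q_{X}/\mathrm{d}P_{X}$, use Bayes' rule to get $\log\|P_{X|Y}(f)\|_{q}=\tfrac{1}{q'}D_{q}(Q_{Y}\|P_{Y})$, translate the entropy constraint into $D_{p}(Q_{X}\|P_{X})=\alpha$, and then do the $\mathrm{sgn}(q')$ case split. The only cosmetic difference is that the paper invokes the symmetry $\underline{\Gamma}_{p,q,1}=\underline{\Gamma}_{1,q,p}$ to work with the ratio $\|P_{X|Y}(f)\|_{q}/\|f\|_{p}$, whereas you work directly with $\|P_{X|Y}(f)\|_{q}/\|f\|_{1}$ and apply the symmetry at the level of $\Ent_{p,1}=\Ent_{1,p}$; the computations are identical.
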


Combining Lemma \ref{lem:RenyiSDPI} and Theorem \ref{thm:BLExponentSingleFunc}
yields the following result. 
\begin{thm}[Generalized Mrs. Gerber's Lemma]
\label{thm:Gerber} For $p\in\overline{\mathbb{R}},q\in\mathbb{R}\backslash\{0,1\}$,
and $\alpha\in\mathcal{E}_{p,1}(\mathcal{X})$, we have for $q\in(0,\infty)\backslash\{1\}$,
\begin{align*}
\overline{\eta}_{p\to q}^{(n)}(\alpha) & \leq\overline{\eta}_{p\to q}^{*}(\alpha):=\begin{cases}
q'(\alpha-\underline{\Gamma}_{p,q,1}^{*}(\alpha)) & q>1,\\
q'(\alpha-\overline{\Gamma}_{p,q,1}^{*}(\alpha)) & 0<q<1,
\end{cases}
\end{align*}
and for $q\in(-\infty,0)$, 
\[
\underline{\eta}_{p\to q}^{(n)}(\alpha)\geq\underline{\eta}_{p\to q}^{*}(\alpha):=q'(\alpha-\overline{\Gamma}_{p,q,1}^{*}(\alpha)),
\]
where $\underline{\Gamma}_{p,q,1}^{*},\overline{\Gamma}_{p,q,1}^{*}$
are respectively defined in \eqref{eq:GammaForward} and \eqref{eq:GammaReverse}.
Moreover, for finite alphabets $\mathcal{X},\mathcal{Y}$, these two
inequalities are asymptotically tight as $n\to\infty$ for given $(p,q,\alpha)$
with $\alpha\neq0,\alpha_{\max}$, i.e., $\overline{\eta}_{p\to q}^{(\infty)}(\alpha)=\overline{\eta}_{p\to q}^{*}(\alpha)$
and $\underline{\eta}_{p\to q}^{(\infty)}(\alpha)=\underline{\eta}_{p\to q}^{*}(\alpha)$.
\end{thm}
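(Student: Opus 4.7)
The plan is to deduce the theorem as a direct corollary of Lemma \ref{lem:RenyiSDPI} combined with Theorem \ref{thm:BLExponentSingleFunc}, applied to the product distribution $P_{XY}^{\otimes n}$.

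First, I apply Lemma \ref{lem:RenyiSDPI} to $P_{XY}^{\otimes n}$ at the dilated parameter $n\alpha$ and then divide by $n$. Using the definitions $\overline{\eta}^{(n)}_{p\to q}(\alpha) = \frac{1}{n}\overline{\eta}_{p\to q}(n\alpha|P_X^{\otimes n}, P_{Y|X}^{\otimes n})$ and $\underline{\Gamma}^{(n)}_{p,q,1}(\alpha) = \frac{1}{n}\underline{\Gamma}_{p,q,1}(n\alpha|P_{XY}^{\otimes n})$ (and similarly for $\overline{\Gamma}^{(n)}$), this yields the $n$-dimensional equivalences
\begin{align*}
\overline{\eta}^{(n)}_{p\to q}(\alpha) &= q'\bigl(\alpha - \underline{\Gamma}^{(n)}_{p,q,1}(\alpha)\bigr) \quad \text{for } q > 1 \text{ or } q < 0,\\
\overline{\eta}^{(n)}_{p\to q}(\alpha) &= q'\bigl(\alpha - \overline{\Gamma}^{(n)}_{p,q,1}(\alpha)\bigr) \quad \text{for } 0 < q < 1,
\end{align*}
and the analogous formulas (with $\underline{\Gamma}$ and $\overline{\Gamma}$ swapped) for $\underline{\eta}^{(n)}_{p\to q}(\alpha)$.

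Second, I invoke Theorem \ref{thm:BLExponentSingleFunc} with $\hat{p}=1$: this gives $\underline{\Gamma}^{(n)}_{p,q,1}(\alpha) \geq \underline{\Gamma}^*_{p,q,1}(\alpha)$ whenever $q\in[1,\infty)$ and $\overline{\Gamma}^{(n)}_{p,q,1}(\alpha) \leq \overline{\Gamma}^*_{p,q,1}(\alpha)$ whenever $q\in(-\infty,1)\setminus\{0\}$. Substituting these into the equalities above and bookkeeping the sign of $q'=q/(q-1)$ produces the three claimed bounds. Concretely: for $q>1$, $q'>0$, so the $\geq$ on $\underline{\Gamma}^{(n)}$ becomes a $\leq$ on $\overline{\eta}^{(n)}_{p\to q}$; for $0<q<1$, $q'<0$, so the $\leq$ on $\overline{\Gamma}^{(n)}$ again becomes a $\leq$ on $\overline{\eta}^{(n)}_{p\to q}$; for $q<0$, $q'>0$, and the $\leq$ on $\overline{\Gamma}^{(n)}$ becomes a $\geq$ on $\underline{\eta}^{(n)}_{p\to q}$. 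In every regime the signs line up correctly.

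Third, for asymptotic tightness on finite $\mathcal{X},\mathcal{Y}$, I pass to the limit $n\to\infty$ in the equalities of the first step. Since $\hat{p}=1\neq 0$ and $\alpha\neq 0,\alpha_{\max}$ by hypothesis, the second part of Theorem \ref{thm:BLExponentSingleFunc} guarantees that both $\underline{\Gamma}^{(n)}_{p,q,1}(\alpha) \to \underline{\Gamma}^*_{p,q,1}(\alpha)$ and $\overline{\Gamma}^{(n)}_{p,q,1}(\alpha) \to \overline{\Gamma}^*_{p,q,1}(\alpha)$, from which the claimed equalities $\overline{\eta}^{(\infty)}_{p\to q} = \overline{\eta}^*_{p\to q}$ and $\underline{\eta}^{(\infty)}_{p\to q} = \underline{\eta}^*_{p\to q}$ follow immediately.

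There is no real obstacle here beyond the casework on the sign of $q'$ in the three regimes, since the heavy lifting (the single-function BL exponent bounds and their asymptotic sharpness) has already been carried out in Theorem \ref{thm:BLExponentSingleFunc}, and the variational equivalence between Rényi concentration and the single-function BL exponents has been established in Lemma \ref{lem:RenyiSDPI}. One minor point to verify is that $\alpha\in\mathcal{E}_{p,1}(\mathcal{X})\subseteq\mathcal{E}^{(n)}_{p,1}(\mathcal{X})$ so that the hypothesis of Theorem \ref{thm:BLExponentSingleFunc} is met, which is immediate from the product construction.
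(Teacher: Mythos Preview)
Your proposal is correct and follows exactly the paper's approach: the paper states just before Theorem~\ref{thm:Gerber} that ``Combining Lemma~\ref{lem:RenyiSDPI} and Theorem~\ref{thm:BLExponentSingleFunc} yields the following result,'' and you have carried out precisely that combination, including the sign bookkeeping on $q'$ and the appeal to the asymptotic tightness clause of Theorem~\ref{thm:BLExponentSingleFunc}. The only subtlety you might add is that when $p\in\{\pm\infty\}$ one uses the symmetry of $\underline{\Gamma}_{p,q,\hat p}$ and $\overline{\Gamma}_{p,q,\hat p}$ in $(p,\hat p)$ (Lemma~\ref{lem:symmetry-1}) to place the finite parameter $\hat p=1$ in the role of ``$p$'' in Theorem~\ref{thm:BLExponentSingleFunc}.
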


Mrs. Gerber's lemma in \cite{wyner1973theorem} only focuses on the
KL divergence and the doubly symmetric binary distribution. In contrast,
Theorem \ref{thm:Gerber} corresponds to a general version of Mrs.
Gerber's lemma for the Rényi divergence and arbitrary distributions
on Polish spaces. We provide tight bounds in Theorem \ref{thm:Gerber}
only for the forward part with $q>0$ and the reverse part with $q<0$.
It is interesting to provide tight bounds for remaining cases. In
fact, the forward Rényi concentration function for $q<0$ and the
reverse Rényi concentration function for $q>0$ are closely related
to the Rényi-resolvability (or Rényi-covering) problem \cite{yu2018renyi}
in which additionally, the input distribution $Q_{X^{n}}$ is restricted
to be uniform. By such a connection, the Rényi-resolvability results
in \cite{yu2018renyi} can be used to derive some interesting bounds
on the Rényi concentration functions. For example, for finite $\mathcal{X},\mathcal{Y}$
and $p\in(0,\infty],q\in(0,2]\cup\{\infty\}$, it holds that $\underline{\eta}_{p\to q}^{(\infty)}(\alpha|P_{X},P_{Y|X})=0$
for all $\alpha\in[0,H(P_{X})-R_{q}]$, where 
\begin{align*}
R_{q} & =\begin{cases}
\mathbb{E}_{X\sim P_{X}}[D_{q}(P_{Y|X}(\cdot|X)\|P_{Y})], & q\in(1,2]\cup\{\infty\}\\
D(P_{Y|X}\|R_{Y}|P_{X}), & q\in(0,1]
\end{cases}.
\end{align*}

\subsection{\label{subsec:Strong-Small-Set-Expansion}Noise Stability}

Consider the following \emph{noise stability} or \emph{noninteractive
correlation distillation} problem \cite{witsenhausen1975sequences,mossel2006non}.
For two events $A\in\mathbb{B}_{\mathcal{X}}^{\otimes n},B\in\mathbb{B}_{\mathcal{Y}}^{\otimes n}$,
if the marginal probabilities $P_{X}^{\otimes n}(A)$ and $P_{Y}^{\otimes n}(B)$
are given, then how large and how small can the joint probability
$P_{XY}^{\otimes n}(A\times B)$ be? First consider a relaxed version,
in which $P_{X}^{\otimes n}(A)$ and $P_{Y}^{\otimes n}(B)$ are bounded,
instead exactly given. Define 
\begin{align}
\underline{\Theta}^{(n)}(\alpha,\beta) & :=-\frac{1}{n}\log\sup_{\substack{A\in\mathbb{B}_{\mathcal{X}}^{\otimes n},B\in\mathbb{B}_{\mathcal{Y}}^{\otimes n}:\\
P_{X}^{\otimes n}(A)\leq e^{-n\alpha},\\
P_{Y}^{\otimes n}(B)\leq e^{-n\beta}
}
}P_{XY}^{\otimes n}(A\times B),\label{eq:-72}\\
\overline{\Theta}^{(n)}(\alpha,\beta) & :=-\frac{1}{n}\log\inf_{\substack{A\in\mathbb{B}_{\mathcal{X}}^{\otimes n},B\in\mathbb{B}_{\mathcal{Y}}^{\otimes n}:\\
P_{X}^{\otimes n}(A)\geq e^{-n\alpha},\\
P_{Y}^{\otimes n}(B)\geq e^{-n\beta}
}
}P_{XY}^{\otimes n}(A\times B),\label{eq:-71}
\end{align}
and $\underline{\Theta}^{(\infty)}$ and $\overline{\Theta}^{(\infty)}$
as their limits as $n\to\infty$. Then, the small-set expansion theorem
\cite{O'Donnell14analysisof} provides some bounds for these two quantities,
but those bounds are not asymptotically tight. Our strong BL inequalities
in Theorem \ref{thm:strongBL} imply the following strong version
of the small-set expansion theorem. The proof is provided in Appendix
\ref{sec:Proof-of-Theorem-strongsse}. 
\begin{thm}[Strong Small-Set Expansion Theorem]
\label{thm:strongsse} For any $n\ge1$ and $\ensuremath{\alpha\in[0,\alpha_{\max}],\beta\in[0,\beta_{\max}]}$,
\begin{align}
\underline{\Theta}^{(n)}(\alpha,\beta) & \ge\underline{\Theta}(\alpha,\beta),\label{eq:-15}\\
\overline{\Theta}^{(n)}(\alpha,\beta) & \leq\begin{cases}
\overline{\Theta}(\alpha,\beta) & \alpha,\beta>0\\
\alpha & \beta=0\\
\beta & \alpha=0
\end{cases},\label{eq:-64}
\end{align}
where $\underline{\Theta}(\alpha,\beta),\overline{\Theta}(\alpha,\beta)$
are respectively defined in \eqref{eq:ThetaUnderline} and \eqref{eq:ThetaOverline}.
Moreover, the inequalities in \eqref{eq:-15} and \eqref{eq:-64}
are asymptotically tight as $n\to\infty$, i.e., for $\alpha\in[0,\alpha_{\max}],\beta\in[0,\beta_{\max}]$,
\begin{align}
\underline{\Theta}^{(\infty)}(\alpha,\beta) & =\underline{\Theta}(\alpha,\beta)\label{eq:-78}\\
\overline{\Theta}^{(\infty)}(\alpha,\beta) & =\begin{cases}
\overline{\Theta}(\alpha,\beta) & \alpha,\beta>0\\
\alpha & \beta=0\\
\beta & \alpha=0
\end{cases}.\label{eq:-79}
\end{align}
\end{thm}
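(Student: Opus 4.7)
The plan is to derive Theorem~\ref{thm:strongsse} directly from the two-function strong Brascamp--Lieb inequalities of Theorem~\ref{thm:strongBL} by taking $f=\mathbf{1}_A$ and $g=\mathbf{1}_B$. The key computational point is that for any $p,\hat{p}\in\{0\}\cup(0,\infty]$ for which $(p,\hat{p})\in\mathcal{S}_{\mathbf{1}_A}$, one has $\|\mathbf{1}_A\|_p=P_X(A)^{1/p}$ and therefore $\Ent_{p,\hat{p}}(\mathbf{1}_A)=-\log P_X(A)$, \emph{independently} of $(p,\hat{p})$. Consequently the factor $P_X^{\otimes n}(A)^{1/p}$ on the right of \eqref{eq:FBL-1} exactly absorbs the $-\alpha/p$ correction in the exponent when we set $\alpha=-\tfrac{1}{n}\log P_X^{\otimes n}(A)$, and similarly for $B$, leaving the clean exponent $\underline{\Theta}(\alpha,\beta)$ (and analogously $\overline{\Theta}(\alpha,\beta)$ via \eqref{eq:RBL-1}).

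For \eqref{eq:-15}, I would pick $(1/p^{*},1/q^{*})$ to be a subgradient of the convex nondecreasing function $\underline{\Theta}$ at $(\alpha,\beta)$; such a subgradient has non-negative components (so $p^{*},q^{*}\in[0,\infty]$), exists on the interior of the domain, and is extended to the boundary by taking limits of interior subgradients. Choose any $p\ge p^{*},q\ge q^{*}$ in $(0,\infty)$. For any admissible $A,B$, $\tfrac{1}{n}\Ent_{p,p^{*}}(\mathbf{1}_A)=-\tfrac{1}{n}\log P_X^{\otimes n}(A)\ge\alpha$ and analogously for $\mathbf{1}_B$, so Condition 1+ of Theorem~\ref{thm:strongBL} holds, and \eqref{eq:FBL-1} gives
\[
P_{XY}^{\otimes n}(A\times B)\le e^{-n(\underline{\Theta}(\alpha,\beta)-\alpha/p-\beta/q)}P_X^{\otimes n}(A)^{1/p}P_Y^{\otimes n}(B)^{1/q}\le e^{-n\underline{\Theta}(\alpha,\beta)}.
\]
Taking $-\tfrac{1}{n}\log$ and the sup over admissible $(A,B)$ gives \eqref{eq:-15}. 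The bound \eqref{eq:-64} for $\alpha,\beta>0$ follows identically from \eqref{eq:RBL-1} with a supergradient of $\overline{\Theta}$ and each measure inequality reversed (so that Condition 1- becomes $P_X^{\otimes n}(A)\ge e^{-n\alpha}$). The boundary cases $\alpha=0$ or $\beta=0$ are immediate: taking $A=\mathcal{X}^n$ or $B=\mathcal{Y}^n$ satisfies the measure constraint trivially and reduces $P_{XY}^{\otimes n}(A\times B)$ to a marginal whose infimum under the remaining constraint is $e^{-n\alpha}$ or $e^{-n\beta}$.

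For the asymptotic tightness \eqref{eq:-78}--\eqref{eq:-79}, the plan is to construct near-extremal sets from the variational formulas for $\underline{\Theta}$ and $\overline{\Theta}$. Given $\varepsilon>0$, pick a finitely supported $Q_W$ and kernels $Q_{X|W},Q_{Y|W}$ achieving $D(Q_{X|W}\|P_X|Q_W)=\alpha$, $D(Q_{Y|W}\|P_Y|Q_W)=\beta$, and $\mathbb{D}(Q_{X|W},Q_{Y|W}\|P_{XY}|Q_W)\le\underline{\Theta}(\alpha,\beta)+\varepsilon$, together with an (approximate) Schr\"odinger optimizer $Q_{XY|W}^{*}$. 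In the finite-alphabet case, split $[n]$ into blocks $I_w$ of sizes $n_w\approx nQ_W(w)$, let $A_w,B_w$ be unions of conditional type classes of $Q_{X|W=w},Q_{Y|W=w}$, and set $A=\prod_w A_w,B=\prod_w B_w$; the method of types then yields $P_X^{\otimes n}(A)\doteq e^{-n\alpha}$, $P_Y^{\otimes n}(B)\doteq e^{-n\beta}$, and $P_{XY}^{\otimes n}(A\times B)\doteq e^{-n\mathbb{D}(Q_{X|W},Q_{Y|W}\|P_{XY}|Q_W)}$ (the joint count being controlled via $Q_{XY|W}^{*}$), whence $\underline{\Theta}^{(\infty)}(\alpha,\beta)\le\underline{\Theta}(\alpha,\beta)+\varepsilon$. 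The case of $\overline{\Theta}^{(\infty)}$ is dual, using a maximizer instead of a minimizer. To pass to general Polish $\mathcal{X},\mathcal{Y}$, I would discretize $P_{XY}$ along a sequence of refining finite measurable partitions $\pi_k$, apply the finite-alphabet construction on the discretization, and pull the resulting sets back.

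The main obstacle I foresee is the Polish-space tightness step, specifically verifying that $\underline{\Theta}(\alpha,\beta|P_{XY}^{\pi_k})\to\underline{\Theta}(\alpha,\beta|P_{XY})$ and similarly for $\overline{\Theta}$ as the partition is refined. The Schr\"odinger minimum $\mathbb{D}$ is an infinite-dimensional variational object: its upper semicontinuity follows cleanly from the dual formula as a supremum over continuous bounded test pairs $(f,g)$, but the matching lower semicontinuity requires a careful piecewise-constant approximation of the optimal coupling $Q_{XY|W}^{*}$ on partition cells, combined with the lower semicontinuity of relative entropy. Once this continuity is established, the finite-alphabet tightness and the first two paragraphs immediately combine to give \eqref{eq:-78}--\eqref{eq:-79}.
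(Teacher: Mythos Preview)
Your derivation of the inequalities \eqref{eq:-15} and \eqref{eq:-64} is correct and matches the paper: both reduce Theorem~\ref{thm:strongsse} to Theorem~\ref{thm:strongBL} by setting $f=\mathbf{1}_A$, $g=\mathbf{1}_B$, exploiting that $\Ent_{p,\hat p}(\mathbf{1}_A)=-\log P_X(A)$ for all admissible $p,\hat p>0$ so that Conditions~1$\pm$ collapse to the marginal-probability constraints and the $\|f\|_p,\|g\|_q$ factors cancel the $\alpha/p,\beta/q$ terms in the exponent.

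For the asymptotic tightness \eqref{eq:-78}--\eqref{eq:-79}, however, the paper takes a genuinely different and more direct route than your finite-alphabet-plus-discretization plan. Rather than reducing to finite alphabets and then proving $\underline\Theta(\alpha,\beta\mid P_{XY}^{\pi_k})\to\underline\Theta(\alpha,\beta\mid P_{XY})$ along refining partitions, the paper works on the Polish space from the start via Sanov's theorem in the weak topology: for a target pair $(R_X,R_Y)$ with $D(R_X\|P_X)>\alpha+\epsilon$, $D(R_Y\|P_Y)>\beta+\epsilon$, it sets $A_n=\{x^n:T_{x^n}\in B_{\le\delta}(R_X)\}$, $B_n=\{y^n:T_{y^n}\in B_{\le\delta}(R_Y)\}$ and reads off the marginal and joint exponents directly. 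The only nontrivial point is then a topological one---showing that $\bigcup_{(Q_X,Q_Y)\in B_\delta(R_X)\times B_\delta(R_Y)}\mathcal C(Q_X,Q_Y)$ is open (and its closed-ball analogue closed) so that Sanov applies to $P_{XY}^{\otimes n}(A_n\times B_n)$; for the reverse part one additionally uses compactness of relative-entropy sublevel sets and lower semicontinuity of $D(\cdot\|P_{XY})$ to pass $\delta\downarrow 0$. Time-sharing and the convexity/concavity-driven one-sided continuity of $\underline\Theta,\overline\Theta$ then absorb the $\epsilon$. This bypasses entirely the partition-continuity obstacle you flagged; your approach could in principle be completed, but the paper's avoids that analysis altogether at the cost of a short argument about openness/closedness of unions of coupling sets.
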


\begin{rem}
\label{rem:equality} Observe that $P_{XY}^{\otimes n}(A\times B)$
is nondecreasing in $(A,B)$ in the sense that $P_{XY}^{\otimes n}(A\times B)\le P_{XY}^{\otimes n}(A'\times B')$
for any $A\subseteq A',B\subseteq B'$. By this property, for the
forward noise stability, given $\ensuremath{\alpha\in[0,\alpha_{\max}],\beta\in[0,\beta_{\max}]}$,
there exists a sequence $\left\{ (A_{n},B_{n})\right\} $ such that
$P_{X}^{\otimes n}(A_{n})\le e^{-n\alpha},P_{Y}^{\otimes n}(B_{n})\le e^{-n\beta}$
for all $n$, and moreover, $-\frac{1}{n}\log P_{X}^{\otimes n}(A_{n})\downarrow\alpha,\,-\frac{1}{n}\log P_{Y}^{\otimes n}(B_{n})\downarrow\beta$,
and $-\frac{1}{n}\log P_{XY}^{\otimes n}(A_{n}\times B_{n})\downarrow\underline{\Theta}(\alpha,\beta)$
as $n\to\infty$. In fact, each $(A_{n},B_{n})$ can be chosen in
the following way: First, choose them as an optimal pair attaining
$\underline{\Theta}^{(n)}(\alpha,\beta)$, and then enlarge them as
long as possible under the condition $P_{X}^{\otimes n}(A)\leq e^{-n\alpha},\,P_{Y}^{\otimes n}(B)\leq e^{-n\beta}$.
 Similarly, for the reverse noise stability, given $(\alpha,\beta)$
in the effective region of $\overline{\Theta}$, there exists a sequence
of $(A_{n},B_{n})$ such that $P_{X}^{\otimes n}(A_{n})\ge e^{-n\alpha},P_{Y}^{\otimes n}(B_{n})\ge e^{-n\beta}$
for all $n$, and moreover, $-\frac{1}{n}\log P_{X}^{\otimes n}({A}_{n})\uparrow\alpha,-\frac{1}{n}\log P_{Y}^{\otimes n}({B}_{n})\uparrow\beta$,
and $-\frac{1}{n}\log P_{XY}^{\otimes n}({A}_{n}\times{B}_{n})\to\overline{\Theta}(\alpha,\beta).$ 
\end{rem}

This theorem for finite alphabets $\mathcal{X},\mathcal{Y}$ was first
proven by the present author together with Anantharam and Chen \cite{yu2021Graphs}.
Our theorem here is a generalization of the one in \cite{yu2021Graphs}
to arbitrary distributions on Polish spaces. As discussed in \cite{yu2021Graphs},
our strong small-set expansion theorem is stronger than O'Donnell's
forward small-set expansion theorem \cite{O'Donnell14analysisof}
and Mossel et al's reverse small-set expansion theorem \cite{mossel2006non}.
Moreover, for the limiting case of $n\to\infty$, our theorems reduce
to theirs for a sequence of pairs $(\alpha_{n},\beta_{n})$ such that
$\alpha_{n}=\kappa_{n}\alpha_{0},\beta_{n}=\kappa_{n}\beta_{0},\lim_{n\to\infty}\kappa_{n}=0$,
e.g., $e^{-n\alpha_{n}}=a,e^{-n\beta_{n}}=b$ for some fixed $0<a,b<1$.
Similarly to the observation in Section \ref{subsec:Strong-Hypercontractivity-Inequa},
conceptually, our small-set expansion theorem here can be seen as
the large deviation theorem of the noise stability problem; while
the forward and reverse small-set expansion theorems in \cite{O'Donnell14analysisof}
and \cite{mossel2006non} correspond to the moderate deviation theorem
of the small-set expansion problem. The moderate deviation theorem
can be recovered from the large deviation theorem by letting $\alpha,\beta$
go to zero in a certain speed, if $P_{XY}$ satisfies the following
condition: $\underline{\Theta}(s,t)\ge\frac{s}{p}+\frac{t}{q}$ and
$\overline{\Theta}(s,t)\le\frac{s}{p}+\frac{t}{q}$ hold for all $s,t\ge0$
if and only if they hold for a neighborhood of the origin. This condition
is satisfied if $\underline{\Theta}$ is convex and $\overline{\Theta}$
is concave. It is known that this condition is satisfied by the \emph{doubly
symmetric binary distribution} given in \eqref{eq:DSBS} in Section
\ref{sec:Examples} \cite{yu2021common}. For the doubly symmetric
binary distribution, the small-set expansion theorems in \cite{O'Donnell14analysisof}
and \cite{mossel2006non} are sharp in the moderate deviation regime
\cite{yu2021common}. Furthermore, strengthening the small-set expansion
theorem for doubly symmetric binary distributions was recently studied
by Ordentlich, Polyanskiy, and Shayevitz \cite{ordentlich2020note},
but they only solved the limiting cases as the correlation coefficient
$\rho\to0,1$. The symmetric case $\alpha=\beta$ was solved by Kirshner
and Samorodnitsky \cite{kirshner2019moment}. Our theorem here is
a generalization of theirs.

We can further strengthen the strong small-set theorem once the exact
values of the marginal probabilities are given. 
\begin{thm}[Strong Small-Set Expansion Theorem]
\label{thm:strongsse-2} For any $n\ge1$ and any subsets $A\in\mathbb{B}_{\mathcal{X}}^{\otimes n},B\in\mathbb{B}_{\mathcal{Y}}^{\otimes n}$,
we have 
\begin{equation}
\underline{\Theta}(\alpha,\beta)\le-\frac{1}{n}\log P_{XY}^{\otimes n}(A\times B)\le\overline{\Theta}^{*}(\alpha,\beta):=\begin{cases}
\invbreve{\varphi}(\alpha,\beta) & \alpha,\beta>0\\
\alpha & \beta=0\\
\beta & \alpha=0
\end{cases}.\label{eq:strongsse}
\end{equation}
where $\alpha:=-\frac{1}{n}\log P_{X}^{\otimes n}(A),\,\beta:=-\frac{1}{n}\log P_{Y}^{\otimes n}(B)$,
and $\overline{\varphi}$ is defined in \eqref{eq:phiOverline}. Moreover,
both the lower and upper bounds above are asymptotically sharp as
$n\to\infty$. That is, for any $\alpha\in[0,\alpha_{\max}],\beta\in[0,\beta_{\max}]$,
there exists a sequence of $(A_{n},B_{n})$ such that $-\frac{1}{n}\log P_{X}^{\otimes n}({A}_{n})\downarrow\alpha,-\frac{1}{n}\log P_{Y}^{\otimes n}({B}_{n})\downarrow\beta$,
and $-\frac{1}{n}\log P_{XY}^{\otimes n}({A}_{n}\times{B}_{n})\to\underline{\Theta}(\alpha,\beta)$
as $n\to\infty$, and there also exists another sequence of $(A_{n},B_{n})$
such that $-\frac{1}{n}\log P_{X}^{\otimes n}({A}_{n})\uparrow\alpha,-\frac{1}{n}\log P_{Y}^{\otimes n}({B}_{n})\uparrow\beta$,
and $-\frac{1}{n}\log P_{XY}^{\otimes n}({A}_{n}\times{B}_{n})\to\overline{\Theta}^{*}(\alpha,\beta)$
as $n\to\infty$. 
\end{thm}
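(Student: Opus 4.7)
The plan is to deduce the two inequalities in \eqref{eq:strongsse} from the two-function strong Brascamp--Lieb inequalities (Corollary~\ref{cor:BL} and Theorem~\ref{thm:strongBL}) applied to the indicator pair $f=\mathbf{1}_A$, $g=\mathbf{1}_B$, and then to obtain the asymptotic sharpness via a time-sharing construction analogous to that in the proof of Theorem~\ref{thm:strongsse}. The reduction is clean because for any indicator $\mathbf{1}_A$ and any orders $(p,\hat p)$ with $p\hat p\neq 0$, a direct computation from \eqref{eq:ent} gives $\Ent_{p,\hat p}(\mathbf{1}_A)=-\log P_X^{\otimes n}(A)=n\alpha$ and $\|\mathbf{1}_A\|_p=e^{-n\alpha/p}$, so all the ``Condition~1'' hypotheses of Theorem~\ref{thm:strongBL} become equalities that are trivially satisfied.

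For the lower bound $\underline{\Theta}(\alpha,\beta)\le -\tfrac{1}{n}\log P_{XY}^{\otimes n}(A\times B)$, I would fix any $p,q\in(0,\infty)$ and any nonnegative subgradient $(1/p^{*},1/q^{*})$ of the nondecreasing convex function $\underline{\Theta}$ at $(\alpha,\beta)$ (such a subgradient exists on $[0,\alpha_{\max}]\times[0,\beta_{\max}]$, with $1/p^{*}=0$ or $1/q^{*}=0$ allowed on the boundary). Plugging $(\mathbf{1}_A,\mathbf{1}_B)$ into Theorem~\ref{thm:strongBL}(1) and cancelling the factors $e^{-n\alpha/p}e^{-n\beta/q}$ on both sides collapses the right-hand side to $e^{-n\underline{\Theta}(\alpha,\beta)}$, as required. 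For the upper bound, the edge cases $\alpha=0$ or $\beta=0$ are immediate: $P_X^{\otimes n}(A)=1$ forces $P_{XY}^{\otimes n}(A\times B)=P_Y^{\otimes n}(B)=e^{-n\beta}$, matching $\overline{\Theta}^{*}(0,\beta)=\beta$, and the other case is symmetric. For $\alpha,\beta>0$, I would apply Corollary~\ref{cor:BL} with the special choice $p=\hat p$ and $q=\hat q$, both in $(0,\infty)$. A term-by-term inspection of the definition of $\overline{\Lambda}^{*}_{p,q,\hat p,\hat q}$ shows that under $p=\hat p$, $q=\hat q$ the signed constraints $\alpha_{\hat p/p}^{\pm},\alpha_{p/\hat p}^{\pm}$ and their $\beta$-analogues all collapse to the equalities $D(Q_{X|W}\|P_X|Q_W)=\alpha$, $D(Q_{Y|W}\|P_Y|Q_W)=\beta$, each of the four $\theta$-terms reduces to $\mathbb{D}(Q_{X|W},Q_{Y|W}\|P_{XY}|Q_W)$, and the resulting supremum equals $\invbreve\varphi(\alpha,\beta)$ by the $W$-based characterization of the upper concave envelope (with $|\mathcal{W}|$ bounded via Carath\'eodory). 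Substituting into the reverse inequality of Corollary~\ref{cor:BL} yields $P_{XY}^{\otimes n}(A\times B)\ge e^{-n\invbreve\varphi(\alpha,\beta)}$, i.e.\ the claimed upper bound on $-\tfrac{1}{n}\log P_{XY}^{\otimes n}(A\times B)$.

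Asymptotic sharpness is obtained by a time-sharing construction that rebuilds the two envelopes at the product level. For the lower-bound direction, expand $\underline{\Theta}(\alpha,\beta)=\sum_i\lambda_i\underline{\varphi}(s_i,t_i)$ with $\sum_i\lambda_i s_i\ge\alpha$, $\sum_i\lambda_i t_i\ge\beta$ (valid by Lemma~\ref{lem:exchange} and Carath\'eodory), pick optimal pairs $(Q_X^i,Q_Y^i)$ for $\underline{\varphi}(s_i,t_i)$ together with minimum-KL couplings $Q_{XY}^i$, and take $A_n=\prod_i A_n^i$, $B_n=\prod_i B_n^i$ as products of weakly-typical marginal sets over blocks of length $\lfloor n\lambda_i\rfloor$; standard type-counting gives the required convergence of the three exponents, and a slight shrinkage of each $A_n^i,B_n^i$ enforces the ``$\downarrow$'' monotonicity of the marginal exponents. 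The upper-bound sharpness uses the dual decomposition $\invbreve\varphi(\alpha,\beta)=\sum_i\lambda_i\overline{\varphi}(s_i,t_i)$ with $\sum_i\lambda_i s_i=\alpha$, $\sum_i\lambda_i t_i=\beta$ and maximum-KL couplings, together with a slight enlargement to impose ``$\uparrow$'' monotonicity. The main obstacle is the Polish-space bookkeeping: one must quantize the extremal types by finite measurable partitions, control the exponential rates uniformly in the partition fineness, and calibrate the typical-set radii so that the marginal probabilities approach $e^{-n\alpha},e^{-n\beta}$ from the correct side. This is exactly the calibration already performed for Theorem~\ref{thm:strongsse} in Appendix~\ref{sec:Proof-of-Theorem-strongsse}, so the task here essentially amounts to re-running that construction with the decomposition dictated by $\invbreve\varphi$ in place of $\overline{\Theta}$.
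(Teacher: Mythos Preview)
Your argument is correct, and for the lower bound and the asymptotic sharpness it essentially coincides with the paper's: the paper also extracts the lower bound from Theorem~\ref{thm:strongsse} (hence ultimately from Theorem~\ref{thm:strongBL} applied to indicators) and invokes Remark~\ref{rem:equality}, and for the upper-bound sharpness it likewise says ``follows similarly to the asymptotic sharpness of~\eqref{eq:-64}''.

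For the upper bound when $\alpha,\beta>0$ you take a genuinely different, and in fact cleaner, route. The paper applies Corollary~\ref{cor:BL} with $p>\hat p>0$, $q>\hat q>0$, discards all but the $\theta_{p,q}$ term to obtain
\[
-\tfrac{1}{n}\log P_{XY}^{\otimes n}(A\times B)\le \sup_{s\ge\alpha,\,t\ge\beta}\invbreve\varphi(s,t)+\tfrac{\alpha-s}{p}+\tfrac{\beta-t}{q},
\]
and then chooses $p,q$ small enough that $(1/p,1/q)$ dominates a supergradient of $\invbreve\varphi$ at $(\alpha,\beta)$, separately disposing of the case $\invbreve\varphi(\alpha,\beta)=\infty$. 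Your choice $p=\hat p$, $q=\hat q$ collapses the constraint box $[\alpha_{\hat p/p}^{-},\alpha_{\hat p/p}^{+}]$ to the single point $\alpha$, kills the linear correction terms in each $\theta$, and leaves
\[
\overline{\Lambda}^{*}_{p,q,p,q}(\alpha,\beta)=\sup_{\substack{Q_W,Q_{X|W},Q_{Y|W}:\\ D(Q_{X|W}\|P_X|Q_W)=\alpha,\ D(Q_{Y|W}\|P_Y|Q_W)=\beta}}\mathbb{D}(Q_{X|W},Q_{Y|W}\|P_{XY}|Q_W)=\invbreve\varphi(\alpha,\beta),
\]
so the bound drops out in one line with no supergradient calculus and no separate treatment of the $\invbreve\varphi=\infty$ case. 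The price is that you must verify $\Ent_{p,p}(\mathbf 1_A)=-\log P_X^{\otimes n}(A)$ via~\eqref{eq:Shannonentropy-1-2} rather than the simpler~\eqref{eq:ent}, and that the equality-constrained $W$-supremum is feasible for every $\alpha\in[0,\alpha_{\max}]$ (which it is, by time-sharing with $P_X$); both are routine.
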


\begin{proof}
The lower bound in \eqref{eq:strongsse} follows by Theorem \ref{thm:strongsse},
and its exponential sharpness follows by Remark \ref{rem:equality}.
It is easy to verify the upper bound for $\alpha=0$ or $\beta=0$.
We now prove the upper bound for $\alpha,\beta>0$ follows by the
reverse strong BL inequality in Corollary \ref{cor:BL}. Specifically,
we set $f=1_{A},g=1_{B}$, $p>\hat{p}>0,q>\hat{q}>0$, and get 
\begin{align*}
 & -\frac{1}{n}\log P_{XY}^{\otimes n}(A\times B)\le\overline{\Lambda}_{p,q,\hat{p},\hat{q}}^{*}(\alpha,\beta)\\
 & \le\sup_{\substack{Q_{W},Q_{X|W},Q_{Y|W}:\\
D(Q_{X|W}\|P_{X}|Q_{W})\ge\alpha,\\
D(Q_{Y|W}\|P_{Y}|Q_{W})\ge\beta
}
}\mathbb{D}(Q_{X|W},Q_{Y|W}\|P_{XY}|Q_{W})+\frac{\alpha-D(Q_{X|W}\|P_{X}|Q_{W})}{p}+\frac{\beta-D(Q_{Y|W}\|P_{Y}|Q_{W})}{q}\\
 & =\sup_{s\ge\alpha,t\ge\beta}\invbreve{\varphi}(s,t)+\frac{\alpha-s}{p}+\frac{\beta-t}{q}
\end{align*}
for any $p,q>0$. We first suppose that $\invbreve{\varphi}(\alpha,\beta)<\infty$
for any finite $(\alpha,\beta)$. Since $\invbreve{\varphi}$ is concave,
any supergradient of $\invbreve{\varphi}$ at $(\alpha,\beta)$ for
$\alpha,\beta>0$ must be finite. Let $(u,v)$ be  a supergradient
of $\invbreve{\varphi}$ at $(\alpha,\beta)$. Then, if we choose
$p,q$ sufficiently small such that $\frac{1}{p}\ge u,\frac{1}{q}\ge v$,
we have 
\begin{align*}
-\frac{1}{n}\log P_{XY}^{\otimes n}(A\times B) & \le\sup_{s\ge\alpha,t\ge\beta}\invbreve(\overline{\varphi}(s,t)-us-vt)+(u-\frac{1}{p})s+(v-\frac{1}{q})t+\frac{\alpha}{p}+\frac{\beta}{q}\\
 & =\invbreve{\varphi}(\alpha,\beta)-u\alpha-v\beta+(u-\frac{1}{p})\alpha+(v-\frac{1}{q})\beta+\frac{\alpha}{p}+\frac{\beta}{q}\\
 & =\invbreve{\varphi}(\alpha,\beta).
\end{align*}
We next suppose $\invbreve{\varphi}(\alpha,\beta)=\infty$ for some
finite $(\alpha,\beta)$. By the concavity, $\invbreve{\varphi}(\alpha,\beta)=\infty$
for all $\alpha,\beta>0$. Hence, the upper bound in \eqref{eq:strongsse}
follows trivially. The asymptotic sharpness of the upper bound in
\eqref{eq:strongsse} follows similarly to the asymptotic sharpness
of \eqref{eq:-64} in Theorem \ref{thm:strongsse}. 
\end{proof}

\subsection{\label{subsec:-Stability}$q$-Stability}

In this subsection, we apply our results to the \emph{$q$-stability}
of Boolean functions. The $q$-stability problem concerns the following
question: For an event $A\in\mathbb{B}_{\mathcal{X}}^{\otimes n}$,
if the probability $P_{X}^{\otimes n}(A)$ is given, then how large
and how small could the noisy version $\Vert P_{X|Y}^{\otimes n}(A|\cdot)\Vert_{q}$
be? Similarly to the small-set expansion case, we first consider a
relaxed version, in which $P_{X}^{\otimes n}(A)$ is bounded, instead
exactly given. For $\alpha\in[0,\alpha_{\max}]$, we aim at characterizing

\begin{align}
\Theta_{q}^{(n)}(\alpha) & :=\begin{cases}
-\frac{1}{n}\log\sup_{\substack{A\in\mathbb{B}_{\mathcal{X}}^{\otimes n}:P_{X}^{\otimes n}(A)\leq e^{-n\alpha}}
}\Vert P_{X|Y}^{\otimes n}(A|\cdot)\Vert_{q} & q\ge1\\
-\frac{1}{n}\log\inf_{\substack{A\in\mathbb{B}_{\mathcal{X}}^{\otimes n}:P_{X}^{\otimes n}(A)\geq e^{-n\alpha}}
}\Vert P_{X|Y}^{\otimes n}(A|\cdot)\Vert_{q} & q\in(-\infty,1)\backslash\{0\}
\end{cases}\label{eq:-72-3}
\end{align}
and $\Theta_{q}^{(\infty)}$ as its limit as $n\to\infty$. The $q$-stability
problem in Gaussian measure spaces was previously investigated in
\cite{eldan2015two}, and the one in Hamming spaces was studied in
\cite{li2019boolean}. As a consequence of Theorem \ref{thm:strongBL_single},
we have the following result for more general distributions. The proof
is provided in Appendix \ref{sec:Proof-of-Theorem-strongsse-1}. 
\begin{thm}[Strong $q$-Stability Theorem]
\label{thm:strongqstability} For any $n\ge1$ and $\ensuremath{\alpha\in[0,\alpha_{\max}]}$,
\begin{align}
\Theta_{q}^{(n)}(\alpha) & \ge\Theta_{q'}(\alpha),\qquad q\in[1,\infty),\label{eq:-63}\\
\Theta_{q}^{(n)}(\alpha) & \leq\begin{cases}
\Theta_{q'}(\alpha) & \alpha>0\\
0 & \alpha=0
\end{cases},\qquad q\in(-\infty,1)\backslash\{0\},\label{eq:-69}
\end{align}
where $\Theta_{q'}$ is defined in \eqref{eq:Theta_q}. Moreover,
\eqref{eq:-63} and \eqref{eq:-69} are asymptotically tight for arbitrary
Polish spaces $\mathcal{X},\mathcal{Y}$. 
\end{thm}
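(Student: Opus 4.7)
The plan is to derive both inequalities by direct application of Theorem~\ref{thm:strongBL_single} to indicator functions, and to handle asymptotic tightness by a method-of-types construction followed by a quantization argument for Polish spaces. The key elementary observation is that for any $A \in \mathbb{B}_{\mathcal{X}}^{\otimes n}$ and any nonzero $p, p^*$, $\|1_A\|_p = (P_X^{\otimes n}(A))^{1/p}$ and $\frac{1}{n}\Ent_{p,p^*}(1_A) = -\frac{1}{n}\log P_X^{\otimes n}(A)$, so the side conditions on $f$ in Theorem~\ref{thm:strongBL_single} translate directly into two-sided constraints on $P_X^{\otimes n}(A)$.

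For the forward bound ($q \in [1,\infty)$), I observe that $\Theta_{q'}$ is convex and nondecreasing on $[0,\alpha_{\max}]$, so it admits a subgradient $1/p^* \in [0,\infty]$ at $\alpha$ (Condition~0+); for any $p \ge p^*$, every $A$ with $P_X^{\otimes n}(A) \le e^{-n\alpha}$ satisfies Condition~1+. Applying \eqref{eq:FBL-3-2} with $\|1_A\|_p \le e^{-n\alpha/p}$ yields
\[
\|P_{X|Y}^{\otimes n}(1_A|\cdot)\|_q \le e^{-n(\Theta_{q'}(\alpha)-\alpha/p)} (P_X^{\otimes n}(A))^{1/p} \le e^{-n\Theta_{q'}(\alpha)},
\]
which is \eqref{eq:-63}. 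The reverse bound for $q \in (-\infty,1)\setminus\{0\}$ and $\alpha > 0$ is entirely analogous, using instead the concavity and monotonicity of $\Theta_{q'}$ to pick $p^*$ satisfying Condition~0- and then substituting $\|1_A\|_p \ge e^{-n\alpha/p}$ into \eqref{eq:RBL-3-3}; the case $\alpha = 0$ is trivially saturated by $A = \mathcal{X}^n$.

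For the asymptotic tightness I would first treat finite alphabets by the method of types. Let $(Q_W^*, Q_{X|W}^*, Q_{Y|W}^*)$ nearly attain the extremum defining $\Theta_{q'}(\alpha)$ with $D(Q_{X|W}^*\|P_X|Q_W^*) = \alpha$, and take $A_n$ as the union, over $\mathbf{w}^n$ typical under $Q_W^*$, of the conditional type classes of $\mathbf{x}^n$ near $Q_{X|W}^*$. Standard type-counting then yields $P_X^{\otimes n}(A_n) \doteq e^{-n\alpha}$ and $\|P_{X|Y}^{\otimes n}(1_{A_n}|\cdot)\|_q \doteq e^{-n\Theta_{q'}(\alpha)}$, matching the bound exactly in the exponent. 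For general Polish $\mathcal{X}, \mathcal{Y}$ I would quantize by a refining sequence of finite measurable partitions, transfer the finite-alphabet construction along each partition, and pass to the limit using the continuity of $\Theta_{q'}$ on $(0,\alpha_{\max}]$ together with the fact that both relative entropy and the Schr\"odinger functional $\mathbb{D}(\cdot,\cdot\|P_{XY})$ can be recovered as suprema over finite partitions.

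The main obstacle is the Polish-space asymptotic tightness: one must verify that the quantized construction produces the correct exponent in the limit, which requires showing that the extremal triple $(Q_W^*, Q_{X|W}^*, Q_{Y|W}^*)$ for $\Theta_{q'}(\alpha)$ can be approximated in a manner that is jointly compatible with all the relative-entropy constraints and with the coupling functional $\mathbb{D}(\cdot,\cdot\|P_{XY}|\cdot)$. This step is a close analogue of the corresponding tightness argument in the proof of Theorem~\ref{thm:strongsse}, and I expect that essentially the same approximation scheme transfers here, with only the combinatorial/type-counting step replaced by the single-function version coming from \eqref{eq:-38}--\eqref{eq:-39}.
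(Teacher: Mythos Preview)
Your derivation of the inequalities \eqref{eq:-63} and \eqref{eq:-69} from Theorem~\ref{thm:strongBL_single} via indicator functions is correct and is exactly the paper's (one-line) reduction.

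The tightness argument, however, diverges from the paper and has a gap. The paper does \emph{not} use quantization anywhere: for Polish spaces it applies Sanov's theorem directly in the weak topology, just as in Appendix~\ref{sec:Proof-of-Theorem-strongsse}, so your appeal to ``the same approximation scheme'' mischaracterizes that proof. More importantly, the paper splits the tightness argument according to the value of $q$. For $q\ge 1$ (and analogously for $q<0$) it exploits the H\"older identity $\|P_{X|Y}^{\otimes n}(1_A)\|_q=\sup_{g}\langle 1_A,g\rangle/\|g\|_{q'}$ with $g=1_{B_n}$, where $(A_n,B_n)$ is the sequence furnished by Theorem~\ref{thm:strongsse-2}; optimizing the resulting bound $\underline{\Theta}(\alpha,\beta)-\beta/q'$ over $\beta$ yields $\Theta_{q'}(\alpha)$. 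This reduces the single-function tightness to the already-established two-function tightness, with no new large-deviations work.

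The case $0<q<1$ is genuinely different, and your sketch does not anticipate it. Here $q'<0$, so $\|1_B\|_{q'}=0$ for every proper $B$, and the indicator-H\"older reduction is vacuous. The paper therefore proves this case directly: it sets $A_n=\bigcup_{T_X\in B_\delta(R_X)}\mathcal{T}_{T_X}$, expresses $\|P_{X|Y}^{\otimes n}(A_n|\cdot)\|_q^q$ as an integral over the empirical type $T_Y$, covers the compact sublevel set $\{T_Y:D(T_Y\|P_Y)\le\Theta_{q'}(\alpha)+\epsilon\}$ by finitely many $\delta$-balls, applies Jensen's inequality on each ball (this is where $0<q<1$ enters), and then invokes Sanov together with lower semicontinuity and compactness of relative-entropy sublevel sets to pass $\delta\downarrow 0$. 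A quantization scheme would not bypass this step; it would have to reproduce the same compactness-and-covering machinery on the quantized space and then control the limit, which is at least as delicate as the paper's direct argument.
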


Our results here strengthen and generalize the (single-function version
of) forward small-set expansion theorem due to Kahn, Kalai, and Linial
\cite{kahn1988influence} and the (single-function version of) reverse
small-set expansion theorem due to Mossel et al \cite{mossel2006non}.

We can further strengthen the strong $q$-stability theorem once the
exact values of the marginal probability are given. The proof is similar
to the one of Theorem \ref{thm:strongsse-2}, and hence, omitted. 
\begin{thm}[Strong $q$-Stability Theorem]
\label{thm:strongsse-2-1} For any $n\ge1$ and any subsets $A\in\mathbb{B}_{\mathcal{X}}^{\otimes n}$,
we have 
\begin{align}
-\frac{1}{n}\log\Vert P_{X|Y}^{\otimes n}(A|\cdot)\Vert_{q} & \ge\Theta_{q'}(\alpha),\qquad q\in[1,\infty),\label{eq:-70}\\
-\frac{1}{n}\log\Vert P_{X|Y}^{\otimes n}(A|\cdot)\Vert_{q} & \leq\begin{cases}
\invbreve\varphi_{q'}(\alpha) & \alpha>0\\
0 & \alpha=0
\end{cases},\qquad q\in(-\infty,1)\backslash\{0\},\label{eq:-73}
\end{align}
where $\alpha:=-\frac{1}{n}\log P_{X}^{\otimes n}(A)$, and $\varphi_{r}$
is defined in \eqref{eq:phi_r} for $r>0$ and in \eqref{eq:phi_q}
for $r<0$. Moreover, \eqref{eq:-70} and \eqref{eq:-73} are asymptotically
sharp for arbitrary Polish spaces. 
\end{thm}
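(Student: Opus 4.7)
The plan is to mirror the proof of Theorem~\ref{thm:strongsse-2}, substituting the two-function strong BL inequalities (Corollary~\ref{cor:BL}) with their single-function counterparts from Corollary~\ref{cor:BL_single}. The critical observation is that for $f=1_{A}$ with $P_{X}^{\otimes n}(A)=e^{-n\alpha}$ one has $\Vert1_{A}\Vert_{p}=e^{-n\alpha/p}$ and $\frac{1}{n}\Ent_{p,\hat{p}}(1_{A})=\alpha$ for every $p,\hat{p}>0$, so every entropy hypothesis in Corollary~\ref{cor:BL_single} holds with equality, and the factor $\Vert f\Vert_{p}$ cancels the $-\alpha/p$ term in the exponent. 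For the lower bound when $q\in[1,\infty)$ this lets me invoke Theorem~\ref{thm:strongBL_single} Statement~1 directly: the convex nondecreasing $\Theta_{q'}$ admits a nonnegative subgradient $1/p^{*}$ at $\alpha$ (Condition~0+) and Condition~1+ is trivial, so \eqref{eq:FBL-3-2} yields $\Vert P_{X|Y}^{\otimes n}(1_{A})\Vert_{q}\leq e^{-n\Theta_{q'}(\alpha)}$, which is \eqref{eq:-70}.

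For the upper bound when $q\in(-\infty,1)\setminus\{0\}$ and $\alpha>0$, I apply Corollary~\ref{cor:BL_single}~\eqref{eq:-10} with $f=1_{A}$ and $p>\hat{p}>0$ to get $-\frac{1}{n}\log\Vert P_{X|Y}^{\otimes n}(1_{A})\Vert_{q}\leq\overline{\Gamma}_{p,q,\hat{p}}^{*}(\alpha)$, and then establish $\overline{\Gamma}_{p,q,\hat{p}}^{*}(\alpha)\leq\invbreve\varphi_{q'}(\alpha)$. In the definition~\eqref{eq:GammaReverse} I upper-bound the minimum by $\theta_{p,q'}$ alone; this erases $\hat{Q}_{X|W}$ and, because $\hat{p}/p<1$ gives $\alpha_{\hat{p}/p}^{-}=\alpha$, forces $s:=D(Q_{X|W}\Vert P_{X}|Q_{W})\geq\alpha$. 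When $q\in(0,1)$ (so $q'<0$ and $\mathbf{S}^{-}=\emptyset$), a measurable-selection step followed by time-sharing identifies $\sup_{Q_{W},Q_{X|W}}\inf_{Q_{Y|W}}[\mathbb{D}-D(Q_{Y|W}\Vert P_{Y}|Q_{W})/q']$ with $\invbreve\varphi_{q'}(s)$ through the sup-inf form~\eqref{eq:phi_q}. When $q<0$ (so $0<q'<1$ and $Q_{Y|W}$ is itself a sup variable), introducing $t:=D(Q_{Y|W}\Vert P_{Y}|Q_{W})$ and time-sharing yield $\sup_{s\geq\alpha,t\geq0}[\invbreve\varphi(s,t)-t/q'+(\alpha-s)/p]$, and a Carath\'eodory argument (writing the concave envelope as a two-atom convex combination) collapses the inner $\sup_{t\geq0}$ to $\invbreve\varphi_{q'}(s)$ via~\eqref{eq:phi_r}. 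In either subcase the bound reduces to $\sup_{s\geq\alpha}[\invbreve\varphi_{q'}(s)+(\alpha-s)/p]$, and picking $p$ small enough that $1/p$ majorizes a supergradient of $\invbreve\varphi_{q'}$ at $\alpha$ pins the supremum at $s=\alpha$, yielding $\invbreve\varphi_{q'}(\alpha)$; the degenerate cases $\alpha=0$ and $\invbreve\varphi_{q'}(\alpha)=\infty$ are trivial.

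Asymptotic sharpness follows from Theorem~\ref{thm:BLExponentSingleFunc}'s asymptotic tightness of $\underline{\Gamma}_{p,q,\hat{p}}^{*}$ and $\overline{\Gamma}_{p,q,\hat{p}}^{*}$ for finite alphabets, restricted to indicator extremizers, combined with the monotonicity trick of Remark~\ref{rem:equality}: after selecting an optimal $A_{n}$ for Theorem~\ref{thm:strongqstability} one enlarges (for the lower bound) or shrinks (for the upper bound) it so that $-\frac{1}{n}\log P_{X}^{\otimes n}(A_{n})\to\alpha$ monotonically from the appropriate side, while the corresponding envelope value is attained in the limit. The main obstacle I foresee lies in the $q<0$ subcase of the upper bound --- rigorously justifying the Carath\'eodory swap when $\invbreve\varphi(s,\cdot)$ may be unbounded in $t$, and confirming that the inner $\inf_{Q_{Y|W}}$ step in the $q\in(0,1)$ subcase delivers the conditional version of $\varphi_{q'}$ on general Polish spaces.
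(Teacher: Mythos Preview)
Your proposal is correct and follows exactly the approach the paper indicates (the paper omits the proof, saying only that it mirrors Theorem~\ref{thm:strongsse-2}): apply Corollary~\ref{cor:BL_single} with $f=1_{A}$ and $p>\hat{p}>0$, bound $\overline{\Gamma}_{p,q,\hat{p}}^{*}(\alpha)$ by $\sup_{s\ge\alpha}\invbreve\varphi_{q'}(s)+(\alpha-s)/p$, and then choose $p$ small so that $1/p$ dominates a supergradient of the concave $\invbreve\varphi_{q'}$ at $\alpha$. Your two flagged obstacles are not genuine: the ``Carath\'eodory swap'' $\sup_{t\ge0}[\invbreve\varphi(s,t)-t/q']=\invbreve\varphi_{q'}(s)$ is just iterated suprema (write $\invbreve\varphi(s,t)$ as $\sup_{\sum\lambda_i(s_i,t_i)=(s,t)}\sum\lambda_i\overline{\varphi}(s_i,t_i)$, absorb $-t/q'=-\sum\lambda_it_i/q'$ into the summands, and optimize each $t_i$ separately), and the $\inf_{Q_{Y|W}}$ in the $q\in(0,1)$ subcase decouples over $w$ without any selection issue because $|\mathcal{W}|$ may be taken finite; for sharpness on Polish spaces you should cite Theorem~\ref{thm:strongqstability} (proven via Sanov in Appendix~\ref{sec:Proof-of-Theorem-strongsse-1}) rather than Theorem~\ref{thm:BLExponentSingleFunc}, whose tightness is stated only for finite alphabets.
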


Similarly to the Rényi concentration function, it is also interesting
to investigate the variant of $q$-stability $\Theta_{q}^{(n)}$ in
which the supremum in the case $q\ge1$ is replaced by an infimum,
and the infimum in the case $q<1$ is replaced by a supremum. This
variant is in fact closely related to the Rényi-resolvability problem,
and hence, the results on the Rényi-resolvability problem in \cite{yu2018renyi}
can be used to derive some interesting bounds for it.

\section{\label{sec:Examples}Examples: Binary Case}

In this section, the bases of logarithms are set to $2$. Consider
a doubly symmetric binary distribution $P_{XY}$ with correlation
coefficient $\rho\in(0,1)$, i.e., 
\begin{equation}
P_{XY}=\begin{array}{ccc}
X\backslash Y & 0 & 1\\
0 & \frac{1+\rho}{4} & \frac{1-\rho}{4}\\
1 & \frac{1-\rho}{4} & \frac{1+\rho}{4}
\end{array}.\label{eq:DSBS}
\end{equation}
Define $\kappa=(\frac{1+\rho}{1-\rho})^{2}$. Define 
\begin{align*}
D(a) & :=D(a\|\frac{1}{2})=1-H_{2}(a),\\
D_{a,b}(p) & :=D((p,a-p,b-p,1+p-a-b)\|(\frac{1+\rho}{4},\frac{1-\rho}{4},\frac{1-\rho}{4},\frac{1+\rho}{4})),\\
\mathbb{D}(a,b) & :=\min_{0,\,a+b-1\le p\le a,\,b}D_{a,b}(p)\,=D_{a,b}(p_{a,b}^{*}),
\end{align*}
where $H_{2}:t\in[0,1]\mapsto-t\log_{2}t-(1-t)\log_{2}(1-t)$ is the
binary entropy function, and 
\[
p_{a,b}^{*}=\frac{(\kappa-1)(a+b)+1-\sqrt{((\kappa-1)(a+b)+1)^{2}-4\kappa(\kappa-1)ab}}{2(\kappa-1)}.
\]
For the doubly symmetric binary distribution $P_{XY}$, 
\begin{align}
\underline{\varphi}(s,t) & =\mathbb{D}(H_{2}^{-1}(1-s),H_{2}^{-1}(1-t))\label{eq:phi-1}\\
\overline{\varphi}(s,t) & =\mathbb{D}(H_{2}^{-1}(1-s),1-H_{2}^{-1}(1-t))\nonumber \\
\varphi_{r}(s) & =\begin{cases}
\min_{0\le t\le1}\underline{\varphi}(s,t)-\frac{t}{r} & r\ge1\mbox{ or }r<0\\
\max_{0\le t\le1}\overline{\varphi}(s,t)-\frac{t}{r} & 0<r<1
\end{cases}\nonumber 
\end{align}
where $H_{2}^{-1}$ is the inverse of the restriction of the binary
entropy function $H_{2}$ to the set $[0,\frac{1}{2}]$. Define the
corresponding lower and upper increasing envelopes of the functions
above as 
\begin{align}
\underline{\psi}(\alpha,\beta) & =\min_{s\ge\alpha,t\ge\beta}\underline{\varphi}(s,t),\label{eq:lce-1}\\
\overline{\psi}(\alpha,\beta) & =\max_{s\leq\alpha,t\leq\beta}\overline{\varphi}(s,t),\label{eq:uce-1}\\
\psi_{r}(\alpha) & =\begin{cases}
\min_{s\ge\alpha}\varphi_{r}(s) & r\ge1\\
\max_{s\leq\alpha}\varphi_{r}(s) & r<1,r\neq0
\end{cases}.
\end{align}
It is not difficult to see that $\overline{\psi}=\overline{\varphi}$,
and $\psi_{r}=\varphi_{r}$ for all $r\neq0$; see \cite{yu2021Graphs}.
By definition, $\underline{\Theta}$ is the lower convex envelope
of $\underline{\psi}$, $\underline{\Theta}$ is the upper concave
envelope of $\overline{\psi}$, and $\Theta_{r}$ is the lower convex
envelope of $\psi_{r}$ for $r\ge1$ and the upper concave envelope
of $\psi_{r}$ for $r<1,r\neq0$. All these functions for $\rho=0.9$
are plotted in Fig. \ref{fig:upsilon}. 
\begin{figure}
\centering %
\begin{tabular}{cc}
\includegraphics[width=0.5\columnwidth]{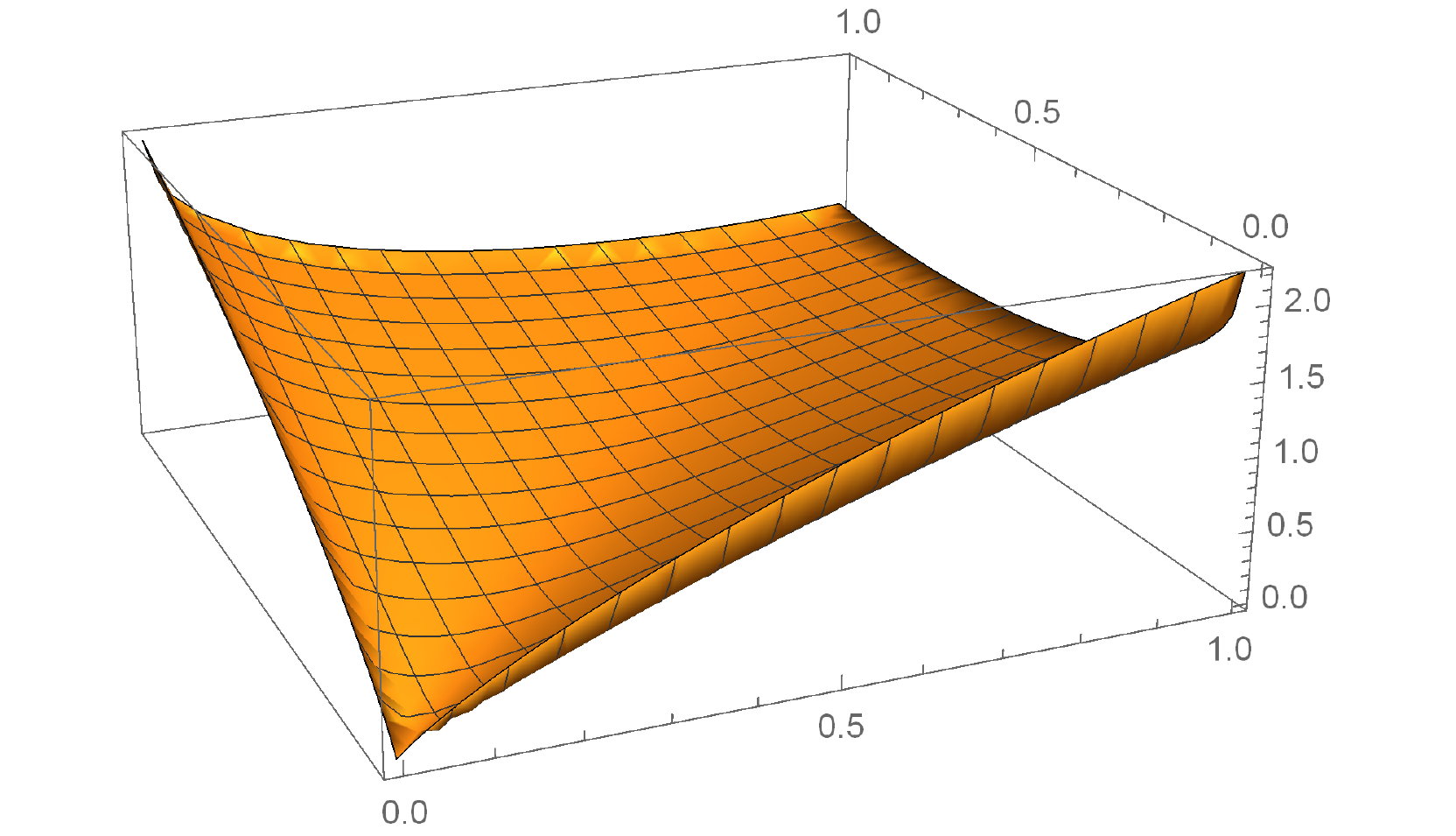}  & \includegraphics[width=0.4\columnwidth,height=4cm]{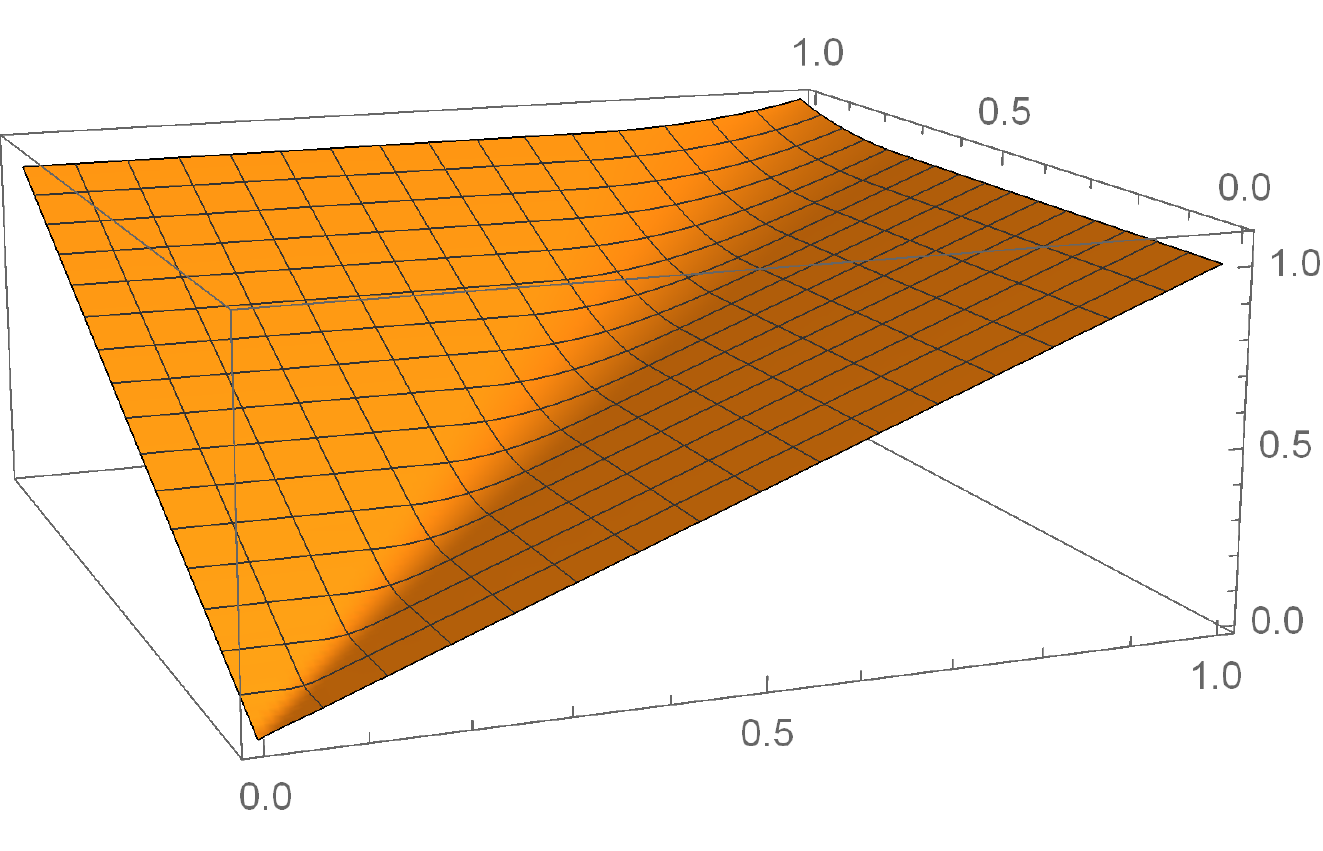}\tabularnewline
{\footnotesize{}{}{}{}{}$\underline{\varphi}$}  & {\footnotesize{}{}{}{}{}$\underline{\Theta}=\underline{\psi}$}\tabularnewline
\end{tabular}

\begin{tabular}{cc}
\includegraphics[width=0.5\columnwidth]{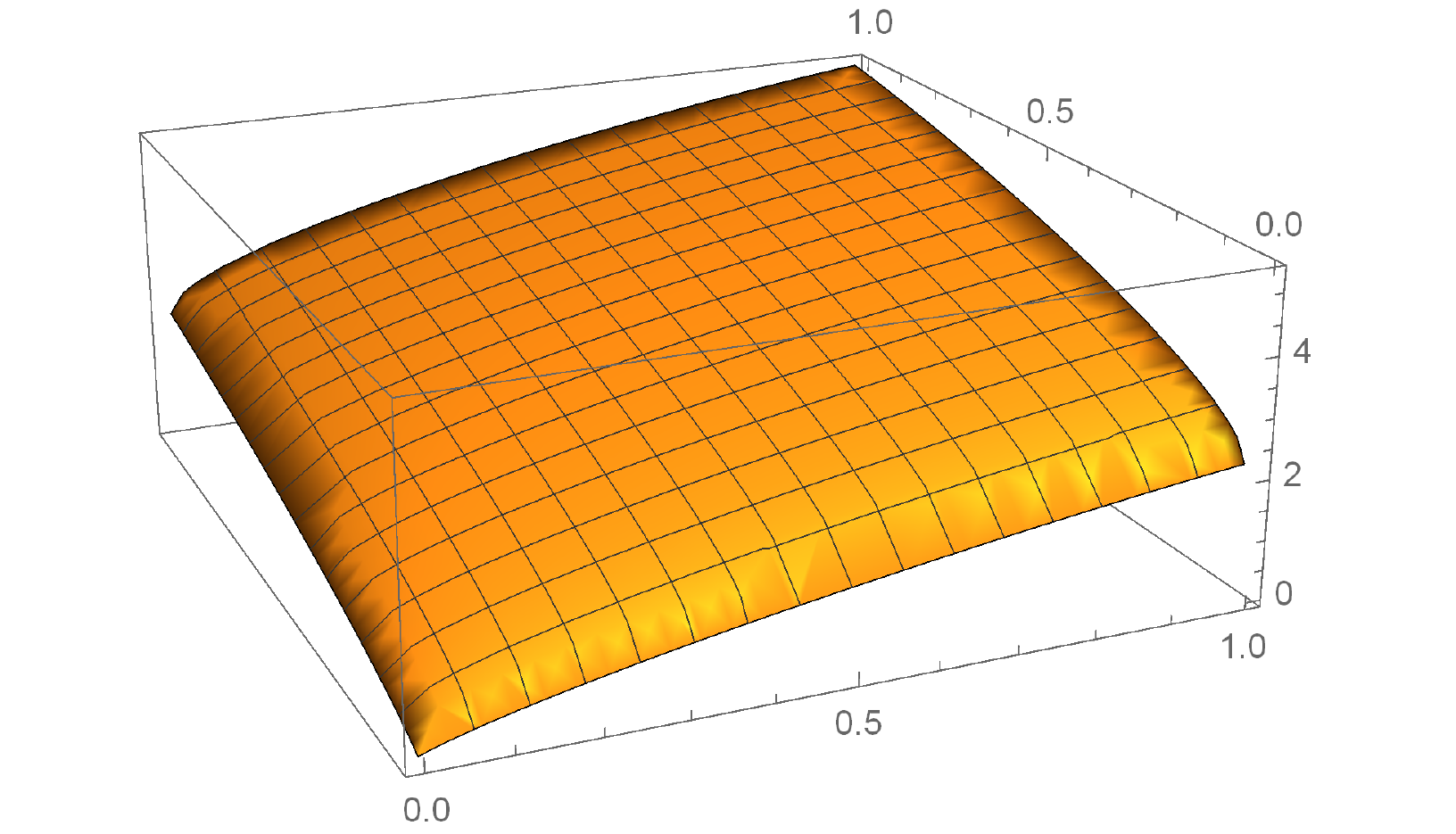}  & \includegraphics[width=0.4\columnwidth]{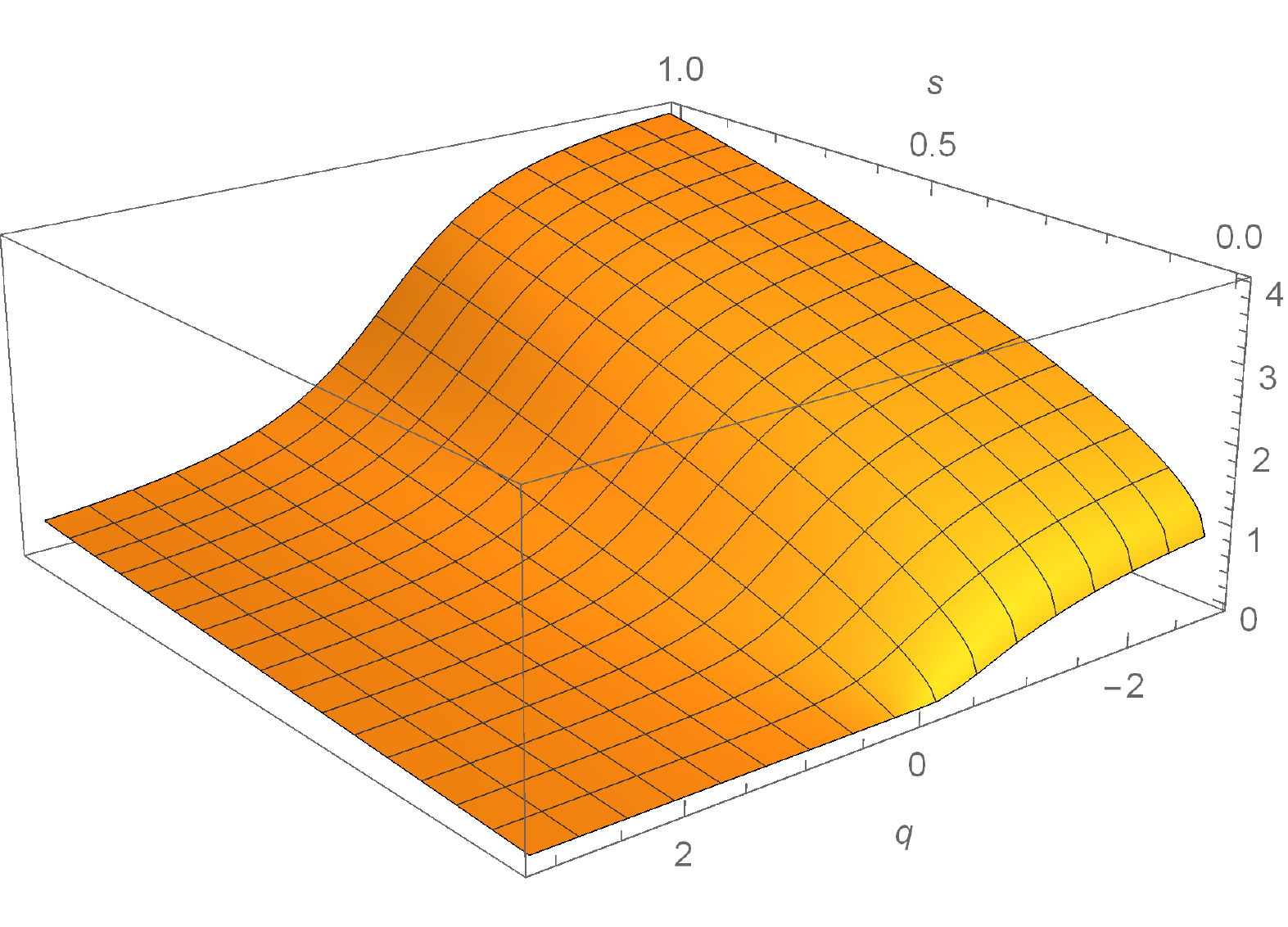}\tabularnewline
{\footnotesize{}{}{}{}{}$\overline{\Theta}=\overline{\psi}=\overline{\varphi}$}  & {\footnotesize{}{}{}{}{}$\Theta_{q'}=\psi_{q'}=\varphi_{q'}$}\tabularnewline
\end{tabular}

\caption{\label{fig:upsilon}Illustration of $\underline{\varphi}$, $\underline{\psi}=\underline{\Theta}$,
$\overline{\Theta}=\overline{\psi}=\overline{\varphi}$, and $\Theta_{q'}=\psi_{q'}=\varphi_{q'}$
for $\rho=0.9$. It can be observed from these figures that $\underline{\Theta}=\underline{\psi}$
is convex, $\overline{\Theta}=\overline{\psi}=\overline{\varphi}$
is concave, and $\Theta_{q'}=\psi_{q'}=\varphi_{q'}$ is convex for
$q\ge1$ and convex for $q<1$. The equalities and convexity and concavity
mentioned here have been shown in \cite{yu2021convexity}.}
\end{figure}

By observing that $\underline{\Theta},\overline{\Theta},\Theta_{q}$
are respectively the optimal exponents for the noise stability and
$q$-stability  (see Sections \ref{subsec:Strong-Small-Set-Expansion}
and \ref{subsec:-Stability}), it is not difficult to show the following
properties of them. The proof is given in Appendix \ref{sec:Proof-of-Lemma-Theta}. 
\begin{lem}
\label{lem:Theta} For a distribution $P_{XY}$ on the product of
two finite alphabets, define $\alpha_{\min}:=-\log\max_{x}P_{X}(x)$
and $\beta_{\min}:=-\log\max_{y}P_{Y}(y)$. Then for $\alpha,\beta,s,t\ge0$
such that $\alpha+s\leq\alpha_{\min},\beta+t\leq\beta_{\min}$, we
have 
\begin{align}
\underline{\Theta}(\alpha+s,\beta+t)-\underline{\Theta}(\alpha,\beta) & \leq s+t\label{eq:sublinear}\\
\overline{\Theta}(\alpha+s,\beta+t)-\overline{\Theta}(\alpha,\beta) & \geq s+t\label{eq:sublinear-1}\\
\Theta_{q'}(\alpha+s)-\Theta_{q'}(\alpha) & \le s\;\textrm{ for }q\ge1\label{eq:sublinear-2}\\
\Theta_{q'}(\alpha+s)-\Theta_{q'}(\alpha) & \geq s\;\textrm{ for }q\in(0,1).\label{eq:sublinear-3}
\end{align}
In particular, for the doubly symmetric binary distribution, \eqref{eq:sublinear}-\eqref{eq:sublinear-3}
hold for all $\alpha,\beta\in[0,1]$ and $0\le s\leq1-\alpha,0\le t\leq1-\beta$. 
\end{lem}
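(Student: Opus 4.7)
The strategy is to invoke the asymptotic identifications $\underline{\Theta}(\alpha,\beta)=\underline{\Theta}^{(\infty)}(\alpha,\beta)$, $\overline{\Theta}(\alpha,\beta)=\overline{\Theta}^{(\infty)}(\alpha,\beta)$, and $\Theta_{q'}(\alpha)=\Theta_{q}^{(\infty)}(\alpha)$ established in Theorems~\ref{thm:strongsse} and~\ref{thm:strongqstability}, which recast the three functions as the exponential rates of the forward/reverse noise-stability and $q$-stability problems respectively. The four incremental bounds then reduce to transforming near-extremal sets at marginal rates $(\alpha,\beta)$ into near-extremal sets at $(\alpha+s,\beta+t)$ with controlled change in the joint probability or conditional norm.

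For~\eqref{eq:sublinear}, I would take a sequence $(A_n,B_n)$ realizing $\underline{\Theta}(\alpha,\beta)$ in the sense of Remark~\ref{rem:equality}, and thin each set by independent Bernoulli selection: sample $C\subseteq\mathcal{X}^n$ by including each $x^n$ independently with probability $p=e^{-ns}$, and $D\subseteq\mathcal{Y}^n$ with $q=e^{-nt}$, then set $A_n'=A_n\cap C$ and $B_n'=B_n\cap D$. Direct computation gives $\mathbb{E}[P_X^{\otimes n}(A_n')]=e^{-ns}P_X^{\otimes n}(A_n)$, $\mathbb{E}[P_Y^{\otimes n}(B_n')]=e^{-nt}P_Y^{\otimes n}(B_n)$, and $\mathbb{E}[P_{XY}^{\otimes n}(A_n'\times B_n')]=e^{-n(s+t)}P_{XY}^{\otimes n}(A_n\times B_n)$. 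The hypothesis $\alpha+s\le\alpha_{\min}$ bounds $\max_{x^n\in A_n}P_X^{\otimes n}(x^n)\le e^{-n\alpha_{\min}}$, so the variance-to-mean-squared ratio of $P_X^{\otimes n}(A_n')$ is of order $e^{-n(\alpha_{\min}-\alpha-s)}\to 0$, and analogously for $P_Y^{\otimes n}(B_n')$ and $P_{XY}^{\otimes n}(A_n'\times B_n')$. A combination of Chebyshev (upper-tail) and Paley--Zygmund (lower-tail) bounds then selects a deterministic realization of $(C,D)$ under which all three inequalities hold simultaneously within subexponential factors. Invoking $\underline{\Theta}^{(n)}(\alpha+s,\beta+t)\ge\underline{\Theta}(\alpha+s,\beta+t)$ from Theorem~\ref{thm:strongsse} and passing to $n\to\infty$ then yields~\eqref{eq:sublinear}.

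The inequality~\eqref{eq:sublinear-1} is obtained by the dual application of the same Bernoulli thinning to a near-extremal pair for $\overline{\Theta}(\alpha,\beta)$: the thinned pair is feasible for the definition of $\overline{\Theta}^{(n)}(\alpha+s,\beta+t)$, and since $\overline{\Theta}$ is the negative logarithm of an infimum, the joint-probability expectation translates into $\overline{\Theta}(\alpha+s,\beta+t)\ge\overline{\Theta}(\alpha,\beta)+s+t$ after taking the limit. Inequalities~\eqref{eq:sublinear-2} and~\eqref{eq:sublinear-3} follow from single-set thinning $A_n'=A_n\cap C$ combined with the $q$-stability identification in Theorem~\ref{thm:strongqstability}, applied to a direct $q$-th-moment computation on $\|P_{X|Y}^{\otimes n}(A_n'|\cdot)\|_q$ whose first moment is $e^{-ns\min(q,1)}$ times the original.

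The main technical obstacle is the joint second-moment analysis: ensuring simultaneous concentration of the three (or two) random quantities requires the hypothesis $\alpha+s\le\alpha_{\min},\beta+t\le\beta_{\min}$, which provides the precise slack needed for the variance-to-mean-squared ratios to vanish exponentially (in particular, the joint variance bound requires controlling the maximum $P_{XY}$-weight $e^{-n\gamma_{\min}}$ against the rate $\underline{\Theta}(\alpha,\beta)$, which is where the condition bites hardest). For the doubly symmetric binary distribution, $P_X$ and $P_Y$ are uniform on $\{0,1\}$, so $\alpha_{\min}=\alpha_{\max}=\beta_{\min}=\beta_{\max}=1$ and the hypothesis reduces to $\alpha+s,\beta+t\in[0,1]$, i.e., the full natural range, as claimed in the final sentence of the lemma.
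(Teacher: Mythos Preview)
Your overall strategy---identifying $\underline{\Theta},\overline{\Theta},\Theta_{q'}$ with the asymptotic operational exponents via Theorems~\ref{thm:strongsse} and~\ref{thm:strongqstability}, and then passing from rate $(\alpha,\beta)$ to $(\alpha+s,\beta+t)$ by shrinking near-optimal sets---matches the paper's. The difference is in \emph{how} you shrink: you thin randomly and appeal to second-moment concentration, whereas the paper partitions deterministically.

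For the reverse inequalities \eqref{eq:sublinear-1} and \eqref{eq:sublinear-3} your thinning does work: you need lower tails on the marginal measures (Chebyshev suffices, since $\mathrm{Var}/(\mathbb{E})^2\approx e^{-n(\alpha_{\min}-\alpha-s)}\to0$) together with an \emph{upper} tail on $P_{XY}^{\otimes n}(A_n'\times B_n')$ or on $\|P_{X|Y}^{\otimes n}(A_n'|\cdot)\|_q^q$, and Markov's inequality handles the latter. But for \eqref{eq:sublinear} and \eqref{eq:sublinear-2} the roles reverse: you need a \emph{lower} tail on the joint quantity, and here your Paley--Zygmund step breaks. With $Z=P_{XY}^{\otimes n}(A_n'\times B_n')$, the cross term in $\mathrm{Var}(Z)/(\mathbb{E}Z)^2$ coming from $x=x'$, $y\ne y'$ is at most
\[
\frac{1-p}{p}\cdot\frac{\sum_{x\in A_n}P_{XY}^{\otimes n}(\{x\}\times B_n)^2}{P_{XY}^{\otimes n}(A_n\times B_n)^2}
\;\le\;\frac{e^{-n\alpha_{\min}}}{p\,P_{XY}^{\otimes n}(A_n\times B_n)}
\;\approx\;e^{\,n(s+\underline{\Theta}(\alpha,\beta)-\alpha_{\min})}.
\]
Since $\underline{\Theta}(\alpha,\beta)\ge\max(\alpha,\beta)$ with strict inequality generically, this ratio blows up whenever $\underline{\Theta}(\alpha,\beta)>\alpha_{\min}-s$, which the hypothesis $\alpha+s\le\alpha_{\min}$ does \emph{not} preclude. (Your remark that ``this is where the condition bites hardest'' suggests you sensed trouble, but the hypothesis does not close the gap.) The route is salvageable---replace Chebyshev on the marginals by a Chernoff bound, so the bad-marginal event is doubly-exponentially small, then use a conditional first-moment argument on $Z$ in place of Paley--Zygmund---but that is more than you wrote, and the analogous fix for $\|P_{X|Y}^{\otimes n}(A_n'|\cdot)\|_q^q$ in \eqref{eq:sublinear-2} is messier still.

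The paper sidesteps all of this. Given $A$ at level $\alpha$, it \emph{partitions} $A$ into pieces $\{A_i\}$ each of $P_X^{\otimes n}$-mass in $[e^{-n(\alpha+s)},\,e^{-n(\alpha+s)}+e^{-n\alpha_{\min}}]$; this is possible precisely because every singleton has mass at most $e^{-n\alpha_{\min}}\le e^{-n(\alpha+s)}$, which is exactly where the hypothesis enters. Then $P_{XY}^{\otimes n}(A\times B)/P_X^{\otimes n}(A)$ is a convex combination of the piecewise ratios $P_{XY}^{\otimes n}(A_i\times B)/P_X^{\otimes n}(A_i)$, so some $A_i$ does at least as well. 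This shows directly that $\underline{\Theta}(\alpha,\beta)-\alpha-\beta$ is nonincreasing in each argument, which is \eqref{eq:sublinear}. For \eqref{eq:sublinear-2} the same partition plus Jensen's inequality for $t\mapsto t^q$ gives the result. No randomness, no tail bounds.
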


\begin{lem}
\label{lem:Theta-1} For the doubly symmetric binary distribution,
we have $\Theta_{r}=\breve{\varphi}_{r}$ for $r\ge1$, and $\Theta_{r}=\invbreve\varphi_{r}$
for $r<0$. 
\end{lem}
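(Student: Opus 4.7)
The plan is to deduce Lemma \ref{lem:Theta-1} from two structural facts about $\varphi_r$ that are known for the doubly symmetric binary distribution, namely that $\varphi_r$ is already nondecreasing and that it is already convex (when $r \geq 1$) or concave (when $r < 0$). Once both are in hand, the envelope and monotonization operations defining $\Theta_r$ collapse to the identity.

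Concretely, first I would recall the identity $\psi_r = \varphi_r$ for all $r \neq 0$, which is stated in the excerpt just before the figure and attributed to \cite{yu2021Graphs}. Since by definition $\psi_r(\alpha) = \min_{s \geq \alpha}\varphi_r(s)$ for $r \geq 1$ and $\psi_r(\alpha) = \max_{s \leq \alpha}\varphi_r(s)$ for $r < 1$ with $r \neq 0$, this identity immediately says that $\varphi_r$ is nondecreasing on $[0,\alpha_{\max}] = [0,1]$. Second, I would invoke the convexity of $\varphi_r$ for $r \geq 1$ and the concavity of $\varphi_r$ for $r < 0$, which are established in \cite{yu2021convexity} and rely on the explicit binary form of $\underline{\varphi}$ and the analysis of $\mathbb{D}(a,b)$ together with the map $s \mapsto H_2^{-1}(1-s)$. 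Putting these together, $\breve\varphi_r = \varphi_r$ for $r \geq 1$ and $\invbreve\varphi_r = \varphi_r$ for $r < 0$, and both envelopes inherit the monotonicity of $\varphi_r$.

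Finally, for any nondecreasing function $h$ one has $\inf_{s\geq\alpha}h(s) = h(\alpha)$ and $\sup_{s\leq\alpha}h(s) = h(\alpha)$. Applying this to $h = \breve\varphi_r$ in the case $r \geq 1$ yields
\[
\Theta_r(\alpha) \;=\; \inf_{s\geq\alpha}\breve\varphi_r(s) \;=\; \breve\varphi_r(\alpha),
\]
and applying it to $h = \invbreve\varphi_r$ in the case $r < 0$ yields
\[
\Theta_r(\alpha) \;=\; \sup_{s\leq\alpha}\invbreve\varphi_r(s) \;=\; \invbreve\varphi_r(\alpha),
\]
which is exactly the claim of Lemma \ref{lem:Theta-1}.

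The main obstacle is the convexity/concavity assertion used in the second step. Although $\underline{\varphi}$ and $\overline{\varphi}$ have closed-form expressions in the binary symmetric case, the definition of $\varphi_r$ involves a partial Legendre-type transform in the variable $t$, and one must show that after optimizing out $t$ the resulting function of $s$ retains convexity (for $r \geq 1$) or concavity (for $r < 0$). I would not reproduce this calculation but simply cite \cite{yu2021convexity}; the remaining monotonicity argument is essentially a one-line exchange of $\inf$ or $\sup$ with the envelope, as outlined above.
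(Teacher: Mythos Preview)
Your argument is correct, but it takes a different route from the paper's. The paper does not invoke the convexity/concavity of $\varphi_r$ from \cite{yu2021convexity} at all; instead it argues as follows (for $r<0$, the case $r\ge 1$ being symmetric). Starting from $\Theta_r(\alpha)=\max_{s\le\alpha}\invbreve\varphi_r(s)$, it uses Lemma~\ref{lem:Theta} (specifically \eqref{eq:sublinear-3}, which gives $\Theta_r(\alpha+s)-\Theta_r(\alpha)\ge s$) to conclude that the supergradient of $\Theta_r$ is everywhere at least $1$. If $\invbreve\varphi_r$ were strictly decreasing on some interval, $\Theta_r$ would be constant there, contradicting this slope bound; hence $\invbreve\varphi_r$ is nondecreasing and $\Theta_r=\invbreve\varphi_r$. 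The trade-off is that the paper's route is self-contained (Lemma~\ref{lem:Theta} is proved in the appendix via the operational $q$-stability characterization), whereas your route imports the deeper convexity result from \cite{yu2021convexity}.

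It is worth noting that your second ingredient is in fact superfluous for the statement as written. You already observe that $\psi_r=\varphi_r$ (cited from \cite{yu2021Graphs}), and the paper records just before the lemma that $\Theta_r$ equals the lower convex envelope of $\psi_r$ for $r\ge 1$ and the upper concave envelope of $\psi_r$ for $r<1,\,r\neq 0$ (this is precisely Lemma~\ref{lem:exchange}). Combining these two facts gives $\Theta_r=\breve{\psi_r}=\breve{\varphi_r}$ for $r\ge 1$ and $\Theta_r=\invbreve{\psi_r}=\invbreve{\varphi_r}$ for $r<0$ immediately, without ever appealing to the convexity or concavity of $\varphi_r$ itself. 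Your detour through \cite{yu2021convexity} does yield the stronger conclusion $\Theta_r=\varphi_r$, but that is more than Lemma~\ref{lem:Theta-1} asks for.
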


\begin{proof}
Here we only prove $\Theta_{r}=\invbreve\varphi_{r}$ for $r<0$.
The other equality follows similarly. By Lemma \ref{lem:exchange},
for the doubly symmetric binary distribution, for $r<0$, 
\begin{equation}
\Theta_{r}(\alpha)=\max_{s\leq\alpha}\invbreve\varphi_{r}(s).\label{eq:-93}
\end{equation}
By Lemma \ref{lem:Theta}, the supergradient of $\Theta_{r}$ is not
smaller than $1$. If $\invbreve\varphi_{r}$ is decreasing on an
interval, then by \eqref{eq:-93}, $\Theta_{r}$ is constant on that
interval, which contradicts with that the supergradient of $\Theta_{r}$
is not smaller than $1$. Hence, $\invbreve\varphi_{r}$ is nondecreasing.
This implies that $\Theta_{r}=\invbreve\varphi_{r}.$ 
\end{proof}
For the doubly symmetric binary distribution, by Theorems \ref{thm:BLexponent}
and \ref{thm:BLExponentSingleFunc} and Lemma \ref{lem:Theta}, we
obtain the following corollaries. 
\begin{cor}[Strong Brascamp--Lieb Inequalities for Binary Distributions with
$\hat{p}=\hat{q}=1$ (Two-Function Version)]
\label{thm:strongBL-1-1} Let $\alpha,\beta\in[0,1]$. Consider the
doubly symmetric binary distribution in \eqref{eq:DSBS}. Then the
following hold. 
\begin{enumerate}
\item For $p,q\in[1,\infty]$, we have 
\begin{align}
\langle f,g\rangle & \leq e^{-n(\min_{\alpha\le s\le1,\beta\le t\le1}\breve{\varphi}(s,t)-\frac{s}{p}-\frac{t}{q})}\Vert f\Vert_{p}\Vert g\Vert_{q}\label{eq:FBL-4}
\end{align}
for all $f:\mathcal{X}^{n}\to[0,\infty),g:\mathcal{Y}^{n}\to[0,\infty)$
satisfying $\frac{1}{n}\Ent_{p,1}(f)\ge\alpha$ and $\frac{1}{n}\Ent_{q,1}(g)\geq\beta$. 
\item For $p,q\in(0,1]$, we have 
\begin{align}
\langle f,g\rangle & \ge e^{-n(\max_{0\le s\le\alpha,0\le t\le\beta}\invbreve\varphi(s,t)-\frac{s}{p}-\frac{t}{q})}\Vert f\Vert_{p}\Vert g\Vert_{q}\label{eq:RBL-5}
\end{align}
for all $f:\mathcal{X}^{n}\to[0,\infty),g:\mathcal{Y}^{n}\to[0,\infty)$
satisfying $\frac{1}{n}\Ent_{p,1}(f)\le\alpha$ and $\frac{1}{n}\Ent_{q,1}(g)\le\beta$. 
\item Moreover, for given $(p,q,\alpha,\beta)$ with $\alpha,\beta\in(0,1)$,
the two inequalities above are exponentially sharp. 
\end{enumerate}
\end{cor}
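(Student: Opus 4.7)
The plan is to derive Corollary \ref{thm:strongBL-1-1} as a specialization of Corollary \ref{cor:BL} to the doubly symmetric binary distribution with $\hat{p}=\hat{q}=1$, followed by an explicit evaluation of the resulting BL exponents that exploits the structural properties of this distribution recorded in the text preceding the statement.

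For Statement 1, I take nonnegative $f,g$ with $\alpha_{f}:=\frac{1}{n}\Ent_{p,1}(f)\in[\alpha,1]$ and $\beta_{g}:=\frac{1}{n}\Ent_{q,1}(g)\in[\beta,1]$, and apply Corollary \ref{cor:BL} with $\hat{p}=\hat{q}=1$ to obtain
\[
\langle f,g\rangle\leq e^{-n(\underline{\Lambda}^{*}_{p,q,1,1}(\alpha_{f},\beta_{g})-\alpha_{f}/p-\beta_{g}/q)}\Vert f\Vert_{p}\Vert g\Vert_{q}.
\]
The core identity to verify is
\[
\underline{\Lambda}^{*}_{p,q,1,1}(\alpha_{f},\beta_{g})-\frac{\alpha_{f}}{p}-\frac{\beta_{g}}{q}=\inf_{\alpha_{f}\le s\le 1,\,\beta_{g}\le t\le 1}\Bigl(\breve{\varphi}(s,t)-\frac{s}{p}-\frac{t}{q}\Bigr).
\]
Starting from the time-sharing representation $\underline{\Lambda}^{*}_{p,q,1,1}(\alpha_{f},\beta_{g})=\inf_{s,t}\breve{\varphi}(s,t)+\eta_{p,1}(\alpha_{f},s)+\eta_{q,1}(\beta_{g},t)$ (which holds with $\breve{\varphi}$ in place of $\underline{\varphi}$ in \eqref{eq:-27} by the convexity of $\eta_{p,1}(\alpha_{f},\cdot)$ for $p\geq 1$) and substituting $\eta_{p,1}(\alpha_{f},s)=\max((\alpha_{f}-s)/p,\alpha_{f}-s)$, one sees that on the quadrant $s\geq\alpha_{f},t\geq\beta_{g}$ the objective reduces to $\breve{\varphi}(s,t)-s/p-t/q+\alpha_{f}/p+\beta_{g}/q$, matching the right-hand side. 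To rule out smaller values on the three complementary quadrants, I would use the subgradient bound $\underline{\Theta}(s+\delta_{1},t+\delta_{2})-\underline{\Theta}(s,t)\leq\delta_{1}+\delta_{2}$ from Lemma \ref{lem:Theta}, together with the identification $\underline{\Theta}=\underline{\psi}=\breve{\varphi}$ on the relevant portion of $[0,1]^{2}$ noted in the text and depicted in Figure \ref{fig:upsilon}. For instance, on the region $s<\alpha_{f},t\geq\beta_{g}$ the bound gives $\breve{\varphi}(s,t)+(\alpha_{f}-s)\geq\breve{\varphi}(\alpha_{f},t)$, so the value of the objective there is dominated by that at the boundary point $(\alpha_{f},t)$; the other two off-diagonal quadrants are handled analogously. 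Finally, since $\alpha_{f}\geq\alpha$ and $\beta_{g}\geq\beta$, the infimum over $[\alpha_{f},1]\times[\beta_{g},1]$ is at least the infimum over $[\alpha,1]\times[\beta,1]$, yielding \eqref{eq:FBL-4}.

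For Statement 2 with $p,q\in(0,1]$, the argument is dual. Corollary \ref{cor:BL} supplies $\langle f,g\rangle\geq e^{-n(\overline{\Lambda}^{*}_{p,q,1,1}(\alpha_{f},\beta_{g})-\alpha_{f}/p-\beta_{g}/q)}\Vert f\Vert_{p}\Vert g\Vert_{q}$, with $\alpha_{f}\in[0,\alpha],\beta_{g}\in[0,\beta]$. Using the sup-inf representation of $\overline{\Lambda}^{*}_{p,q,1,1}$ (in which $\mathbf{Q}^{+}$ contains all four distributions since $p,q,\hat{p},\hat{q}\ge 0$), the supergradient bound $\overline{\Theta}(s+\delta_{1},t+\delta_{2})-\overline{\Theta}(s,t)\ge\delta_{1}+\delta_{2}$ from \eqref{eq:sublinear-1}, and the coincidence $\overline{\Theta}=\invbreve{\varphi}$, I would establish the parallel identity
\[
\overline{\Lambda}^{*}_{p,q,1,1}(\alpha_{f},\beta_{g})-\frac{\alpha_{f}}{p}-\frac{\beta_{g}}{q}=\sup_{0\le s\le\alpha_{f},\,0\le t\le\beta_{g}}\Bigl(\invbreve{\varphi}(s,t)-\frac{s}{p}-\frac{t}{q}\Bigr),
\]
from which $\alpha_{f}\leq\alpha,\beta_{g}\leq\beta$ deliver \eqref{eq:RBL-5}. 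Statement 3 is then a direct consequence of the asymptotic equalities $\underline{\Lambda}^{(\infty)}=\underline{\Lambda}^{*}$ and $\overline{\Lambda}^{(\infty)}=\overline{\Lambda}^{*}$ of Theorem \ref{thm:BLexponent} (the binary alphabet being finite), combined with the density claim in the Corollary following Proposition \ref{prop:E}, which allows one to construct sequences $(f_{n},g_{n})$ whose entropies converge to $(\alpha,\beta)$ and which realize the exponents up to vanishing error.

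The main technical obstacle is the case analysis in Step 2, since the (super)gradient bounds in Lemma \ref{lem:Theta} are stated for the envelopes $\underline{\Theta},\overline{\Theta}$, not directly for $\breve{\varphi}$ and $\invbreve{\varphi}$. For the doubly symmetric binary distribution this transfer is available precisely because the coincidences $\underline{\Theta}=\breve{\varphi}$ and $\overline{\Theta}=\invbreve{\varphi}$ hold on the relevant portion of $[0,1]^{2}$, as recorded in the figure caption; verifying these coincidences cleanly (or equivalently, checking that the minimum and maximum in the corollary's statement are attained at interior points where the envelopes coincide) is the delicate part of the argument.
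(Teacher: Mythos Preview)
Your approach is essentially the same as the paper's: both apply Corollary \ref{cor:BL} (equivalently Theorem \ref{thm:BLexponent}) with the pair $(p,1)$ and invoke the subgradient bound of Lemma \ref{lem:Theta} to localize the optimization to the correct quadrant. The paper phrases this as ``setting $p,q$ to $1$ so that $\hat p,\hat q$ can be $\pm\infty$'', but since $\eta_{p,\hat p}$ is symmetric in $(p,\hat p)$ this is the same computation you carry out.

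Your stated ``main technical obstacle'' is not actually an obstacle. You do not need the coincidence $\underline\Theta=\breve\varphi$; you can run the quadrant argument directly on $\underline\Theta$ (for which Lemma \ref{lem:Theta} applies verbatim) and only afterward pass to $\breve\varphi$. Concretely, once you establish
\[
\underline\Lambda^*_{p,q,1,1}(\alpha_f,\beta_g)-\tfrac{\alpha_f}{p}-\tfrac{\beta_g}{q}\;\ge\;\inf_{s\ge\alpha_f,\,t\ge\beta_g}\underline\Theta(s,t)-\tfrac{s}{p}-\tfrac{t}{q},
\]
the identity $\underline\Theta(s,t)=\min_{s'\ge s,\,t'\ge t}\breve\varphi(s',t')$ together with $p,q\ge1$ (so that $-s/p-t/q\ge -s'/p-t'/q$ on that region) gives
\[
\inf_{s\ge\alpha_f,\,t\ge\beta_g}\underline\Theta(s,t)-\tfrac{s}{p}-\tfrac{t}{q}\;=\;\inf_{s\ge\alpha_f,\,t\ge\beta_g}\breve\varphi(s,t)-\tfrac{s}{p}-\tfrac{t}{q},
\]
with no need to compare $\breve\varphi$ and $\underline\Theta$ pointwise. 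The paper's one-line inference that ``all subgradients of $\breve\varphi$ also satisfy $\lambda,\mu\le1$'' can be read as shorthand for this same manipulation.

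For Statement 3 you go through the asymptotic equalities of Theorem \ref{thm:BLexponent}, whereas the paper instead cites the small-set expansion Theorems \ref{thm:strongsse} and \ref{thm:strongsse-2} (using indicator functions $1_{A_n},1_{B_n}$, for which $\frac1n\Ent_{p,1}(1_{A_n})=-\frac1n\log P_X^{\otimes n}(A_n)$ regardless of $p$). Both routes are valid; yours stays inside the BL-exponent framework, the paper's is more concrete about the extremizing sequences.
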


\begin{proof}
By Lemma \ref{lem:Theta}, all the subgradients $(\lambda,\mu)$ of
$\underline{\Theta}$ satisfy $\lambda,\mu\le1$. By definition, $\underline{\Theta}(\alpha,\beta)=\min_{\alpha\le s\le1,\beta\le t\le1}\breve{\varphi}(s,t)$,
which implies that all the subgradients $(\lambda,\mu)$ of $\breve{\varphi}$
also satisfy $\lambda,\mu\le1$. By this property and Theorem \ref{thm:BLexponent}
with $p,q$ set to $1$ (so that $\hat{p},\hat{q}$ can be equal to
$\pm\infty$), we have \eqref{eq:FBL-4}. (Note that $p,q$ in \eqref{eq:FBL-4}
correspond to $\hat{p},\hat{q}$ in Theorem \ref{thm:BLexponent}.)
By Theorem \ref{thm:strongsse}, the inequality in \eqref{eq:FBL-4}
is exponentially sharp.

Inequality in \eqref{eq:RBL-5} follows directly from Theorem \ref{thm:BLExponentSingleFunc}.
The exponential sharpness follows by Theorem \ref{thm:strongsse-2}. 
\end{proof}
Similarly to the corollary above, we have the following corollary. 
\begin{cor}[Strong Brascamp--Lieb Inequality for Binary Distributions with $\hat{p}=1$
(Single-Function Version)]
\label{cor:-Let-}\label{thm:strongBL_single-1-1} Let $\alpha\in[0,1]$.
Consider the doubly symmetric binary distribution in \eqref{eq:DSBS}.
Then the following hold. 
\begin{enumerate}
\item For $p\in[1,\infty],q\in[1,\infty)$, we have 
\begin{align}
\Vert P_{X|Y}^{\otimes n}(f)\Vert_{q} & \leq e^{-n(\min_{\alpha\le s\le1}\breve{\varphi}_{q'}(s)-\frac{s}{p})}\Vert f\Vert_{p}\label{eq:FBL-3}
\end{align}
for all $f:\mathcal{X}^{n}\to[0,\infty)$ satisfying $\frac{1}{n}\Ent_{p,1}(f)\ge\alpha$. 
\item For $p\in(0,1],q\in(-\infty,1)\backslash\{0\}$, we have 
\begin{align}
\Vert P_{X|Y}^{\otimes n}(f)\Vert_{q} & \ge e^{-n(\max_{0\le s\le\alpha}\invbreve\varphi_{q'}(s)-\frac{s}{p})}\Vert f\Vert_{p}\label{eq:RBL-6}
\end{align}
for all $f:\mathcal{X}^{n}\to[0,\infty)$ satisfying $\frac{1}{n}\Ent_{p,1}(f)\le\alpha$. 
\item Moreover, for given $(p,q,\alpha)$ with $\alpha\in(0,1)$, the two
inequalities above are exponentially sharp. 
\end{enumerate}
\end{cor}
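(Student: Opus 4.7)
The proof parallels that of Corollary \ref{thm:strongBL-1-1}, replacing the two-function tools by their single-function analogues: Corollary \ref{cor:BL_single} supplies the raw Brascamp--Lieb bound, Lemmas \ref{lem:Theta} and \ref{lem:Theta-1} supply the binary-specific Lipschitz and envelope identifications for $\breve{\varphi}_{q'}$ and $\invbreve{\varphi}_{q'}$, and Theorem \ref{thm:strongsse-2-1} provides exponentially tight witnesses. The plan for Parts 1 and 2 is to apply Corollary \ref{cor:BL_single} with $\hat{p}=1$, let $a:=\frac{1}{n}\Ent_{p,1}(f)$, reduce $\underline{\Gamma}_{p,q,1}^{*}(a)-a/p$ or $\overline{\Gamma}_{p,q,1}^{*}(a)-a/p$ via time-sharing to an optimization over $\breve{\varphi}_{q'}$ or $\invbreve{\varphi}_{q'}$, and then use the monotonicity $a\ge\alpha$ (forward) or $a\le\alpha$ (reverse) to pass to the stated thresholds.

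For Statement 1, I first perform the inner optimization over $Q_{Y|W}$ inside $\underline{\Gamma}_{p,q,1}^{*}(a)$, which collapses the expression, after a $Q_{W}$-time-sharing, to $\inf_{s\ge0}\breve{\varphi}_{q'}(s)+\eta_{p,1}(a,s)$. The piecewise formula $\eta_{p,1}(a,s)=a-s$ for $s\le a$ and $(a-s)/p$ for $s\ge a$ (valid because $p\ge1$), combined with the fact that $\breve{\varphi}_{q'}(s)-s$ is nonincreasing in $s$ (an immediate consequence of Lemma \ref{lem:Theta} read through the identification $\Theta_{q'}=\breve{\varphi}_{q'}$ in Lemma \ref{lem:Theta-1}), shows that the infimum is attained on $\{s\ge a\}$, so $\underline{\Gamma}_{p,q,1}^{*}(a)-a/p=\inf_{s\ge a}\breve{\varphi}_{q'}(s)-s/p$. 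Since $a\ge\alpha$, this is at least $\min_{\alpha\le s\le1}\breve{\varphi}_{q'}(s)-s/p$, which is exactly \eqref{eq:FBL-3}.

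For Statement 2, starting from $\overline{\Gamma}_{p,q,1}^{*}(a)-a/p$ with $a\le\alpha$, I bound $\min\{\theta_{p,q'},\theta_{1,q'}\}\le\theta_{p,q'}$ inside the sup-inf (case $q\in(0,1)$, $q'<0$, using \eqref{eq:phi_q}) or sup-sup (case $q<0$, $q'\in(0,1)$, using \eqref{eq:phi_r}). Because $\theta_{p,q'}$ does not depend on $\hat{Q}_{X|W}$ and a feasible $\hat{Q}_{X|W}$ with $D(\hat{Q}_{X|W}\|P_X|Q_W)\ge a$ exists whenever $a\le\alpha_{\max}=1$, the supremum over $\hat{Q}_{X|W}$ drops. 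Time-sharing then collapses the remainder to $\sup_{s\le a}\invbreve{\varphi}_{q'}(s)+(a-s)/p$. The single-sided constraint $D(Q_{X|W}\|P_X|Q_W)\le a$ used here is the tight $\alpha_{\hat{p}/p}^{+}=a$ bound coming from $\hat{p}/p=1/p\ge1$, i.e.\ from $p\le1$; this is where the hypothesis $p\in(0,1]$ is consumed. Subtracting $a/p$ and relaxing via $a\le\alpha$ gives $\overline{\Gamma}_{p,q,1}^{*}(a)-a/p\le\max_{0\le s\le\alpha}\invbreve{\varphi}_{q'}(s)-s/p$, proving \eqref{eq:RBL-6}.

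For Statement 3, Theorem \ref{thm:strongsse-2-1} produces, for each $s\in(0,1)$, sets $A_n\subseteq\mathcal{X}^n$ with $-\frac{1}{n}\log P_X^{\otimes n}(A_n)\to s$ and $-\frac{1}{n}\log\|P_{X|Y}^{\otimes n}(A_n|\cdot)\|_q\to\breve{\varphi}_{q'}(s)$ (if $q\ge1$) or $\invbreve{\varphi}_{q'}(s)$ (if $q<1$). Choosing $f_n=1_{A_n}$ makes $\frac{1}{n}\Ent_{p,1}(f_n)=-\frac{1}{n}\log P_X^{\otimes n}(A_n)\to s$, and taking $s=s^{*}$ to be the minimizer in $[\alpha,1]$ of $\breve{\varphi}_{q'}(s)-s/p$ (Statement 1) or the maximizer in $[0,\alpha]$ of $\invbreve{\varphi}_{q'}(s)-s/p$ (Statement 2) produces a sequence attaining the asserted exponent. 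The main obstacle is the time-sharing reduction of $\underline{\Gamma}^{*}$ and $\overline{\Gamma}^{*}$ to optimization problems in $\breve{\varphi}_{q'}$ and $\invbreve{\varphi}_{q'}$: this requires commuting the inner $Q_{Y|W}$ optimization with the $Q_{W}$-expectation, tracking the sign-dependent constraint sets $\alpha_{\hat{p}/p}^{\pm}$ and $\alpha_{p/\hat{p}}^{\pm}$ through the two cases $q'<0$ and $q'\in(0,1)$, and correctly identifying the envelope produced by $Q_W$ (lower convex for the forward case, upper concave for the reverse).
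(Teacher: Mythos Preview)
Your proposal is correct and follows essentially the same route the paper intends. The paper offers no explicit proof for this corollary, merely saying ``Similarly to the corollary above''; your argument supplies exactly the single-function analogue of that proof: invoke Corollary~\ref{cor:BL_single} with $\hat p=1$, collapse $\underline{\Gamma}^*_{p,q,1}$ and $\overline{\Gamma}^*_{p,q,1}$ via the $Q_W$--time-sharing to optimizations over $\breve{\varphi}_{q'}$ and $\invbreve{\varphi}_{q'}$, use the Lipschitz bound from Lemma~\ref{lem:Theta} (together with the identification $\Theta_{q'}=\breve{\varphi}_{q'}$ or $\invbreve{\varphi}_{q'}$ from Lemma~\ref{lem:Theta-1}) to restrict the range of $s$, and finally appeal to Theorem~\ref{thm:strongsse-2-1} with indicator test functions for exponential sharpness. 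Your treatment is somewhat more explicit than the paper's two-function sketch---in particular your handling of the constraint sets $\alpha^{\pm}_{\hat p/p}$, $\alpha^{\pm}_{p/\hat p}$ and of the two sub-cases $q'<0$ versus $q'\in(0,1)$ in Statement~2---but the logical skeleton is the same.
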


If $p\in[p^{*},\infty],q\in[1,\infty)$ where $\frac{1}{p^{*}}$ denotes
a subgradient of $\breve{\varphi}_{q'}$ at the point $\alpha$, then
the minimum in the RHS of \eqref{eq:FBL-3} is attained at $s=\alpha$.
 Similarly, if $p\in(0,p^{*}],q\in(-\infty,1)\backslash\{0\}$ where
$\frac{1}{p^{*}}$ denotes a supergradient of $\invbreve\varphi_{q'}$
at the point $\alpha$, then the maximum in the RHS of \eqref{eq:RBL-6}
is attained at $s=\alpha$. 

In a recent work \cite{yu2021convexity}, we have shown that for the
doubly symmetric binary distribution, $\underline{\psi}$ and $\varphi_{r}$
with $r\ge1$ are nondecreasing and convex, and $\overline{\varphi}$,
$\varphi_{r}$ with $r<1,r\neq0$ are nondecreasing and concave, which
imply that 
\begin{align*}
 & \underline{\Theta}=\underline{\psi}\leq\underline{\varphi},\quad\overline{\Theta}=\overline{\psi}=\overline{\varphi},\quad\Theta_{r}=\psi_{r}=\varphi_{r}.
\end{align*}
Hence, the functions $\invbreve\varphi,\breve{\varphi}_{q'},\invbreve\varphi_{q'}$
appearing in \eqref{eq:RBL-5}-\eqref{eq:RBL-6} can be respectively
replaced by $\overline{\varphi},\varphi_{q'}$ and $\varphi_{q'}$.
As for \eqref{eq:FBL-4}, since $p,q\ge1$ is assumed, $\breve{\varphi}$
in \eqref{eq:FBL-4} can be replaced by $\underline{\Theta}$, and
hence can be also replaced by $\underline{\varphi}$. In other words,
the time-sharing random variables can be removed for the exponents
at the RHSs of \eqref{eq:FBL-4}-\eqref{eq:RBL-6}. These further
imply that the exponential sharpness of \eqref{eq:FBL-4}-\eqref{eq:RBL-6}
is attained by the indicators of Hamming spheres (without time-sharing).
Specifically, the exponents of the two sides of \eqref{eq:FBL-4}
coincide asymptotically as $n\to\infty$ if we choose $f,g$ as a
pair of certain \emph{concentric} Hamming spheres, and the exponents
of the two sides of \eqref{eq:RBL-5} coincide asymptotically if we
choose $f,g$ as a pair of certain \emph{anti-concentric} Hamming
spheres. The exponents of the two sides of \eqref{eq:FBL-3} with
$q\in[1,\infty)$, as well as the exponents of the two sides of \eqref{eq:RBL-6}
with $q\in(-\infty,1)\backslash\{0\}$, respectively coincide asymptotically
if we choose $f$ as certain Hamming spheres. These results in fact
correspond to a conjecture of Polyansky on the forward strong Brascamp--Lieb
inequality \cite{kirshner2019moment} and a conjecture on the reverse
version. As mentioned in \cite{kirshner2019moment}, Polyansky's conjecture
was already solved by himself in an unpublished paper \cite{polyanskiy2019hypercontractivity2}.
Here, we independently resolve Polyansky's conjecture and its counterpart
on the reverse strong Brascamp--Lieb inequality. 

The inequality in \eqref{eq:FBL-3} immediately implies the following
forward version of strong hypercontractivity inequality. Given $\alpha\in[0,1]$
and $q>1$, denote $p^{*}$ as the minimum $p$ such that $\min_{\alpha\le s\le1}\varphi_{q'}(s)-\frac{s}{p}=0$.
Since $\varphi_{q'}$ is a convex function and $\varphi_{q'}(0)=0$,
indeed, it holds that $1/p^{*}=\varphi_{q'}(\alpha)/\alpha$. 
\begin{cor}
For $\alpha\in(0,1)$, $q\in[1,\infty)$, and any $p\ge p^{*}$ with
$1/p^{*}=\varphi_{q'}(\alpha)/\alpha$, we have 
\begin{align}
\Vert P_{X|Y}^{\otimes n}(f)\Vert_{q} & \leq\Vert f\Vert_{p}\label{eq:FBL-3-1-1}
\end{align}
for all $f:\mathcal{X}^{n}\to[0,\infty)$ satisfying $\frac{1}{n}\Ent_{p,1}(f)\ge\alpha$. 
\end{cor}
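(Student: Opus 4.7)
The plan is to deduce this forward strong hypercontractivity inequality directly from the strong Brascamp--Lieb inequality \eqref{eq:FBL-3} established in Corollary \ref{cor:-Let-}. Applying Corollary \ref{cor:-Let-} with $p\in[1,\infty]$ and $q\in[1,\infty)$ yields
\begin{equation*}
\Vert P_{X|Y}^{\otimes n}(f)\Vert_{q}\leq e^{-nE}\Vert f\Vert_{p},\qquad E:=\min_{\alpha\le s\le 1}\breve{\varphi}_{q'}(s)-\frac{s}{p},
\end{equation*}
for every $f$ satisfying $\frac{1}{n}\Ent_{p,1}(f)\ge\alpha$. The goal is then to show that, under the assumption $p\ge p^{*}$, the exponent $E$ is nonnegative.

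First I would invoke the result from \cite{yu2021convexity} (quoted just after Corollary \ref{thm:strongBL_single-1-1}) which states that, for the doubly symmetric binary distribution $P_{XY}$ in \eqref{eq:DSBS}, the function $\varphi_{r}$ is nondecreasing and convex for $r\ge 1$, and in particular $\Theta_{r}=\varphi_{r}=\breve{\varphi}_{r}$. Since $q\ge 1$ implies $q'\ge 1$ (with the convention that $q'=\infty$ when $q=1$), we can replace $\breve{\varphi}_{q'}$ in the expression for $E$ by $\varphi_{q'}$, obtaining $E=\min_{\alpha\le s\le 1}\varphi_{q'}(s)-s/p$.

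The key observation is a convexity/slope argument: because $\varphi_{q'}$ is convex on $[0,1]$ with $\varphi_{q'}(0)=0$, the function $s\mapsto\varphi_{q'}(s)/s$ is nondecreasing on $(0,1]$. Therefore, for every $s\in[\alpha,1]$,
\begin{equation*}
\frac{\varphi_{q'}(s)}{s}\ge\frac{\varphi_{q'}(\alpha)}{\alpha}=\frac{1}{p^{*}}.
\end{equation*}
Hence $\varphi_{q'}(s)\ge s/p^{*}\ge s/p$ whenever $p\ge p^{*}$, which gives $\varphi_{q'}(s)-s/p\ge 0$ for all $s\in[\alpha,1]$, and therefore $E\ge 0$. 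Substituting back into the strong Brascamp--Lieb bound yields $\Vert P_{X|Y}^{\otimes n}(f)\Vert_{q}\le\Vert f\Vert_{p}$.

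The only nontrivial ingredient is the convexity of $\varphi_{q'}$ for the doubly symmetric binary distribution, which is imported from \cite{yu2021convexity}; taking this for granted, the argument is essentially a one-line monotonicity-of-slopes calculation. One small subtlety worth checking is that $p^{*}\ge 1$ so that Corollary \ref{cor:-Let-} is applicable: this follows from $\varphi_{q'}(\alpha)\le\alpha$, which is a consequence of $\underline{\varphi}(s,t)-t/q'\le s$ obtained by taking the coupling $Q_{XY}=Q_{X}\cdot P_{Y|X}$ with $D(Q_{X}\Vert P_{X})=s$ and observing that $D(Q_{Y}\Vert P_{Y})\le s$ by the data-processing inequality.
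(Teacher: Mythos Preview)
Your proposal is correct and follows essentially the same approach as the paper: the paper states that the corollary ``immediately'' follows from \eqref{eq:FBL-3}, and in the sentence preceding the corollary it uses exactly your convexity-of-slopes observation (that $\varphi_{q'}$ is convex with $\varphi_{q'}(0)=0$) to identify $1/p^{*}=\varphi_{q'}(\alpha)/\alpha$. Your write-up supplies the details the paper leaves implicit, and your additional verification that $p^{*}\ge 1$ (so that Corollary~\ref{cor:-Let-} is indeed applicable) is a nice touch the paper omits.
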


\section{\label{sec:Generalization-to-Original}Generalization to Original
BL Inequalities}

Consider a tuple $(\pi_{XY},\mu_{X},\mu_{Y})$, 
where $\pi_{XY}$ is a $\sigma$-finite nonnegative measure  on $(\mathcal{X}\times\mathcal{Y},\mathbb{B}_{\mathcal{X}}\otimes\mathbb{B}_{\mathcal{Y}})$,
and $\mu_{X},\mu_{Y}$ are two $\sigma$-finite nonnegative measures
respectively on $(\mathcal{X},\mathbb{B}_{\mathcal{X}})$ and $(\mathcal{Y},\mathbb{B}_{\mathcal{Y}})$.
Here $\mu_{X},\mu_{Y}$ are not necessarily to be the marginals of
$\pi_{XY}.$ We redefine $\langle f,g\rangle$, $\Vert f\Vert_{p}$,
and $\Vert g\Vert_{q}$ respectively with respect to $\pi_{XY},\mu_{X},\mu_{Y}$.
 Then, the forward BL inequality corresponds to the original BL inequality
studied in \cite{brascamp1976best}, but the reverse BL inequality
in \eqref{eq:RBL} is different from the one introduced in \cite{barthe1998reverse}.
In fact, both of them generalize the case in which $\mu_{X},\mu_{Y}$
are the marginals of $\pi_{XY}.$ 

For simplicity, the condition $\pi_{X}\ll\mu_{X},\pi_{Y}\ll\mu_{Y}$
is assumed. This does not lose any generality, due to the following
argument. If $\pi_{X}\nll\mu_{X}$, then we can write $\pi_{X}=\bar{\pi}_{X}+\hat{\pi}_{X}$
where $\bar{\pi}_{X}\ll\mu_{X}$ and $\hat{\pi}_{X}\perp\mu_{X}$.
By definition, there is a measurable set $A$ such that $\mu_{X}(A^{c})=0$
and $\hat{\pi}_{X}(A)=0$. So, we can also write $f$ as  $f=f1_{A}+f1_{A^{c}}$.
Note that $\Ent_{p,\hat{p}}(f)$ only depends on the part $f1_{A}$
of $f$, and is independent of the other part. Hence, $\Ent_{p,\hat{p}}(f)$
remains unchanged if we rechoose $f=f1_{A}+c1_{A^{c}}.$ For this
new function, $\langle f,g\rangle=\langle f1_{A},g\rangle+c\langle1_{A^{c}},g\rangle$,
which tends to infinity as $c\to\infty$. This implies $\underline{\Lambda}_{p,q,\hat{p},\hat{q}}(\alpha,\beta|\pi_{XY},\mu_{X},\mu_{Y})=-\infty$.
As for the reverse part $\overline{\Lambda}_{p,q,\hat{p},\hat{q}}(\alpha,\beta)$,
if $\pi_{X}\nll\mu_{X}$, by the argument above, without loss of optimality,
one can restrict $f$ to be $f1_{A}$. If $\pi_{X}(A)=0$, then $\langle f,g\rangle=0$,
which implies $\overline{\Lambda}_{p,q,\hat{p},\hat{q}}(\alpha,\beta|\pi_{XY},\mu_{X},\mu_{Y})=\infty$.
Otherwise, we can denote the conditional distribution $\pi_{XY|A}$
of $\pi_{XY}$ on $A\times\mathcal{Y}$ as 
\[
\pi_{XY|A}:B\in\mathbb{B}_{\mathcal{X}}\otimes\mathbb{B}_{\mathcal{Y}}\mapsto\frac{\pi_{XY}(B\cap(A\times\mathcal{Y}))}{\pi_{X}(A)}.
\]
Obviously, the marginal of $\pi_{XY|A}$ on $\mathcal{X}$ is $\pi_{X|A}:B\in\mathbb{B}_{\mathcal{X}}\mapsto\frac{\pi_{X}(B\cap A)}{\pi_{X}(A)}=\frac{\bar{\pi}_{X}(B\cap A)}{\bar{\pi}_{X}(A)}$.
Hence, $\pi_{X|A}\ll\mu_{X}$. On the other hand, 
\[
\overline{\Lambda}_{p,q,\hat{p},\hat{q}}(\alpha,\beta|\pi_{XY},\mu_{X},\mu_{Y})=\overline{\Lambda}_{p,q,\hat{p},\hat{q}}(\alpha,\beta|\pi_{XY|A},\mu_{X},\mu_{Y})-\log\pi_{X}(A),
\]
which implies that determining $\overline{\Lambda}_{p,q,\hat{p},\hat{q}}$
for $(\pi_{XY},\mu_{X},\mu_{Y})$ is equivalent to determining $\overline{\Lambda}_{p,q,\hat{p},\hat{q}}$
for $(\pi_{XY|A},\mu_{X},\mu_{Y})$. Hence, $\pi_{X}\ll\mu_{X},\pi_{Y}\ll\mu_{Y}$
can be assumed. 

By checking our proofs, one can verify that for the case of $\pi_{X}\ll\mu_{X},\pi_{Y}\ll\mu_{Y}$,
all the inequalities derived in Sections \ref{sec:Strong-BL-and}-\ref{sec:Applications}
still hold for this general setting, but they require to be slightly
modified. For this general case, $\Ent_{p,\hat{p}}(f)=D_{\hat{p}/p}(Q_{X}\|\mu_{X})$
still holds, but the Rényi divergence here is required to extend to
the one of a probability from a $\sigma$-finite nonnegative measure.
More specifically, the Rényi divergence of a probability measure $Q$
from a $\sigma$-finite nonnegative measure $\mu$ on the same measurable
space is still defined as $D_{s}(Q\|\mu):=\frac{1}{s-1}\log\int(\frac{\mathrm{d}Q}{\mathrm{d}\nu})^{s}(\frac{\mathrm{d}\mu}{\mathrm{d}\nu})^{1-s}\mathrm{d}\nu$
for $s\in\mathbb{R}\backslash\{0,1\}$ and a $\sigma$-finite nonnegative
measure $\nu$ such that $Q,\mu\ll\nu$  (in fact, the value of $D_{s}(Q\|\mu)$
is independent of the choice of $\nu$). For $s\in\{0,1,\pm\infty\}$,
the Rényi divergence of order $s$ is still defined by the continuous
extension. In particular, the formula for the relative entropy (i.e.,
the case $s=1$) can be found in Remark \ref{rem:The-probability-measures}
in Appendix \ref{sec:Proof-of-Proposition-ITChara}. In fact, this
general version of the Rényi divergence  can be seen as the negative
Rényi entropy of $Q$ with respect to the reference measure $\mu$.
It still admits the following variational formula (see e.g. Lemma
\ref{lem:dual} in Appendix \ref{sec:Proof-of-Proposition-ITChara}):
\begin{equation}
D_{s}(Q\|\mu)=\frac{1}{1-s}\inf_{R}\{sD(R\|Q)+(1-s)D(R\|\mu)\},\label{eq:-51-2}
\end{equation}
where the infimum is still taken over probability measures $R$. By
this formula, given $Q,\mu$, the Rényi divergence $D_{s}(Q\|\mu)$
is still nondecreasing in its order $s\in[-\infty,\infty]$. Similarly
to Lemma \ref{lem:norm}, one can also prove the continuity of $D_{s}(Q\|\mu)$
in $s\in\{s:|D_{s}(Q\|\mu)|<\infty\}$.  However, unlike the case
of probability measure $\mu$, in general, for a $\sigma$-finite
nonnegative measure $\mu$, $D_{s}(Q\|\mu)$ is \emph{not} nonnegative
anymore for $s\ge0$, and also \emph{not} nonpositive anymore for
$s<0$.  Based on this general definitions of the Rényi divergence
and the relative entropy, all the inequalities derived in Sections
\ref{sec:Strong-BL-and}-\ref{sec:Applications} still hold if we
replace $(P_{XY},P_{X},P_{Y})$ with the tuple $(\pi_{XY},\mu_{X},\mu_{Y})$.
Note that all the optimizations involved in these results are still
taken over probability measures. Furthermore, it should be noted that
for the single-function version of BL inequalities and the theorems
on the $q$-stability, we should additionally assume $\mu_{Y}=\pi_{Y}$,
so that the equivalences in \eqref{eq:-38} and \eqref{eq:-39} are
still valid for the new setting. 

In addition, the proofs of the exponential tightness of the inequalities
derived in Sections \ref{sec:Strong-BL-and}-\ref{sec:Applications}
requires the large deviation theory, more specifically, Sanov's theorem.
If $\pi_{XY},\mu_{X},\mu_{Y}$ are finite measures, then we can normalize
them to probability measures. In this case, Sanov's theorem for the
probability measures still works. However, if $\pi_{XY},\mu_{X},\mu_{Y}$
are infinite (but $\sigma$-finite) measures, our proofs require a
version of Sanov's theorem for infinite measures. In fact, the generalization
of Sanov's theorem to infinite measures was already investigated in
\cite{bakhtin2016kullback}, but for a so-called \emph{fine topology},
instead of the weak topology. The exponential tightness of the inequalities
derived in Sections \ref{sec:Strong-BL-and}-\ref{sec:Applications}
for infinite measures remains to be investigated in the future. 

\appendix

\section{\label{sec:Proof-of-Proposition-ITChara}Proof of Proposition \ref{prop:ITcharacterization}}

To prove Proposition \ref{prop:ITcharacterization}, we need the following
lemma. 
\begin{lem}
\label{lem:dual}Let $P_{i},i\in[n]$ be $n$ probability measures
on $\mathcal{X}$ and $\mu$ be a $\sigma$-finite nonnegative measure
on $\mathcal{X}$ such that all $P_{i}\ll\mu,i\in[n]$. Let $s_{i},i\in[n]$
be $n$ real numbers such that $\sum_{i=1}^{n}s_{i}=1$. Let $c:\mathcal{X}\to[-\infty,\infty]$
be a measurable function. Denote $\beta:=\int e^{-c}\prod_{i=1}^{n}(\frac{\mathrm{d}P_{i}}{\mathrm{d}\mu})^{s_{i}}\mathrm{d}\mu$.
Then we have 
\begin{equation}
-\log\beta=\inf_{Q}\sum_{i=1}^{n}s_{i}D(Q\|P_{i})+\int c\,\mathrm{d}Q,\label{eq:duality}
\end{equation}
where the conventions that $0\cdot\infty=0$, and $0^{s}=\infty$
for $s<0$, and $1$ for $s=0$ are adopted, according to Convention
\ref{Convention-:-When}, the infimization in the RHS of \eqref{eq:duality}
is over all probability measures $Q$ on $\mathcal{X}$ such that
$D(Q\|P_{i})<\infty,\forall i\in[n]$ and $\int|c|\,\mathrm{d}Q<\infty$.
Moreover, if $0<\beta<\infty$, $D(Q^{*}\|P_{i})<\infty,\forall i\in[n]$,
and $\int|c|\,\mathrm{d}Q^{*}<\infty$, where $Q^{*}$ is a probability
measure with the density 
\begin{equation}
\frac{\mathrm{d}Q^{*}}{\mathrm{d}\mu}=\frac{e^{-c}}{\beta}\prod_{i=1}^{n}(\frac{\mathrm{d}P_{i}}{\mathrm{d}\mu})^{s_{i}},\label{eq:-62}
\end{equation}
then the infimization in the RHS of \eqref{eq:duality} is uniquely
attained by $Q^{*}$. 
\end{lem}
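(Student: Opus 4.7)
\textbf{Proof plan for Lemma~\ref{lem:dual}.} The strategy is to recognize the right-hand side of \eqref{eq:duality} as the variational form of a single relative entropy, up to the additive constant $-\log\beta$, and then invoke the Gibbs inequality $D(Q\|R)\geq 0$.

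First I would reduce the problem to densities with respect to $\mu$. By Convention~\ref{Convention-:-When}, only probability measures $Q$ with $D(Q\|P_i)<\infty$ for every $i$ enter the infimum; each such $Q$ satisfies $Q\ll P_i$, and since $P_i\ll\mu$ this forces $Q\ll\mu$. Setting $q:=\mathrm{d}Q/\mathrm{d}\mu$ and $p_i:=\mathrm{d}P_i/\mathrm{d}\mu$, a direct computation using $\sum_i s_i=1$ gives, for every admissible $Q$, the pointwise identity
$$\sum_{i=1}^{n} s_i D(Q\|P_i) + \int c\,\mathrm{d}Q \;=\; \int q\,\log\frac{q}{e^{-c}\prod_{i=1}^{n} p_i^{s_i}}\,\mathrm{d}\mu.$$
Here I would carefully verify the conventions $0\log 0=0$, $0\cdot\infty=0$, and $0^s=\infty$ for $s<0$ to handle points where some $p_i$ vanishes or where $c=\pm\infty$, so that the identity holds whenever the left-hand side is well-defined.

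Next, in the main regime $0<\beta<\infty$, the $\sigma$-finite nonnegative measure $\mathrm{d}\nu:=e^{-c}\prod_i p_i^{s_i}\,\mathrm{d}\mu$ is in fact finite with total mass $\beta$, so $Q^*:=\nu/\beta$ is a probability measure with density \eqref{eq:-62}. Then
$$\int q\,\log\frac{q}{e^{-c}\prod_i p_i^{s_i}}\,\mathrm{d}\mu \;=\; D(Q\|Q^*) - \log\beta \;\geq\; -\log\beta,$$
by the nonnegativity of the relative entropy between two probability measures, with equality iff $Q=Q^*$ ($\mu$-a.e.). Provided the stated finiteness conditions on $Q^*$ hold, $Q^*$ is admissible, and uniqueness of the optimizer follows from the strict positivity of $D(Q\|Q^*)$ for $Q\neq Q^*$.

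Finally I would handle the two boundary cases. If $\beta=0$, then $e^{-c}\prod_i p_i^{s_i}=0$ $\mu$-a.e., and a short argument shows the integrand above is $+\infty$ on any set of positive $Q$-mass, so the infimum is $+\infty=-\log 0$; this matches the convention that an infimum over the empty admissible class is $+\infty$. If $\beta=\infty$, I would construct an explicit minimizing sequence by truncating: on the level set $E_k:=\{k^{-1}\leq e^{-c}\prod_i p_i^{s_i}\leq k\}\cap\{p_i\geq k^{-1}\text{ for all }i\text{ with }s_i<0\}$, normalize $\mathbf 1_{E_k}\,\mathrm{d}\nu$ to a probability measure $Q_k$, verify via the truncations that $D(Q_k\|P_i)<\infty$ and $\int|c|\,\mathrm{d}Q_k<\infty$, and then compute that the objective equals $-\log\nu(E_k)\to-\infty$ as $k\to\infty$, matching $-\log\beta=-\infty$. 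The main obstacle will be this last case: ensuring the finiteness constraints on $D(Q_k\|P_i)$ are preserved along the sequence requires truncating the density both from above and (when some $s_i<0$) from below, and one must also check that $\int c\,\mathrm{d}Q_k$ stays finite, which is delicate when $c$ is unbounded; the regimes $0<\beta<\infty$ and $\beta=0$ are by contrast routine applications of the Gibbs inequality.
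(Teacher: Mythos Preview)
Your overall strategy---rewrite the objective as $D(Q\|Q^*)-\log\beta$ and invoke Gibbs---is exactly what the paper does. The handling of $\beta=0$ also matches. But there is a genuine gap in the $0<\beta<\infty$ case.

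Your Gibbs argument gives $\inf_Q\{\cdots\}\geq -\log\beta$ unconditionally, and you then say that \emph{if} the stated finiteness conditions on $Q^*$ hold, the infimum is attained there. However, the lemma asserts the equality $-\log\beta=\inf_Q\{\cdots\}$ \emph{without} those extra hypotheses on $Q^*$; the ``moreover'' clause is only about attainment. It is perfectly possible that $0<\beta<\infty$ while $D(Q^*\|P_i)=\infty$ for some $i$ (think $s_i<0$ and $\mathrm d P_i/\mathrm d\mu$ decaying fast where the density of $Q^*$ does not), or $\int|c|\,\mathrm dQ^*=\infty$. In that regime $Q^*$ is not admissible, and you have supplied no argument for $\inf_Q\{\cdots\}\leq -\log\beta$.

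The fix is exactly the truncation you already propose for $\beta=\infty$, and this is how the paper proceeds: set
\[
\mathcal A_{a,b}:=\Bigl\{x:\tfrac{\mathrm dP_i}{\mathrm d\mu}(x)\in(a,b)\ \forall i,\ c(x)\in(a,b)\Bigr\},\qquad 0<a<b<\infty,
\]
and let $Q^*_{a,b}$ be $e^{-c}\prod_i p_i^{s_i}$ normalized on $\mathcal A_{a,b}$. On $\mathcal A_{a,b}$ every $p_i$ and $c$ is bounded above and below, so $D(Q^*_{a,b}\|P_i)<\infty$ for every $i$ and $\int|c|\,\mathrm dQ^*_{a,b}<\infty$; hence $Q^*_{a,b}$ is admissible and achieves the value $-\log\beta_{a,b}$. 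Monotone convergence gives $\beta_{a,b}\uparrow\beta$ as $a\downarrow0$, $b\uparrow\infty$, yielding $\inf_Q\{\cdots\}\leq -\log\beta$ and completing the proof; the same construction also settles the $\beta=\infty$ case uniformly. So you should move your truncation argument from the last paragraph up and use it as the primary device for the direction $\inf\leq -\log\beta$ in \emph{both} $0<\beta<\infty$ and $\beta=\infty$.
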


\begin{rem}
In fact, from the definition, $\beta$ is independent of the reference
measure $\mu$, which can be also seen from \eqref{eq:duality}. 
\end{rem}

\begin{rem}
\label{rem:The-probability-measures}The probability measures $P_{i},i\in[n]$
can be relaxed to any $\sigma$-finite nonnegative measures. For this
case, the relative entropy of a probability measure $Q$ from a $\sigma$-finite
nonnegative measure $\pi$ on the same measurable space is still defined
as $D(Q\|\pi):=\int\log(\frac{\mathrm{d}Q}{\mathrm{d}\pi})\mathrm{d}Q$
if $Q\ll\pi$ and the integral exists, and infinity otherwise. Moreover,
the infimum in \eqref{eq:duality}  is still taken over probability
measures. 
\end{rem}

\begin{rem}
Special cases of this lemma were provided in \cite{van2010data,shayevitz2011renyi,Erven}.
The finite alphabet version can be proven by the Lagrange multiplier
method. The proof here is based on the non-negativity of the relative
entropy, which is essentially due to Csiszár and Matus \cite{csiszar2003information}
and also used in \cite{Erven,liu2018information}. 
\end{rem}

\begin{proof}
We first consider the case $0<\beta<\infty$. For $0<a<b<\infty$,
define 
\[
\mathcal{A}_{a,b}:=\Big\{ x:\frac{\mathrm{d}P_{i}}{\mathrm{d}\mu}(x),c(x)\in(a,b),\forall i\in[n]\Big\}.
\]
Observe that for any $Q\ll\mu$ concentrated on $\mathcal{A}_{a,b}$,
we have $D(Q\|P_{i})<\infty,\forall i\in[n]$, $\int|c|\,\mathrm{d}Q<\infty$,
and moreover, 
\begin{align*}
\sum_{i=1}^{n}s_{i}D(Q\|P_{i})+\int c\,\mathrm{d}Q+\log\beta_{a,b} & =D(Q\|Q_{a,b}^{*})\geq0,
\end{align*}
where $\beta_{a,b}:=\int_{\mathcal{A}_{a,b}}\prod_{i=1}^{n}(\frac{\mathrm{d}P_{i}}{\mathrm{d}\mu})^{s_{i}}\mathrm{d}\mu$,
and $Q_{a,b}^{*}$ is a probability measure with the density 
\[
\frac{\mathrm{d}Q_{a,b}^{*}}{\mathrm{d}\mu}:=\frac{1_{\mathcal{A}_{a,b}}e^{-c}\prod_{i=1}^{n}(\frac{\mathrm{d}P_{i}}{\mathrm{d}\mu})^{s_{i}}}{\beta_{a,b}}.
\]
The equality above holds if $Q=Q_{a,b}^{*}$. Hence, 
\[
-\log\beta_{a,b}=\inf_{Q:\,Q(\mathcal{A}_{a,b}^{c})=0}\sum_{i=1}^{n}s_{i}D(Q\|P_{i})+\int c\,\mathrm{d}Q,
\]
by Convention \ref{Convention-:-When}, the infimum is taken over
all $Q$ such that $Q(\mathcal{A}_{a,b}^{c})=0$, $D(Q\|P_{i})<\infty,\forall i\in[n],\int|c|\,\mathrm{d}Q<\infty$.
Taking infimization over $0<a<b<\infty$ for both sides above and
applying the monotone convergence theorem, we have 
\[
-\log\beta=\inf_{0<a<b<\infty}\inf_{Q:\,Q(\mathcal{A}_{a,b}^{c})=0}\sum_{i=1}^{n}s_{i}D(Q\|P_{i})+\int c\,\mathrm{d}Q,
\]
which implies 
\begin{equation}
-\log\beta\geq\inf_{Q}\sum_{i=1}^{n}s_{i}D(Q\|P_{i})+\int c\mathrm{d}Q.\label{eq:-54}
\end{equation}

On the other hand, for any $Q$ such that $D(Q\|P_{i})<\infty,\forall i\in[n]$
and $\int|c|\mathrm{d}Q<\infty$, we have 
\begin{align}
\sum_{i=1}^{n}s_{i}D(Q\|P_{i})+\int c\mathrm{d}Q+\log\beta & =D(Q\|Q^{*})\geq0,\label{eq:-56}
\end{align}
where $Q^{*}$ is defined in \eqref{eq:-62} and its existence follows
by the assumption $0<\beta<\infty$. It is easy to see that the equality
for the inequality above holds if $Q=Q^{*}$, $D(Q^{*}\|P_{i})<\infty,\forall i\in[n]$,
and $\int|c|\mathrm{d}Q^{*}<\infty$. Therefore, combining \eqref{eq:-54}
and \eqref{eq:-56} yields \eqref{eq:duality}.

We next consider the case $\beta=\infty$. For this case, it is easy
to check that \eqref{eq:-54} still holds. Hence, both sides of \eqref{eq:duality}
are $-\infty$.

We lastly consider the case $\beta=0$. For this case, $e^{-c}\prod_{i=1}^{n}(\frac{\mathrm{d}P_{i}}{\mathrm{d}\mu})^{s_{i}}=0$
holds $\mu$-a.e. Hence, for any $Q$, it holds that $\int|c|\mathrm{d}Q^{*}=\infty$
or there exists at least one $i\in[n]$ such that $s_{i}>0$ and $D(Q\|P_{i})=\infty$.
Hence, the infimization at the RHS of \eqref{eq:duality} is taken
over the empty set. Hence, both sides of \eqref{eq:duality} are $\infty$.
This completes the proof of Lemma \ref{lem:dual}. 
\end{proof}
We may assume, by homogeneilty, that $\Vert f\Vert_{p}=\Vert g\Vert_{q}=1$.
Observe that all the integrals involved in $\Vert f\Vert_{p},\Vert g\Vert_{q}$,
$\Ent_{p,\hat{p}}(f)$, $\Ent_{q,\hat{q}}(g),\langle f,g\rangle$
can be written as the ones with respect to $P_{XY}$. Hence, we can
choose $P_{XY}$ as a reference measure. Then, without loss of generality,
we can write 
\[
f^{p}=\frac{\mathrm{d}Q_{X}}{\mathrm{d}P_{X}}=\frac{\mathrm{d}(Q_{X}P_{Y|X})}{\mathrm{d}P_{XY}}\qquad g^{q}=\frac{\mathrm{d}Q_{Y}}{\mathrm{d}P_{Y}}=\frac{\mathrm{d}(Q_{Y}P_{X|Y})}{\mathrm{d}P_{XY}},
\]
for some probability measures $Q_{X}\ll P_{X},Q_{Y}\ll P_{Y}$. Moreover,
we require $f<\infty$. Hence, $Q_{X}\ll\gg P_{X}$ if $p<0$. Similarly,
$Q_{Y}\ll\gg P_{Y}$ if $q<0$.

By this choice of $f,g$, we have 
\begin{align}
\Ent_{p,\hat{p}}(f) & =-\frac{p}{p-\hat{p}}\log\int(\frac{\mathrm{d}Q_{X}}{\mathrm{d}P_{X}})^{\hat{p}/p}\mathrm{d}P_{X}=D_{\hat{p}/p}(Q_{X}\|P_{X}).\label{eq:-3}
\end{align}
To ensure that $D_{{\hat{p}}/{p}}(Q_{X}\|P_{X})$ is finite, for the
case $p>0>\hat{p}$, we still require $Q_{X}\ll\gg P_{X}$. Hence,
$Q_{X}\ll P_{X}$ if $p>0,\hat{p}\ge0$; otherwise, $Q_{X}\ll\gg P_{X}$.
Similarly, $Q_{Y}\ll P_{Y}$ if $q>0,\hat{q}\ge0$; otherwise, $Q_{Y}\ll\gg P_{Y}$.

In addition, we also have that 
\begin{align}
-\log\langle f,g\rangle & =-\log\int(\frac{\mathrm{d}(Q_{X}P_{Y|X})}{\mathrm{d}P_{XY}})^{1/p}(\frac{\mathrm{d}(Q_{Y}P_{X|Y})}{\mathrm{d}P_{XY}})^{1/q}\mathrm{d}P_{XY}\\
 & =\inf_{R_{XY}}\{D(R_{XY}\|P_{XY})+\frac{1}{p}D(R_{X}\|Q_{X})-\frac{1}{p}D(R_{X}\|P_{X})+\frac{1}{q}D(R_{Y}\|Q_{Y})-\frac{1}{q}D(R_{Y}\|P_{Y})\}\label{eq:-55}\\
 & =\phi(Q_{X},Q_{Y}|P_{XY}),\label{eq:-4}
\end{align}
where \eqref{eq:-55} follows by Lemma \ref{lem:dual}, and moreover,
in \eqref{eq:-55}, the infimization is taken over all $R_{XY}$ such
that all the relative entropies appearing in the objective function
are finite.

Substituting \eqref{eq:-3} and \eqref{eq:-4} into the definitions
of $\underline{\Lambda}_{p,q,\hat{p},\hat{q}}(\alpha,\beta)$ and
$\overline{\Lambda}_{p,q,\hat{p},\hat{q}}(\alpha,\beta)$ yields Theorem
\ref{prop:ITcharacterization}.

\section{\label{sec:Proof-of-Theorem-BLexponent}Proof of Theorem \ref{thm:BLexponent}}

\subsection{Forward Case}

Our proof consists of two parts: one-shot bound and singleletterization.

\subsubsection{One-shot Bound}

We first consider the case $n=1$. Denote $\lambda=\hat{p}/p,\mu=\hat{q}/q$.
Define 
\begin{align*}
\psi(R_{XY}) & =D(R_{XY}\|P_{XY})-\frac{1}{p}D(R_{X}\|P_{X})-\frac{1}{q}D(R_{Y}\|P_{Y})\\
 & \qquad+\inf_{\substack{Q_{X}:D_{\lambda}(Q_{X}\|P_{X})=\alpha}
}\frac{1}{p}D(R_{X}\|Q_{X})+\inf_{\substack{Q_{Y}:D_{\mu}(Q_{Y}\|P_{Y})=\beta}
}\frac{1}{q}D(R_{Y}\|Q_{Y}).
\end{align*}
Then $\underline{\Lambda}_{p,q,\hat{p},\hat{q}}(\alpha,\beta)=\inf_{R_{XY}}\psi(R_{XY})+\frac{\alpha}{p}+\frac{\beta}{q}.$
We will prove 
\begin{align}
\xi:=\frac{1}{p}(\alpha-D(R_{X}\|P_{X}))+\inf_{\substack{Q_{X}:D_{\lambda}(Q_{X}\|P_{X})=\alpha}
}\frac{1}{p}D(R_{X}\|Q_{X}) & \ge\eta_{p,\hat{p}}(\alpha,D(R_{X}\|P_{X})),\label{eq:-46}
\end{align}
which, by symmetry, implies that a similar inequality holds for $P_{Y}$,
and further implies that 
\begin{align}
\underline{\Lambda}_{p,q,\hat{p},\hat{q}}(\alpha,\beta) & \geq\inf_{R_{XY}}D(R_{XY}\|P_{XY})+\eta_{p,\hat{p}}(\alpha,D(R_{X}\|P_{X}))+\eta_{q,\hat{q}}(\beta,D(R_{Y}\|P_{Y})).\label{eq:-52-3}
\end{align}

I. We first assume $p>0,\hat{p}\ge0$. 

1) We first consider the case $\lambda\in(0,\infty]\backslash\{1\}$.
By Lemma \ref{lem:dual}, 
\begin{equation}
D_{\lambda}(Q_{X}\|P_{X})=\frac{1}{1-\lambda}\inf_{R_{X}}\{\lambda D(R_{X}\|Q_{X})+(1-\lambda)D(R_{X}\|P_{X})\}.\label{eq:-51}
\end{equation}
Hence, for any $R_{X}$, 
\begin{equation}
\inf_{\substack{Q_{X}:D_{\lambda}(Q_{X}\|P_{X})=\alpha}
}\frac{1}{p}D(R_{X}\|Q_{X})\ge\frac{1-\lambda}{\hat{p}}(\alpha-D(R_{X}\|P_{X})).\label{eq:-10-2}
\end{equation}
Combining this with the nonnegtivity of relative entropies yields
that the left-hand side (LHS) above is lower bounded by $[\frac{1-\lambda}{\hat{p}}(\alpha-D(R_{X}\|P_{X}))]^{+}$,
where $[x]^{+}:=x\vee0$. Hence, 
\begin{align}
\xi & \ge\frac{1}{p}(\alpha-D(R_{X}\|P_{X}))+[\frac{1-\lambda}{\hat{p}}(\alpha-D(R_{X}\|P_{X}))]^{+}=\eta_{p,\hat{p}}(\alpha,D(R_{X}\|P_{X})).\label{eq:-52}
\end{align}

2) We next consider the case $\lambda=0$. Observe that for any feasible
$(R_{XY},Q_{X})$, we have $R_{X}\ll Q_{X}\ll P_{X}$ since, otherwise,
$\inf_{\substack{Q_{X}:D_{0}(Q_{X}\|P_{X})=\alpha}
}\frac{1}{p}D(R_{X}\|Q_{X})=\infty$. This means, $\alpha=D_{0}(Q_{X}\|P_{X})\le D_{0}(R_{X}\|P_{X})\le D(R_{X}\|P_{X})$.
Therefore, by the nonnegtivity of relative entropies, we have 
\begin{align*}
\xi & \geq\begin{cases}
\frac{1}{p}(\alpha-D(R_{X}\|P_{X})), & D(R_{X}\|P_{X})\geq\alpha\\
\infty, & D(R_{X}\|P_{X})<\alpha
\end{cases}\;=\eta_{p,\hat{p}}(\alpha,D(R_{X}\|P_{X})).
\end{align*}

3) We next consider the case $\lambda=1$. By the nonnegtivity of
relative entropies, we have $\xi\geq\frac{1}{p}(\alpha-D(R_{X}\|P_{X}))=\eta_{p,\hat{p}}(\alpha,D(R_{X}\|P_{X})).$

II. For $p<0<\hat{p}$, \eqref{eq:-10-2} still holds. Hence, \eqref{eq:-46}
holds. 

III.  For $\hat{p}<0<p$, by the nonnegtivity of relative entropies,
we have $\xi\ge\frac{1}{p}(\alpha-D(R_{X}\|P_{X}))=\eta_{p,\hat{p}}(\alpha,D(R_{X}\|P_{X})).$ 

IV. For $p<0,\hat{p}\le0$, \eqref{eq:-46} holds trivially since
the RHS is equal to $-\infty$. 

\subsubsection{Singleletterization}

We next consider the singleletterization. Substituting $(\alpha,\beta,P_{XY})\leftarrow(n\alpha,n\beta,P_{XY}^{\otimes n}),$
we obtain the $n$-dimensional version: 
\begin{align}
\underline{\Lambda}_{p,q,\hat{p},\hat{q}}^{(n)}(\alpha,\beta) & \geq\frac{1}{n}\inf_{R_{X^{n}Y^{n}}}D(R_{X^{n}Y^{n}}\|P_{XY}^{\otimes n})+\eta_{p,\hat{p}}(n\alpha,D(R_{X^{n}}\|P_{X}^{\otimes n}))\nonumber \\
 & \qquad+\eta_{q,\hat{q}}(n\beta,D(R_{Y^{n}}\|P_{Y}^{\otimes n})).\label{eq:-13}
\end{align}
By the chain rule, we have 
\begin{align}
D(R_{X^{n}Y^{n}}\|P_{XY}^{\otimes n}) & =\sum_{i=1}^{n}D(R_{X_{i}Y_{i}|X^{i-1}Y^{i-1}}\|P_{XY}|R_{X^{i-1}Y^{i-1}})\label{eq:-32}\\
D(R_{X^{n}}\|P_{X}^{\otimes n}) & =\sum_{i=1}^{n}D(R_{X_{i}|X^{i-1}}\|P_{X}|R_{X^{i-1}})\nonumber \\
D(R_{Y^{n}}\|P_{Y}^{\otimes n}) & =\sum_{i=1}^{n}D(R_{Y_{i}|Y^{i-1}}\|P_{Y}|R_{Y^{i-1}}).\nonumber 
\end{align}
Consider the joint distribution $R_{X^{n}Y^{n}K}:=R_{X^{n}Y^{n}}\otimes\mathrm{Unif}[n]$,
i.e., under this distribution, $K\sim\mathrm{Unif}[n]$ (called a
\emph{random index}) is independent of $(X^{n},Y^{n})$. Denote $X:=X_{K},Y:=Y_{K},U:=(X^{K-1},K),V:=(Y^{K-1},K),W:=(U,V)$.
Then 
\begin{align*}
D(R_{X^{n}Y^{n}}\|P_{XY}^{\otimes n}) & =nD(R_{XY|W}\|P_{XY}|R_{W})\\
D(R_{X^{n}}\|P_{X}^{\otimes n}) & =nD(R_{X|U}\|P_{X}|R_{U})\leq nD(R_{X|W}\|P_{X}|R_{W})\\
D(R_{Y^{n}}\|P_{Y}^{\otimes n}) & =nD(R_{Y|V}\|P_{Y}|R_{V})\leq nD(R_{Y|W}\|P_{Y}|R_{W}),
\end{align*}
where the inequalities in the last two lines follow by the fact that
conditioning increasing the relative entropy. Substituting these into
\eqref{eq:-13} and utilizing the monotonicity of $\eta_{p,\hat{p}}(\alpha,s)$
in $s$ yields that 
\begin{align}
\underline{\Lambda}_{p,q,\hat{p},\hat{q}}^{(n)}(\alpha,\beta) & \ge\inf_{R_{XYW}}D(R_{XY|W}\|P_{XY}|R_{W})\nonumber \\
 & \qquad+\eta_{p,\hat{p}}(\alpha,D(R_{X|W}\|P_{X}|R_{W}))+\eta_{q,\hat{q}}(\beta,D(R_{Y|W}\|P_{Y}|R_{W}))\nonumber \\
 & =\underline{\Lambda}_{p,q,\hat{p},\hat{q}}^{*}(\alpha,\beta).\label{eq:-34}
\end{align}

\subsection{\label{subsec:overline}Reverse Case}

Define $\overline{\Lambda}_{p,q,\hat{p},\hat{q}}^{**}(\alpha,\beta)$
as a variant of $\overline{\Lambda}_{p,q,\hat{p},\hat{q}}^{*}(\alpha,\beta)$,
by removing $\sup_{Q_{W}}$ from the definition of $\overline{\Lambda}_{p,q,\hat{p},\hat{q}}^{*}(\alpha,\beta)$,
and moreover, taking $Q_{W}$ to be a Dirac measure. In the following,
we first consider the case $n=1$, and prove 
\begin{equation}
\overline{\Lambda}_{p,q,\hat{p},\hat{q}}(\alpha,\beta)\le\overline{\Lambda}_{p,q,\hat{p},\hat{q}}^{**}(\alpha,\beta).\label{eq:-18}
\end{equation}
We then prove the $n$-dimensional version 
\begin{equation}
\overline{\Lambda}_{p,q,\hat{p},\hat{q}}^{**}(n\alpha,n\beta|P_{XY}^{\otimes n})\le n\overline{\Lambda}_{p,q,\hat{p},\hat{q}}^{*}(\alpha,\beta).\label{eq:-19}
\end{equation}
These yield the desired result 
\begin{equation}
\overline{\Lambda}_{p,q,\hat{p},\hat{q}}^{(n)}(\alpha,\beta)\le\overline{\Lambda}_{p,q,\hat{p},\hat{q}}^{*}(\alpha,\beta).\label{eq:-26}
\end{equation}

Denote $\lambda=\hat{p}/p,\mu=\hat{q}/q$. Define 
\begin{align*}
\psi(Q_{X},Q_{Y},R_{X},R_{Y}) & =\mathbb{D}(R_{X},R_{Y}\|P_{XY})-\frac{1}{p}D(R_{X}\|P_{X})-\frac{1}{q}D(R_{Y}\|P_{Y})\\
 & \qquad+\frac{1}{p}D(R_{X}\|Q_{X})+\frac{1}{q}D(R_{Y}\|Q_{Y})+\frac{\alpha}{p}+\frac{\beta}{q}.
\end{align*}
Then, $\phi(Q_{X},Q_{Y})+\frac{\alpha}{p}+\frac{\beta}{q}=\inf_{R_{X},R_{Y}}\psi(Q_{X},Q_{Y},R_{X},R_{Y}).$

\subsubsection{\label{subsec:One-shot-Bound}One-shot Bound}

We now prove \eqref{eq:-18}. We divide our proof into several parts
according to different cases: 

I. $p,q>0,\hat{p},\hat{q}\ge0$: 1) $\lambda,\mu\in(1,\infty)$ or
$\lambda,\mu\in(0,1)$; 2) $\lambda=\mu=0$; 3) $\lambda=\mu=1$;
4) $\lambda=\mu=\infty$.

II. $p,q<0,\hat{p},\hat{q}\ge0$: 1) $p,q<0<\hat{p},\hat{q}\le\infty$;
2) $p,q<0=\hat{p}=\hat{q}$.

III. $p,q>0,\hat{p},\hat{q}<0$: 1) $p,q>0>\hat{p},\hat{q}>-\infty$;
2) $p,q>0,\hat{p},\hat{q}=-\infty$.

IV. $p,q,\hat{p},\hat{q}<0$.

V. Other cases. 

The cases I-IV are ``symmetric'' with respect to $(p,q)$ and also
with respect to $(\hat{p},\hat{q})$. For brevity, we only provide
the proof for these ``symmetric'' cases. For the ``asymmetric''
case (i.e., the case V), one can prove \eqref{eq:-18} by ``mixing''
the proofs for ``symmetric'' cases, since the ``asymmetric'' case
can be seen as a mixture of the ``symmetric'' cases. 

I. We first assume $p,q>0,\hat{p},\hat{q}\ge0$, and prove \eqref{eq:-18}
for this case. 

1) We first consider the case $\lambda,\mu\in(1,\infty)$. Define
$R_{X}^{*}$ as the distribution with density 
\begin{equation}
\frac{\mathrm{d}R_{X}^{*}}{\mathrm{d}P_{X}}=\frac{(\frac{\mathrm{d}Q_{X}}{\mathrm{d}P_{X}})^{\lambda}}{\int(\frac{\mathrm{d}Q_{X}}{\mathrm{d}P_{X}})^{\lambda}\mathrm{d}P_{X}}.\label{eq:-R*}
\end{equation}
The existence of $R_{X}^{*}$ follows by the requirement $D_{\lambda}(Q_{X}\|P_{X})=\alpha<\infty$.
Define $R_{Y}^{*}$ similarly. We first assume that $D(R_{X}^{*}\|P_{X}),D(R_{Y}^{*}\|Q_{Y})<\infty$,
and later will consider the case $D(R_{X}^{*}\|P_{X})=\infty$ or
$D(R_{Y}^{*}\|Q_{Y})=\infty$. It is easy to check that for $D(R_{X}^{*}\|P_{X})<\infty$,
we have 
\begin{equation}
D(R_{X}^{*}\|P_{X})+\frac{\lambda}{1-\lambda}D(R_{X}^{*}\|Q_{X})=D_{\lambda}(Q_{X}\|P_{X}),\label{eq:-58}
\end{equation}
which, combined with the monotonicity of the Rényi divergence in its
order, implies that for $\lambda>1$, 
\begin{equation}
D(Q_{X}\|P_{X})\leq D_{\lambda}(Q_{X}\|P_{X})\le D(R_{X}^{*}\|P_{X}).\label{eq:-22}
\end{equation}
Denote $s=D(Q_{X}\|P_{X}),\hat{s}=D(R_{X}^{*}\|P_{X})$ and $t=D(Q_{Y}\|P_{Y}),\hat{t}=D(R_{Y}^{*}\|P_{Y})$.
Then under the constraints $D_{\lambda}(Q_{X}\|P_{X})=\alpha,D_{\mu}(Q_{Y}\|P_{Y})=\beta$,
we have 
\begin{equation}
s\le\alpha\le\hat{s},\quad t\leq\beta\le\hat{t}.\label{eq:-35}
\end{equation}
Hence, we have 
\begin{align}
\phi(Q_{X},Q_{Y})+\frac{\alpha}{p}+\frac{\beta}{q} & \le\min_{R_{X}\in\{R_{X}^{*},Q_{X}\},R_{Y}\in\{R_{Y}^{*},Q_{Y}\}}\psi(Q_{X},Q_{Y},R_{X},R_{Y})\label{eq:-23-1}\\
 & \leq\hat{\phi}(Q_{X},Q_{Y},R_{X}^{*},R_{Y}^{*}),\label{eq:-20-1}
\end{align}
where 
\begin{align*}
\hat{\phi}(Q_{X},Q_{Y},R_{X},R_{Y}) & :=\min\biggl\{\mathbb{D}(R_{X},R_{Y}\|P_{XY})+\frac{\alpha-\hat{s}}{\hat{p}}+\frac{\beta-\hat{t}}{\hat{q}},\;\mathbb{D}(R_{X},Q_{Y}\|P_{XY})+\frac{\alpha-\hat{s}}{\hat{p}}+\frac{\beta-t}{q},\\
 & \qquad\mathbb{D}(Q_{X},R_{Y}\|P_{XY})+\frac{\alpha-s}{p}+\frac{\beta-\hat{t}}{\hat{q}},\:\mathbb{D}(Q_{X},Q_{Y}\|P_{XY})+\frac{\alpha-s}{p}+\frac{\beta-t}{q}\biggr\},
\end{align*}
and in \eqref{eq:-21}, \eqref{eq:-58} was used to eliminate the
terms $D(R_{X}^{*}\|Q_{X}),D(R_{Y}^{*}\|Q_{Y})$. Then, relaxing $Q_{X},Q_{Y},R_{X}^{*},R_{Y}^{*}$
to arbitrary distributions satisfying \eqref{eq:-35}, we obtain 
\begin{align}
\overline{\Lambda}_{p,q,\hat{p},\hat{q}}(\alpha,\beta) & \le\sup_{\substack{s\le\alpha\le\hat{s},\,t\leq\beta\le\hat{t}}
}\sup_{\substack{Q_{X},Q_{Y},R_{X},R_{Y}:\\
D(Q_{X}\|P_{X})=s,D(Q_{Y}\|P_{Y})=t\\
D(R_{X}\|P_{X})=\hat{s},D(R_{Y}\|P_{Y})=\hat{t}
}
}\hat{\phi}(Q_{X},Q_{Y},R_{X},R_{Y})=\overline{\Lambda}_{p,q,\hat{p},\hat{q}}^{**}(\alpha,\beta).\label{eq:-59}
\end{align}

We now consider the case $D(R_{X}^{*}\|P_{X})=\infty$ or $D(R_{Y}^{*}\|Q_{Y})=\infty$.
Here we only provide a proof for the case of $D(R_{X}^{*}\|P_{X})=D(R_{Y}^{*}\|Q_{Y})=\infty$.
The case of $D(R_{X}^{*}\|P_{X})<D(R_{Y}^{*}\|Q_{Y})=\infty$ and
the case of $D(R_{Y}^{*}\|Q_{Y})<D(R_{X}^{*}\|P_{X})=\infty$ can
be proven similarly. If $D(R_{X}^{*}\|P_{X})=\infty$, we replace
$R_{X}^{*}$ above as the distribution $R_{X}^{(r)}$ with density
w.r.t. $P_{X}$ being 
\begin{equation}
\frac{\mathrm{d}R_{X}^{(r)}}{\mathrm{d}P_{X}}=\frac{(\frac{\mathrm{d}Q_{X}}{\mathrm{d}P_{X}})^{\lambda}1_{\mathcal{A}_{r}}}{\int(\frac{\mathrm{d}Q_{X}}{\mathrm{d}P_{X}})^{\lambda}1_{\mathcal{A}_{r}}\mathrm{d}P_{X}}\label{eq:-Rr}
\end{equation}
where $\mathcal{A}_{r}:=\{x:\frac{\mathrm{d}Q_{X}}{\mathrm{d}P_{X}}(x)<r\}$
for $r>0$. Then it is easy to verify that $D(R_{X}^{(r)}\|P_{X})<\infty$
and $D(R_{X}^{(r)}\|P_{X})\to D(R_{X}^{*}\|P_{X})=\infty$ as $r\to\infty$.
Moreover, analogue to \eqref{eq:-58}, we have 
\begin{equation}
(1-\lambda)D(R_{X}^{(r)}\|P_{X})+\lambda D(R_{X}^{(r)}\|Q_{X})=(1-\lambda)D_{\lambda}(Q_{X}\|P_{X})+\varepsilon_{r},\label{eq:-21}
\end{equation}
where $\varepsilon_{r}\ge0$ vanishes as $r\to\infty$. The equality
above implies that $D(R_{X}^{(r)}\|Q_{X})\to\infty$ as $r\to\infty$,
under the constraint $D_{\lambda}(Q_{X}\|P_{X})=\alpha$. Hence, for
sufficiently large $r$, $D(R_{X}^{(r)}\|P_{X})\ge\alpha$. By redefining
$\hat{s}=D(R_{X}^{(r)}\|P_{X})$, we have that \eqref{eq:-35} still
holds. Using \eqref{eq:-21} to replace \eqref{eq:-58}, we still
have \eqref{eq:-20-1}, but with $\hat{\phi}$ replaced by the following
$\hat{\phi}_{r}$. 
\begin{align*}
\hat{\phi}_{r}(Q_{X},Q_{Y},R_{X},R_{Y}) & :=\min\biggl\{\mathbb{D}(R_{X},R_{Y}\|P_{XY})+\frac{\alpha-\hat{s}+\varepsilon_{r}}{\hat{p}}+\frac{\beta-\hat{t}+\varepsilon_{r}}{\hat{q}},\\
 & \qquad\mathbb{D}(R_{X},Q_{Y}\|P_{XY})+\frac{\alpha-\hat{s}+\varepsilon_{r}}{\hat{p}}+\frac{\beta-t}{q},\\
 & \qquad\mathbb{D}(Q_{X},R_{Y}\|P_{XY})+\frac{\alpha-s}{p}+\frac{\beta-\hat{t}+\varepsilon_{r}}{\hat{q}},\\
 & \qquad\mathbb{D}(Q_{X},Q_{Y}\|P_{XY})+\frac{\alpha-s}{p}+\frac{\beta-t}{q}\biggr\}.
\end{align*}
Then, letting $r\to\infty$, we have \eqref{eq:-59}. This completes
the proof of \eqref{eq:-18} for the case $\lambda,\mu\in(1,\infty)$.

Since $\overline{\Lambda}_{p,q,\hat{p},\hat{q}}(\alpha,\beta)$ is
symmetric w.r.t. $p,\hat{p}$ and also $q,\hat{q}$, \eqref{eq:-18}
holds for the case $\lambda,\mu\in(0,1)$.

2) We next consider the case $\lambda=\mu=0$. By the monotonicity
of the Rényi divergence in its order, $D(Q_{X}\|P_{X})\ge D_{0}(Q_{X}\|P_{X}).$
Denote $s=D(Q_{X}\|P_{X})$ and $t=D(Q_{Y}\|P_{Y})$. Then under the
constraints $D_{0}(Q_{X}\|P_{X})=\alpha,D_{0}(Q_{Y}\|P_{Y})=\beta$,
we have $s\ge\alpha,t\ge\beta.$ Hence, we have 
\begin{align}
 & \phi(Q_{X},Q_{Y})+\frac{\alpha}{p}+\frac{\beta}{q}\le\mathbb{D}(Q_{X},Q_{Y}\|P_{XY})+\frac{\alpha-s}{p}+\frac{\beta-t}{q}.\label{eq:-23-1-3}
\end{align}
Then, relaxing $Q_{X},Q_{Y}$ to be arbitrary distributions satisfying
$s\ge\alpha,t\ge\beta$, we obtain 
\begin{align}
\overline{\Lambda}_{p,q,\hat{p},\hat{q}}(\alpha,\beta) & \le\sup_{s\ge\alpha,t\ge\beta}\sup_{\substack{Q_{X},Q_{Y}:\\
D(Q_{X}\|P_{X})=s,D(Q_{Y}\|P_{Y})=t
}
}\mathbb{D}(Q_{X},Q_{Y}\|P_{XY})+\frac{\alpha-s}{p}+\frac{\beta-t}{q}\label{eq:-59-4}\\
 & =\overline{\Lambda}_{p,q,\hat{p},\hat{q}}^{**}(\alpha,\beta).
\end{align}

3) We next consider the case $\lambda=\mu=1$. For this case, \eqref{eq:-23-1-3}
still holds. 
Then, we obtain 
\begin{align}
\overline{\Lambda}_{p,q,\hat{p},\hat{q}}(\alpha,\beta) & \le\sup_{\substack{Q_{X},Q_{Y}:\\
D(Q_{X}\|P_{X})=\alpha,D(Q_{Y}\|P_{Y})=\beta
}
}\mathbb{D}(Q_{X},Q_{Y}\|P_{XY})+\frac{\alpha-s}{p}+\frac{\beta-t}{q}\label{eq:-59-2}\\
 & =\overline{\Lambda}_{p,q,\hat{p},\hat{q}}^{**}(\alpha,\beta).
\end{align}

4) We then consider the case $\lambda=\mu=\infty$. Suppose that $P_{X}(A)>0$
with $A:=\{x:\frac{\mathrm{d}Q_{X}}{\mathrm{d}P_{X}}(x)=\|\frac{\mathrm{d}Q_{X}}{\mathrm{d}P_{X}}\|_{\infty}\}$.
Otherwise, we can set $A_{\delta}:=\{x:\frac{\mathrm{d}Q_{X}}{\mathrm{d}P_{X}}(x)\ge\|\frac{\mathrm{d}Q_{X}}{\mathrm{d}P_{X}}\|_{\infty}-\delta\}$
for small $\delta>0$. 
Here, we only consider the case of $P_{X}(A)>0$. Define $R_{X}^{*}$
as the distribution with density $\frac{\mathrm{d}R_{X}^{*}}{\mathrm{d}P_{X}}=\frac{1_{A}}{P_{X}(A)}.$
Then 
\[
\frac{\mathrm{d}R_{X}^{*}}{\mathrm{d}Q_{X}}=\frac{\frac{1_{A}}{P_{X}(A)}}{\frac{\mathrm{d}Q_{X}}{\mathrm{d}P_{X}}}=\frac{1_{A}}{P_{X}(A)\|\frac{\mathrm{d}Q_{X}}{\mathrm{d}P_{X}}\|_{\infty}},\quad P_{X}\textrm{-almost everywhere}.
\]
Define $R_{Y}^{*}$ similarly. Observe that $D(R_{X}^{*}\|P_{X})=-\log P_{X}(A)$
and \eqref{eq:-58} still holds, i.e., 
\begin{align}
D(R_{X}^{*}\|Q_{X}) & =\int\frac{1_{A}}{P_{X}(A)}\log\frac{1_{A}}{P_{X}(A)\|\frac{\mathrm{d}Q_{X}}{\mathrm{d}P_{X}}\|_{\infty}}\mathrm{d}P_{X}=D(R_{X}^{*}\|P_{X})-D_{\infty}(Q_{X}\|P_{X}).\label{eq:-58-2}
\end{align}
Combined with the monotonicity of the Rényi divergence in its order
and also the non-negativity, \eqref{eq:-58-2} implies that for $\lambda>1$,
\begin{equation}
D(Q_{X}\|P_{X})\leq D_{\infty}(Q_{X}\|P_{X})\le D(R_{X}^{*}\|P_{X}).\label{eq:-22-4}
\end{equation}
Denote $s,t,\hat{s},\hat{t}$ same as the ones above \eqref{eq:-35}.
Then \eqref{eq:-35} and \eqref{eq:-20-1} still hold, but with $\hat{p},\hat{q}$
in the definition of $\hat{\phi}$ taking value of $\infty$. Furthermore,
\eqref{eq:-59} also holds, completing the proof for this case. 

II. We next consider the case $p,q<0,\hat{p},\hat{q}\ge0$. For this
case, we adopt a method similar to the above. For $p,q<0<\hat{p},\hat{q}\le\infty$,
we still define $R_{X}^{*},R_{Y}^{*}$ as above. We first assume that
$D(R_{X}^{*}\|P_{X}),D(R_{Y}^{*}\|Q_{Y})<\infty$. For this case,
similarly to the above, 
\begin{align}
 & \phi(Q_{X},Q_{Y})+\frac{\alpha}{p}+\frac{\beta}{q}\nonumber \\
 & \le\min\{\psi(Q_{X},Q_{Y},R_{X}^{*},R_{Y}^{*}),\inf_{R_{Y}}\psi(Q_{X},Q_{Y},R_{X}^{*},R_{Y}),\inf_{R_{X}}\psi(Q_{X},Q_{Y},R_{X},R_{Y}^{*}),\inf_{R_{X},R_{Y}}\psi(Q_{X},Q_{Y},R_{X},R_{Y})\}\nonumber \\
 & =\hat{\phi}(R_{X}^{*},R_{Y}^{*}),\label{eq:-16}
\end{align}
where 
\begin{align*}
\hat{\phi}(S_{X},S_{Y}) & :=\inf_{R_{X},R_{Y}}\min\biggl\{\mathbb{D}(S_{X},S_{Y}\|P_{XY})+\frac{\alpha-\hat{s}}{\hat{p}}+\frac{\beta-\hat{t}}{\hat{q}},\\
 & \qquad\mathbb{D}(S_{X},R_{Y}\|P_{XY})+\frac{\alpha-\hat{s}}{\hat{p}}+\frac{\beta-D(R_{Y}\|P_{Y})}{q},\\
 & \qquad\mathbb{D}(R_{X},S_{Y}\|P_{XY})+\frac{\alpha-D(R_{X}\|P_{X})}{p}+\frac{\beta-\hat{t}}{\hat{q}},\\
 & \qquad\mathbb{D}(R_{X},R_{Y}\|P_{XY})+\frac{\alpha-D(R_{X}\|P_{X})}{p}+\frac{\beta-D(R_{Y}\|P_{Y})}{q}\biggr\}.
\end{align*}
Then, $\overline{\Lambda}_{p,q,\hat{p},\hat{q}}(\alpha,\beta)\le\sup_{S_{X},S_{Y}}\hat{\phi}(S_{X},S_{Y})=\overline{\Lambda}_{p,q,\hat{p},\hat{q}}^{**}(\alpha,\beta),$
i.e., \eqref{eq:-18}. The cases of $D(R_{X}^{*}\|P_{X})=\infty$
or $D(R_{Y}^{*}\|Q_{Y})=\infty$ can be proven similarly as the above.
We omit the proofs for these cases.

For $p,q<0=\hat{p}=\hat{q}$, \eqref{eq:-23-1-3} still holds. Hence,
\begin{align*}
\overline{\Lambda}_{p,q,\hat{p},\hat{q}}(\alpha,\beta) & \le\sup_{Q_{X},Q_{Y}}\mathbb{D}(Q_{X},Q_{Y}\|P_{XY})+\frac{\alpha-s}{p}+\frac{\beta-t}{q}=\overline{\Lambda}_{p,q,\hat{p},\hat{q}}^{**}(\alpha,\beta).
\end{align*}

III. We then consider the case $p,q>0,\hat{p},\hat{q}<0$. For $p,q>0>\hat{p},\hat{q}>-\infty$,
by symmetry, \eqref{eq:-18} also holds. For $p,q>0,\hat{p},\hat{q}=-\infty$,
we have 
\begin{equation}
D_{-\infty}(Q_{X}\|P_{X})=\inf_{R_{X}}\{D(R_{X}\|P_{X})-D(R_{X}\|Q_{X})\}.\label{eq:-51-1}
\end{equation}
which implies that for any $R_{X},R_{Y}$, 
\begin{align*}
\phi(Q_{X},Q_{Y})+\frac{\alpha}{p}+\frac{\beta}{q} & \leq\min\biggl\{\mathbb{D}(R_{X},R_{Y}\|P_{XY}),\\
 & \qquad\mathbb{D}(R_{X},Q_{Y}\|P_{XY})+\frac{\beta-D(Q_{Y}\|P_{Y})}{q},\\
 & \qquad\mathbb{D}(Q_{X},R_{Y}\|P_{XY})+\frac{\alpha-D(Q_{X}\|P_{X})}{p},\\
 & \qquad\mathbb{D}(Q_{X},Q_{Y}\|P_{XY})+\frac{\alpha-D(Q_{X}\|P_{X})}{p}+\frac{\beta-D(Q_{Y}\|P_{Y})}{q}\biggr\}.
\end{align*}
Taking $\inf_{R_{X},R_{Y}}$ and then taking $\sup_{Q_{X},Q_{Y}}$,
we obtain \eqref{eq:-18}. 

IV. We then consider the case $p,q,\hat{p},\hat{q}<0$. For this case,
\begin{align}
\phi(Q_{X},Q_{Y})+\frac{\alpha}{p}+\frac{\beta}{q} & \le\inf_{R_{X},R_{Y},\text{\ensuremath{Q_{X}},\ensuremath{Q_{Y}}}}\min\biggl\{\mathbb{D}(R_{X},R_{Y}\|P_{XY})+\frac{\alpha-D(R_{X}\|P_{X})}{\hat{p}}+\frac{\beta-D(R_{Y}\|P_{Y})}{\hat{q}},\label{eq:-23-1-1}\\
 & \qquad\mathbb{D}(R_{X},Q_{Y}\|P_{XY})+\frac{\alpha-D(R_{X}\|P_{X})}{\hat{p}}+\frac{\beta-D(Q_{Y}\|Q_{Y})}{q},\\
 & \qquad\mathbb{D}(Q_{X},R_{Y}\|P_{XY})+\frac{\alpha-D(Q_{X}\|P_{X})}{p}+\frac{\beta-D(R_{Y}\|P_{Y})}{\hat{q}},\\
 & \qquad\mathbb{D}(Q_{X},Q_{Y}\|P_{XY})+\frac{\alpha-D(Q_{X}\|P_{X})}{p}+\frac{\beta-D(Q_{Y}\|Q_{Y})}{q}\biggr\},\\
 & =\min\{\frac{\alpha}{p}+\frac{\beta}{q},\frac{\alpha}{p}+\frac{\beta}{\hat{q}},\frac{\alpha}{\hat{p}}+\frac{\beta}{q},\frac{\alpha}{\hat{p}}+\frac{\beta}{\hat{q}}\}=\overline{\Lambda}_{p,q,\hat{p},\hat{q}}^{**}(\alpha,\beta).
\end{align}
Therefore, $\overline{\Lambda}_{p,q,\hat{p},\hat{q}}(\alpha,\beta)\le\overline{\Lambda}_{p,q,\hat{p},\hat{q}}^{**}(\alpha,\beta).$

\subsubsection{Singleletterization}

Based on the inequality \eqref{eq:-18}, in order to prove the desired
inequality \eqref{eq:-26}, it suffices to prove \eqref{eq:-19},
i.e., the following lemma. 
\begin{lem}
\label{lem:tensorization-1} For $p,q\in\overline{\mathbb{R}}\backslash\{0\},\hat{p},\hat{q}\in\overline{\mathbb{R}}$,
\begin{equation}
\overline{\Lambda}_{p,q,\hat{p},\hat{q}}^{**}(n\alpha,n\beta|P_{XY}^{\otimes n})\leq n\overline{\Lambda}_{p,q,\hat{p},\hat{q}}^{*}(\alpha,\beta|P_{XY}).\label{eq:-77}
\end{equation}
\end{lem}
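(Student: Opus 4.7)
The plan is to mimic the time-sharing singleletterization used for the forward case in the proof of Theorem~\ref{thm:BLexponent}. Fix $\varepsilon > 0$ and pick a four-tuple $(Q_{X^n}, Q_{Y^n}, \hat Q_{X^n}, \hat Q_{Y^n})$ (partitioned into $\mathbf{Q}^+_n$, $\mathbf{Q}^-_n$ per the sign convention) that is $\varepsilon$-near-optimal for the outer $\sup$--$\inf$ in $\overline{\Lambda}^{**}_{p,q,\hat p,\hat q}(n\alpha,n\beta|P_{XY}^{\otimes n})$, together with an $\varepsilon$-near-optimal coupling $R^{*,(ij)}_{X^n Y^n}$ for each of the four $\theta^{n,(ij)}$ terms with $(ij)\in\{(p,q),(p,\hat q),(\hat p,q),(\hat p,\hat q)\}$. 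Since these four couplings share their pairwise marginals (for example, $Q_{X^n}$ is the $X$-marginal of both $R^{*,(p,q)}$ and $R^{*,(p,\hat q)}$), iterated application of the glueing lemma yields a single joint distribution $\tilde R$ on $(X^n,Y^n,\hat X^n,\hat Y^n)$ whose four pairwise marginals coincide with the four $R^{*,(ij)}$.

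With an independent time-sharing index $K \sim \mathrm{Unif}[n]$, set $W=(X^{K-1},Y^{K-1},\hat X^{K-1},\hat Y^{K-1},K)$ and $Q_W = \tilde R_W$, and define the single-letter conditional distributions $Q_{X|W}, Q_{Y|W}, \hat Q_{X|W}, \hat Q_{Y|W}$ as the position-$K$ conditionals of $\tilde R$. The heart of the argument is then to establish, for each of the four $(ij)$-pairings,
\[
\tfrac{1}{n}\,\theta^{n,(ij)}(\mathbf{Q}^+_n,\mathbf{Q}^-_n) \;\le\; \theta^{*,(ij)}(Q_W,\mathbf{Q}^+_*,\mathbf{Q}^-_*),
\]
where $\mathbf{Q}^-_n$ is constructed by lifting any single-letter $\mathbf{Q}^-_*$ to an $n$-letter conditional product over positions given $W$. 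This inequality is obtained by combining three ingredients: (i) the chain rule for $D(Q_{X^n}\|P_X^{\otimes n})$ (and analogues for $Y,\hat X,\hat Y$), tensorizing the marginal relative entropies; (ii) the chain rule for $D(R^{*,(ij)}\|P_{XY}^{\otimes n})$ followed by the bound $D(\cdot\|P_{XY}) \ge \mathbb{D}(\cdot,\cdot\|P_{XY})$, applied conditionally on $W$, to relate $\mathbb{D}^{n,(ij)}/n$ to the single-letter $\mathbb{D}(\tilde R_{X|W},\tilde R_{Y|W}\|P_{XY}|Q_W)$; and (iii) sign bookkeeping, using that $s\mapsto(\alpha-s)/p$ and $s\mapsto(\alpha-s)/\hat p$ are monotone in ways controlled by the signs of $p,\hat p$, so that the chain-rule inequalities line up with the $\mathbf{Q}^+/\mathbf{Q}^-$ partition. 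Taking the minimum over $(ij)$, then the infimum over $\mathbf{Q}^-_*$, and finally letting $\varepsilon\downarrow 0$ gives the lemma.

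The principal obstacle is that each $\theta^{n,(ij)}$ comes equipped with its own optimal coupling $R^{*,(ij)}$, whereas the right-hand side $\overline{\Lambda}^*$ uses a single shared auxiliary variable $W$. Fusing the four couplings into a single $\tilde R$ via glueing, and then choosing just the right amount of past history to include in $W$—enough to tensorize the $\mathbb{D}$ term via the chain rule, but matched to the $D$ terms via conditioning-increases-KL so that the inequality points in the direction required for the $\sup$--$\inf$ comparison—is the delicate part. A secondary complication is the case analysis on the signs of $(p,q,\hat p,\hat q)$ and the extremal cases where some of them equal $\pm\infty$ or $0$, which dictates the $\mathbf{Q}^+/\mathbf{Q}^-$ split and hence the order of $\sup$ and $\inf$; these follow the pattern of the one-shot bound argument in Section~\ref{subsec:One-shot-Bound}, but must now be propagated through the time-sharing construction.
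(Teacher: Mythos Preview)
Your plan has a genuine gap at the glueing step. The four near-optimal couplings $R^{*,(ij)}$ form a \emph{cycle} of shared marginals ($R^{*,(1,1)}$ shares $Q_{X^n}$ with $R^{*,(1,2)}$, which shares $\hat Q_{Y^n}$ with $R^{*,(2,2)}$, which shares $\hat Q_{X^n}$ with $R^{*,(2,1)}$, which shares $Q_{Y^n}$ back with $R^{*,(1,1)}$). The glueing lemma only works along a tree; for a cycle there is in general no joint law $\tilde R$ on $(X^n,Y^n,\hat X^n,\hat Y^n)$ whose four pairwise marginals match all four prescribed $R^{*,(ij)}$. A two-point example already fails: with all four variables uniform on $\{0,1\}$, take $R^{*,(1,1)},R^{*,(1,2)},R^{*,(2,1)}$ to be identity couplings and $R^{*,(2,2)}$ the anti-identity; any joint $\tilde R$ realizing the first three forces $X_2=Y_2$ a.s., contradicting the fourth. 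So the object $\tilde R$ on which the rest of your time-sharing construction is built need not exist.

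The paper avoids this obstruction by \emph{not} trying to make all four pairwise couplings optimal at once. Its key device is Lemma~\ref{lem:eta}, which rewrites $\min_{i,j}\{\mathbb D(R_{X_i},R_{Y_j}\|P_{XY})+b_{ij}\}$ as an infimum over a \emph{single} joint $R_{X_1Y_1X_2Y_2}$ with only the four \emph{univariate} marginals fixed; inside that infimum one takes $\min_{i,j}\{D(R_{X_iY_j}\|P_{XY})+b_{ij}\}$. This is always achievable because one can extend the optimal coupling for the minimizing $(i^*,j^*)$ arbitrarily to the remaining variables. After linearizing the inner $\min$ via an auxiliary distribution $R_{IJ}$ on $[2]^2$ and applying the coupling ``chain rule'' (Lemma~\ref{lem:coupling}) to tensorize, the decisive step is a minimax swap (Lemma~\ref{lem:minimax}) of $\min_{R_{IJ}}$ past $\sup_{R_{W|K}}$. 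This swap is what lets the single-letter bound choose which $(i,j)$ to target \emph{after} seeing the conditional structure, rather than forcing all four to be simultaneously good. Your proposal would need to replace the glueing by precisely this mechanism; even if one could glue, the per-term comparison $\theta^{n,(ij)}/n\le\theta^{*,(ij)}$ that you sketch runs in the wrong direction once you account for both ``conditioning increases KL'' on the marginal terms and $D\ge\mathbb D$ on the coupling term.
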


We next prove Lemma \ref{lem:tensorization-1}. 

Similarly to the proof of the one-shot bound above, we only provide
the proofs for the ``symmetric'' cases. For the ``asymmetric''
case, one can prove Lemma \ref{lem:tensorization-1} by ``mixing''
the proofs for ``symmetric'' cases, since the ``asymmetric'' case
can be seen as a mixture of the ``symmetric'' cases. 

I. We now prove the case $p,q>0,\hat{p},\hat{q}\ge0$. We first consider
the case $p,q,\hat{p},\hat{q}>0$.  For any reals $b_{i,j}$ with
$i,j\in[2]$, define 
\begin{align*}
\eta(R_{X_{1}},R_{Y_{1}},R_{X_{2}},R_{Y_{2}}) & :=\min_{i,j\in[2]}\{\mathbb{D}(R_{X_{i}},R_{Y_{j}}\|P_{XY})+b_{i,j}\},
\end{align*}
and 
\begin{align}
\hat{\eta}(R_{X_{1}},R_{Y_{1}},R_{X_{2}},R_{Y_{2}}) & :=\inf_{R_{X_{1}Y_{1}X_{2}Y_{2}}\in\mathcal{C}(R_{X_{1}},R_{Y_{1}},R_{X_{2}},R_{Y_{2}})}\min_{i,j\in[2]}\{D(R_{X_{i}Y_{j}}\|P_{XY})+b_{i,j}\}.\label{eq:-14}
\end{align}
Then, we have the following lemma. 
\begin{lem}
\label{lem:eta}For any distributions $R_{X_{1}},R_{Y_{1}},R_{X_{2}},R_{Y_{2}}$,
\[
\eta(R_{X_{1}},R_{Y_{1}},R_{X_{2}},R_{Y_{2}})=\hat{\eta}(R_{X_{1}},R_{Y_{1}},R_{X_{2}},R_{Y_{2}}).
\]
\end{lem}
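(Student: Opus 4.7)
The plan is to prove both directions separately, with the inequality $\eta \le \hat\eta$ being essentially immediate and the reverse $\hat\eta \le \eta$ requiring an explicit construction of a four-variable coupling.

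For the easy direction, I would start from the observation that any $R_{X_1Y_1X_2Y_2} \in \mathcal{C}(R_{X_1},R_{Y_1},R_{X_2},R_{Y_2})$ induces, for each pair $(i,j) \in [2]^2$, a two-variable marginal $R_{X_iY_j}$ that lies in $\mathcal{C}(R_{X_i},R_{Y_j})$. By the definition of $\mathbb{D}$ in \eqref{eq:mathbbD}, this gives $\mathbb{D}(R_{X_i},R_{Y_j}\|P_{XY}) \le D(R_{X_iY_j}\|P_{XY})$, hence $\mathbb{D}(R_{X_i},R_{Y_j}\|P_{XY}) + b_{i,j} \le D(R_{X_iY_j}\|P_{XY}) + b_{i,j}$ for every $(i,j)$. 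Taking the minimum over $(i,j)$ and then the infimum over couplings yields $\eta \le \hat\eta$.

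For the reverse direction, the key idea is that we only need the minimum (not all four summands simultaneously) to be small; this gives complete freedom to optimize the coupling in favor of a single pair. I would let $(i^*,j^*)$ attain the minimum defining $\eta$. The case $\eta = \infty$ is trivial, so assume $\eta < \infty$, which forces $\mathbb{D}(R_{X_{i^*}},R_{Y_{j^*}}\|P_{XY}) < \infty$. By the duality of the Schrödinger problem cited earlier in the paper, the infimum in \eqref{eq:mathbbD} is attained, so pick an optimal coupling $R^*_{X_{i^*}Y_{j^*}} \in \mathcal{C}(R_{X_{i^*}},R_{Y_{j^*}})$ achieving $D(R^*_{X_{i^*}Y_{j^*}}\|P_{XY}) = \mathbb{D}(R_{X_{i^*}},R_{Y_{j^*}}\|P_{XY})$. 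Then I would build the four-variable coupling by tensoring: let the pair $(X_{i^*},Y_{j^*})$ be distributed according to $R^*$, and let the two remaining variables be independent of everything and of each other with their prescribed marginals $R_{X_{3-i^*}}$ and $R_{Y_{3-j^*}}$. The four one-dimensional marginals are by construction the prescribed ones, so this is a valid element of $\mathcal{C}(R_{X_1},R_{Y_1},R_{X_2},R_{Y_2})$.

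Plugging this coupling into the definition of $\hat\eta$ bounds the outer infimum by the value of the inner minimum at this particular coupling, and in that minimum the term corresponding to $(i^*,j^*)$ equals exactly $\mathbb{D}(R_{X_{i^*}},R_{Y_{j^*}}\|P_{XY}) + b_{i^*,j^*} = \eta$. Thus $\hat\eta \le \eta$, completing the proof. The only subtlety I anticipate is handling the case when the infimum in $\mathbb{D}$ is not attained (for instance, if the duality hypothesis ever fails to give a minimizer); in that case I would take an $\varepsilon$-optimal coupling, perform the same tensoring construction, obtain $\hat\eta \le \eta + \varepsilon$, and let $\varepsilon \downarrow 0$. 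Everything else is routine.
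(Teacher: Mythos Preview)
Your proof is correct and is essentially the detailed unfolding of the paper's one-line argument. The paper simply says ``swap the infimization and minimization,'' which is valid because $\min_{i,j}$ is over a finite set; after swapping, for each fixed $(i,j)$ the inner $\inf$ over four-variable couplings reduces to $\mathbb{D}(R_{X_i},R_{Y_j}\|P_{XY})$ via exactly the tensoring construction you describe. Your two-direction presentation, including the $\varepsilon$-optimal contingency, makes explicit what the paper leaves implicit, but the underlying idea is the same.
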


\begin{proof}
This lemma follows by swapping the infimization and minimization in
\eqref{eq:-14}. 
\end{proof}
By definition, 
\begin{align}
 & \frac{1}{n}\overline{\Lambda}_{p,q,\hat{p},\hat{q}}^{**}(n\alpha,n\beta|P_{XY}^{\otimes n})\nonumber \\
 & =\sup_{\substack{\alpha_{\lambda}^{-}\le s_{1}\le\alpha_{\lambda}^{+},\beta_{\mu}^{-}\le t_{1}\leq\beta_{\mu}^{+},\\
\alpha_{1/\lambda}^{-}\le s_{2}\le\alpha_{1/\lambda}^{+},\beta_{1/\mu}^{-}\le t_{2}\leq\beta_{1/\mu}^{+}
}
}\sup_{\substack{R_{X_{1}^{n}},R_{Y_{1}^{n}},R_{X_{2}^{n}},R_{Y_{2}^{n}}:\\
\frac{1}{n}D(R_{X_{i}^{n}}\|P_{X}^{\otimes n})=s_{i},\\
\frac{1}{n}D(R_{Y_{j}^{n}}\|P_{Y}^{\otimes n})=t_{j},\forall i,j\in[2]
}
}\chi(R_{X_{1}^{n}},R_{Y_{1}^{n}},R_{X_{2}^{n}},R_{Y_{2}^{n}}),\label{eq:-29}
\end{align}
where 
\[
\chi(R_{X_{1}^{n}},R_{Y_{1}^{n}},R_{X_{2}^{n}},R_{Y_{2}^{n}}):=\min_{i,j\in[2]}\{\frac{1}{n}\mathbb{D}(R_{X_{i}^{n}},R_{Y_{j}^{n}}\|P_{XY}^{\otimes n})+a_{i,j}\}
\]
and $a_{1,1}:=\frac{\alpha-s_{1}}{p}+\frac{\beta-t_{1}}{q},\,a_{1,2}:=\frac{\alpha-s_{1}}{p}+\frac{\beta-t_{2}}{\hat{q}},\,a_{2,1}:=\frac{\alpha-s_{2}}{\hat{p}}+\frac{\beta-t_{1}}{q},\,a_{2,2}:=\frac{\alpha-s_{2}}{\hat{p}}+\frac{\beta-t_{2}}{\hat{q}}.$
For ease of presentation, here we use $(R_{X_{1}},R_{Y_{1}},R_{X_{2}},R_{Y_{2}})$
to denote $(Q_{X},Q_{Y},R_{X},R_{Y})$ in the expression of $\overline{\Lambda}_{p,q,\hat{p},\hat{q}}^{**}(\alpha,\beta)$
in \eqref{eq:-59}, and use $(s_{1},t_{1},s_{2},t_{2})$ to denote
$(s,t,\hat{s},\hat{t})$. By the lemma above, we can rewrite 
\begin{align*}
 & \chi(R_{X_{1}^{n}},R_{Y_{1}^{n}},R_{X_{2}^{n}},R_{Y_{2}^{n}})\\
 & =\inf_{R_{X_{1}^{n}Y_{1}^{n}X_{2}^{n}Y_{2}^{n}}\in\mathcal{C}(R_{X_{1}^{n}},R_{Y_{1}^{n}},R_{X_{2}^{n}},R_{Y_{2}^{n}})}\min_{i,j\in[2]}\{\frac{1}{n}D(R_{X_{i}^{n}Y_{j}^{n}}\|P_{XY}^{\otimes n})+a_{i,j}\}.
\end{align*}
Denote $R_{IJ}$ as a distribution on $[2]^{2}$. Then we can rewrite
\begin{align*}
\chi(R_{X_{1}^{n}},R_{Y_{1}^{n}},R_{X_{2}^{n}},R_{Y_{2}^{n}}) & =\inf_{R_{X_{1}^{n}Y_{1}^{n}X_{2}^{n}Y_{2}^{n}}\in\mathcal{C}(R_{X_{1}^{n}},R_{Y_{1}^{n}},R_{X_{2}^{n}},R_{Y_{2}^{n}})}\min_{R_{IJ}}\{\frac{1}{n}D(R_{X_{I}^{n}Y_{J}^{n}}\|P_{XY}^{\otimes n}|R_{IJ})+\mathbb{E}_{R_{I,J}}[a_{I,J}]\}\\
 & =\min_{R_{IJ}}\{\omega(R_{X_{1}^{n}},R_{Y_{1}^{n}},R_{X_{2}^{n}},R_{Y_{2}^{n}})+\mathbb{E}_{R_{I,J}}[a_{I,J}]\},
\end{align*}
where 
\[
\omega(R_{X_{1}^{n}},R_{Y_{1}^{n}},R_{X_{2}^{n}},R_{Y_{2}^{n}}):=\inf_{R_{X_{1}^{n}Y_{1}^{n}X_{2}^{n}Y_{2}^{n}}\in\mathcal{C}(R_{X_{1}^{n}},R_{Y_{1}^{n}},R_{X_{2}^{n}},R_{Y_{2}^{n}})}\frac{1}{n}D(R_{X_{I}^{n}Y_{J}^{n}}\|P_{XY}^{\otimes n}|R_{IJ}).
\]
To single-letterize $\chi(R_{X_{1}^{n}},R_{Y_{1}^{n}},R_{X_{2}^{n}},R_{Y_{2}^{n}})$,
we need the following ``chain rule'' for coupling sets. 
\begin{lem}[``Chain Rule'' for Coupling Sets]
\cite[Lemma 9]{yu2020exact} \label{lem:coupling} For a pair of
regular conditional distributions $(P_{X^{n}|W},P_{Y^{n}|W})$, we
have 
\[
\prod_{i=1}^{n}\mathcal{C}(P_{X_{i}|X^{i-1}W},P_{Y_{i}|Y^{i-1}W})\subseteq\mathcal{C}(P_{X^{n}|W},P_{Y^{n}|W}),
\]
where for $i\in[n]$, 
\begin{align*}
\mathcal{C}(P_{X_{i}|X^{i-1}W},P_{Y_{i}|Y^{i-1}W}) & :=\{Q_{X_{i}Y_{i}|X^{i-1}Y^{i-1}W}:\\
 & \qquad Q_{X_{i}|X^{i-1}Y^{i-1}W}=P_{X_{i}|X^{i-1}W},Q_{Y_{i}|X^{i-1}Y^{i-1}W}=P_{Y_{i}|Y^{i-1}W}\},
\end{align*}
and 
\[
\prod_{i=1}^{n}\mathcal{C}(P_{X_{i}|X^{i-1}W},P_{Y_{i}|Y^{i-1}W}):=\{\prod_{i=1}^{n}Q_{X_{i}Y_{i}|X^{i-1}Y^{i-1}W}:\,Q_{X_{i}Y_{i}|X^{i-1}Y^{i-1}W}\in\mathcal{C}(P_{X_{i}|X^{i-1}W},P_{Y_{i}|Y^{i-1}W}),i\in[n]\}.
\]
\end{lem}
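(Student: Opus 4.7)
The plan is to construct a single candidate coupling from the given conditional couplings and verify both marginals separately. Given $Q_{X_i Y_i | X^{i-1} Y^{i-1} W} \in \mathcal{C}(P_{X_i|X^{i-1}W}, P_{Y_i|Y^{i-1}W})$ for each $i\in[n]$, I would set
\[
Q_{X^n Y^n | W} \;:=\; \prod_{i=1}^n Q_{X_i Y_i | X^{i-1} Y^{i-1} W}
\]
and show $Q_{X^n|W} = P_{X^n|W}$; by symmetry, swapping the roles of $X$ and $Y$ then gives $Q_{Y^n|W} = P_{Y^n|W}$, so together $Q_{X^n Y^n|W} \in \mathcal{C}(P_{X^n|W}, P_{Y^n|W})$, which is exactly the claimed inclusion.

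The key structural observation driving the computation is that the marginal constraint defining $\mathcal{C}(P_{X_i|X^{i-1}W}, P_{Y_i|Y^{i-1}W})$ forces the $X$-conditional kernel $Q_{X_i|X^{i-1}Y^{i-1}W}$ to equal $P_{X_i|X^{i-1}W}$, which is \emph{free of $y^{i-1}$}. Consequently, integrating the product over $y_n$ first yields $P_{X_n|X^{n-1}W}(x_n\mid x^{n-1},w)$, which no longer involves $y_{n-1},\ldots,y_1$, so it pulls outside the remaining $y$-integrals. Iterating this ``peel one $y_i$ at a time'' step (a finite downward induction on $i$) collapses the product to $\prod_{i=1}^n P_{X_i|X^{i-1}W}(dx_i\mid x^{i-1},w) = P_{X^n|W}(dx^n\mid w)$, which is the desired identity.

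The only non-algebraic ingredient is the exchange of order of integration at each induction step, and this is precisely where the hypothesis of regularity is used: since $(P_{X_i|X^{i-1}W}, P_{Y_i|Y^{i-1}W})$ are regular conditional distributions on Polish spaces, every factor of the product is a genuine Markov kernel, and Fubini's theorem for kernels applies without qualification. I do not anticipate any real obstacle beyond this measurability bookkeeping; once the constraint ``$Q_{X_i|X^{i-1}Y^{i-1}W}$ is $y^{i-1}$-free'' is made explicit, the identification of the marginal is mechanical, and no delicate estimate is required.
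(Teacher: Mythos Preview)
Your proposal is correct and is the natural direct verification. Note that the paper does not actually supply a proof of this lemma; it merely cites it from \cite[Lemma 9]{yu2020exact} and uses it as a black box, so there is no ``paper's own proof'' to compare against. Your argument---build $Q_{X^nY^n|W}$ as the product of the chosen conditional kernels, then peel off the $y$-variables one at a time using that $Q_{X_i|X^{i-1}Y^{i-1}W}=P_{X_i|X^{i-1}W}$ is free of $y^{i-1}$, with Fubini for Markov kernels justifying each step---is exactly the standard way to establish this ``chain rule'' and would be an acceptable self-contained proof here.
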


This lemma also holds for the coupling set of multiple marginal distributions.
Denote 
\[
D_{k}:=D(R_{X_{I,k}Y_{J,k}|X_{1}^{k-1}Y_{1}^{k-1}X_{2}^{k-1}Y_{2}^{k-1}}\|P_{XY}|R_{X_{1}^{k-1}Y_{1}^{k-1}X_{2}^{k-1}Y_{2}^{k-1}}R_{IJ}).
\]
Then, by the chain rule, $D(R_{X_{I}^{n}Y_{J}^{n}}\|P_{XY}^{\otimes n}|R_{IJ})=\sum_{k=1}^{n}D_{k}$.
Applying the lemma above, we obtain that 
\begin{align}
 & n\omega(R_{X_{1}^{n}},R_{Y_{1}^{n}},R_{X_{2}^{n}},R_{Y_{2}^{n}})\leq\inf_{\substack{R_{X_{1,k}Y_{1,k}X_{2,k}Y_{2,k}|X_{1}^{k-1}Y_{1}^{k-1}X_{2}^{k-1}Y_{2}^{k-1}}\\
\in\mathcal{C}(R_{X_{1,k}|X_{1}^{k-1}},R_{Y_{1,k}|Y_{1}^{k-1}},R_{X_{2,k}|X_{2}^{k-1}},R_{Y_{2,k}|Y_{2}^{k-1}}),k\in[n]
}
}\sum_{k=1}^{n}D_{k}.\label{eq:-11}
\end{align}
Obviously, the RHS above can be rewritten as 
\begin{align}
 & \inf_{\substack{R_{X_{1,1}Y_{1,1}X_{2,1}Y_{2,1}}\in\\
\mathcal{C}(R_{X_{1,1}},R_{Y_{1,1}},R_{X_{2,1}},R_{Y_{2,1}})
}
}\Biggl(D_{1}+...+\inf_{\substack{R_{X_{1,n-1}Y_{1,n-1}X_{2,n-1}Y_{2,n-1}|X_{1}^{n-2}Y_{1}^{n-2}X_{2}^{n-2}Y_{2}^{n-2}}\in\\
\mathcal{C}(R_{X_{1,n-1}|X_{1}^{n-2}},R_{Y_{1,n-1}|Y_{1}^{n-2}},R_{X_{2,n-1}|X_{2}^{n-2}},R_{Y_{2,n-1}|Y_{2}^{n-2}})
}
}\nonumber \\
 & \qquad\biggl(D_{n-1}+\inf_{\substack{R_{X_{1,n}Y_{1,n}X_{2,n}Y_{2,n}|X_{1}^{n-1}Y_{1}^{n-1}X_{2}^{n-1}Y_{2}^{n-1}}\in\\
\mathcal{C}(R_{X_{1,n}|X_{1}^{n-1}},R_{Y_{1,n}|Y_{1}^{n-1}},R_{X_{2,n}|X_{2}^{n-1}},R_{Y_{2,n}|Y_{2}^{n-1}})
}
}D_{n}\biggr)\Biggr).\label{eq:nestedinf}
\end{align}
In order to simplify the expression above, we take a supremization
for each infimization in the nested optimization above. Specifically,
for the $k$-th infimization, we take the supremization over $R_{X_{1}^{k-1}Y_{1}^{k-1}X_{2}^{k-1}Y_{2}^{k-1}}\in\mathcal{C}(R_{X_{1}^{k-1}},R_{Y_{1}^{k-1}},R_{X_{2}^{k-1}},R_{Y_{2}^{k-1}})$.
Then, we obtain the following upper bound for \eqref{eq:nestedinf}:
\begin{align}
\sum_{k=1}^{n}\sup_{\substack{R_{X_{1}^{k-1}Y_{1}^{k-1}X_{2}^{k-1}Y_{2}^{k-1}}\in\\
\mathcal{C}(R_{X_{1}^{k-1}},R_{Y_{1}^{k-1}},R_{X_{2}^{k-1}},R_{Y_{2}^{k-1}})
}
}\inf_{\substack{R_{X_{1,k}Y_{1,k}X_{2,k}Y_{2,k}|X_{1}^{k-1}Y_{1}^{k-1}X_{2}^{k-1}Y_{2}^{k-1}}\in\\
\mathcal{C}(R_{X_{1,k}|X_{1}^{k-1}},R_{Y_{1,k}|Y_{1}^{k-1}},R_{X_{2,k}|X_{2}^{k-1}},R_{Y_{2,k}|Y_{2}^{k-1}})
}
}D_{k}.\label{eq:sumDk}
\end{align}
We denote $K\sim R_{K}:=\mathrm{Unif}[n]$ as a random index which
is assumed to be independent of all other random variables, and also
denote $X_{i}:=X_{i,K},Y_{j}:=Y_{j,K},U_{i}:=X_{i}^{K-1},V_{j}:=Y_{j}^{K-1},i,j\in[2]$,
and $W:=(U_{1},V_{1},U_{2},V_{2})$. Then, \eqref{eq:sumDk} can be
rewritten as 
\begin{align}
 & \sum_{k=1}^{n}\sup_{\substack{R_{W|K=k}\in\\
\mathcal{C}(R_{U_{1}|K=k},R_{V_{1}|K=k},R_{U_{2}|K=k},R_{V_{2}|K=k})
}
}\nonumber \\
 & \qquad\inf_{\substack{R_{X_{1}Y_{1}X_{2}Y_{2}|W,K=k}\in\\
\mathcal{C}(R_{X_{1}|U_{1},K=k},R_{Y_{1}|V_{1},K=k},R_{X_{2}|U_{2},K=k},R_{Y_{2}|V_{2},K=k})
}
}D(R_{X_{I}Y_{J}|W,K=k}\|P_{XY}|R_{W|K=k}R_{IJ})\nonumber \\
 & =\sup_{\substack{R_{W|K}\in\\
\mathcal{C}(R_{U_{1}|K},R_{V_{1}|K},R_{U_{2}|K},R_{V_{2}|K})
}
}\inf_{\substack{R_{X_{1}Y_{1}X_{2}Y_{2}|WK}\in\\
\mathcal{C}(R_{X_{1}|U_{1}K},R_{Y_{1}|V_{1}K},R_{X_{2}|U_{2}K},R_{Y_{2}|V_{2}K})
}
}\sum_{k=1}^{n}D(R_{X_{I}Y_{J}|W,K=k}\|P_{XY}|R_{W|K=k}R_{IJ}).\label{eq:-41}
\end{align}
Here we swap the summation with the supremization and infimization
which is feasible since the supremization and infimization is taken
for each term in the summation independently. The summation in the
last line above is in fact equal to $nD(R_{X_{I}Y_{J}|WK}\|P_{XY}|R_{WK}R_{IJ})$.
Hence, the infimization in \eqref{eq:-41} can be rewritten as 
\begin{align*}
 & \inf_{\substack{R_{X_{1}Y_{1}X_{2}Y_{2}|WK}\in\\
\mathcal{C}(R_{X_{1}|U_{1}K},R_{Y_{1}|V_{1}K},R_{X_{2}|U_{2}K},R_{Y_{2}|V_{2}K})
}
}D(R_{X_{I}Y_{J}|WK}\|P_{XY}|R_{WK}R_{IJ})\\
 & =\mathbb{E}_{R_{WK}}[g(W,K,R_{IJ})]
\end{align*}
with 
\[
g(w,k,R_{IJ}):=\inf_{\substack{R_{X_{1}Y_{1}X_{2}Y_{2}|W=w,K=k}\in\\
\mathcal{C}(R_{X_{1}|U_{1}=u_{1},K=k},R_{Y_{1}|V_{1}=v_{1},K=k},R_{X_{2}|U_{2}=u_{2},K=k},R_{Y_{2}|V_{2}=v_{2},K=k})
}
}D(R_{X_{I}Y_{J}|W=w,K=k}\|P_{XY}|R_{IJ})
\]
and $w=(u_{1},v_{1},u_{2},v_{2})$. Then summarizing the derivations
above, we obtain that 
\begin{align}
\chi(R_{X_{1}^{n}},R_{Y_{1}^{n}},R_{X_{2}^{n}},R_{Y_{2}^{n}}) & \le\min_{R_{IJ}}\sup_{\substack{R_{W|K}\in\\
\mathcal{C}(R_{U_{1}|K},R_{V_{1}|K},R_{U_{2}|K},R_{V_{2}|K})
}
}\{\mathbb{E}_{R_{WK}}[g(W,K,R_{IJ})]-\mathbb{E}_{R_{I,J}}[a_{I,J}]\}.\label{eq:minsup}
\end{align}

We now claim that the minimization and the supremization at the RHS
above can be swapped. We now prove this claim.

\textbf{Fact 1.} The space of $R_{IJ}$ is the probability simplex,
which is a nonempty convex compact subset of $\mathbb{R}^{4}$. 

\textbf{Fact 2.} The coupling set $\mathcal{C}(R_{U_{1}|K},R_{V_{1}|K},R_{U_{2}|K},R_{V_{2}|K})$
is a nonempty convex subset of a linear space (the space of nonnegative
measures).

\textbf{Fact 3.} For each $(w,k)$, $g(w,k,R_{IJ})$ is linear in
$R_{IJ}$, i.e., $g(w,k,R_{IJ})=\sum_{i,j\in[2]}R_{IJ}(i,j)b_{i,j,w,k}$
with 
\begin{equation}
b_{i,j,w,k}:=\mathbb{D}(R_{X_{i}|U_{i}=u_{i},K=k},R_{Y_{j}|V_{j}=v_{j},K=k}\|P_{XY})\in[0,\infty],\;i,j\in[2].\label{eq:fact3}
\end{equation}


Facts 1 and 2 are obvious. We now prove Fact 3. For any $\lambda\in[0,1],R_{IJ}^{(0)},R_{IJ}^{(1)}$,
it holds that 
\begin{align}
 & D(R_{X_{I}Y_{J}|W=w,K=k}\|P_{XY}|\lambda R_{IJ}^{(0)}+(1-\lambda)R_{IJ}^{(1)})\nonumber \\
 & =\lambda D(R_{X_{I}Y_{J}|W=w,K=k}\|P_{XY}|R_{IJ}^{(0)})+(1-\lambda)D(R_{X_{I}Y_{J}|W=w,K=k}\|P_{XY}|R_{IJ}^{(1)}).
\end{align}
Taking infimum over $R_{X_{1}Y_{1}X_{2}Y_{2}|W=w,K=k}\in\mathcal{C}(R_{X_{1}|U_{1}=u_{1},K=k},R_{Y_{1}|V_{1}=v_{1},K=k},R_{X_{2}|U_{2}=u_{2},K=k},R_{Y_{2}|V_{2}=v_{2},K=k})$,
we obtain that $g(w,k,\lambda R_{IJ}^{(0)}+(1-\lambda)R_{IJ}^{(1)})\ge\lambda g(w,k,R_{IJ}^{(0)})+(1-\lambda)g(w,k,R_{IJ}^{(1)})$.

On the other hand, for any $R_{X_{I}Y_{J}|W=w,K=k}^{(0)},R_{X_{I}Y_{J}|W=w,K=k}^{(1)}$,
denote $R_{X_{I}Y_{J}IJ|W=w,K=k}^{(\lambda)}:=\lambda R_{X_{I}Y_{J}IJ|W=w,K=k}^{(0)}+(1-\lambda)R_{X_{I}Y_{J}IJ|W=w,K=k}^{(1)}$,
where $R_{X_{I}Y_{J}IJ|W=w,K=k}^{(l)}$ with $l\in[2]$ denotes the
joint distribution induced by the distribution $R_{IJ}^{(l)}$ and
the conditional distribution $R_{X_{i}Y_{j}|W=w,K=k}^{(l)}$ given
$I=i,J=j$. Denote $R_{IJ}^{(\lambda)}$ and $R_{X_{I}Y_{J}|W=w,K=k}^{(\lambda)}$
as the marginal distribution and the conditional distribution induced
by the joint distribution $R_{X_{I}Y_{J}IJ|W=w,K=k}^{(\lambda)}$.
By the convexity of the relative entropy \cite{Erven}, 
\begin{align}
 & \lambda D(R_{X_{I}Y_{J}|W=w,K=k}^{(0)}\|P_{XY}|R_{IJ}^{(0)})+(1-\lambda)D(R_{X_{I}Y_{J}|W=w,K=k}^{(1)}\|P_{XY}|R_{IJ}^{(1)})\nonumber \\
 & \ge D(R_{X_{I}Y_{J}|W=w,K=k}^{(\lambda)}\|P_{XY}|R_{IJ}^{(\lambda)}).
\end{align}
Taking infimum over 
\[
R_{X_{1}Y_{1}X_{2}Y_{2}|W=w,K=k}^{(0)},R_{X_{1}Y_{1}X_{2}Y_{2}|W=w,K=k}^{(1)}\in\mathcal{C}(R_{X_{1}|U_{1}=u_{1},K=k},R_{Y_{1}|V_{1}=v_{1},K=k},R_{X_{2}|U_{2}=u_{2},K=k},R_{Y_{2}|V_{2}=v_{2},K=k}),
\]
we obtain that $\lambda g(w,k,R_{IJ}^{(0)})+(1-\lambda)g(w,k,R_{IJ}^{(1)})\ge g(w,k,R_{IJ}^{(\lambda)})=g(w,k,\lambda R_{IJ}^{(0)}+(1-\lambda)R_{IJ}^{(1)})$.
Combining the above two points, $g(w,k,\lambda R_{IJ}^{(0)}+(1-\lambda)R_{IJ}^{(1)})=\lambda g(w,k,R_{IJ}^{(0)})+(1-\lambda)g(w,k,R_{IJ}^{(1)})$
for any $\lambda\in[0,1],R_{IJ}^{(0)},R_{IJ}^{(1)}$. That is, Fact
3 is true.

Based on Facts 1-3, to prove the feasibility of the exchange of the
minimization and the supremization in \eqref{eq:minsup}, it suffices
to show the following minimax lemma. Note that in our setting, the
coefficients $b_{i,j,w,k},\;i,j\in[2]$ could be $+\infty$.
\begin{lem}
\label{lem:minimax} Let $\mathcal{X}$ be a finite set, and $\mathcal{Y}$
a Polish space. Let $b:\mathcal{X}\times\mathcal{Y}\to[a,+\infty]$
with $a\in\mathbb{R}$ be an extended real valued measurable function,
and $B\subseteq\mathcal{P}(\mathcal{Y})$ a nonempty convex set. 
Then, for the bilinear map 
\[
g:(Q_{X},Q_{Y})\in\mathcal{P}(\mathcal{X})\times B\mapsto\mathbb{E}_{(X,Y)\sim Q_{X}\otimes Q_{Y}}[b(X,Y)],
\]
it holds that 
\[
\min_{Q_{X}}\sup_{Q_{Y}\in B}g(Q_{X},Q_{Y})=\sup_{Q_{Y}\in B}\min_{Q_{X}}g(Q_{X},Q_{Y}).
\]
\end{lem}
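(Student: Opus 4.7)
The plan is to establish the nontrivial $\geq$ direction of the minimax equality by combining a truncation of $b$ with a classical minimax theorem for bounded bilinear forms, and then passing to the limit via a compactness argument on the finite-dimensional simplex $\mathcal{P}(\mathcal{X})$.

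First, observe that $\sup_{Q_Y \in B}\min_{Q_X} g(Q_X,Q_Y) \leq \min_{Q_X}\sup_{Q_Y \in B} g(Q_X,Q_Y)$ is automatic. Enumerating $\mathcal{X}=\{x_1,\dots,x_k\}$ and writing $h(x,Q_Y):=\mathbb{E}_{Q_Y}[b(x,Y)]\in[a,\infty]$, bilinearity gives $g(Q_X,Q_Y)=\sum_i Q_X(x_i)\,h(x_i,Q_Y)$, so $\min_{Q_X} g(\cdot,Q_Y)=\min_i h(x_i,Q_Y)$. Moreover, $Q_X\mapsto\sup_{Q_Y\in B}g(Q_X,Q_Y)$ is a supremum of affine, possibly $+\infty$-valued convex lower semicontinuous functions on the compact simplex $\mathcal{P}(\mathcal{X})$, so its infimum is attained and the notation ``$\min$'' is justified.

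For the reverse inequality, I truncate. For each integer $N>a$, set $b_N:=b\wedge N$ and $g_N(Q_X,Q_Y):=\mathbb{E}_{Q_X\otimes Q_Y}[b_N]\in[a,N]$, which is a bounded bilinear form. A classical minimax theorem (for instance, the version due to Ky Fan that requires no topology on the concave side) then applies with $X=\mathcal{P}(\mathcal{X})$ compact convex in $\mathbb{R}^k$, $Y=B$ convex, $g_N(\cdot,Q_Y)$ continuous and convex (in fact linear) in $Q_X$, and $g_N(Q_X,\cdot)$ concave (in fact linear) in $Q_Y$, yielding
\begin{equation}
\min_{Q_X}\sup_{Q_Y\in B} g_N(Q_X,Q_Y)=\sup_{Q_Y\in B}\min_{Q_X} g_N(Q_X,Q_Y).
\label{eq:fan-truncated}
\end{equation}
It remains to pass to the limit $N\to\infty$. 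On the right-hand side, monotone convergence applied to each of the finitely many $x_i$ gives $\min_i\mathbb{E}_{Q_Y}[b_N(x_i,Y)]\uparrow\min_i\mathbb{E}_{Q_Y}[b(x_i,Y)]$, and taking the supremum over $Q_Y\in B$ shows the right-hand side of \eqref{eq:fan-truncated} increases to $\sup_{Q_Y}\min_{Q_X} g$. On the left-hand side, writing $m_N$ and $m$ for the respective minima, one has $m_N\leq m$ trivially; for the reverse, let $Q_X^{(N)}$ attain $m_N$, extract a subsequential limit $Q_X^{(N_j)}\to Q_X^*$ by compactness of the simplex, and, for any $N_0$ and any $Q_Y\in B$, use $g_{N_0}(Q_X^{(N_j)},Q_Y)\leq g_{N_j}(Q_X^{(N_j)},Q_Y)\leq m_{N_j}$ once $N_j\geq N_0$; sending $j\to\infty$ by continuity of $g_{N_0}(\cdot,Q_Y)$ on the simplex and then $N_0\to\infty$ by monotone convergence gives $g(Q_X^*,Q_Y)\leq\lim_N m_N$, and taking the supremum over $Q_Y$ yields $m\leq\sup_{Q_Y} g(Q_X^*,Q_Y)\leq\lim_N m_N$. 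Combining these two limit passages with \eqref{eq:fan-truncated} closes the argument.

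The main obstacle is the extended-real-valued nature of $b$, which (as the coefficients $b_{i,j,w,k}$ of Fact 3 above illustrate) can legitimately assume the value $+\infty$; this blocks a direct application of Sion's minimax theorem, since $g(\cdot,Q_Y)$ is then only lower semicontinuous rather than continuous. Truncation resolves this cleanly, but the limit passage on the left-hand side requires the compactness-plus-diagonal argument above, whereas the right-hand side is handled by elementary monotone convergence on finitely many points of $\mathcal{X}$.
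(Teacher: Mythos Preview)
Your proof is correct, and it takes a genuinely different route from the paper's.

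The paper does not truncate $b$. Instead it partitions $\mathcal{X}$ into $\hat{\mathcal{X}}:=\{x:\sup_{Q_Y\in B}\mathbb{E}_{Q_Y}[b(x,Y)]<\infty\}$ and its complement. On $\mathcal{P}(\hat{\mathcal{X}})$ the payoff is automatically bounded (finitely many $x$, each with finite sup), so a bounded minimax theorem yields $\inf_{Q_X\in\mathcal{P}(\hat{\mathcal{X}})}\sup_{Q_Y}g=\sup_{Q_Y}\inf_{Q_X\in\mathcal{P}(\hat{\mathcal{X}})}g=:\tau$; one then argues that the full $\min_{Q_X}\sup_{Q_Y}g$ equals $\tau$ as well. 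The remaining work is to show that restricting the inner $\min_{Q_X}$ to $\mathcal{P}(\hat{\mathcal{X}})$ does not inflate the max--min value: for each bad $x\in\hat{\mathcal{X}}^c$ pick $Q_Y^{(x)}\in B$ making $\mathbb{E}_{Q_Y^{(x)}}[b(x,Y)]$ huge, form the convex mixture $Q_Y^{(\delta)}:=(1-\delta)Q_Y+\frac{\delta}{|\hat{\mathcal{X}}^c|}\sum_{x}Q_Y^{(x)}$, and check that $\min_{Q_X}g(Q_X,Q_Y^{(\delta)})\geq(1-\delta)\inf_{Q_X\in\mathcal{P}(\hat{\mathcal{X}})}g(Q_X,Q_Y)$, then let $\delta\downarrow 0$.

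Your truncation-plus-limit argument is more routine: once $b_N=b\wedge N$ is bounded, the minimax at level $N$ follows from any standard theorem for real-valued concave--convex functions with a compact convex side (your reduction to the bounded convex set $\{(\mathbb{E}_{Q_Y}[b_N(x_i,Y)])_i:Q_Y\in B\}\subseteq[a,N]^k$ makes this transparent), and the limit in $N$ is handled by monotone convergence on the max--min side and by the compactness/diagonal extraction on the min--sup side. The paper's approach, by contrast, exposes the structural reason the unbounded values are harmless---they can only sit at points $x$ that any optimal $Q_X$ must avoid---and dispenses with subsequence extraction, at the price of the ad hoc mixing construction with parameters $r$ and $\delta$. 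Both methods ultimately rely on the same bounded minimax theorem as a black box; yours wraps it in an approximation scheme, the paper wraps it in a decomposition of $\mathcal{X}$.
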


\begin{proof}[Proof of Lemma \ref{lem:minimax}]
 For brevity, we assume $a=0$ (by re-setting $g$ to $g-a$). By
weak duality, 
\[
\min_{Q_{X}}\sup_{Q_{Y}\in B}g(Q_{X},Q_{Y})\ge\sup_{Q_{Y}\in B}\min_{Q_{X}}g(Q_{X},Q_{Y}).
\]

Denote $\hat{\mathcal{X}}:=\{x\in\mathcal{X}:\sup_{Q_{Y}\in B}\mathbb{E}_{Y\sim Q_{Y}}[b(x,Y)]<\infty\}$.
We first assume that $\hat{\mathcal{X}}$ is nonempty.   Denote $A:=\mathcal{P}(\hat{\mathcal{X}})$.
Since $\hat{\mathcal{X}}$ is finite, the function $(x,Q_{Y})\in\hat{\mathcal{X}}\times B\mapsto\mathbb{E}_{Y\sim Q_{Y}}[b(x,Y)]$
is bounded. Hence, for any $Q_{Y}\in B$, $Q_{X}\in A\mapsto g(Q_{X},Q_{Y})$
is continuous in the relative topology of probability simplex. By
\cite[Theorem 2.10.2]{zalinescu2002convex}, 
\begin{align*}
\min_{Q_{X}}\sup_{Q_{Y}\in B}g(Q_{X},Q_{Y}) & =\inf_{Q_{X}\in A}\sup_{Q_{Y}\in B}g(Q_{X},Q_{Y})=\sup_{Q_{Y}\in B}\inf_{Q_{X}\in A}g(Q_{X},Q_{Y})=:\tau.
\end{align*}
Hence, $\inf_{Q_{X}\in A}g(Q_{X},Q_{Y})\le\tau$ for all $Q_{Y}\in B$.

Let $r$ be a real value such that $\frac{\delta}{|\hat{\mathcal{X}}^{c}|}r>(1-\delta)\tau$
and let $\delta\in(0,1)$. For each $x\in\hat{\mathcal{X}}^{c}:=\mathcal{X}\backslash\hat{\mathcal{X}}$,
let $Q_{Y}^{(x)}\in B$ be such that $\mathbb{E}_{Y\sim Q_{Y}^{(x)}}[b(x,Y)]>r$.
For each $Q_{Y}$, denote $Q_{Y}^{(\delta)}:=(1-\delta)Q_{Y}+\sum_{x\in\hat{\mathcal{X}}^{c}}\frac{\delta}{|\hat{\mathcal{X}}^{c}|}Q_{Y}^{(x)}$
as a mixture distribution. For this distribution, 
\begin{align}
g(Q_{X},Q_{Y}^{(\delta)}) & =(1-\delta)g(Q_{X},Q_{Y})+\sum_{x'\in\hat{\mathcal{X}}^{c}}\frac{\delta}{|\hat{\mathcal{X}}^{c}|}g(Q_{X},Q_{Y}^{(x')}).\label{eq:-33}
\end{align}

For the first term at the RHS above, 
\begin{align}
g(Q_{X},Q_{Y}) & \ge Q_{X}(\hat{\mathcal{X}})g(\hat{Q}_{X},Q_{Y}),\label{eq:-12}
\end{align}
where $\hat{Q}_{X}=Q_{X}1_{\hat{\mathcal{X}}}/Q_{X}(\hat{\mathcal{X}})$.
For the second term, for each $x'\in\hat{\mathcal{X}}^{c}$, 
\begin{align}
g(Q_{X},Q_{Y}^{(x')}) & =\sum_{x\in\mathcal{X}}Q_{X}(x)\mathbb{E}_{Y\sim Q_{Y}^{(x')}}[b(x,Y)]\ge Q_{X}(x')r.\label{eq:-17}
\end{align}

Substituting \eqref{eq:-12} and \eqref{eq:-17} into \eqref{eq:-33}
yields 
\begin{align*}
g(Q_{X},Q_{Y}^{(\delta)}) & \ge(1-\delta)Q_{X}(\hat{\mathcal{X}})g(\hat{Q}_{X},Q_{Y})+\frac{\delta}{|\hat{\mathcal{X}}^{c}|}Q_{X}(\hat{\mathcal{X}}^{c})r\\
 & \ge\min\Bigl\{(1-\delta)\inf_{\hat{Q}_{X}\in A}g(\hat{Q}_{X},Q_{Y}),(1-\delta)\tau\Bigr\}\\
 & =(1-\delta)\inf_{\hat{Q}_{X}\in A}g(\hat{Q}_{X},Q_{Y}).
\end{align*}
Taking $\sup_{Q_{Y}\in B}\inf_{Q_{X}}$, we obtain 
\begin{align*}
\sup_{Q_{Y}\in B}\inf_{Q_{X}}g(Q_{X},Q_{Y}^{(\delta)}) & \ge(1-\delta)\sup_{Q_{Y}\in B}\inf_{\hat{Q}_{X}\in A}g(\hat{Q}_{X},Q_{Y})=(1-\delta)\tau,
\end{align*}
which implies that $\sup_{Q_{Y}\in B}\inf_{Q_{X}}g(Q_{X},Q_{Y})\ge(1-\delta)\tau.$
Letting $\delta\downarrow0$, we have $\sup_{Q_{Y}\in B}\inf_{Q_{X}}g(Q_{X},Q_{Y})\ge\tau.$ 

If $\hat{\mathcal{X}}$ is empty, then $\min_{Q_{X}}\sup_{Q_{Y}\in B}g(Q_{X},Q_{Y})=+\infty$.
On the other hand, for $Q_{Y}^{(\delta)}$ constructed above, $g(Q_{X},Q_{Y}^{(\delta)})=+\infty$
for all $Q_{X}$ and $Q_{Y}\in B$. Hence, $\sup_{Q_{Y}\in B}\min_{Q_{X}}g(Q_{X},Q_{Y})$
is also equal to $+\infty$. 
\end{proof}
In our setting, $a_{i,j}$ with $i,j\in[2]$ are finite, and $b_{i,j,w,k}\in[0,\infty]$.
Observe that the objective function at the RHS of \eqref{eq:minsup}
is $\mathbb{E}_{R_{WK}\otimes R_{IJ}}[b_{I,J,W,K}-a_{I,J}]$. By the
minimax lemma above, the RHS of \eqref{eq:minsup} is equal to 
\[
\sup_{\substack{R_{W|K}\in\\
\mathcal{C}(R_{U_{1}|K},R_{V_{1}|K},R_{U_{2}|K},R_{V_{2}|K})
}
}\min_{i,j\in[2]}\{\mathbb{E}_{R_{WK}}[b_{i,j,W,K}]-a_{i,j}\}.
\]

Substituting this formula into \eqref{eq:-29}, we obtain that 
\begin{align*}
 & \frac{1}{n}\overline{\Lambda}_{p,q,\hat{p},\hat{q}}^{**}(n\alpha,n\beta|P_{XY}^{\otimes n})\\
 & \leq\sup_{\substack{\alpha_{\lambda}^{-}\le s_{1}\le\alpha_{\lambda}^{+},\beta_{\mu}^{-}\le t_{1}\leq\beta_{\mu}^{+},\\
\alpha_{1/\lambda}^{-}\le s_{2}\le\alpha_{1/\lambda}^{+},\beta_{1/\mu}^{-}\le t_{2}\leq\beta_{1/\mu}^{+}
}
}\sup_{\substack{R_{K},R_{X_{1}U_{1}|K},R_{Y_{1}V_{1}|K},R_{X_{2}U_{2}|K},R_{Y_{2}V_{2}|K}:\\
D(R_{X_{i}|U_{i}K}\|P_{X}|R_{U_{i}|K}R_{K})=s_{i},\\
D(R_{Y_{j}|V_{j}K}\|P_{Y}|R_{V_{j}|K}R_{K})=t_{j},\forall i,j\in[2]
}
}\\
 & \qquad\sup_{\substack{R_{W|K}\in\\
\mathcal{C}(R_{U_{1}|K},R_{V_{1}|K},R_{U_{2}|K},R_{V_{2}|K})
}
}\min_{i,j\in[2]}\{\mathbb{D}(R_{X_{i}|U_{i}K},R_{Y_{j}|V_{j}K}\|P_{XY}|R_{WK})-a_{i,j}\}.
\end{align*}
Observe that the tuple consisting of all the distributions appearing
in the second and third supremizations above and the joint distribution
$R_{K}R_{W|K}R_{X_{1}|U_{1}K}R_{Y_{1}|V_{1}K}R_{X_{2}|U_{2}K}R_{Y_{2}|V_{2}K}$
are mutually determined by each other. Hence, we can replace the second
and third supremizations above with the supremization taken over the
joint distributions 
\begin{equation}
R_{K}R_{W|K}R_{X_{1}|U_{1}K}R_{Y_{1}|V_{1}K}R_{X_{2}|U_{2}K}R_{Y_{2}|V_{2}K}\label{eq:-40}
\end{equation}
satisfying the constraints under the second supremization above.

Denote $\hat{W}:=(W,K)$. Observe that for the joint distribution
in \eqref{eq:-40}, we have $R_{X_{i}|U_{i}K}=R_{X_{i}|\hat{W}}$
and $R_{Y_{j}|V_{j}K}=R_{Y_{j}|\hat{W}}$. Hence, we further have
\begin{align}
\frac{1}{n}\overline{\Lambda}_{p,q,\hat{p},\hat{q}}^{**}(n\alpha,n\beta|P_{XY}^{\otimes n}) & \leq\sup_{\substack{\alpha_{\lambda}^{-}\le s_{1}\le\alpha_{\lambda}^{+},\beta_{\mu}^{-}\le t_{1}\leq\beta_{\mu}^{+},\\
\alpha_{1/\lambda}^{-}\le s_{2}\le\alpha_{1/\lambda}^{+},\beta_{1/\mu}^{-}\le t_{2}\leq\beta_{1/\mu}^{+}
}
}\sup_{\substack{R_{\hat{W}}R_{X_{1}|\hat{W}}R_{Y_{1}|\hat{W}}R_{X_{2}|\hat{W}}R_{Y_{2}|\hat{W}}:\\
D(R_{X_{i}|\hat{W}}\|P_{X}|R_{\hat{W}})=s_{i},\\
D(R_{Y_{j}|\hat{W}}\|P_{Y}|R_{\hat{W}})=t_{j},\forall i,j\in[2]
}
}\nonumber \\
 & \qquad\min_{i,j\in[2]}\{\mathbb{D}(R_{X_{i}|\hat{W}},R_{Y_{j}|\hat{W}}\|P_{XY}|R_{\hat{W}})-a_{i,j}\}\label{eq:-48}\\
 & =\overline{\Lambda}_{p,q,\hat{p},\hat{q}}^{*}(\alpha,\beta|P_{XY}),
\end{align}
where in \eqref{eq:-48}, \eqref{eq:-40} is relaxed to arbitrary
distributions $R_{\hat{W}}R_{X_{1}|\hat{W}}R_{Y_{1}|\hat{W}}R_{X_{2}|\hat{W}}R_{Y_{2}|\hat{W}}$.

II. We next prove the case $p,q<0,\hat{p},\hat{q}\ge0$. For this
case, by definition, 
\begin{align}
\frac{1}{n}\overline{\Lambda}_{p,q,\hat{p},\hat{q}}^{**}(n\alpha,n\beta|P_{XY}^{\otimes n}) & =\sup_{R_{X_{2}^{n}},R_{Y_{2}^{n}}}\inf_{\substack{R_{X_{2}^{n}Y_{2}^{n}}\in\mathcal{C}(R_{X_{2}^{n}},R_{Y_{2}^{n}}),\\
R_{X_{1}^{n}Y_{1}^{n}|X_{2}^{n}Y_{2}^{n}}
}
}\min_{i,j\in[2]}\frac{1}{n}D(R_{X_{i}^{n}Y_{j}^{n}}\|P_{XY}^{n})\nonumber \\
 & \qquad+\frac{\alpha-\frac{1}{n}D(R_{X_{i}^{n}}\|P_{X}^{n})}{p_{i}}+\frac{\beta-\frac{1}{n}D(R_{Y_{j}^{n}}\|P_{Y}^{n})}{q_{j}}\label{eq:-50}
\end{align}
where $p_{1}=p,p_{2}=\hat{p},q_{1}=q,q_{2}=\hat{q}$. By Lemma \ref{lem:coupling},
the RHS above is upper bounded by 
\begin{align}
 & \sup_{R_{X_{2,k}|X_{2}^{k-1}},R_{Y_{2,k}|Y_{2}^{k-1}},k\in[n]}\min_{R_{IJ}}\inf_{\substack{R_{X_{1,k}Y_{1,k}X_{2,k}Y_{2,k}|X_{1}^{k-1}Y_{1}^{k-1}X_{2}^{k-1}Y_{2}^{k-1}},k\in[n]:\\
\textrm{marginals are }R_{X_{2,k}|X_{2}^{k-1}},R_{Y_{2,k}|Y_{2}^{k-1}}
}
}\nonumber \\
 & \qquad\frac{1}{n}\sum_{k=1}^{n}\mathbb{E}_{R_{IJ}}\biggl[D(R_{X_{I,k}Y_{J,k}|X_{I}^{k-1}Y_{J}^{k-1}}\|P_{XY}|R_{X_{I}^{k-1}Y_{J}^{k-1}})+\frac{\alpha-D(R_{X_{I,k}|X_{I}^{k-1}}\|P_{X}|R_{X_{I}^{k-1}})}{p_{I}}\nonumber \\
 & \qquad+\frac{\beta-D(R_{Y_{J,k}|Y_{J}^{k-1}}\|P_{Y}|R_{Y_{J}^{k-1}})}{q_{J}}\biggr].\label{eq:-65}
\end{align}
Similarly to the derivation above, if we insert a supremization for
each infimization in the expression above, we obtain the following
upper bound:
\begin{align}
 & \sup_{R_{X_{2,k}|X_{2}^{k-1}},R_{Y_{2,k}|Y_{2}^{k-1}},k\in[n]}\min_{R_{IJ}}\frac{1}{n}\sum_{k=1}^{n}\sup_{\substack{R_{X_{1}^{k-1}Y_{1}^{k-1}X_{2}^{k-1}Y_{2}^{k-1}},k\in[n]:\\
\textrm{marginals are }R_{X_{2}^{k-1}},R_{Y_{2}^{k-1}}
}
}\inf_{\substack{R_{X_{1,k}Y_{1,k}X_{2,k}Y_{2,k}|X_{1}^{k-1}Y_{1}^{k-1}X_{2}^{k-1}Y_{2}^{k-1}},k\in[n]:\\
\textrm{marginals are }R_{X_{2,k}|X_{2}^{k-1}},R_{Y_{2,k}|Y_{2}^{k-1}}
}
}\nonumber \\
 & \qquad\mathbb{E}_{R_{IJ}}\biggl[D(R_{X_{I,k}Y_{J,k}|X_{I}^{k-1}Y_{J}^{k-1}}\|P_{XY}|R_{X_{I}^{k-1}Y_{J}^{k-1}})+\frac{\alpha-D(R_{X_{I,k}|X_{I}^{k-1}}\|P_{X}|R_{X_{I}^{k-1}})}{p_{I}}\nonumber \\
 & \qquad+\frac{\beta-D(R_{Y_{J,k}|Y_{J}^{k-1}}\|P_{Y}|R_{Y_{J}^{k-1}})}{q_{J}}\biggr].\label{eq:-74}
\end{align}
We denote $K\sim R_{K}:=\mathrm{Unif}[n]$ as a random index which
is assumed to be independent of all other random variables, and also
denote $X_{i}:=X_{i,K},Y_{j}:=Y_{j,K},U_{i}:=X_{i}^{K-1},V_{j}:=Y_{j}^{K-1},i,j\in[2]$,
and $W:=(U_{1},V_{1},U_{2},V_{2})$.  Then, the above expression
is further upper bounded by 
\begin{align}
 & \sup_{R_{K},R_{U_{2}|K},R_{V_{2}|K},R_{X_{2}|U_{2}K},R_{Y_{2}|V_{2}K}}\min_{R_{IJ}}\sup_{\substack{R_{W|K}:\\
\textrm{marginals are }R_{U_{2}|K},R_{V_{2}|K}
}
}\inf_{\substack{R_{X_{1}Y_{1}X_{2}Y_{2}|WK}:\\
\textrm{marginals are }R_{X_{2}|U_{2}K},R_{Y_{2}|V_{2}K}
}
}\nonumber \\
 & \qquad\mathbb{E}_{R_{IJ}}\biggl[D(R_{X_{I}Y_{J}|WK}\|P_{XY}|R_{WK})+\frac{\alpha-D(R_{X_{I}|WK}\|P_{X}|R_{WK})}{p_{I}}\nonumber \\
 & \qquad+\frac{\beta-D(R_{Y_{J}|WK}\|P_{Y}|R_{WK})}{q_{J}}\biggr].\label{eq:-75}
\end{align}
where for $i=1$ and $j=1$, the inequalities 
\begin{align*}
D(R_{X_{1,k}|X_{i}^{k-1}}\|P_{X}|R_{X_{1}^{k-1}}) & \le D(R_{X_{1}|WK}\|P_{X}|R_{WK})\\
D(R_{Y_{1,k}|Y_{1}^{k-1}}\|P_{Y}|R_{Y_{1}^{k-1}}) & \le D(R_{Y_{1}|WK}\|P_{Y}|R_{WK})
\end{align*}
are applied which follow by the fact that conditioning increases relative
entropy. By Fact 3 above \eqref{eq:fact3}, the last infimization
and the expectation $\mathbb{E}_{R_{IJ}}$ can be swapped. Hence,
the above expression is equal to 
\begin{align}
 & \sup_{R_{K},R_{U_{2}|K},R_{V_{2}|K},R_{X_{2}|U_{2}K},R_{Y_{2}|V_{2}K}}\min_{R_{IJ}}\sup_{\substack{R_{W|K}:\\
\textrm{marginals are }R_{U_{2}|K},R_{V_{2}|K}
}
}\nonumber \\
 & \qquad\mathbb{E}_{R_{IJ}}\biggl[\inf_{R_{X_{1}|WK},R_{Y_{1}|WK}}\mathbb{D}(R_{X_{I}|WK},R_{Y_{J}|WK}\|P_{XY}|R_{WK})+\frac{\alpha-D(R_{X_{I}|WK}\|P_{X}|R_{WK})}{p_{I}}\nonumber \\
 & \qquad+\frac{\beta-D(R_{Y_{J}|WK}\|P_{Y}|R_{WK})}{q_{J}}\biggr].\label{eq:-76}
\end{align}
By Lemma \ref{lem:minimax}, the operations $\min_{R_{IJ}}$ and $\sup_{R_{U_{1}V_{1}U_{2}V_{2}|K}:R_{U_{2}|K},R_{V_{2}|K}}$
can be swapped. Hence, we obtain 
\begin{align}
 & \sup_{R_{K},R_{U_{2}|K},R_{V_{2}|K},R_{X_{2}|U_{2}K},R_{Y_{2}|V_{2}K}}\sup_{\substack{R_{W|K}:\\
\textrm{marginals are }R_{U_{2}|K},R_{V_{2}|K}
}
}\min_{i,j\in[2]}\nonumber \\
 & \qquad\Bigl\{\inf_{R_{X_{1}|WK},R_{Y_{1}|WK}}\mathbb{D}(R_{X_{i}|WK},R_{Y_{j}|WK}\|P_{XY}|R_{WK})+\frac{\alpha-D(R_{X_{i}|WK}\|P_{X}|R_{WK})}{p_{i}}\nonumber \\
 & \qquad+\frac{\beta-D(R_{Y_{j}|WK}\|P_{Y}|R_{WK})}{q_{j}}\Bigr\}.\label{eq:-66}
\end{align}
Combining two supremizations above yields $\sup_{R_{K},R_{W|K},R_{X_{2}|U_{2}K},R_{Y_{2}|V_{2}K}}$,
which is equivalent to $\sup_{R_{KW}R_{X_{2}|U_{2}K}R_{Y_{2}|V_{2}K}}.$
By relaxing $R_{KW}R_{X_{2}|U_{2}K}R_{Y_{2}|V_{2}K}$ to arbitrary
distributions $R_{\hat{W}}R_{X_{2}|\hat{W}}R_{Y_{2}|\hat{W}}$, we
obtain the upper bound $\overline{\Lambda}_{p,q,\hat{p},\hat{q}}^{*}(\alpha,\beta|P_{XY})$
on the above expression. 

The inequality \eqref{eq:-77} for the case $p,q>0=\hat{p}=\hat{q}$
follows similarly. 

III. By symmetry, \eqref{eq:-77} also holds for the case $p,q>0,\hat{p},\hat{q}<0$. 

IV. We next prove the case $p,q,\hat{p},\hat{q}<0$. In fact, \eqref{eq:-77}
 for this case can be proven by following steps similar to those in
Case II. Specifically, we only need to do the following modifications:
1) replace the supremization and infimization  in \eqref{eq:-50}
with $\inf_{R_{X_{1}^{n}Y_{1}^{n}X_{2}^{n}Y_{2}^{n}}}$; 2) remove
the first supremizations in \eqref{eq:-65} and \eqref{eq:-74}, and
remove all the marginal constraints involved in these two expressions;
replace the first supremizations $\sup_{R_{K},R_{U_{2}|K},R_{V_{2}|K},R_{X_{2}|U_{2}K},R_{Y_{2}|V_{2}K}}$
in \eqref{eq:-75}-\eqref{eq:-66} with $\sup_{R_{K}}$, remove all
the marginal constraints involved in these expressions, and replace
$\inf_{R_{X_{1}|WK},R_{Y_{1}|WK}}$ in \eqref{eq:-76} and \eqref{eq:-66}
with $\inf_{R_{X_{1}|WK},R_{Y_{1}|WK},R_{X_{2}|WK},R_{Y_{2}|WK}}$.

\subsection{\label{subsec:Sharpness}Asymptotic Tightness: Forward Case}

We next prove the asymptotic tightness of the inequality in \eqref{eq:FBLE}
 by the method of types \cite{Csi97}. We assume $\mathcal{X},\mathcal{Y}$
are the supports of $P_{X},P_{Y}$, respectively. Before starting
the proof, we first introduce some notations on empirical measures
(or types). 

For a sequence $x^{n}\in\mathcal{X}^{n}$, we use $T_{x^{n}}$ as
the \emph{empirical measure} (or \emph{type}) of $x^{n}$. We use
$T_{X}$ and $T_{Y}$ to respectively denote empirical measures (or
types) of sequences in $\mathcal{X}^{n}$ and $\mathcal{Y}^{n}$,
and $T_{XY}$ to denote an empirical joint measure (or a joint type)
of a pair of sequences in $\mathcal{X}^{n}\times\mathcal{Y}^{n}$.
For an empirical measure $T_{X}$ (resp. an empirical joint measure
$T_{XY}$), the \emph{type class} $\mathcal{T}_{T_{X}}^{(n)}$ (resp.
the \emph{joint type class} $\mathcal{T}_{T_{XY}}^{(n)}$) is defined
as the set of sequences having the same empirical measure $T_{X}$
(resp. $T_{XY}$). In other words, if we denote the empirical measure
mapping as $t:x^{n}\mapsto T_{x^{n}}$, then the type class $\mathcal{T}_{T_{X}}^{(n)}=t^{-1}(T_{X})$.
For a sequence $w^{n}$, define the conditional type class $\mathcal{T}_{T_{X|W}}^{(n)}(w^{n})=\{x^{n}:(w^{n},x^{n})\textrm{ has joint type }T_{w^{n}}T_{X|W}\}$.
Shortly denote them as $\mathcal{T}_{T_{X}}$, $\mathcal{T}_{T_{XY}}$,
and $\mathcal{T}_{T_{X|W}}(w^{n})$. 

We now start the proof. Similarly to the proof of the one-shot bound
in Section \ref{subsec:One-shot-Bound}, we only provide the proofs
for the ``symmetric'' cases. For the ``asymmetric'' case, one
can prove the asymptotic tightness by ``mixing'' the proofs for
``symmetric'' cases, since the ``asymmetric'' case can be seen
as a mixture of the ``symmetric'' cases. 

I. We now prove the case $p,q,\hat{p},\hat{q}>0$. 

1) We first assume $p\neq\hat{p},q\neq\hat{q}$. Let $w^{n}$ be a
sequence in $\mathcal{W}^{n}$ with $n$-type $T_{W}$. We choose
\begin{align}
f=\sum_{T_{X|W}}e^{n\mu_{T_{X|W}}}1_{\mathcal{T}_{T_{X|W}}(w^{n})} & \quad\textrm{ and }\quad g=\sum_{T_{Y|W}}e^{n\nu_{T_{Y|W}}}1_{\mathcal{T}_{T_{Y|W}}(w^{n})},\label{eq:g}
\end{align}
where the summations are taken over all conditional $n$-types $T_{X|W}$
and $T_{Y|W}$ respectively. Denote $s_{T_{X|W}}:=D(T_{X|W}\|P_{X}|T_{W}).$
Then, by Sanov's theorem, 
\begin{align}
\frac{1}{n}\log\Vert f\Vert_{p} & =\frac{1}{np}\log(\sum_{T_{X|W}}P_{X}^{\otimes n}(\mathcal{T}_{T_{X|W}}(w^{n}))e^{np\mu_{T_{X|W}}})=\frac{1}{p}\max_{T_{X|W}}\{p\mu_{T_{X|W}}-s_{T_{X|W}}\}+o(1),\label{eq:-47}
\end{align}
and similarly, $\frac{1}{n}\log\Vert f\Vert_{\hat{p}}=\frac{1}{\hat{p}}\max_{T_{X|W}}\{\hat{p}\mu_{T_{X|W}}-s_{T_{X|W}}\}+o(1).$

It is well-known that for the finite alphabet case, the KL divergence
$D(Q\|P)$ is continuous in $Q$ given $P$ under the condition $Q\ll P$,
and the set of types is dense in the probability simplex (i.e., the
space of probability measures on the finite set $\mathcal{W}\times\mathcal{X}\times\mathcal{Y}$).
 By these properties, it is easily verified that if we choose 
\begin{equation}
\mu_{T_{X|W}}=\min\{\frac{1}{p}s_{T_{X|W}},\,\frac{1}{\hat{p}}s_{T_{X|W}}+(\frac{1}{p}-\frac{1}{\hat{p}})\alpha\}\textrm{ for all }T_{X|W},\label{eq:-37}
\end{equation}
then the resultant $\frac{1}{n}\log\Vert f\Vert_{p}=o(1)$, $\frac{1}{n}\log\Vert f\Vert_{\hat{p}}=(\frac{1}{p}-\frac{1}{\hat{p}})\alpha+o(1)$,
$\frac{1}{n}\Ent_{p,\hat{p}}(f)=\alpha+o(1)$, and 
\begin{align}
 & -\frac{1}{n}\log\frac{\langle f,g\rangle}{\Vert f\Vert_{p}\Vert g\Vert_{q}}+\frac{\alpha}{p}+\frac{\beta}{q}\nonumber \\
 & =-\frac{1}{n}\log\Big(\sum_{T_{X|W},T_{Y|W}}P_{XY}^{\otimes n}(\mathcal{T}_{T_{X|W}}(w^{n})\times\mathcal{T}_{T_{Y|W}}(w^{n}))e^{n\mu_{T_{X|W}}+n\nu_{T_{Y|W}}}\Big)+\frac{\alpha}{p}+\frac{\beta}{q}+o(1)\label{eq:Efg}\\
 & \leq\inf_{Q_{XYW}:Q_{W}=T_{W}}D(Q_{XY|W}\|P_{XY}|Q_{W})\nonumber \\
 & \qquad+\eta_{p,\hat{p}}(\alpha,D(Q_{X|W}\|P_{X}|Q_{W}))+\eta_{q,\hat{q}}(\beta,D(Q_{Y|W}\|P_{Y}|Q_{W}))+o(1).\label{eq:-82}
\end{align}
Optimizing \eqref{eq:-82} over all types $T_{W}$ yields that 
\begin{align}
-\frac{1}{n}\log\frac{\langle f,g\rangle}{\Vert f\Vert_{p}\Vert g\Vert_{q}}+\frac{\alpha}{p}+\frac{\beta}{q} & \le\underline{\Lambda}_{p,q,\hat{p},\hat{q}}^{*}(\alpha,\beta)+o(1).\label{eq:-1}
\end{align}

Note that the proof above is not finished, since in our problem, $\frac{1}{n}\Ent_{p,\hat{p}}(f)$
is restricted to be \emph{strictly} equal to $\alpha$, rather than
asymptotically equal to $\alpha$.  This gap can be fixed as follows.
For fixed $\alpha_{1}<\alpha<\alpha_{2}$, we choose $f_{1}$ as the
above $f$ for $\alpha_{1}$, and choose $f_{2}$ as the above $f$
for $\alpha_{2}$. Then, we construct a mixture $f_{\theta}=\theta f_{1}+\bar{\theta}f_{2}$
with $\bar{\theta}:=1-\theta$. Observe that for sufficiently large
$n$, $\alpha$ is sandwiched between $\frac{1}{n}\Ent_{p,\hat{p}}(f_{1})$
and $\frac{1}{n}\Ent_{p,\hat{p}}(f_{2})$, and moreover, given $f_{1},f_{2}$,
$\frac{1}{n}\Ent_{p,\hat{p}}(f_{\theta})$ is continuous in $\theta\in[0,1]$.
Hence, there must exist a $\theta\in[0,1]$ such that $\frac{1}{n}\Ent_{p,\hat{p}}(f_{\theta})=\alpha$.
In the following we choose $\theta$ as this value. Similarly, one
can construct a $g_{\tau}=\tau g_{1}+\bar{\tau}g_{2}$ such that $\frac{1}{n}\Ent_{q,\hat{q}}(g_{\tau})=\beta$.
We now require the following basic inequalities: $(\theta f_{1})^{p}+(\bar{\theta}f_{2})^{p}\le(\theta f_{1}+\bar{\theta}f_{2})^{p}\le\theta f_{1}^{p}+\bar{\theta}f_{2}^{p}$
holds for $p\ge1$ and $\theta\in[0,1]$, and the reverse version
with inequality signs reversed holds for $0<p<1$ and $\theta\in[0,1]$.
By these inequalities, $\frac{1}{n}\log\Vert f_{\theta}\Vert_{p}=o(1)$
still holds. Furthermore,  it holds that 
\[
\langle f_{\theta},g_{\tau}\rangle=\theta\tau\langle f_{1},g_{1}\rangle+\bar{\theta}\tau\langle f_{2},g_{1}\rangle+\theta\bar{\tau}\langle f_{1},g_{2}\rangle+\bar{\theta}\bar{\tau}\langle f_{2},g_{2}\rangle\geq\min_{i,j\in[2]}\langle f_{i},g_{j}\rangle.
\]
Combining these with \eqref{eq:-1} yields that $\limsup_{n\to\infty}-\frac{1}{n}\log\frac{\langle f_{\theta},g_{\tau}\rangle}{\Vert f_{\theta}\Vert_{p}\Vert g_{\tau}\Vert_{q}}+\frac{\alpha}{p}+\frac{\beta}{q}$
is upper bounded by $\max_{i,j\in[2]}\underline{\Lambda}_{p,q,\hat{p},\hat{q}}^{*}(\alpha_{i},\beta_{j})+\frac{\alpha-\alpha_{i}}{p}+\frac{\beta-\beta_{j}}{q}$.
 It is not difficult to verify that for finite $\mathcal{X},\mathcal{Y}$,
$\underline{\Lambda}_{p,q,\hat{p},\hat{q}}^{*}(\alpha,\beta)$ is
continuous in $(\alpha,\beta)$. By letting $\alpha_{1},\alpha_{2}\to\alpha$
and $\beta_{1},\beta_{2}\to\beta$, the upper bound above converges
to $\underline{\Lambda}_{p,q,\hat{p},\hat{q}}^{*}(\alpha,\beta)+o(1)$.
This verifies the asymptotic tightness for the case of $p\neq\hat{p},q\neq\hat{q}.$

2) We next assume $p=\hat{p}>0,q=\hat{q}>0$.  Let $\delta>0$ be
fixed. Let $\{T_{X|W}^{*}\}_{n\ge1}$ be a sequence of conditional
types such that $s_{T_{X|W}^{*}}=\alpha+o(1)$, and let $\mu_{T_{X|W}^{*}}=\frac{1}{p}s_{T_{X|W}^{*}}+\delta$.
For $T_{X|W}\neq T_{X|W}^{*}$, we choose $\mu_{T_{X|W}}=\frac{1}{p}s_{T_{X|W}}$.
Then, similarly to the above derivations, for such a choice, 
\begin{align*}
\frac{1}{n}\log\mathbb{E}[f^{p}] & =p\delta+o(1),\\
\frac{1}{n}\Ent_{p}(f) & =\mathbb{E}[\frac{f^{p}}{\mathbb{E}[f^{p}]}\log\frac{f^{p}}{\mathbb{E}[f^{p}]}]=\alpha+o(1),\\
-\frac{1}{n}\log\frac{\langle f,g\rangle}{\Vert f\Vert_{p}\Vert g\Vert_{q}}+\frac{\alpha}{p}+\frac{\beta}{q} & \le\underline{\Lambda}_{p,q,\hat{p},\hat{q}}^{*}(\alpha,\beta)+\delta+o(1).
\end{align*}
By the mixture argument as in the proof for the case of $p\neq\hat{p},q\neq\hat{q}$,
one can prove the asymptotic tightness for this case. Here, $\delta>0$
is arbitrary, and hence, can be removed. 

II. We now prove the case $p,q>0,\hat{p},\hat{q}<0$. Note that for
this case, $\alpha,\beta\le0$. We still choose $f,g$ as in \eqref{eq:g}.
  Let $\{T_{X|W}^{*}\}_{n\ge1}$ be a sequence of conditional types
such that $s_{T_{X|W}^{*}}=\alpha+o(1)$. For this sequence conditional
types, we choose $\mu_{T_{X|W}^{*}}=\frac{1}{\hat{p}}s_{T_{X|W}^{*}}+(\frac{1}{p}-\frac{1}{\hat{p}})\alpha\le0$,
and for $T_{X|W}\neq T_{X|W}^{*}$, choose $\mu_{T_{X|W}}=\frac{1}{p}s_{T_{X|W}}$.
For such a choice, 
\begin{align}
\frac{1}{n}\log\Vert f\Vert_{p} & =o(1),\label{eq:-42-1}\\
\frac{1}{n}\log\Vert f\Vert_{\hat{p}} & =\min_{T_{X|W}}\{\mu_{T_{X|W}}-\frac{1}{\hat{p}}s_{T_{X|W}}\}+o(1)=(\frac{1}{p}-\frac{1}{\hat{p}})\alpha+o(1),\label{eq:-43-1}
\end{align}
and by \eqref{eq:Efg},
\begin{align}
 & -\frac{1}{n}\log\frac{\langle f,g\rangle}{\Vert f\Vert_{p}\Vert g\Vert_{q}}+\frac{\alpha}{p}+\frac{\beta}{q}\nonumber \\
 & \leq-\frac{1}{n}\log\Big(\sum_{T_{X|W}\neq T_{X|W}^{*},T_{Y|W}\neq T_{Y|W}^{*}}P_{XY}^{\otimes n}(\mathcal{T}_{T_{X|W}}(w^{n})\times\mathcal{T}_{T_{Y|W}}(w^{n}))e^{n\mu_{T_{X|W}}+n\nu_{T_{Y|W}}}\Big)+\frac{\alpha}{p}+\frac{\beta}{q}+o(1)\nonumber \\
 & \leq\inf_{Q_{XYW}:Q_{W}=T_{W},Q_{X|W}\neq T_{X|W}^{*},Q_{Y|W}\neq T_{Y|W}^{*}}D(Q_{XY|W}\|P_{XY}|Q_{W})\nonumber \\
 & \qquad+\eta_{p,\hat{p}}(\alpha,D(Q_{X|W}\|P_{X}|Q_{W}))+\eta_{q,\hat{q}}(\beta,D(Q_{Y|W}\|P_{Y}|Q_{W}))+o(1).\label{eq:-61-1}
\end{align}
The formulas \eqref{eq:-42-1} and \eqref{eq:-43-1} imply $\frac{1}{n}\Ent_{p,\hat{p}}(f)=\alpha+o(1)$.
By the continuity of the relative entropy and the denseness of the
types, \eqref{eq:-61-1} implies \eqref{eq:-1}. By the mixture argument
as in the proof for the case of $p\neq\hat{p},q\neq\hat{q}$, we obtain
the asymptotic tightness for the case $p,q>0,\hat{p},\hat{q}<0$.
By symmetry, the asymptotic tightness also holds for the case $\hat{p},\hat{q}>0,p,q<0$. 

III. We now prove the case $p,q,\widehat{p},\widehat{q}<0$. For this
case, $\alpha,\beta\geq0$. We choose $f,g$ as in \eqref{eq:g}.
Let $\{T_{X|W}^{*}\}_{n\ge1}$ be a sequence of conditional types
such that $s_{T_{X|W}^{*}}=\alpha+o(1)$. For this sequence conditional
types, we choose $\mu_{T_{X|W}^{*}}=\frac{1}{p}s_{T_{X|W}^{*}}$,
 and for $T_{X|W}\neq T_{X|W}^{*}$, choose $\mu_{T_{X|W}}=b$ where
$b$ is sufficiently large. Choose $\nu_{T_{Y|W}}$ similarly. For
such a choice, 
\begin{align}
\frac{1}{n}\log\Vert f\Vert_{p} & =\min_{T_{X|W}}\{\mu_{T_{X|W}}-\frac{1}{p}s_{T_{X|W}}\}+o(1)=o(1),\label{eq:-42-1-2}\\
\frac{1}{n}\log\Vert f\Vert_{\hat{p}} & =\min_{T_{X|W}}\{\mu_{T_{X|W}}-\frac{1}{\hat{p}}s_{T_{X|W}}\}+o(1)=(\frac{1}{p}-\frac{1}{\hat{p}})\alpha+o(1),\label{eq:-43-1-2}\\
\frac{1}{n}\Ent_{p,\hat{p}}(f) & =\alpha+o(1)
\end{align}
and by \eqref{eq:Efg},
\begin{align}
 & -\frac{1}{n}\log\frac{\langle f,g\rangle}{\Vert f\Vert_{p}\Vert g\Vert_{q}}+\frac{\alpha}{p}+\frac{\beta}{q}\nonumber \\
 & \leq-\frac{1}{n}\log\Big(\sum_{T_{X|W}\neq T_{X|W}^{*},T_{Y|W}\neq T_{Y|W}^{*}}P_{XY}^{\otimes n}(\mathcal{T}_{T_{X|W}}(w^{n})\times\mathcal{T}_{T_{Y|W}}(w^{n}))e^{2nb}\Big)+\frac{\alpha}{p}+\frac{\beta}{q}+o(1)\nonumber \\
 & \leq-2b+\frac{\alpha}{p}+\frac{\beta}{q}+o(1).\label{eq:-84}
\end{align}
  For fixed $n$, as $b\to\infty$, the upper bound in \eqref{eq:-84}
tends to $-\infty$. By the mixture argument, this proves the asymptotic
tightness for $p,q,\widehat{p},\widehat{q}<0$.

\subsection{\label{subsec:Sharpness-1}Asymptotic Tightness: Reverse Case}

We next prove the asymptotic tightness of the inequality in \eqref{eq:RBLE}.
We assume $\mathcal{X},\mathcal{Y}$ are the supports of $P_{X},P_{Y}$,
respectively. Similarly to the proof of the one-shot bound in Section
\ref{subsec:One-shot-Bound}, we only provide the proofs for the ``symmetric''
cases. For the ``asymmetric'' case, one can prove the asymptotic
tightness by ``mixing'' the proofs for ``symmetric'' cases, since
the ``asymmetric'' case can be seen as a mixture of the ``symmetric''
cases. 

I. We now prove the case $p,q,\hat{p},\hat{q}>0$. 

1) We first assume $p\neq\hat{p},q\neq\hat{q}$. Let $T_{W},T_{X|W},\hat{T}_{X|W},T_{Y|W},\hat{T}_{Y|W}$
be unconditional and conditional types such that $\hat{s}\leq\alpha\leq s$
if $p>\hat{p}$, and $s\le\alpha\le\hat{s}$ if $0<p<\hat{p}$, as
well as, $t\le\beta\le\hat{t}$ if $q>\hat{q}$, and $t\ge\beta\ge\hat{t}$
if $0<q<\hat{q}$, where 
\begin{equation}
s:=D(T_{X|W}\|P_{X}|T_{W}),\hat{s}:=D(\hat{T}_{X|W}\|P_{X}|T_{W}),t:=D(T_{Y|W}\|P_{Y}|T_{W}),\hat{t}:=D(\hat{T}_{Y|W}\|P_{Y}|T_{W}).\label{eq:-83}
\end{equation}
Let $w^{n}$ be a sequence in $\mathcal{W}^{n}$ with $n$-type $T_{W}$.
We choose 
\begin{align*}
f & =e^{n\mu}1_{\mathcal{T}_{T_{X|W}}(w^{n})}+e^{n\hat{\mu}}1_{\mathcal{T}_{\hat{T}_{X|W}}(w^{n})},\quad g=e^{n\nu}1_{\mathcal{T}_{T_{Y|W}}(w^{n})}+e^{n\hat{\nu}}1_{\mathcal{T}_{\hat{T}_{Y|W}}(w^{n})}.
\end{align*}
Then, for such a choice of $f,g$, 
\begin{align*}
\frac{1}{n}\log\Vert f\Vert_{p} & =\frac{1}{p}\max\{p\mu-s,p\hat{\mu}-\hat{s}\}+o(1),\\
\frac{1}{n}\log\Vert f\Vert_{\hat{p}} & =\frac{1}{\hat{p}}\max\{\hat{p}\mu-s,\hat{p}\hat{\mu}-\hat{s}\}+o(1),\\
\Ent_{p,\hat{p}}(f) & =\frac{\hat{p}}{p-\hat{p}}\max\{p\mu-s,p\hat{\mu}-\hat{s}\}+\frac{p}{\hat{p}-p}\max\{\hat{p}\mu-s,\hat{p}\hat{\mu}-\hat{s}\}+o(1).
\end{align*}
Given $(s,\hat{s})$, we choose $\mu=\frac{s}{p}$ and $\hat{\mu}=\frac{\hat{s}}{\hat{p}}+(\frac{1}{p}-\frac{1}{\hat{p}})\alpha$,
which satisfy $\max\{p\mu-s,p\hat{\mu}-\hat{s}\}=0$ and $\max\{\hat{p}\mu-s,\hat{p}\hat{\mu}-\hat{s}\}=(\frac{\hat{p}}{p}-1)\alpha$.
These imply $\frac{1}{n}\log\Vert f\Vert_{p}=o(1)$ and $\Ent_{p,\hat{p}}(f)=\alpha+o(1)$.
On the other hand, 
\begin{align*}
-\frac{1}{n}\log\frac{\langle f,g\rangle}{\Vert f\Vert_{p}\Vert g\Vert_{q}}+\frac{\alpha}{p}+\frac{\beta}{q} & =\min\Big\{\mathbb{D}(T_{X|W},T_{Y|W}\|P_{XY}|T_{W})+\frac{\alpha-s}{p}+\frac{\beta-t}{q},\\
 & \qquad\mathbb{D}(T_{X|W},\hat{T}_{Y|W}\|P_{XY}|T_{W})+\frac{\alpha-s}{p}+\frac{\beta-\hat{t}}{\hat{q}},\\
 & \qquad\mathbb{D}(\hat{T}_{X|W},T_{Y|W}\|P_{XY}|T_{W})+\frac{\alpha-\hat{s}}{\hat{p}}+\frac{\beta-t}{q},\\
 & \qquad\mathbb{D}(\hat{T}_{X|W},\hat{T}_{Y|W}\|P_{XY}|T_{W})+\frac{\alpha-\hat{s}}{\hat{p}}+\frac{\beta-\hat{t}}{\hat{q}}\Big\}+o(1).
\end{align*}
Optimizing $s,t,\hat{s},\hat{t}$, by the continuity of the relative
entropy and by the fact that the set of types is dense in the probability
simplex, we get 
\begin{align*}
-\frac{1}{n}\log\frac{\langle f,g\rangle}{\Vert f\Vert_{p}\Vert g\Vert_{q}}+\frac{\alpha}{p}+\frac{\beta}{q} & \ge\overline{\Lambda}_{p,q,\hat{p},\hat{q}}^{*}(\alpha,\beta)+o(1).
\end{align*}
By the mixture argument as in the proof for the forward case, one
can prove the asymptotic tightness for this reverse case. 

2) We next assume $p=\hat{p}>0,q=\hat{q}>0$. We choose $f=e^{n\mu}1_{\mathcal{T}_{T_{X|W}}(w^{n})}$
and $g=e^{n\nu}1_{\mathcal{T}_{T_{Y|W}}(w^{n})}$ for $T_{X|W}$ and
$T_{Y|W}$ such that $s=\alpha+o(1)$ and $t=\beta+o(1)$ where $s,t$
are given in \eqref{eq:-83}. We choose $\mu=\frac{s}{p}$ and $\nu=\frac{t}{q}$.
Then, 
\begin{align}
\frac{1}{n}\log\mathbb{E}[f^{p}] & =o(1),\label{eq:-47-1-1}\\
\frac{1}{n}\Ent_{p}(f) & =\alpha+o(1),\\
-\frac{1}{n}\log\frac{\langle f,g\rangle}{\Vert f\Vert_{p}\Vert g\Vert_{q}}+\frac{\alpha}{p}+\frac{\beta}{q} & \ge\overline{\Lambda}_{p,q,\hat{p},\hat{q}}^{*}(\alpha,\beta)+o(1).
\end{align}
By the mixture argument as in the proof for the forward case, one
can prove the asymptotic tightness for this reverse case. 

II. We now prove the case $p,q>0,\hat{p},\hat{q}<0$. Note that for
this case, $\alpha,\beta\le0$. We still choose $f,g$ as in \eqref{eq:g}.
  We choose $\mu_{T_{X|W}^{*}}=\frac{1}{p}s_{T_{X|W}^{*}}$ for
some $T_{X|W}^{*}$, and for $T_{X|W}\neq T_{X|W}^{*}$, choose $\mu_{T_{X|W}}=\frac{1}{\hat{p}}s_{T_{X|W}}+(\frac{1}{p}-\frac{1}{\hat{p}})\alpha$.
For such a choice, 
\begin{align}
\frac{1}{n}\log\Vert f\Vert_{p} & =\max_{T_{X|W}}\{\mu_{T_{X|W}}-\frac{1}{p}s_{T_{X|W}}\}+o(1)=o(1),\label{eq:-42-1-1}\\
\frac{1}{n}\log\Vert f\Vert_{\hat{p}} & =\min_{T_{X|W}}\{\mu_{T_{X|W}}-\frac{1}{\hat{p}}s_{T_{X|W}}\}+o(1)=(\frac{1}{p}-\frac{1}{\hat{p}})\alpha+o(1),\label{eq:-43-1-1}\\
\frac{1}{n}\Ent_{p}(f) & =\alpha+o(1),
\end{align}
and by \eqref{eq:Efg},
\begin{align}
 & -\frac{1}{n}\log\frac{\langle f,g\rangle}{\Vert f\Vert_{p}\Vert g\Vert_{q}}+\frac{\alpha}{p}+\frac{\beta}{q}\nonumber \\
 & =\min_{T_{X|W},T_{Y|W}}\min\Big\{\mathbb{D}(T_{X|W}^{*},T_{Y|W}^{*}\|P_{XY}|T_{W})+\frac{\alpha-s}{p}+\frac{\beta-t}{q},\nonumber \\
 & \qquad\mathbb{D}(T_{X|W},T_{Y|W}^{*}\|P_{XY}|T_{W})+\frac{\alpha-\hat{s}}{\hat{p}}+\frac{\beta-t}{q},\nonumber \\
 & \qquad\mathbb{D}(T_{X|W}^{*},T_{Y|W}\|P_{XY}|T_{W})+\frac{\alpha-s}{p}+\frac{\beta-\hat{t}}{\hat{q}},\nonumber \\
 & \qquad\mathbb{D}(T_{X|W},T_{Y|W}\|P_{XY}|T_{W})+\frac{\alpha-\hat{s}}{\hat{p}}+\frac{\beta-\hat{t}}{\hat{q}}\Big\}+o(1).\label{eq:-85}
\end{align}
Since $T_{X|W}^{*},T_{Y|W}^{*}$ are arbitrary, taking supremum for
\eqref{eq:-85} over all $T_{X|W}^{*},T_{Y|W}^{*}$ and by the mixture
argument to ensure $\frac{1}{n}\Ent_{p}(f)=\alpha$ and $\frac{1}{n}\Ent_{q}(g)=\beta$,
we obtain the asymptotic tightness for the case $p,q>0,\hat{p},\hat{q}<0$.
By symmetry, the asymptotic tightness also holds for the case $\hat{p},\hat{q}>0,p,q<0$. 

III. We now prove the case $p,q,\widehat{p},\widehat{q}<0$. Without
loss of generality, we assume $p\le\widehat{p},q\le\widehat{q}$.
For this case, $\alpha,\beta\geq0$. We still choose $f,g$ as in
\eqref{eq:g}.   We choose 
\begin{equation}
\mu_{T_{X|W}}=\max\{\frac{1}{p}s_{T_{X|W}},\,\frac{1}{\hat{p}}s_{T_{X|W}}+(\frac{1}{p}-\frac{1}{\hat{p}})\alpha\}\textrm{ for all }T_{X|W}.\label{eq:-37-4}
\end{equation}
Then, 
\begin{align*}
\frac{1}{n}\log\Vert f\Vert_{p} & =\min_{T_{X|W}}\{\mu_{T_{X|W}}-\frac{1}{p}s_{T_{X|W}}\}+o(1)=o(1),\\
\frac{1}{n}\log\Vert f\Vert_{\hat{p}} & =\min_{T_{X|W}}\{\mu_{T_{X|W}}-\frac{1}{\hat{p}}s_{T_{X|W}}\}+o(1)=(\frac{1}{p}-\frac{1}{\hat{p}})\alpha+o(1),\\
\frac{1}{n}\Ent_{p,\hat{p}}(f) & =\alpha+o(1),
\end{align*}
and by \eqref{eq:Efg},
\begin{align*}
 & -\frac{1}{n}\log\frac{\langle f,g\rangle}{\Vert f\Vert_{p}\Vert g\Vert_{q}}+\frac{\alpha}{p}+\frac{\beta}{q}\\
 & =\min_{T_{X|W}}\min\{\frac{\alpha-s_{T_{X|W}}}{p},\frac{\alpha-s_{T_{X|W}}}{\hat{p}}\}+\min_{T_{Y|W}}\min\{\frac{\beta-s_{T_{Y|W}}}{q},\frac{\beta-s_{T_{Y|W}}}{\hat{q}}\}+o(1)\\
 & =\min\{\frac{\alpha}{p},\frac{\alpha}{\hat{p}}\}+\min\{\frac{\beta}{q},\frac{\beta}{\hat{q}}\}+o(1)\\
 & =\underline{\Lambda}_{p,q,\hat{p},\hat{q}}^{*}(\alpha,\beta)+o(1).
\end{align*}
By the mixture argument, this proves the asymptotic tightness for
$p,q,\widehat{p},\widehat{q}<0$. 

\section{Alternative Proof of Theorem \ref{thm:strongBL}}

Respectively define the {\em forward } and {\em reverse BL exponents}
as 
\begin{align}
\underline{\Lambda}_{p,q}(X;Y) & :=-\log\sup_{f:\mathcal{X}\to[0,\infty),g:\mathcal{Y}\to[0,\infty):\Vert f\Vert_{p}\Vert g\Vert_{q}>0}\frac{\langle f,g\rangle}{\Vert f\Vert_{p}\Vert g\Vert_{q}}\quad\mbox{and}\label{eq:FI}\\
\overline{\Lambda}_{p,q}(X;Y) & :=-\log\inf_{f:\mathcal{X}\to[0,\infty),g:\mathcal{Y}\to[0,\infty):\Vert f\Vert_{p}\Vert g\Vert_{q}>0}\frac{\langle f,g\rangle}{\Vert f\Vert_{p}\Vert g\Vert_{q}}.\label{eq:FI-1}
\end{align}

It is well-known that $\underline{\Lambda}_{p,q},\overline{\Lambda}_{p,q}$
have the following information-theoretic characterizations. 
\begin{thm}
\label{thm:ITcharacterization-1} Let $\mathcal{X}$ and $\mathcal{Y}$
be two Polish spaces. For $p,q\in\mathbb{R}\backslash\{0\}$, if $(X,Y)\sim P_{XY}$,
then 
\begin{equation}
\underline{\Lambda}_{p,q}(X;Y)=\begin{cases}
{\displaystyle \inf_{s,t\ge0}\underline{\varphi}(s,t)-\frac{s}{p}-\frac{t}{q}} & p,q>0\vspace{.03in}\\
{\displaystyle -\infty} & p<0\textrm{ or }q<0
\end{cases}\label{eq:FIInfChLambdaUnderline}
\end{equation}
and 
\begin{equation}
\overline{\Lambda}_{p,q}(X;Y)=\begin{cases}
{\displaystyle \sup_{s,t\ge0}\overline{\varphi}(s,t)-\frac{s}{p}-\frac{t}{q}} & p,q>0\vspace{.03in}\\
{\displaystyle \sup_{s\ge0}\varphi_{q}(s)-\frac{s}{p}} & q<0<p\vspace{.03in}\\
{\displaystyle \sup_{t\ge0}\varphi_{p}(s)-\frac{t}{q}} & p<0<q\vspace{.03in}\\
{\displaystyle 0} & p,q\!<\!0
\end{cases}.\label{eq:FIInfChLambdaOverline}
\end{equation}
\end{thm}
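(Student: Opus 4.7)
The plan is to convert the BL exponents into information-theoretic optimizations via Lemma \ref{lem:dual} and then simplify by exchanging orders of inf/sup. By homogeneity of the ratio in $f$ and $g$, normalize $\Vert f\Vert_p = \Vert g\Vert_q = 1$ and parametrize $f^p = dQ_X/dP_X$, $g^q = dQ_Y/dP_Y$, where $Q_X \ll P_X$ (resp.\ $Q_X \ll\gg P_X$) when $p > 0$ (resp.\ $p < 0$), and analogously for $Q_Y$. Applying Lemma \ref{lem:dual} to $\langle f, g\rangle$ identifies $-\log\langle f, g\rangle = \phi(Q_X, Q_Y)$ as defined in \eqref{eq:phi}, so that $\underline{\Lambda}_{p,q}(X;Y) = \inf_{Q_X, Q_Y}\phi(Q_X, Q_Y)$ and $\overline{\Lambda}_{p,q}(X;Y) = \sup_{Q_X, Q_Y}\phi(Q_X, Q_Y)$.

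For the forward case with $p, q > 0$, I would swap the inner $\inf_{R_{XY}}$ with the outer $\inf_{Q_X, Q_Y}$ inside $\phi$. For each $R_X$, $\inf_{Q_X}\tfrac{1}{p}D(R_X\Vert Q_X) = 0$ is attained at $Q_X = R_X$, and symmetrically for $Q_Y$. What remains is $\inf_{R_{XY}}\{D(R_{XY}\Vert P_{XY}) - D(R_X\Vert P_X)/p - D(R_Y\Vert P_Y)/q\}$, which after parametrizing by $s = D(R_X\Vert P_X)$ and $t = D(R_Y\Vert P_Y)$ coincides with $\inf_{s, t \ge 0}\underline{\varphi}(s, t) - s/p - t/q$. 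For $p < 0$ (and by symmetry $q < 0$), pick $A$ with $0 < P_X(A) < 1$ and set $f_\epsilon = \epsilon \mathbf{1}_A + \mathbf{1}_{A^c}$; then $\Vert f_\epsilon\Vert_p \to 0$ as $\epsilon \downarrow 0$ while $\langle f_\epsilon, g\rangle \ge \langle \mathbf{1}_{A^c}, g\rangle > 0$ for any fixed positive bounded $g$, so the ratio diverges and $\underline{\Lambda}_{p,q} = -\infty$.

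For the reverse case with $p, q > 0$, restricting $R_{XY}$ to $\mathcal{C}(Q_X, Q_Y)$ in $\phi$ forces $D(R_X\Vert Q_X) = D(R_Y\Vert Q_Y) = 0$ and yields the upper bound $\phi(Q_X, Q_Y) \le \mathbb{D}(Q_X, Q_Y\Vert P_{XY}) - D(Q_X\Vert P_X)/p - D(Q_Y\Vert P_Y)/q$, so that $\overline{\Lambda}_{p,q}(X;Y) \le \sup_{s,t \ge 0}\overline{\varphi}(s, t) - s/p - t/q$. The \emph{main obstacle} is the matching lower bound. The plan is: at any (approximate) maximizer $(Q_X^*, Q_Y^*)$ of the right-hand side, KKT / envelope conditions for $\mathbb{D}(Q_X, Q_Y\Vert P_{XY})$ (via the cited duality of the Schr\"odinger problem) produce dual potentials $(u, v)$ of the optimal coupling $R^*$ with $dR^*/dP_{XY} = u(x) v(y)$ satisfying $u \propto (dQ_X^*/dP_X)^{1/p}$ and $v \propto (dQ_Y^*/dP_Y)^{1/q}$. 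But this is exactly the density of the unique minimizer in \eqref{eq:-62} of Lemma \ref{lem:dual} applied to $\phi(Q_X^*, Q_Y^*)$; since that minimizer lies in $\mathcal{C}(Q_X^*, Q_Y^*)$, the inner infimum of $\phi$ is attained on the coupling set, forcing $\phi(Q_X^*, Q_Y^*)$ to equal the upper bound. Non-attainment is handled by a standard approximation / continuity argument along sequences of near-optimizers.

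For the remaining reverse cases: when $q < 0 < p$, apply reverse H\"older conditionally on $Y$ to obtain $\inf_{g:\Vert g\Vert_q=1}\int fg\, dP_{XY} = \Vert P_{X|Y}(f)\Vert_{q'}$ with $q' = q/(q-1) \in (0,1)$, so $\sup_{Q_Y}\phi(Q_X, Q_Y) = -\log\Vert P_{X|Y}(f_{Q_X})\Vert_{q'}$; Donsker--Varadhan duality applied to $-\log\Vert\cdot\Vert_{q'}$ then supplies the inner $\inf_{Q_Y}$ form, and combining with an analogue of the previous paragraph's KKT argument for the resulting problem in $Q_X$ yields $\sup_{s \ge 0}\varphi_q(s) - s/p$. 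The case $p < 0 < q$ is symmetric. For $p, q < 0$: Jensen's inequality gives $\log\langle f, g\rangle \ge \mathbb{E}_{P_{XY}}[\log(fg)] = \log\Vert f\Vert_0 + \log\Vert g\Vert_0 \ge \log\Vert f\Vert_p + \log\Vert g\Vert_q$, the last step by monotonicity of $\Vert\cdot\Vert_r$ in $r$ (Statement 1 of Lemma \ref{lem:norm}); so the ratio is $\ge 1$ with equality at $f = g = 1$, yielding $\overline{\Lambda}_{p,q}(X;Y) = 0$.
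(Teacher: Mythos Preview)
Your forward case and the upper bound $(\le)$ in the reverse case are essentially the paper's route: both rewrite $-\log\langle f,g\rangle$ as $\phi(Q_X,Q_Y)$ via Lemma~\ref{lem:dual} (this is Proposition~\ref{prop:ITcharacterization}), then swap infima (forward) or restrict $R_{XY}\in\mathcal{C}(Q_X,Q_Y)$ (reverse upper bound). Your explicit constructions for $\underline{\Lambda}_{p,q}=-\infty$ when $p<0$ and for $\overline{\Lambda}_{p,q}=0$ when $p,q<0$ (Jensen plus monotonicity of $\|\cdot\|_r$) are correct and pleasantly direct.

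The genuine divergence --- and the gap --- is the reverse \emph{lower} bound $(\ge)$ for $p,q>0$ and for $q<0<p$. The paper does \emph{not} argue at the single-letter level here. It instead exploits tensorization: since $\overline{\Lambda}_{p,q}$ on $P_{XY}^{\otimes n}$ equals $n\,\overline{\Lambda}_{p,q}$, one may test against the indicator functions $f=1_{A_n}$, $g=1_{B_n}$ supplied by the strong small-set expansion theorem (Theorems~\ref{thm:strongsse} and~\ref{thm:strongsse-2}) for $p,q>0$, and against the $q$-stability constructions (Theorem~\ref{thm:strongqstability}) for $q<0<p$. Those results furnish $(A_n,B_n)$ whose exponents realise $\overline{\varphi}$ and $\varphi_q$ exactly in the limit, and dividing by $n$ closes the bound without any first-order analysis of Schr\"odinger potentials.

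Your KKT route is formally correct at an interior maximizer $(Q_X^*,Q_Y^*)$ where the Schr\"odinger coupling exists with regular potentials: the envelope theorem does identify the $X$-potential with $\tfrac{1}{p}\log(\mathrm{d}Q_X^*/\mathrm{d}P_X)$ up to a constant, which forces the unconstrained minimizer of $\phi$ to coincide with the Schr\"odinger coupling, hence $\phi(Q_X^*,Q_Y^*)=\Psi(Q_X^*,Q_Y^*)$. But the hand-off to a ``standard approximation/continuity argument'' when the supremum is not attained is where the proposal is incomplete. On a general Polish space there is no a~priori compactness for maximizing sequences of $\overline{\varphi}(s,t)-s/p-t/q$; near-maximizers need not satisfy approximate first-order conditions in any sense that controls $\Psi-\phi$; and no continuity of $\phi$ or $\Psi$ in $(Q_X,Q_Y)$ is available to transfer. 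This is exactly the technical obstruction noted in the paper's discussion (prior works needed compactness hypotheses), and it is what the paper sidesteps by going through the $n$-letter large-deviations constructions, which are robust because Sanov's theorem and time-sharing manufacture the approximants directly.
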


For Euclidean spaces, the forward part of this theorem, i.e., \eqref{eq:FIInfChLambdaUnderline},
was derived in \cite{carlen2009subadditivity}. The reverse part of
this theorem, i.e., \eqref{eq:FIInfChLambdaOverline}, for finite
alphabets was derived in \cite{beigi2016equivalent} for all $p,q\neq0$,
and also in \cite{liu2016brascamp} for $p,q>0$. The extension of
these characterizations to Polish spaces was studied in \cite{liu2018information},
but some compactness conditions are required especially for the reverse
part in \eqref{eq:FIInfChLambdaOverline}. Theorem \ref{thm:ITcharacterization-1}
(for any Polish spaces) can be proven by using the results in this
paper. The forward part \eqref{eq:FIInfChLambdaUnderline} can be
easily proven by the duality in Proposition \ref{prop:ITcharacterization}
by swapping two infimizations, and $\le$ in \eqref{eq:FIInfChLambdaOverline}
follows by Proposition \ref{prop:ITcharacterization}, and $\ge$
in \eqref{eq:FIInfChLambdaOverline} follows by the tensorization
property and the strong small-set expansion theorem (Theorem \ref{thm:ITcharacterization-1})
and the strong $q$-stability theorem (Theorem \ref{thm:strongqstability}).

Based on the theorem above, we are ready to prove Theorem \ref{thm:strongBL}.
Combined with the tensorization property of the BL exponents, Theorem
\ref{thm:ITcharacterization-1} implies that for all functions $f:\mathcal{X}^{n}\to[0,\infty),g:\mathcal{Y}^{n}\to[0,\infty)$,
\begin{align}
 & -\frac{1}{n}\log\frac{\langle f,g\rangle}{\Vert f\Vert_{p}\Vert g\Vert_{q}}-\frac{1}{n}\log\frac{\Vert f\Vert_{p}}{\Vert f\Vert_{\hat{p}}}-\frac{1}{n}\log\frac{\Vert g\Vert_{q}}{\Vert g\Vert_{\hat{q}}}\nonumber \\
 & \quad\ge\inf_{s,t\ge0}\underline{\varphi}(s,t)-\frac{s}{\hat{p}}-\frac{t}{\hat{q}}\ge\inf_{s,t\ge0}\underline{\Theta}(s,t)-\frac{s}{\hat{p}}-\frac{t}{\hat{q}},\label{eq:FIBLFSSE-1}
\end{align}
where $\underline{\varphi}$ and $\underline{\Theta}$ are defined
in \eqref{eq:phiUnderline} and \eqref{eq:ThetaUnderline} respectively.
For any $(f,g)$, put $a=\frac{1}{n}\Ent_{p,\hat{p}}(f)$ and $b=\frac{1}{n}\Ent_{q,\hat{q}}(g)$.
Since $(\frac{1}{p^{*}},\frac{1}{q^{*}})$ is a subgradient of $\underline{\Theta}$
at the point $(\alpha,\beta)$, by the definition of subgradients,
we have 
\begin{equation}
\inf_{s,t\ge0}\underline{\Theta}(s,t)-\frac{s}{p^{*}}-\frac{t}{q^{*}}=\underline{\Theta}(\alpha,\beta)-\frac{\alpha}{p^{*}}-\frac{\beta}{q^{*}}.
\end{equation}
Substituting $\hat{p}=p^{*}$ and $\hat{q}=q^{*}$ into \eqref{eq:FIBLFSSE-1},
we have 
\begin{equation}
-\frac{1}{n}\log\frac{\langle f,g\rangle}{\Vert f\Vert_{p}\Vert g\Vert_{q}}\ge\underline{\Theta}(\alpha,\beta)-\frac{\alpha}{p^{*}}-\frac{\beta}{q^{*}}+(\frac{1}{p^{*}}-\frac{1}{p})a+(\frac{1}{q^{*}}-\frac{1}{q})b.
\end{equation}
By assumption, $(\frac{1}{p^{*}}-\frac{1}{p})(a-\alpha)\ge0$ and
$(\frac{1}{q^{*}}-\frac{1}{q})(b-\beta)\ge0$. Hence, 
\begin{equation}
-\frac{1}{n}\log\frac{\langle f,g\rangle}{\Vert f\Vert_{p}\Vert g\Vert_{q}}\ge\underline{\Theta}(\alpha,\beta)-\frac{\alpha}{p}-\frac{\beta}{q},
\end{equation}
i.e., the inequality in \eqref{eq:FBL-1} holds.

The inequalities in \eqref{eq:RBL-1} and \eqref{eq:RBL-2} can be
proven similarly. We omit the proofs.

\begin{remark} Although the method above can be used to prove the
special version of the strong BL inequalities in Theorem \ref{thm:strongBL},
this method seems to fail to prove the general version in Theorem
\ref{thm:BLexponent}. \end{remark}

\section{\label{sec:Proof-of-Lemma-RenyiSDPI}Proof of Lemma \ref{lem:RenyiSDPI}}

We may assume, by homogeneilty, that $\Vert f\Vert_{1}=1$. Hence
we can write $f=\frac{\mathrm{d}Q_{X}}{\mathrm{d}P_{X}}$. This choice
implies that 
\begin{align*}
\log\Vert f\Vert_{p} & =\frac{1}{p}\log\int(\frac{\mathrm{d}Q_{X}}{\mathrm{d}P_{X}})^{p}\mathrm{d}P_{X}=\frac{1}{p'}D_{p}(Q_{X}\|P_{X})
\end{align*}
and 
\begin{align*}
\log\Vert P_{X|Y}(f)\Vert_{q} & =\frac{1}{q}\log\int(\int\frac{\mathrm{d}Q_{X}}{\mathrm{d}P_{X}}\mathrm{d}P_{X|Y})^{q}\mathrm{d}P_{Y}\\
 & =\frac{1}{q}\log\int(\frac{\mathrm{d}Q_{Y}}{\mathrm{d}P_{Y}})^{q}\mathrm{d}P_{Y}=\frac{1}{q'}D_{q}(Q_{Y}\|P_{Y})
\end{align*}
where $Q_{Y}:=Q_{X}\circ P_{Y|X}$ and by Bayes's theorem, $\int\frac{\mathrm{d}Q_{X}}{\mathrm{d}P_{X}}\mathrm{d}P_{X|Y}=\int\frac{\mathrm{d}P_{X|Y}}{\mathrm{d}P_{X}}\mathrm{d}Q_{X}=\int\frac{\mathrm{d}P_{Y|X}}{\mathrm{d}P_{Y}}\mathrm{d}Q_{X}=\frac{\mathrm{d}Q_{Y}}{\mathrm{d}P_{Y}}$,
$P_{Y}$-a.e. Therefore, 
\begin{align}
\underline{\Gamma}_{p,q,1}(\alpha|P_{XY})-\frac{\alpha}{p} & =\inf_{Q_{X}:D_{p}(Q_{X}\|P_{X})=\alpha}-\log\frac{\Vert P_{X|Y}(f)\Vert_{q}}{\Vert f\Vert_{p}}\nonumber \\
 & =\inf_{Q_{X}:D_{p}(Q_{X}\|P_{X})=\alpha}-\frac{1}{q'}D_{q}(Q_{Y}\|P_{Y})+\frac{1}{p'}D_{p}(Q_{X}\|P_{X})\nonumber \\
 & =\inf_{Q_{X}:D_{p}(Q_{X}\|P_{X})=\alpha}-\frac{1}{q'}D_{q}(Q_{Y}\|P_{Y})+\frac{\alpha}{p'},\label{eq:-24}
\end{align}
and similarly, 
\begin{align}
\overline{\Gamma}_{p,q,1}(\alpha|P_{XY})-\frac{\alpha}{p} & =\sup_{Q_{X}:D_{p}(Q_{X}\|P_{X})=\alpha}-\frac{1}{q'}D_{q}(Q_{Y}\|P_{Y})+\frac{\alpha}{p'}.\label{eq:-25}
\end{align}
The equalities \eqref{eq:-24} and \eqref{eq:-25}  imply \eqref{eq:-88}
and \eqref{eq:-89}.

\section{\label{sec:Proof-of-Theorem-strongsse}Proof of Theorem \ref{thm:strongsse}}

The inequalities \eqref{eq:-15} for all $\alpha,\beta$ and \eqref{eq:-64}
for $\alpha,\beta>0$ follow from the strong BL inequalities in Theorem
\ref{thm:strongBL} directly. By definition, one can easily obtain
that $\overline{\Theta}^{(n)}(\alpha,\beta)=\begin{cases}
\alpha & \beta=0\\
\beta & \alpha=0
\end{cases}.$ We next prove \eqref{eq:-78} for all $\alpha,\beta$ and \eqref{eq:-79}
for $\alpha,\beta>0$.

Let $\mathtt{L}$ be a metric (e.g., the Lévy--Prokhorov metric)
on $\mathcal{P}(\mathcal{Z})$ compatible with the weak topology on
$\mathcal{P}(\mathcal{Z})$, where $\mathcal{Z}$ is $\mathcal{X}$
or $\mathcal{Y}$. In the following, we use $B_{\delta}(R):=\{Q\in\mathcal{P}(\mathcal{Z}):\mathtt{L}(R,Q)<\delta\}$
and $B_{\leq\delta}(R):=\{Q\in\mathcal{P}(\mathcal{Z}):\mathtt{L}(R,Q)\leq\delta\}$
to respectively denote an open ball and an closed ball. We use $\overline{A}$,
$A^{o}$, and $A^{c}$ to respectively denote the closure, interior,
and complement of the set $A$.

We first prove \eqref{eq:-78} for $\alpha\in[0,\alpha_{\max}),\beta\in[0,\beta_{\max})$.
For $\epsilon>0$, let 
\begin{align*}
\mathcal{Q}_{1} & =\{Q_{X}:D(Q_{X}\|P_{X})>\alpha+\epsilon\},\quad\mathcal{Q}_{2}=\{Q_{Y}:D(Q_{Y}\|P_{Y})>\beta+\epsilon\}.
\end{align*}
Let $(R_{X},R_{Y})\in\mathcal{Q}_{1}\times\mathcal{Q}_{2}$ be a pair
of distributions. Since $\mathcal{Q}_{1}$ is open under the weak
topology \cite{Dembo}, there exists $\delta_{1}>0$ such that the
closed ball $B_{\le\delta_{1}}(R_{X})\subseteq\mathcal{Q}_{1}$.
Similarly, there exists $\delta_{2}>0$ such that $B_{\le\delta_{2}}(R_{Y})\subseteq\mathcal{Q}_{2}$.
Let $\delta=\min\{\delta_{1},\delta_{2}\}$. Then, $B_{\le\delta}(R_{X})\subseteq\mathcal{Q}_{1}$
and $B_{\le\delta}(R_{Y})\subseteq\mathcal{Q}_{2}$.

Define 
\begin{align*}
A_{n} & =\bigcup_{T_{X}\in B_{\le\delta}(R_{X})}\mathcal{T}_{T_{X}}\quad\textrm{ and }\quad B_{n}=\bigcup_{T_{Y}\in B_{\le\delta}(R_{Y})}\mathcal{T}_{T_{Y}},
\end{align*}
where $\mathcal{T}_{T_{X}},\mathcal{T}_{T_{Y}}$ are type classes;
see the definition at the beginning of Appendix \ref{subsec:Sharpness}.
Then by Sanov's theorem \cite[Theorem 6.2.10]{Dembo}, we have 
\begin{align*}
\liminf_{n\to\infty}-\frac{1}{n}\log P_{X}^{\otimes n}(A_{n}) & \geq\inf_{Q_{X}\in B_{\le\delta}(R_{X})}D(Q_{X}\|P_{X})\ge\alpha+\epsilon,\\
\liminf_{n\to\infty}-\frac{1}{n}\log P_{Y}^{\otimes n}(B_{n}) & \geq\inf_{Q_{Y}\in B_{\le\delta}(R_{Y})}D(Q_{Y}\|P_{Y})\ge\beta+\epsilon,
\end{align*}
and 
\begin{align}
\limsup_{n\to\infty}-\frac{1}{n}\log P_{XY}^{\otimes n}(A_{n}\times B_{n}) & \leq\inf_{Q_{XY}\in\mathcal{C}_{\le\delta}(R_{X},R_{Y})^{o}}D(Q_{XY}\|P_{XY}),\label{eq:-86}
\end{align}
where 
\begin{equation}
\mathcal{C}_{\le\delta}(R_{X},R_{Y}):=\bigcup_{(Q_{X},Q_{Y})\in B_{\le\delta}(R_{X})\times B_{\le\delta}(R_{Y})}\mathcal{C}(Q_{X},Q_{Y}).\label{eq:-87}
\end{equation}
Obviously, $\mathcal{C}_{\le\delta}(R_{X},R_{Y})\supseteq\mathcal{C}_{\delta}(R_{X},R_{Y}):=\bigcup_{(Q_{X},Q_{Y})\in B_{\delta}(R_{X})\times B_{\delta}(R_{Y})}\mathcal{C}(Q_{X},Q_{Y})$.
Moreover, we have the following properties on the union of coupling
sets. 

1) If $E$ and $F$ are respectively closed subsets of $\mathcal{P}(\mathcal{X})$
and $\mathcal{P}(\mathcal{Y})$ (under the corresponding weak topology),
then $\mathcal{C}(E,F):=\bigcup_{(Q_{X},Q_{Y})\in E\times F}\mathcal{C}(Q_{X},Q_{Y})$
is closed in the space of $\mathcal{P}(\mathcal{X}\times\mathcal{Y})$
(under the corresponding weak topology). 

2)  The property 1) still holds if we replace ``closed'' with ``open''
in the both assumption and conclusion. 

The property 1) follows by the following fact. For a sequence of probability
measures $\left\{ Q_{XY}^{(n)}\right\} $ on the product Polish space
$\mathcal{X}\times\mathcal{Y}$, $Q_{XY}^{(n)}\to Q_{XY}$ implies
$Q_{X}^{(n)}\to Q_{X}$ and $Q_{Y}^{(n)}\to Q_{Y}$, where $\rightarrow$
denotes the weak convergence. By this fact, for any sequence $\left\{ Q_{XY}^{(n)}\right\} $
from the set $\mathcal{C}(E,F)$ such that $Q_{XY}^{(n)}\to Q_{XY}$,
we have $Q_{X}^{(n)}\to Q_{X}$ and $Q_{Y}^{(n)}\to Q_{Y}$. Moreover,
since $E,F$ are closed, we have $Q_{X}\in E,Q_{Y}\in F$, which in
turn implies $Q_{XY}\in\mathcal{C}(E,F)$, i.e., $\mathcal{C}(E,F)$
is closed. The property 2) follows from the property 1). This is because
by the property 1), for open $E,F$, we have that $\mathcal{C}(E^{c},\mathcal{P}(\mathcal{Y})),\mathcal{C}(\mathcal{P}(\mathcal{X}),F^{c})$
are closed, which implies $\mathcal{C}(E^{c},\mathcal{P}(\mathcal{Y}))\cup\mathcal{C}(\mathcal{P}(\mathcal{X}),F^{c})$
is closed as well, and hence, $\mathcal{C}(E,F)=[\mathcal{C}(E^{c},\mathcal{P}(\mathcal{Y}))\cup\mathcal{C}(\mathcal{P}(\mathcal{X}),F^{c})]^{c}$
is open. 

By the property 2), $\mathcal{C}_{\delta}(R_{X},R_{Y})$ is open,
and hence, $\mathcal{C}_{\le\delta}(R_{X},R_{Y})^{o}\supseteq\mathcal{C}_{\delta}(R_{X},R_{Y})$.
Therefore, the RHS of \eqref{eq:-86} is further upper bounded by
$\inf_{Q_{XY}\in\mathcal{C}_{\delta}(R_{X},R_{Y})}D(Q_{XY}\|P_{XY})\leq\mathbb{D}(R_{X},R_{Y}\|P_{XY})$.
Since $(R_{X},R_{Y})$ is arbitrary, we optimize over all $(R_{X},R_{Y})\in\mathcal{Q}_{1}\times\mathcal{Q}_{2}$
and obtain that 
\begin{align*}
\underline{\Theta}^{(\infty)}(\alpha,\beta) & \le\inf_{\substack{Q_{XY}:D(Q_{X}\|P_{X})>\alpha+\epsilon,\\
D(Q_{Y}\|P_{Y})>\beta+\epsilon
}
}D(Q_{XY}\|P_{XY}).
\end{align*}

By the time-sharing argument, 
\begin{align*}
\underline{\Theta}^{(\infty)}(\alpha,\beta) & \leq\inf_{\substack{Q_{XYW}:D(Q_{X|W}\|P_{X}|Q_{W})>\alpha+\epsilon,\\
D(Q_{Y|W}\|P_{Y}|Q_{W})>\beta+\epsilon
}
}D(Q_{XY|W}\|P_{XY}|Q_{W})\le\underline{\Theta}(\alpha+2\epsilon,\beta+2\epsilon).
\end{align*}
Letting $\epsilon\downarrow0$, we obtain $\underline{\Theta}^{(\infty)}(\alpha,\beta)\leq\lim_{\epsilon\downarrow0}\underline{\Theta}(\alpha+2\epsilon,\beta+2\epsilon)$.
Since $t\in(0,\min\{\alpha_{\max}-\alpha,\beta_{\max}-\beta\})\mapsto\underline{\Theta}(\alpha+t,\beta+t)$
is convex and nondecreasing, it is right continuous at $t=0$. Hence,
$\underline{\Theta}^{(\infty)}(\alpha,\beta)\le\underline{\Theta}(\alpha,\beta)$,
which, combined with \eqref{eq:-15}, implies that $\underline{\Theta}^{(\infty)}(\alpha,\beta)=\underline{\Theta}(\alpha,\beta)$.

We next prove \eqref{eq:-78} for $\alpha=\alpha_{\max}<\infty$ and
$\beta\in[0,\beta_{\max})$. Since $\alpha_{\max}$ is finite, $P_{X}$
is supported on a finite set; see the paragraph below \eqref{eq:amax}.
 Denote  $\hat{\mathcal{X}}:=\{x\in\mathcal{X}:-\log P_{X}(x)=\alpha_{\max}\}$.
Then, $-\frac{1}{n}\log P_{X}^{\otimes n}(\{x^{n}\})\ge\alpha_{\max}$
is equivalent to $x^{n}\in\hat{\mathcal{X}}^{n}$.  Therefore, 
\begin{align}
\underline{\Theta}^{(n)}(\alpha_{\max},\beta) & =-\frac{1}{n}\log\sup_{\substack{x^{n}\in\hat{\mathcal{X}}^{n},B\in\mathbb{B}_{\mathcal{Y}}^{\otimes n}:P_{Y}^{\otimes n}(B)\leq e^{-n\beta}}
}P_{XY}^{\otimes n}(\{x^{n}\}\times B)\nonumber \\
 & =\alpha_{\max}-\frac{1}{n}\log\sup_{\substack{B\in\mathbb{B}_{\mathcal{Y}}^{\otimes n}:P_{Y}^{\otimes n}(B)\leq e^{-n\beta}}
}(\prod_{i=1}^{n}P_{Y|X=x_{i}})(B).\label{eq:-81-1}
\end{align}
By an argument similar to the one in the case of $\alpha\in[0,\alpha_{\max}),\beta\in[0,\beta_{\max})$,
and by the time-sharing argument, we have 
\begin{align}
\limsup_{n\to\infty}-\frac{1}{n}\log\sup_{\substack{B\in\mathbb{B}_{\mathcal{Y}}^{\otimes n}:P_{Y}^{\otimes n}(B)\leq e^{-n\beta}}
}(\prod_{i=1}^{n}P_{Y|X=x_{i}})(B) & \le\inf_{\substack{R_{YXW}:R_{X}(\hat{\mathcal{X}}^{c})=0,\\
\,D(R_{Y|XW}\|P_{Y}|R_{XW})>\beta+\epsilon
}
}D(R_{Y|XW}\|P_{Y|X}|R_{XW}),\label{eq:-80-1}
\end{align}
Substituting \eqref{eq:-80-1} into \eqref{eq:-81-1} and taking limit
yield that $\underline{\Theta}^{(\infty)}(\alpha_{\max},\beta)$ is
upper bounded by the sum of $\alpha_{\max}$ and the RHS of \eqref{eq:-81-1}.
 It is not difficult to see that this sum is further upper bounded
by $\underline{\Theta}(\alpha_{\max},\beta+\epsilon)$, and then,
by the continuity (which is implied by the convexity), we finally
have $\underline{\Theta}^{(\infty)}(\alpha_{\max},\beta)\le\underline{\Theta}(\alpha_{\max},\beta)$. 

By symmetry, \eqref{eq:-78} also holds for the case of $\alpha\in[0,\alpha_{\max}),\beta=\beta_{\max}<\infty$.
As for the case of $\alpha=\alpha_{\max}<\infty,\beta=\beta_{\max}<\infty$,
in fact, it has already been proven in \cite{yu2021Graphs}, since
in this case, $P_{XY}$ is finitely supported. Combining all the cases
above, we complete the proof of \eqref{eq:-78}.

We next prove \eqref{eq:-79} for $\alpha,\beta>0$. For $\epsilon>0$,
let $(R_{X},R_{Y})$ be a pair of distributions such that $D(R_{X}\|P_{X})\le\alpha-\epsilon,\,D(R_{Y}\|P_{Y})\le\beta-\epsilon.$
For $\delta>0$, define 
\begin{align*}
A_{n} & =\bigcup_{T_{X}\in B_{\delta}(R_{X})}\mathcal{T}_{T_{X}},\qquad B_{n}=\bigcup_{T_{Y}\in B_{\delta}(R_{Y})}\mathcal{T}_{T_{Y}}.
\end{align*}
Then by Sanov's theorem \cite[Theorem 6.2.10]{Dembo}, we have
\begin{align*}
\limsup_{n\to\infty}-\frac{1}{n}\log P_{X}^{\otimes n}(A_{n}) & \leq\inf_{Q_{X}\in B_{\delta}(R_{X})}D(Q_{X}\|P_{X})\le D(R_{X}\|P_{X}),\\
\limsup_{n\to\infty}-\frac{1}{n}\log P_{Y}^{\otimes n}(B_{n}) & \leq\inf_{Q_{Y}\in B_{\delta}(R_{Y})}D(Q_{Y}\|P_{Y})\le D(R_{Y}\|P_{Y}),
\end{align*}
and 
\begin{align*}
\liminf_{n\to\infty}-\frac{1}{n}\log P_{XY}^{\otimes n}(A_{n}\times B_{n}) & \ge\inf_{Q_{XY}\in\overline{\mathcal{C}_{\delta}(R_{X},R_{Y})}}D(Q_{XY}\|P_{XY}).
\end{align*}
Obviously, $\mathcal{C}_{\delta}(R_{X},R_{Y})\subseteq\mathcal{C}_{\le\delta}(R_{X},R_{Y})$.
By the property 1) above, $\mathcal{C}_{\le\delta}(R_{X},R_{Y})$
is closed. Hence, $\overline{\mathcal{C}_{\delta}(R_{X},R_{Y})}\subseteq\mathcal{C}_{\le\delta}(R_{X},R_{Y})$,
which further implies that 
\begin{align}
\liminf_{n\to\infty}-\frac{1}{n}\log P_{XY}^{\otimes n}(A_{n}\times B_{n}) & \ge g(\delta):=\inf_{Q_{XY}\in\mathcal{C}_{\le\delta}(R_{X},R_{Y})}D(Q_{XY}\|P_{XY}).\label{eq:-49}
\end{align}
By definition, $g(0)=\mathbb{D}(R_{X},R_{Y}\|P_{XY})$, and $g(\delta)$
is nonincreasing for $\delta\ge0$. Hence, $g(\delta)\le g(0).$ We
next prove the lower semi-continuity of $g$ at $\delta=0$, i.e.,
$\lim_{\delta\downarrow0}g(\delta)\ge g(0)$, which, combined with
$g(\delta)\le g(0)$, yields the continuity of $g$ at $\delta=0$.

Let $\tau>0$. For any $\delta>0$, one can find a distribution $Q_{XY}^{(\delta,\tau)}$
such that $Q_{X}^{(\delta,\tau)}\in B_{\le\delta}(R_{X})$, $Q_{Y}^{(\delta,\tau)}\in B_{\le\delta}(R_{Y})$,
and $D(Q_{XY}^{(\delta,\tau)}\|P_{XY})\le g(\delta)+\tau$. Hence,
for any $\delta>0$, $Q_{XY}^{(\delta,\tau)}\in\mathcal{Q}_{\tau}:=\{Q_{XY}:D(Q_{XY}\|P_{XY})\le g(0)+\tau\}$.
For any positive sequence $\{\delta_{i}\}$ such that $\delta_{i}\to0$
as $i\to\infty$, $\{Q_{XY}^{(\delta_{i},\tau)}\}_{i\in\mathbb{N}}$
is a sequence from $\mathcal{Q}_{\tau}$. By the compactness of $\mathcal{Q}_{\tau}$
\cite{Dembo,Erven}, one can find a convergent subsequence of $\{Q_{XY}^{(\delta_{i},\tau)}\}_{i\in\mathbb{N}}$,
denoted as $\{Q_{XY}^{(\delta_{i_{k}},\tau)}\}_{i\in\mathbb{N}}$,
and whose limit is denoted as $Q_{XY}^{*}$. Since $Q_{XY}^{(\delta_{i_{k}},\tau)}\to Q_{XY}^{*}$
as $k\to\infty$ implies that $Q_{X}^{(\delta_{i_{k}},\tau)}\to Q_{X}^{*},\,Q_{Y}^{(\delta_{i_{k}},\tau)}\to Q_{Y}^{*}$.
On the other hand, as $\delta_{i}\to0$, $Q_{X}^{(\delta_{i_{k}},\tau)}\to R_{X},\,Q_{Y}^{(\delta_{i_{k}},\tau)}\to R_{Y}$.
Hence, $Q_{XY}^{*}\in\mathcal{C}(R_{X},R_{Y})$. Moreover, by the
lower semi-continuity of the relative entropy under the weak topology,
we have $\liminf_{k\to\infty}D(Q_{XY}^{(\delta_{i_{k}},\tau)}\|P_{XY})\ge D(Q_{XY}^{*}\|P_{XY})\ge g(0)$.
Furthermore, by the monotone convergence theorem, $\lim_{\delta\downarrow0}g(\delta)$
exists (here we allow limits to be equal to $\infty$). Therefore,
$\lim_{\delta\downarrow0}g(\delta)=\lim_{k\to\infty}g(\delta_{i_{k}})\ge g(0)-\tau$.
Since $\tau>0$ is arbitrary, we have $\lim_{\delta\downarrow0}g(\delta)\ge g(0)$,
i.e., the lower semi-continuity of $g$ at $\delta=0$. 

Substituting this lower continuity into \eqref{eq:-49} yields that
$\liminf_{n\to\infty}-\frac{1}{n}\log P_{XY}^{\otimes n}(A_{n}\times B_{n})\ge g(0).$
Since $(R_{X},R_{Y})$ is arbitrary such that $D(R_{X}\|P_{X})\le\alpha-\epsilon,D(R_{Y}\|P_{Y})\le\beta-\epsilon$
and $\epsilon>0$ is also arbitrary, we have 
\begin{align}
\overline{\Theta}^{(\infty)}(\alpha,\beta) & \ge\limsup_{\epsilon\downarrow0}\sup_{R_{X},R_{Y}:D(R_{X}\|P_{X})\le\alpha-\epsilon,D(R_{Y}\|P_{Y})\le\beta-\epsilon}\mathbb{D}(R_{X},R_{Y}\|P_{XY}).\label{eq:-8}
\end{align}
By the time-sharing argument, we have $\overline{\Theta}^{(\infty)}(\alpha,\beta)\geq\lim_{\epsilon\downarrow0}\overline{\Theta}(\alpha-\epsilon,\beta-\epsilon).$
Since $t\in(0,\min\{\alpha,\beta\})\mapsto\overline{\Theta}(\alpha-t,\beta-t)$
is concave and nondecreasing, it is left continuous at $t=0$. Hence,
$\overline{\Theta}^{(\infty)}(\alpha,\beta)\geq\overline{\Theta}(\alpha,\beta)$,
which, combined with \eqref{eq:-64}, implies that $\overline{\Theta}^{(\infty)}(\alpha,\beta)=\overline{\Theta}(\alpha,\beta)$.

\section{\label{sec:Proof-of-Theorem-strongsse-1}Proof of Theorem \ref{thm:strongqstability}}

The strong BL inequalities in Theorem \ref{thm:strongBL_single} imply
\eqref{eq:-69} and \eqref{eq:-63}. We next prove the asymptotic
sharpness of the inequalities in \eqref{eq:-63} for $q\ge1$ and
\eqref{eq:-69} for $q<0$, which follows from the equation in \eqref{eq:equivalence}
and the asymptotic sharpness of the inequalities in Theorem \ref{thm:strongsse-2}.

Setting $\hat{g}\leftarrow P_{X|Y}^{\otimes n}(f)$ in the $n$-dimensional
version of \eqref{eq:equivalence}, we obtain 
\[
\Vert P_{X|Y}^{\otimes n}(f|\cdot)\Vert_{q}=\begin{cases}
\sup_{g\ge0}\frac{\langle f,g\rangle}{\Vert g\Vert_{q'}} & q\ge1\\
\inf_{g\ge0}\frac{\langle f,g\rangle}{\Vert g\Vert_{q'}} & q\le1
\end{cases}
\]
By Theorem \ref{thm:strongsse-2}, for any $\alpha\in[0,\alpha_{\max}],\beta\in[0,\beta_{\max}]$,
there exists a sequence of $(A_{n},B_{n})$ such that $-\frac{1}{n}\log P_{X}^{\otimes n}({A}_{n})\downarrow\alpha,-\frac{1}{n}\log P_{Y}^{\otimes n}({B}_{n})\downarrow\beta$,
and $-\frac{1}{n}\log P_{XY}^{\otimes n}({A}_{n}\times{B}_{n})\to\underline{\Theta}(\alpha,\beta)$
as $n\to\infty$. Setting $f=1_{A_{n}},g=1_{B_{n}}$, we obtain for
$q\ge1$, 
\begin{align*}
\limsup_{n\to\infty}-\frac{1}{n}\log\Vert P_{X|Y}^{\otimes n}(A_{n}|\cdot)\Vert_{q} & \le\limsup_{n\to\infty}-\frac{1}{n}\log\frac{P_{XY}^{\otimes n}(A_{n}\times B_{n})}{P_{Y}^{\otimes n}(B_{n})^{1/q'}}=\underline{\Theta}(\alpha,\beta)-\frac{1}{q'}\beta.
\end{align*}
Since $\beta\in[0,\beta_{\max}]$ is arbitrary, we have 
\begin{align*}
\limsup_{n\to\infty}-\frac{1}{n}\log\Vert P_{X|Y}^{\otimes n}(A_{n}|\cdot)\Vert_{q} & \le\inf_{\beta\in[0,\beta_{\max}]}\underline{\Theta}(\alpha,\beta)-\frac{1}{q'}\beta\\
 & \le\inf_{\beta\in[0,\beta_{\max}]}\underline{\varphi}(\alpha,\beta)-\frac{1}{q'}\beta=\Theta_{q'}(\alpha).
\end{align*}

The asymptotic sharpness of the inequality in \eqref{eq:-69} for
$q<0$ follows similarly. We lastly prove the asymptotic sharpness
of \eqref{eq:-69} for $0<q<1$.

The case of $\alpha=0$ is trivial. We next consider the case of $\alpha>0$.
For $\epsilon>0$, let $R_{X}$ be a distribution such that $D(R_{X}\|P_{X})\le\alpha-\epsilon.$
For $\delta>0$, define $A_{n}=\bigcup_{T_{X}\in B_{\delta}(R_{X})}\mathcal{T}_{T_{X}}^{(n)}.$
Then by Sanov's theorem \cite[Theorem 6.2.10]{Dembo}, we have for
any $\delta,\epsilon>0$, 
\begin{align*}
\limsup_{n\to\infty}-\frac{1}{n}\log P_{X}^{\otimes n}(A_{n}) & \leq\inf_{Q_{X}\in B_{\delta}(R_{X})}D(Q_{X}\|P_{X})\le D(R_{X}\|P_{X}).
\end{align*}
Denote $\mu_{T_{X}T_{Y}}$ as the joint law of the pair of empirical
distributions $(T_{X},T_{Y})$ of $(X^{n},Y^{n})\sim P_{XY}^{\otimes n}$.
Observe that $P_{X|Y}^{\otimes n}(A_{n}|y^{n})$ remains the same
for all $y^{n}$ having the same type. Hence, $P_{X|Y}^{\otimes n}(A_{n}|y^{n})=\mu_{T_{X}|T_{Y}}(B_{\delta}(R_{X})|T_{Y})$
for any $y^{n}$ having type $T_{Y}$. By this identity, we obtain
\begin{align*}
-\frac{1}{n}\log\Vert P_{X|Y}^{\otimes n}(A_{n})\Vert_{q} & =-\frac{1}{qn}\log\int_{\mathcal{Y}^{n}}P_{X|Y}^{\otimes n}(A_{n}|y^{n})^{q}\mathrm{d}P_{Y}^{\otimes n}(y^{n})\\
 & =-\frac{1}{qn}\log\int_{\mathcal{P}(\mathcal{Y})}\mu_{T_{X}|T_{Y}}(B_{\delta}(R_{X})|T_{Y})^{q}\mathrm{d}\mu_{T_{Y}}.
\end{align*}
Suppose that $\Theta_{q'}(\alpha)<\infty$, otherwise, the desired
conclusion follows trivially. Denote $A_{\epsilon}:=\{T_{Y}:D(T_{Y}\|P_{Y})\le\Theta_{q'}(\alpha)+\epsilon\},$
which is compact \cite{Erven}. Then, 
\begin{align}
-\frac{1}{n}\log\Vert P_{X|Y}^{\otimes n}(A_{n})\Vert_{q} & \geq-\frac{1}{qn}\log[\int_{A_{\epsilon}}\mu_{T_{X}|T_{Y}}(B_{\delta}(R_{X})|T_{Y})^{q}\mathrm{d}\mu_{T_{Y}}+\mu_{T_{Y}}(A_{\epsilon}^{c})].\label{eq:-53}
\end{align}
By Sanov's theorem \cite[Theorem 6.2.10]{Dembo}, $\liminf_{n\to\infty}-\frac{1}{n}\log\mu_{T_{Y}}(A_{\epsilon}^{c})\ge\Theta_{q'}(\alpha)+\epsilon.$
By the inequality in \eqref{eq:-69}, we know that the limit of the
LHS of \eqref{eq:-53} satisfies $\liminf_{n\to\infty}-\frac{1}{n}\log\Vert P_{X|Y}^{\otimes n}(A_{n})\Vert_{q}\le\Theta_{q'}(\alpha)$.
Hence, the first term in the logarithm in the RHS of \eqref{eq:-53}
dominates in the sense that if we take $\liminf_{n\to\infty}$ for
both sides of \eqref{eq:-53}, then the RHS turns into $\liminf_{n\to\infty}-\frac{1}{qn}\log\int_{A_{\epsilon}}\mu_{T_{X}|T_{Y}}(B_{\delta}(R_{X})|T_{Y})^{q}\mathrm{d}\mu_{T_{Y}}$.
Moreover, by the compactness, there is a finite cover $\{B_{\delta}(T_{Y}^{(i)}):1\le i\le L\}$
of $A_{\epsilon}$. Further, combining this with Jensen's inequality
yields that for $0<q<1$, 
\begin{align*}
\int_{A_{\epsilon}}\mu_{T_{X}|T_{Y}}(B_{\delta}(R_{X})|T_{Y})^{q}\mathrm{d}\mu_{T_{Y}} & \leq\sum_{i=1}^{L}\int_{B_{\delta}(T_{Y}^{(i)})}\mu_{T_{X}|T_{Y}}(B_{\delta}(R_{X})|T_{Y})^{q}\mathrm{d}\mu_{T_{Y}}\\
 & =\sum_{i=1}^{L}\mu_{T_{Y}}(B_{\delta}(T_{Y}^{(i)}))\int_{B_{\delta}(T_{Y}^{(i)})}\mu_{T_{X}|T_{Y}}(B_{\delta}(R_{X})|T_{Y})^{q}\frac{1}{\mu_{T_{Y}}(B_{\delta}(T_{Y}^{(i)}))}\mathrm{d}\mu_{T_{Y}}\\
 & \leq\sum_{i=1}^{L}\mu_{T_{Y}}(B_{\delta}(T_{Y}^{(i)}))\Big[\frac{\int_{B_{\delta}(T_{Y}^{(i)})}\mu_{T_{X}|T_{Y}}(B_{\delta}(R_{X})|T_{Y})\mathrm{d}\mu_{T_{Y}}}{\mu_{T_{Y}}(B_{\delta}(T_{Y}^{(i)}))}\Big]^{q}\\
 & =\sum_{i=1}^{L}\mu_{T_{Y}}(B_{\delta}(T_{Y}^{(i)}))^{1-q}\mu_{T_{X}T_{Y}}(B_{\delta}(R_{X})\times B_{\delta}(T_{Y}^{(i)}))^{q}\\
 & \le L\max_{1\le i\le L}\mu_{T_{Y}}(B_{\delta}(T_{Y}^{(i)}))^{1-q}\mu_{T_{X}T_{Y}}(B_{\delta}(R_{X})\times B_{\delta}(T_{Y}^{(i)}))^{q},
\end{align*}
where all the summation $\sum_{i=1}^{L}$ can be taken over all $i$
such that $\mu_{T_{Y}}(B_{\delta}(T_{Y}^{(i)}))>0$, in order to ensure
that the distribution $\frac{\mu_{T_{Y}}}{\mu_{T_{Y}}(B_{\delta}(T_{Y}^{(i)}))}$
is well-defined. Furthermore, by Sanov's theorem \cite[Theorem 6.2.10]{Dembo},
\begin{align*}
\liminf_{n\to\infty}-\frac{1}{n}\log\mu_{T_{Y}}(B_{\delta}(T_{Y}^{(i)})) & \ge\inf_{Q_{Y}\in B_{\le\delta}(T_{Y}^{(i)})}D(Q_{Y}\|P_{Y}),\\
\liminf_{n\to\infty}-\frac{1}{n}\log\mu_{T_{X}T_{Y}}(B_{\delta}(R_{X})\times B_{\delta}(T_{Y}^{(i)})) & \ge\inf_{Q_{XY}\in\mathcal{C}_{\le\delta}(R_{X},T_{Y}^{(i)})}D(Q_{XY}\|P_{XY}),
\end{align*}
where $\mathcal{C}_{\le\delta}$ is defined in \eqref{eq:-87}. 

Combining all the points above yields that for fixed $L$,

\begin{align}
 & \liminf_{n\to\infty}-\frac{1}{n}\log\Vert P_{X|Y}^{\otimes n}(A_{n})\Vert_{q}\nonumber \\
 & \ge\min_{1\le i\le L}-\frac{1}{q'}\inf_{Q_{Y}\in B_{\le\delta}(T_{Y}^{(i)})}D(Q_{Y}\|P_{Y})+\inf_{Q_{XY}\in\mathcal{C}_{\le\delta}(R_{X},T_{Y}^{(i)})}D(Q_{XY}\|P_{XY})\nonumber \\
 & \ge\inf_{R_{Y}}-\frac{1}{q'}\inf_{Q_{Y}\in B_{\le\delta}(R_{Y})}D(Q_{Y}\|P_{Y})+\inf_{Q_{XY}\in\mathcal{C}_{\le\delta}(R_{X},R_{Y})}D(Q_{XY}\|P_{XY})\nonumber \\
 & \ge\inf_{Q_{Y}}-\frac{1}{q'}D(Q_{Y}\|P_{Y})+\inf_{S_{XY}\in\mathcal{C}_{\le2\delta}(R_{X},Q_{Y})}D(S_{XY}\|P_{XY})\:=:g(\delta),\label{eq:-90}
\end{align}
where the last inequality follows since for any $Q_{Y}\in B_{\le\delta}(R_{Y})$
and $S_{XY}\in\mathcal{C}_{\le\delta}(R_{X},R_{Y})$, by the triangle
inequality, it holds that $S_{Y}\in B_{\le2\delta}(Q_{Y})$. (In fact,
in the above derivation, we use the compactness or finite cover technique
to swap the liminf and the optimization operators coming along with
Sanov's theorem.) 

Suppose that $\Theta_{q'}(\alpha)<\infty$, otherwise, the desired
conclusion follows trivially. Hence, it suffices to consider $R_{Y},Q_{XY}$
in the infimum above respectively belong to some sublevel sets of
$D(Q_{Y}\|P_{Y})$ and $D(S_{XY}\|P_{XY})$. It is known that the
sublevel sets of the relative entropy are compact. By taking limit
as $\delta\downarrow0$ for \eqref{eq:-90}, we have $\liminf_{n\to\infty}-\frac{1}{n}\log\Vert P_{X|Y}^{\otimes n}(A_{n})\Vert_{q}\ge\lim_{\delta\downarrow0}g(\delta).$
Let $\{\delta_{i}\}$ be a decreasing positive sequence with limit
zero. Let $\{(Q_{Y}^{(i)},S_{XY}^{(i)})\}$ be a sequence from the
sublevel sets of $D(Q_{Y}\|P_{Y})$ and $D(S_{XY}\|P_{XY})$ such
that $-\frac{1}{q'}D(Q_{Y}^{(i)}\|P_{Y})+D(S_{XY}^{(i)}\|P_{XY})\le g(\delta)+\tau$.
Since the sublevel sets are compact, we can pass $\{(Q_{Y}^{(i)},S_{XY}^{(i)})\}$
to a convergent subsequence for which the limit is denoted as $(Q_{Y}^{*},S_{XY}^{*})$.
It is easy to see that $S_{X}^{*}=R_{X},S_{Y}^{*}=Q_{Y}^{*}$. Moreover,
by the lower semicontinuity of the relative entropy, 
\begin{align*}
-\frac{1}{q'}D(Q_{Y}^{*}\|P_{Y})+D(S_{XY}^{*}\|P_{XY}) & \le\liminf_{i\to\infty}-\frac{1}{q'}D(Q_{Y}^{(i)}\|P_{Y})+D(S_{XY}^{(i)}\|P_{XY})\\
 & \le\lim_{\delta\downarrow0}g(\delta)+\tau.
\end{align*}
Note that here $-\frac{1}{q'}>0$. Therefore, 
\begin{align*}
\liminf_{n\to\infty}-\frac{1}{n}\log\Vert P_{X|Y}^{\otimes n}(A_{n})\Vert_{q} & \ge-\frac{1}{q'}D(R_{Y}^{*}\|P_{Y})+D(Q_{XY}^{*}\|P_{XY})-\tau\\
 & \ge\inf_{R_{Y|X}}-\frac{1}{q'}D(R_{Y}\|P_{Y})+D(R_{X}R_{Y|X}\|P_{XY})-\tau.
\end{align*}
Since $\tau>0$ is arbitrary, it can be removed from the last line
above. Optimizing the RHS above over all $R_{X}$ such that $D(R_{X}\|P_{X})\le\alpha-\epsilon$
and taking limit as $\epsilon\downarrow0$, we obtain 
\begin{align*}
\Theta_{q}^{(\infty)}(\alpha) & \ge\lim_{\epsilon\downarrow0}\sup_{Q_{X}:D(Q_{X}\|P_{X})\le\alpha-\epsilon}\inf_{Q_{Y}}\mathbb{D}(Q_{X},Q_{Y}\|P_{XY})-\frac{D(Q_{Y}\|P_{Y})}{q'}.
\end{align*}
By the time-sharing argument, we have $\Theta_{q}^{(\infty)}(\alpha)\ge\lim_{\epsilon\downarrow0}\Theta_{q'}(\alpha-\epsilon).$
Since $t\in(0,\alpha)\mapsto\Theta_{q'}(\alpha-t)$ is concave and
nondecreasing, it is left continuous at $t=0$. Hence, $\Theta_{q}^{(\infty)}(\alpha)\ge\Theta_{q'}(\alpha)$,
which, combined with \eqref{eq:-69}, implies that $\Theta_{q}^{(\infty)}(\alpha)=\Theta_{q'}(\alpha)$
for the case $0<q<1$.

\section{\label{sec:Proof-of-Lemma-Theta}Proof of Lemma \ref{lem:Theta}}

We first prove \eqref{eq:sublinear}. For $\alpha<\alpha_{\min},\beta<\beta_{\min}$,
define 
\begin{align}
\underline{\Psi}(\alpha,\beta) & :=\lim_{n\to\infty}-\frac{1}{n}\log\max_{\substack{A\subseteq\mathcal{X}^{n},B\subseteq\mathcal{Y}^{n}:P_{X}^{\otimes n}(A)\in e^{-n\alpha}+[0,e^{-n\alpha_{\min}}],\\
P_{X}^{\otimes n}(B)\in e^{-n\beta}+[0,e^{-n\beta_{\min}}]
}
}\frac{P_{XY}^{\otimes n}(A\times B)}{P_{X}^{\otimes n}(A)P_{Y}^{\otimes n}(B)}.\label{eq:Upsilon-1-2}
\end{align}
Let $\alpha,s\ge0$ such that $\alpha+s<\alpha_{\min}$. Then for
any $A\subseteq\mathcal{X}^{n}$ such that $P_{X}^{\otimes n}(A)\in e^{-n\alpha}+[0,e^{-n\alpha_{\min}}]$,
we can partition $A$ into a number of subsets $\{A_{i}\}$ such that
$P_{X}^{\otimes n}(A_{i})\in e^{-n(\alpha+s)}+[0,e^{-n\alpha_{\min}}]$
for all $i$. We further have 
\begin{align*}
P_{X}^{\otimes n}(B|A) & =\sum_{i}\frac{P_{X}^{\otimes n}(A_{i})}{P_{X}^{\otimes n}(A)}P_{X}^{\otimes n}(B|A_{i})\leq\max_{i}P_{X}^{\otimes n}(B|A_{i})\\
 & \leq\max_{A\subseteq\mathcal{X}^{n}:P_{X}^{\otimes n}(A)\in e^{-n(\alpha+s)}+[0,e^{-n\alpha_{\min}}]}P_{X}^{\otimes n}(B|A).
\end{align*}
Hence, $\underline{\Psi}(\alpha,\beta)\le\underline{\Psi}(\alpha+s,\beta)$,
which implies $\underline{\Psi}(\alpha,\beta)$ is nonincreasing in
$\alpha\in(0,\alpha_{\min})$ given $\beta$. By symmetry, $\underline{\Psi}(\alpha,\beta)$
is also nonincreasing in $\beta\in(0,\beta_{\min})$ given $\alpha$.

On the other hand, by Theorem \ref{thm:strongsse}, for $0<\alpha<\alpha_{\min},0<\beta<\beta_{\min}$,
we have 
\begin{align}
\underline{\Psi}(\alpha,\beta) & =\lim_{n\to\infty}-\frac{1}{n}\log\max_{\substack{A\subseteq\mathcal{X}^{n},B\subseteq\mathcal{Y}^{n}:P_{X}^{\otimes n}(A)\in e^{-n\alpha}+[0,e^{-n\alpha_{\min}}],\\
P_{X}^{\otimes n}(B)\in e^{-n\beta}+[0,e^{-n\beta_{\min}}]
}
}P_{XY}^{\otimes n}(A\times B)-\alpha-\beta\nonumber \\
 & =\lim_{n\to\infty}-\frac{1}{n}\log\max_{\substack{A\subseteq\mathcal{X}^{n},B\subseteq\mathcal{Y}^{n}:P_{X}^{\otimes n}(A)\leq e^{-n\alpha}+e^{-n\alpha_{\min}},\\
P_{X}^{\otimes n}(B)\leq e^{-n\beta}+e^{-n\beta_{\min}}
}
}P_{XY}^{\otimes n}(A\times B)-\alpha-\beta\label{eq:-92}\\
 & =\underline{\Theta}(\alpha,\beta)-\alpha-\beta,\label{eq:-91}
\end{align}
where \eqref{eq:-92} follows since there is an optimal pair $(A,B)$
attaining the maximum in \eqref{eq:-92} satisfying $P_{X}^{\otimes n}(A)\in e^{-n\alpha}+[0,e^{-n\alpha_{\min}}]$
and $P_{X}^{\otimes n}(B)\in e^{-n\beta}+[0,e^{-n\beta_{\min}}]$,
since, otherwise, we can enlarge $A,B$ to increase $P_{XY}^{\otimes n}(A\times B)$;
see Remark \ref{rem:equality}. Combining \eqref{eq:-91} and the
monotonicity of $\underline{\Psi}$, we have the inequality in \eqref{eq:sublinear}.
Similarly, one can prove \eqref{eq:sublinear-1}.

We next consider \eqref{eq:sublinear-2}. The proof for this case
is also similar to the above. For $\alpha<\alpha_{\min}$, define
\begin{align}
\underline{\Psi}_{q}(\alpha) & :=\lim_{n\to\infty}-\frac{1}{qn}\log\max_{A\subseteq\mathcal{X}^{n}:P_{X}^{\otimes n}(A)\in e^{-n\alpha}+[0,e^{-n\alpha_{\min}}]}\frac{\mathbb{E}_{Y^{n}}[P_{X|Y}^{\otimes n}(A|Y^{n})^{q}]}{P_{X}^{\otimes n}(A)^{q}}\label{eq:Upsilon-1-1-1-1-1}\\
 & =\lim_{n\to\infty}-\frac{1}{qn}\log\max_{A\subseteq\mathcal{X}^{n}:P_{X}^{\otimes n}(A)\in e^{-n\alpha}+[0,e^{-n\alpha_{\min}}]}\mathbb{E}_{Y^{n}}[P_{X}^{\otimes n}(Y^{n}|A)^{q}].
\end{align}
Let $\alpha,s\ge0$ such that $\alpha+s<\alpha_{\min}$. Then for
any $A\subseteq\mathcal{X}^{n}$ such that $P_{X}^{\otimes n}(A)\in e^{-n\alpha}+[0,e^{-n\alpha_{\min}}]$,
we can partition $A$ into a number of subsets $\{A_{i}\}$ such that
$P_{X}^{\otimes n}(A_{i})\in e^{-n(\alpha+s)}+[0,e^{-n\alpha_{\min}}]$
for all $i$. We further have 
\begin{align}
\mathbb{E}_{Y^{n}}[P_{X}^{\otimes n}(Y^{n}|A)^{q}] & =\mathbb{E}_{Y^{n}}[\big(\sum_{i}\frac{P_{X}^{\otimes n}(A_{i})}{P_{X}^{\otimes n}(A)}P_{X}^{\otimes n}(Y^{n}|A_{i})\big)^{q}]\nonumber \\
 & \le\sum_{i}\frac{P_{X}^{\otimes n}(A_{i})}{P_{X}^{\otimes n}(A)}\mathbb{E}_{Y^{n}}[P_{X}^{\otimes n}(Y^{n}|A_{i})^{q}]\label{eq:-90-1}\\
 & \le\max_{i}\mathbb{E}_{Y^{n}}[P_{X}^{\otimes n}(Y^{n}|A_{i})^{q}]\nonumber \\
 & \le\max_{A\subseteq\mathcal{X}^{n}:P_{X}^{\otimes n}(A)\in e^{-n(\alpha+s)}+[0,e^{-n\alpha_{\min}}]}\mathbb{E}_{Y^{n}}[P_{X}^{\otimes n}(Y^{n}|A)^{q}],\nonumber 
\end{align}
where \eqref{eq:-90-1} follows by Jensen's inequality since $q\ge1$.
Hence, $\underline{\Psi}_{q}(\alpha)\ge\underline{\Psi}_{q}(\alpha+s)$,
which implies $\underline{\Psi}_{q}$ is nondecreasing on $(0,\alpha_{\min})$.

On the other hand, by Theorem \ref{thm:strongqstability}, for $0<\alpha<\alpha_{\min}$,
we have 
\begin{align}
\underline{\Psi}_{q}(\alpha) & =\underline{\Theta}_{q'}(\alpha)-\alpha.\label{eq:-77-1-1-1}
\end{align}
Combining \eqref{eq:-77-1-1-1} and the monotonicity of $\underline{\Psi}_{q}$,
we have the inequality in \eqref{eq:sublinear-2}. Similarly, one
can prove \eqref{eq:sublinear-3}.

 \bibliographystyle{unsrt}
\bibliography{ref}

\end{document}